\documentclass[a4paper]{report}

\usepackage{url}
\usepackage{latexsym}
\usepackage{bussproofs}
\usepackage{amsmath, amssymb}
\usepackage{stmaryrd}
\usepackage{mathtools}
\usepackage[all]{xy}

\title{Fibrations of Predicates and Bicategories of Relations}
\author{
  Finn Lawler\\
  Department of Computer Science\\
  Trinity College, Dublin
}
\date{February 2015}

\includeonly{1-intro,2-new-cats,3-new-2cats,4-new-eqts,5-concl}

\usepackage{amsthm}

\theoremstyle{remark}

\swapnumbers
\theoremstyle{plain}
\newtheorem{thm}{Theorem}[section]
\newtheorem{prp}[thm]{Proposition}
\newtheorem{lem}[thm]{Lemma}
\newtheorem{cor}[thm]{Corollary}
\theoremstyle{definition}
\newtheorem{dfn}[thm]{Definition}
\newtheorem{rem}[thm]{Remark}

\numberwithin{equation}{section}

\newcommand{\pwrset}{{\mathcal{P}}}
\newcommand{\nullset}{\varnothing}
\newcommand{\set}[2]{\{#1\,|\, #2\}}

\newcommand{\PN}{\pwrset\mathbb{N}}
\newcommand{\Eff}{\cat{Eff}}

\newcommand{\Iff}{if\mbox{}f}

\newcommand{\den}[1]{\llbracket #1 \rrbracket}

\newcommand{\Rel}{\bicat{Rel}}
\DeclareMathOperator{\Map}{Map}

\DeclareMathOperator{\Matr}{Matr}
\DeclareMathOperator{\Pred}{Pred}
\DeclareMathOperator{\colim}{colim}

\DeclareMathOperator{\img}{im}
\newcommand{\im}{\img}
\DeclareMathOperator{\Nat}{Nat}
\DeclareMathOperator{\ob}{ob}

\newcommand{\Span}{\bicat{Span}}

\newcommand{\term}{\mathbf{1}}

\newcommand{\coop}{\mathrm{co\,op}}
\newcommand{\op}{\mathrm{op}}
\newcommand{\co}{\mathrm{co}}

\newcommand{\pbcorner}[1][dr]{\save*!/#1-1.2pc/#1:(-1,1)@^{|-}\restore}

\newdir{ (}{{}*!/-5pt/@{(}}
\newdir^{ (}{{}*!/-5pt/@^{(}}

% \xslashedrightarrow from nLab

\makeatletter
\def\slashedarrowfill@#1#2#3#4#5{%
  $\m@th\thickmuskip0mu\medmuskip\thickmuskip\thinmuskip\thickmuskip
   \relax#5#1\mkern-7mu%
   \cleaders\hbox{$#5\mkern-2mu#2\mkern-2mu$}\hfill
   \mathclap{#3}\mathclap{#2}%
   \cleaders\hbox{$#5\mkern-2mu#2\mkern-2mu$}\hfill
   \mkern-7mu#4$%
}
\def\rightslashedarrowfill@{%
  \slashedarrowfill@\relbar\relbar\mapstochar\rightarrow}
\newcommand\xslashedrightarrow[2][]{%
  \ext@arrow 0055{\rightslashedarrowfill@}{#1}{#2}}
\makeatother

\newcommand{\prof}{\xslashedrightarrow{}}

\usepackage{rotating}

\EnableBpAbbreviations{}

\newcommand{\sqar}{\Longrightarrow}
\newcommand{\sq}[1][]{\overset{#1}{\sqar}}

\newcommand{\rel}{\looparrowright}
\newcommand{\map}{\dashrightarrow}
\newcommand{\lmap}{\dashleftarrow}

\newcommand{\tricat}[1]{\mathrm{\mathbb #1}}

\newcommand{\Bicat}{2\mbox{-}\tricat{Cat}}
\newcommand{\Biprof}{2\mbox{-}\tricat{Prof}}

\newcommand{\cat}[1]{\mathbf{#1}}

\newcommand{\xcat}[1]{\mbox{$#1$-$\Cat$}}
\newcommand{\xalg}[1]{\mbox{$#1$-$\cat{Alg}$}}

\newcommand{\iso}{\cong}

\newcommand{\cmp}{\circ}
\newcommand{\inv}[1]{{#1}^{-1}}

\newcommand{\Gray}{\cat{Gray}}
\newcommand{\Set}{\cat{Set}}
\newcommand{\C}{\cat{C}}
\newcommand{\D}{\cat{D}}
\newcommand{\E}{\cat{E}}
\newcommand{\B}{\cat{B}}
\newcommand{\Dsc}{\cat{Dsc}}

\newcommand{\bicat}[1]{\mathit{\mathcal #1}}
\newcommand{\Cat}{\bicat{Cat}}
\newcommand{\bA}{\bicat{A}}
\newcommand{\bB}{\bicat{B}}
\newcommand{\bK}{\bicat{K}}
\newcommand{\bL}{\bicat{L}}
\newcommand{\bJ}{\bicat{J}}

\newcommand{\bM}{\bicat{M}}
\newcommand{\bN}{\bicat{N}}
\newcommand{\bEqt}{\bicat{Eqt}}

\newcommand{\Pw}{\mathbb{P}}

\newcommand{\tc}{\Rightarrow}
\newcommand{\nt}{\tc}
\newcommand{\mdf}{\Rrightarrow}
\newcommand{\xnt}{\mathrel{\ddot{\tc}}}
\newcommand{\eqv}{\simeq}
\newcommand{\mon}{\hookrightarrow}
\newcommand{\sub}{\subseteq}
\newcommand{\tup}[1]{\langle #1 \rangle}

\newcommand{\vcmp}{\mathop{.}}
\newcommand{\hcmp}{\circ}

\newcommand{\elt}{{\textstyle \int}}

\begin{document}
\pagenumbering{roman}
% title
\maketitle{}

\section*{Note to arXiv version}

This is a slightly revised version of my Ph.D. thesis, which was
originally submitted in November 2013.  The viva was held the
following May, with Matthew Hennessy as chair and Martin Hyland and
John Power as external examiners.  The thesis was accepted with no
corrections required, and the final hardbound version was submitted,
with a few minor revisions made, in September 2014.  For this version
I have again made some small corrections and added a few notes, but
the document remains the same in substance.

\newpage

% declaration
\section*{Declaration}

I hereby declare:
\begin{itemize}
\item that this work has not been submitted as an exercise for a
  degree at this or any other University;
\item that it is, except where indicated, entirely my own work;
\item and that I agree that the Library may lend or copy it upon
  request.
\end{itemize}

\vskip 5em
\noindent
Finn Lawler

\newpage

% abstract
\section*{Summary}

We reconcile the two different category-theoretic semantics of regular
theories in predicate logic.  A 2-category of \emph{regular
  fibrations} is constructed, as well as a 2-category of \emph{regular
  proarrow equipments}, and it is shown that the two are equivalent.
A regular equipment is a \emph{cartesian equipment} satisfying certain
axioms, and a cartesian equipment is a slight generalization of a
cartesian bicategory.

This is done by defining a tricategory $\Biprof$ whose objects are
bicategories and whose morphisms are category-valued profunctors, and
then defining an \emph{equipment} to be a pseudo-monad in this
tricategory.  The resulting notion of equipment is compared to several
existing ones.  Most importantly, this involves showing that every
pseudo-monad in $\Biprof$ has a \emph{Kleisli object}.  A strict
2-category of equipments, over locally discrete base bicategories, is
identified, and cartesian equipments are defined to be the cartesian
objects in this 2-category.  Thus cartesian equipments themselves form
a 2-category, and this is shown to admit a 2-fully-faithful functor
from the 2-category of regular fibrations.  The cartesian equipments
in the image of this functor are characterized as those satisfying
certain axioms, and hence a 2-category of \emph{regular equipments} is
identified that is equivalent to that of regular fibrations.

It is then shown that a regular fibration admits \emph{comprehension}
for predicates if and only if its corresponding regular equipment
admits \emph{tabulation} for morphisms, and further that the presence
of tabulations for morphisms is equivalent to the existence of
\emph{Eilenberg--Moore objects} for co-monads.  We conclude with a
brief examination of the two different constructions of the effective
topos, via triposes and via assemblies, in the light of the foregoing.

\newpage

% acknowledgements
\section*{Acknowledgements}

My thanks must go first of all to my supervisor, Hugh Gibbons, for his
unfailing patience and support.  When Hugh took me on, my intention
was to write a thesis on logic and computation, so he must have been
somewhat dismayed to watch me then veer sharply into category theory.
But he did not despair, and in fact this meant that I had to make my
ideas and explanations intelligible to a sympathetic but lay audience,
forcing me to think more deeply and expound more clearly than I might
otherwise have done.  I hope that shows.  Hugh's help in navigating
the College bureaucracy, and indeed all the usual travails of
post-graduate life, has been likewise invaluable.

If Hugh has been my mainstay, then Arthur Hughes has been my binnacle.
The two have together supported and guided me through the whole
process of planning and writing this thesis.  Though his status as my
co-supervisor remains unofficial, Arthur has more than earned that
acknowledgement by his willingness to listen, suggest and advise,
particularly when it comes to category theory.  His good nature and
solicitousness make him a pleasure to work with too.  I should also
thank the other members of the Foundations and Methods Group of the
Computer Science Department for their help and advice, especially
Matthew Hennessy.

Even despite Arthur's mathematical company, though, I would not have
come to understand category theory at anywhere near the level this
thesis required without the $n$-Lab (\url{ncatlab.org}).  Of course, I
can't possibly thank by name everyone involved, but I do want to make
clear just how much I owe to all of them.  My understanding of
category theory, and of mathematics in general, has been immeasurably
deepened and broadened by the material on the Lab, as well as by
discussions on the associated $n$-Forum.  I must thank Mike Shulman
and Todd Trimble in particular for the latter.

Writing a doctoral thesis is never easy, and this one too had a
difficult birth.  But when things looked grim, the College support
services were there to help.  I really cannot overstate how much it
has meant to me to have their professional, compassionate and
effective support available, or how important these services are in
general.  It reflects very well indeed on College that they employ so
many staff whose sole concern is the well-being of students, and long
may this continue.  I don't suppose it would be appropriate to mention
the people I have in mind here by name, but if I say that they are
C.~G., C.~R. and Dr.~N.~F., then they will know who they are, and know
that they have my immense gratitude.  The system works, and this
thesis is the proof.  Likewise, the College administrative staff, in
my dealings with them, have never failed to be friendly, efficient,
flexible and eager to help.  In particular, Helen Thornbury of the
Graduate Studies Office is a sparkling exemplar of administrative
excellence, who has personally dug me out of several holes.

In the end, though, it is beyond certain that none of this would have
happened if it wasn't for the continual and unconditional love and
support of my parents.  Never mind about mainstays and binnacles ---
they have been my hull, my rudder and my sails.  I dedicate this work
to them.

\newpage

% contents
\tableofcontents{}

\newpage

\pagenumbering{arabic}
% chapters

\chapter{Introduction}
\label{cha:introduction}

\section{Background}
\label{sec:idea}

This work is intended primarily as a contribution to the
category-theoretic understanding of predicate logic, with an eye to
clarifying the relationship between the two different constructions of
realizability toposes.  The following section gives more details on
the motivation behind this work, the next explains its development and
major results, and the last gives a detailed outline of the remaining
chapters.

\subsection{Motivation}
\label{sec:motivation}

For our purposes, a \emph{logic} specifies, given a collection of
types, and terms that map from one type to another, and of predicates,
each of which lives over some type, and derivations or proofs that map
from one predicate to another, a set of admissible ways to build new
types, terms, predicates and derivations from existing ones.  A theory
$T$ over a logic is then given by a collection of basic types and
terms and of (equational) \emph{axioms} (equations between terms), and
a collection of basic predicates and derivations and of
(propositional) axioms (equations between derivations).
Traditionally, one did not distinguish between different derivations
of the same entailment, so that a collection of derivations and
propositional axioms is determined by a collection of statements that
one predicate entails another.  But we will take the view that it is
useful to keep different proofs distinct --- one might say that we are
doing type theory, rather than logic as traditionally understood.

Category theory formalizes this situation in one of the following two
ways (see
e.g.~\cite{lawvere69:_adjoin,jacobs99:_categ_logic_and_type_theor} and
\cite{freyd90:_categ_alleg,carboni87:_cartes_bicat_i} respectively):
\begin{enumerate}
\item\label{item:7} The types and terms form a category $\cat{B}_T$,
  with equality on its morphisms generated by the equational axioms of
  $T$.  The predicates over each type $X$ form a category
  $\cat{E}_T(X)$, whose morphisms $P \to Q$ are given by (proofs of)
  entailments $P(x) \vdash Q(x)$, and the propositional axioms furnish
  an equality relation on these.  The terms $t \colon X \to Y$ act on
  these categories by substitution, so as to make $\cat{E}_T(-)$ a
  pseudo-functor $\cat{B}_T^{\op} \to \Cat$, or a fibration over
  $\cat{B}_T$.
\item\label{item:8} A bicategory $\operatorname{Rel}(T)$ is formed,
  whose objects are the types, and in which a morphism $X \rel Y$ is a
  \emph{relation} from $X$ to $Y$, that is, a predicate on $X \times
  Y$.  The composite of $R(x,y)$ and $S(y,z)$ is the relation $\exists
  y. R(x,y) \wedge S(y,z)$, and the 2-cells are morphisms of
  predicates as above.  Each term $t \colon X \to Y$ gives rise to a
  relation $t_\bullet \colon X \rel Y $, given by $t x = y$ and called
  the \emph{graph} of $t$.  The equational axioms of $T$ determine
  propositional equations between graphs.
\end{enumerate}
Notice that the first (\emph{fibrational}) approach requires very
little structure to be present in the theory $T$.  On the other hand,
the second (\emph{relational} or \emph{bicategorical}) approach
requires that (the logic underlying) $T$ have at least finite
conjunctions and the existential quantifier; that is, that $T$ be a
\emph{regular theory}.

By the usual `yoga' (to use Grothendieck's term) of categorical logic,
syntactic models such as these carry structure determined by the logic
underlying the theory $T$; more general models are structures of the
same kind, and an interpretation of the theory in a model is a
homomorphism.  In the above two cases, the most common kinds of
`model' are the subobject fibrations and bicategories of relations of
regular categories.  But these two kinds of structure also arise in
the two distinct recipes for constructing \emph{realizability
  toposes}: one approach \cite{hyland82:_effec_topos} goes via
fibrations, and the other
\cite{carboni88:_categ_approac_to_realiz_and_polym_types} via
bicategories.  The initial motivation for the research described here
was to understand the relationship between these two constructions.

\section{Outline}
\label{sec:outline}

\subsection{Development and results}
\label{sec:results}

We will show that the two ways given above of describing regular
theories and their models are equivalent.  That is, there is a kind of
fibration called a \emph{regular fibration}, and a kind of bicategory,
or rather proarrow equipment \cite{wood82:proarrows_i}, that we call a
\emph{regular equipment}, and the bicategories of which these are the
objects are equivalent.  In particular, the syntactic examples above
correspond to each other, and we describe also how the two
constructions of the effective topos fit into this framework.

A regular fibration is a bifibration with fibred finite products,
satisfying the Frobenius condition and the Beck--Chevalley conditions
for certain (\emph{product-absolute}) pullback squares.  Structures
like these have been studied before, although except for in
\cite{pavlovic96:_maps_ii} this has usually been restricted to those
fibrations whose fibres are preorders.  In the syntactic case
described above, these are the term models that record only the
existence of a proof of one proposition from another.  Our results
apply in full generality.

Similarly, the locally preordered versions of the bicategories
described in (\ref{item:8}) above are well known as \emph{allegories}
\cite{freyd90:_categ_alleg}.  The allegories that arise in the
`regular' context carry certain extra structure, making them
\emph{unitary} and \emph{pre-tabular}.  We show that such allegories
are the same thing as \emph{bicategories of relations}
\cite{carboni87:_cartes_bicat_i}.  These are locally ordered
\emph{cartesian bicategories} \cite{carboni08:_cartes_ii} satisfying
some extra axioms.

In order to construct an equivalence between regular fibrations and
cartesian bicategories, it is necessary to equip the latter with
distinguished subcategories of morphisms with right adjoints, making
them into proarrow equipments.  Intuitively, this lets a cartesian
bicategory remember the difference, which fibrations account for,
between \emph{functions} or \emph{terms} on the one hand, and
\emph{functional relations} on the other.  So we are looking for a
notion of \emph{cartesian equipment}.

There are several definitions of equipments in the literature, namely
Wood's original one \cite{wood82:proarrows_i}, Shulman's \emph{framed
  bicategories} \cite{shulman08:_framed_bicat_and_monoid_fibrat}, and
the (strictly more general) equipments of Carboni
et.~al.~\cite{carboni98:change_base_geom_ii}.  We give an abstract
definition, involving the tricategory whose objects are bicategories
and whose morphisms are category-valued profunctors, that subsumes
those of Wood and of Shulman, whose relation to that of Carboni
et.~al.~is clear, and that is quite similar to Verity's notion of
\emph{double bicategory} \cite{verity92:_enric}.  We also show that
our equipments form a category that is equivalent to the ordinary
category underlying Shulman's strict 2-category of framed
bicategories, and so we may take 2-cells between equipment-morphisms
to be transformations between the associated framed functors, yielding
a 2-category of equipments.

A cartesian equipment is then defined to be a cartesian object in this
last 2-category, and a \emph{regular equipment} to be a cartesian one
satisfying some well-known axioms; we show that cartesian bicategories
are a special case of cartesian equipments, and that the 2-category of
regular equipments is equivalent to that of regular fibrations, as
expected.  We can then show that (suitable notions of)
\emph{comprehension} in a fibration and \emph{tabulation} in an
equipment correspond to each other, and that completion with respect
to these is equivalent, in the preordered case, to one of the steps in
the construction of the effective topos.

\subsection{Detailed outline}
\label{sec:detailed-outline}

Chapter~\ref{cha:categ-fibr-alleg} begins with basic background
definitions, before going on to describe the syntax of regular logic
and its semantics in regular fibrations.  Comprehension in regular
fibrations is also discussed.  Section~\ref{sec:class-fibr-regul}
shows that a regular theory gives rise to a `syntactic' or classifying
regular fibration, a result that we will not make essential further
use of but that it is worth giving in the context of
section~\ref{sec:regul-fibr-regul} as a whole.  The next section
defines allegories and the structures on them that we want, and
describes idempotents and the construction of the universal allegory
in which a class of them splits.  The last section of the chapter
defines bicategories of relations and proves that they are equivalent
to unitary pre-tabular allegories.

Chapter~\ref{cha:2-categories} introduces more new ideas and results
than the preceding one.  It starts with definitions of adjunctions and
mates and of monads and modules in a bicategory.  This material is of
course very well known, but we present the theory of monads and
modules in what seems to be a somewhat original way.
Section~\ref{sec:2-categories} concludes with definitions of monoidal
bicategories and pseudo-monads, which will be used in
chapter~\ref{cha:equipments}.  As mentioned above, we want to define
equipments to be pseudo-monads in the tricategory of bicategories and
category-valued profunctors; in order to define this tricategory we
mimic the definition of the usual bicategory of profunctors as
consisting of presheaf categories and cocontinuous functors.  So we
spend section~\ref{sec:limits-colimits}, the remainder of
chapter~\ref{cha:2-categories}, defining and exploring the properties
of bicategorical colimits.  In particular, our description of
2-dimensional (co)ends appears to be new, as do the results of
section~\ref{sec:computing-colimits} on computing bicategorical
colimits in $\Cat$ (but see the footnote to prop.~\ref{prp:25}).

Chapter~\ref{cha:equipments} is the core of this work.  In it we
define the tricategory $\Biprof$ as promised, and show that it admits
the construction of Kleisli objects for pseudo-monads.  This is what
enables us to go on and show, in section~\ref{sec:defin-equipm}, that
to give a pseudo-monad in $\Biprof$, satisfying certain properties, is
precisely to give a proarrow equipment in the sense of Wood
\cite{wood82:proarrows_i}.  The remainder of that section compares our
notion of equipment to Shulman's notion
\cite{shulman08:_framed_bicat_and_monoid_fibrat} of framed bicategory,
showing that together with their morphisms (equipment-morphisms having
been defined) they make up equivalent categories.  Even though our
abstract approach to equipments via pseudo-monads works well for 0-
and 1-cells, it does not quite go through when it comes to 2-cells.
Section~\ref{sec:equipments} discusses how we might rectify this, and
it is certainly work that ought to be done, but for our purposes here
we can get away with simply defining equipment 2-cells to be
transformations between corresponding functors between framed
bicategories.

Section~\ref{sec:equipm-fibr} is where our earlier work begins to bear
fruit.  Section~\ref{sec:cartesian-equipments} defines what it is for
an equipment to be \emph{cartesian}, and gives equivalent descriptions
of this structure in both equipments and framed bicategories.  In
section~\ref{sec:comp-with-regul} it is shown that Shulman's
construction
\cite[theorem~14.2]{shulman08:_framed_bicat_and_monoid_fibrat} of a
monoidal equipment from a regular fibration extends to a functor from
the bicategory of regular fibrations to that of cartesian equipments,
and further that this functor is fully faithful.  The construction of
a would-be right inverse to its action on objects shows that a regular
fibration will only result if two additional axioms are assumed to
hold in a given cartesian bicategory.  One of these is well-known, and
the second is a Beck--Chevalley-type condition that automatically
holds in the locally ordered case when a simpler \emph{Frobenius}
axiom holds, as well as in the cases of bicategories of spans and of
relations, which may explain why it has not previously been considered
in the bicategorical context.  With this done, we have an equivalence
of bicategories between regular fibrations and these \emph{regular
  equipments}.  The last part of section~\ref{sec:equipm-fibr}
compares comprehension in regular fibrations to tabulation in regular
equipments, showing that they are equivalent modulo the equivalence of
bicategories just noted.  The existence of tabulation is also shown to
be equivalent to the existence of \emph{Eilenberg--Moore objects} for
co-monads.  Chapter~\ref{cha:equipments} ends with an application to
the original motivation for our work: a discussion of the effective
topos and the relationship between its two constructions, through the
lens what we have already done.

Finally, chapter~\ref{cha:concl-future-work} reviews the results of
the preceding three chapters, noting some links with existing work.
We conclude with some prospects for future work, and some ideas on how
to go about doing it: further elaboration of the abstract approach to
equipments in section~\ref{sec:biprofunctors}, and an attempt to
generalize the equipment side of the correspondence we have
established in order to go beyond the regular context.  There is also
reason to hope that the latter may help to connect our work with some
other abstract approaches to realizability.

%%% Local Variables: 
%%% mode: latex
%%% TeX-master: "main"
%%% End: 

\chapter{Categories, fibrations and allegories}
\label{cha:categ-fibr-alleg}

This chapter serves as background on the structures that will be used
in those to come.  After giving some very basic definitions, we define
what is meant by \emph{regular logic}, and then discuss the fibrations
in which regular theories find their models, namely \emph{regular
  fibrations}.  We show that any regular theory gives rise to a
syntactic model.  Then the definition of \emph{allegory} is recalled
and the splitting of idempotents described, material that will be used
later to connect our work with one of the constructions of the
effective topos.  Because the structures we will go on to use are a
slightly generalized version of cartesian bicategories, we show that
certain locally ordered cartesian bicategories, namely
\emph{bicategories of relations}, are the same as certain allegories,
namely the \emph{unitary pre-tabular} ones.

It is assumed that the reader is familiar with elementary category
theory, as expounded in e.g.~\cite{mac98:_categ_workin_mathem}, as
well as the theory of enriched categories, for which see
e.g.~\cite{kelly82}, and with `formal category theory', i.e.~those
parts of ordinary category theory, such as the theory of adjunctions,
monads and Kan extensions, that can be replicated in 2-categories
other than $\Cat$.

Everything we talk about will be assumed to be `weak' or `pseudo' by
default --- if something is strict or lax we will say so.  A
`2-category' is therefore a bicategory, a `functor' is a
pseudofunctor, and so on.  On the other hand, we will make broad use
of coherence and strictification theorems in order to simplify
definitions and calculations.  For example, monoidal categories and
bicategories will be (mostly) silently assumed to have been
strictified.

Ordinary (possibly monoidal) categories are written in bold face:
$\cat{Cat}$, $\cat{Gray}$, 2-categories in `calligraphic': $\bK$,
$\Cat$, and 3-categories with `blackboard bold': $\Bicat$, $\Biprof$.
Transformations and other 2-cells are written with a double arrow:
$\alpha \colon F \tc G$, extranaturals
(section~\ref{sec:2-extranaturality}) with a dotted arrow $\xnt$.
Modifications and other 3-cells are written with a triple arrow: $m
\colon \alpha \Rrightarrow \beta$.

Identities are called $1$ and terminal objects are called
$\mathbf{1}$.

\section{Regular fibrations and regular logic}
\label{sec:regul-fibr-regul}

\subsection{Basic definitions}
\label{sec:basic-definitions}

We give some elementary definitions in order to fix terminology and
notation.

\begin{dfn}
  The \emph{image} of a morphism $f \colon A \to B$ is a factorisation
  \begin{equation*}
    f = A \overset{e}{\to} \operatorname{im}(f) \overset{m}{\mon} B
  \end{equation*}
  in which $m$ is a monomorphism, and such that in any other such
  factorisation $f = m' e'$, $m' \sub m$ as subobjects of $B$.
\end{dfn}

\begin{dfn}
  \label{dfn:14}
  A \emph{regular category} is a category with finite limits in which
  every morphism has an image, and in which images are
  pullback-stable; that is, if $f \colon A \to B$ and $g \colon C \to
  B$, then $g^* \operatorname{im}(f) \iso \operatorname{im}(g^* f)$.
\end{dfn}

We assume familiarity with the notions of fibrations and of indexed
categories, and of the equivalence between the two.  In fact, we will
rarely distinguish between them, and will mostly use the term
`fibration' to denote either concept.  A \emph{bifibration} is of
course a functor that is both a fibration and an opfibration.  We will
write $f^*$ etc.~for the pullback functors of fibrations and either
$f_!$ or $\exists_f$ for the pushforwards of opfibrations.

The 2-category $\bicat{Fib}$ of fibrations can then be thought of as
the `2-category of elements' (def.~\ref{dfn:27}) of either of two
equivalent functors
\begin{equation*}
  \bicat{Fib}(-) \sim [-^\op, \Cat] \colon \Cat^\coop \to \Bicat
\end{equation*}

\begin{dfn}
  \label{dfn:28}
  The 2-category $\bicat{Fib}$ is defined as follows:
  \begin{itemize}
  \item an object is a pair of a category $\B$ and a fibration $\E$
    over $\B$;
  \item a morphism $(\B_1,\E_1) \to (\B_2,\E_2)$ is a functor $F
    \colon \B_1 \to \B_2$ and a morphism of fibrations $\phi \colon
    \E_1 \to F^*\E_2$, i.e.~either a natural transformation $\phi_X
    \colon \E_1(X) \to \E_2(F X)$ or a cartesian-morphism-preserving
    functor $\E_1 \to \E_2$ between total categories that fits into a
    commuting square
    \begin{equation*}
      \xymatrix{
        \E_1 \ar[r]^\phi \ar[d] & \E_2 \ar[d] \\
        \B_1 \ar[r]_F & \B_2
      }
    \end{equation*}
  \item a 2-cell $(F,\phi) \to (G,\gamma)$ is a transformation $\alpha
    \colon F \tc G$ such that
    \begin{equation*}
      \xymatrix{
        \E_1 \ar[dr]_\gamma \ar[r]^\phi & F^*\E_2
        \ar[d]^{\alpha^*\E_2} \\
        & G^*\E_2
      }
    \end{equation*}
    commutes.
  \end{itemize}
  $\bicat{Fib}$ then has a locally full sub-2-category $\bicat{BiFib}$
  consisting of bifibrations, opcartesian-morphism-preserving
  fibration morphisms and all fibration 2-cells.
\end{dfn}

\begin{dfn}
  A \emph{monoidal (bi)fibration} is given by a pair of monoidal
  categories together with a functor between them that is both
  (strong) monoidal and a (bi)fibration.
\end{dfn}

\begin{prp}[{\cite[theorem~12.7]{shulman08:_framed_bicat_and_monoid_fibrat}}]
  If $\B$ is a cartesian monoidal category, then the category of
  monoidal fibrations over $\B$ is equivalent (via the usual
  Grothendieck construction and its inverse) to the category of
  (pseudo)functors from $\B^\op$ to the 2-category of monoidal
  categories.
\end{prp}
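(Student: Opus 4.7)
The plan is to exhibit both directions of the Grothendieck correspondence and check that they are mutually inverse (up to isomorphism) and functorial, as the statement is essentially a `monoidal enhancement' of the ordinary Grothendieck equivalence, which we already have at our disposal.

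\emph{From monoidal fibrations to indexed monoidal categories.} Suppose $p\colon \E\to\B$ is a monoidal fibration with total tensor $\otimes_\E$, unit $I_\E$, and structure iso $p(A\otimes_\E B)\iso pA\times pB$. I would define, for each $X\in\B$, a monoidal structure on the fibre $\E_X$ by setting $A\otimes_X B := \Delta_X^*(A\otimes_\E B)$ (which lies in $\E_X$ since $p(A\otimes_\E B)\iso X\times X$ is pulled back along the diagonal $\Delta_X$), with unit $!_X^* I_\E$ where $!_X\colon X\to \term$. Pullback functors $f^*\colon \E_X\to\E_Y$ for $f\colon Y\to X$ should then be shown to be strong monoidal: the necessary isos come from naturality of $\Delta$ (so that $f\circ\Delta_Y = \Delta_X\circ f \times f \circ \cdots$ up to the pseudofunctoriality of reindexing) and from the observation that the external tensor $\otimes_\E$ descends to reindexing because $p$ is a strong monoidal fibration. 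The coherence conditions (associativity, unit, hexagon, pentagon on each fibre, and the monoidal pseudonaturality of $f\mapsto f^*$) follow from those of $\E$ combined with the cartesian-monoidal coherence in $\B$.

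\emph{From indexed monoidal categories to monoidal fibrations.} Suppose $F\colon\B^\op\to\cat{MonCat}$ is a pseudofunctor. Apply the ordinary Grothendieck construction to obtain a fibration $p\colon \E\to\B$. I would then define a tensor on $\E$ by
\begin{equation*}
  (X,A)\otimes_\E (Y,B) \;:=\; \bigl(X\times Y,\; \pi_1^* A \otimes_{F(X\times Y)} \pi_2^* B\bigr),
\end{equation*}
with unit $(\term, I_{F(\term)})$. On morphisms, the tensor is the evident pairing. Using that each $F(f)$ is strong monoidal and that $F$ is pseudofunctorial, one checks that the external tensor is functorial and that the associator and unitors are inherited from those of $\B$ (via pseudofunctoriality) and those of each $F(X)$. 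Strong monoidality of $p$ is automatic because $p$ sends $\otimes_\E$ to $\times$ on the nose.

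\emph{Equivalence and functoriality.} Once both constructions are in hand, the two round-trip comparisons are straightforward naturality checks, using that $\Delta^*(\pi_1^* A\otimes \pi_2^* B)\iso A\otimes B$ in any cartesian $\B$ (since $\pi_i\circ\Delta = 1$). Functoriality on morphisms of monoidal fibrations versus pseudonatural transformations of $\cat{MonCat}$-valued pseudofunctors is again an exercise in transporting the strong monoidal structure of a fibration morphism through the Grothendieck equivalence already established for ordinary fibrations.

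\emph{Main obstacle.} The conceptual content is small, but the bookkeeping is not: the hard part is verifying \emph{coherently} that substitution is strong monoidal and that all higher coherence cells (the monoidal coherence of each $f^*$, the pseudonaturality in $f$, and their interaction with the global $\otimes_\E$) match up on the nose under Grothendieck. The key ingredient that makes this go through, and that is the reason the hypothesis `$\B$ cartesian monoidal' cannot be dropped without further assumptions, is that in a cartesian base the diagonals and projections are natural and satisfy the universal equations $\pi_i\circ\Delta = 1$, which is precisely what converts the external tensor back and forth into the fibrewise tensor.
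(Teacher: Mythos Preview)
The paper does not give its own proof of this proposition: it is simply stated with a citation to Shulman's theorem~12.7, and no \texttt{proof} environment follows. Your sketch is correct and in fact recapitulates Shulman's own argument: fibrewise tensor via pullback along the diagonal, external tensor via projections, and the round-trip governed by $\pi_i\circ\Delta = 1$, with the cartesian hypothesis on $\B$ being exactly what makes the diagonals and projections available and coherent.
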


\begin{dfn}[{\cite[2.8]{street81:_consp_of_variab_categ}}]
  \label{dfn:30}
  Let $\C$ and $\B$ be categories.  A \emph{two-sided fibration} from
  $\B$ to $\C$ is given by a span $(p,q) \colon \E \to \C \times \B$
  such that
  \begin{itemize}
  \item $p$ is a fibration whose chosen cartesian lifts are
    $q$-vertical (i.e.~they are inverted by $q$);
  \item $q$ is an opfibration whose chosen opcartesian lifts are
    $p$-vertical;
  \item for any composable cartesian-opcartesian pair $i^* x \to x \to
    j_! x$ in $\E$, the canonical morphism $j_!i^* x \to i^* j_! x$ is
    invertible.
  \end{itemize}
  We will say that a two-sided fibration the opposite of whose
  underlying span is also such is a \emph{two-sided bifibration}.
\end{dfn}

\begin{dfn}
  \label{dfn:5}
  An adjoint pair $F \dashv G$ of \emph{colax} monoidal functors
  between symmetric monoidal categories satisfies \emph{Frobenius
    reciprocity} \cite{lawvere70:_equal} if the canonical morphism
  \begin{equation*}
    \xymatrix{
      F(A \otimes G B) \ar[r] &
      F A \otimes F G B \ar[r] &
      F A \otimes B
    }
  \end{equation*}
  is invertible.  (Such an adjoint pair is also called a \emph{Hopf
    adjunction} \cite{bruguieres11:_hopf}).
\end{dfn}

\subsection{Regular logic}
\label{sec:regular-logic}

Regular logic is the fragment of first-order predicate logic that uses
only the connectives $\top$ for truth, $\land$ for conjunction and
$\exists$ for existential quantification.  We will mostly follow
\cite{seely83:_hyper_natur_deduc_and_beck_condit}.

\begin{dfn}
  A (regular) \emph{signature} $S$ is given by a collection
  $X,Y,\ldots$ of sorts, together with a collection of typed predicate
  and function symbols.  A \emph{type} is a finite sequence
  $X_1,X_2,\ldots$ of sorts, and types will also be denoted
  $X,Y,\ldots$.  If $P$ is a predicate of type $X$ we may write $P
  \colon X$, and similarly $f\colon X \to Y$ indicates the type of
  $f$.  Every signature contains at least the equality predicate
  ${=_X} \colon X,X$.
\end{dfn}

We assume given an inexhaustible supply of free variables
$x,x',y,y'\ldots$ and bound variables
$\xi,\xi',\upsilon,\upsilon'\ldots$ of each sort, with the notation
extended to types so that a variable of type $X,Y$ is the same as a
pair $x,y$ of variables of sorts $X$ and $Y$.

\begin{dfn}
  A \emph{context} is a finite list $x \colon X, y \colon Y,\ldots$ of
  sorted variables, or equivalently a single variable $z \colon
  X,Y,\ldots$.  A \emph{term} is either a variable, a tuple of terms
  or a function symbol $f$ applied to a term, all with the obvious
  well-typedness constraints.  Every term lives in a context, which is
  assumed to contain every variable in the term, perhaps together with
  `dummy' variables that don't.  We write $t[x]$ to indicate that $x$
  is the context of $t$, and $t[s]$ to denote the substitution of the
  term $s$ for the variable(s) $x$ in $t$.
\end{dfn}

\begin{dfn}
  A (regular) \emph{formula} is either the constant $\top$, a
  predicate symbol $P(t)$ applied to a term, the conjunction $\phi
  \land \psi$ of two formulas, a quantified formula $\exists \xi.\phi$
  or the substitution $\phi[t]$ of the term $t$ into the formula
  $\phi$, defined in the usual way.  Every formula lives in a context,
  which we assume contains (perhaps strictly) all of its free
  variables, and we write $\phi[x]$ for this.
\end{dfn}

\begin{dfn}
  The inference rules of regular logic are as follows: conjunction is
  governed by
  \begin{prooftree}
    \AXC{$\phi$} \AXC{$\psi$}
    \BIC{$\phi \land \psi$}
    \DisplayProof
    \qquad\qquad
    \AXC{$\phi \land \psi$}
    \UIC{$\phi$}
    \DisplayProof
    \qquad
    \AXC{$\phi \land \psi$}
    \UIC{$\psi$}
  \end{prooftree}
  truth by
  \begin{prooftree}
    \AXC{$\phi$}
    \UIC{$\top$}
  \end{prooftree}
  existentials by
  \begin{prooftree}
    \AXC{$\phi[t]$}
    \UIC{$\exists \xi.\phi[\xi]$}
    \DisplayProof{}
    \qquad\qquad
    \AXC{$\exists \xi.\phi[\xi]$}
    \AXC{}
    \UIC{$\phi[x]$}\noLine
    \UIC{$\vdots$}\noLine
    \UIC{$\psi$}
    \BIC{$\psi$}
  \end{prooftree}
  where on the right $x$ is not free in $\psi$, and equality by
  \begin{prooftree}
    \AXC{}
    \UIC{$t=t$}
    \DisplayProof{}
    \qquad\qquad
    \AXC{$t=s$}
    \AXC{$\phi[t]$}
    \BIC{$\phi[s]$}
  \end{prooftree}
  The notion of context is easily extended to derivations.  Observe that
  the rules for $\exists$ are the only rules that do not preserve the
  contexts of formulas.
\end{dfn}

Derivations using these rules may be composed:
\begin{prooftree}
  \AXC{$\phi$}\noLine
  \UIC{$\vdots$}\noLine
  \UIC{$\psi$}\DisplayProof{}
  ,
  \AXC{$\psi$}\noLine
  \UIC{$\vdots$}\noLine
  \UIC{$\chi$}\DisplayProof{}
  \ $\mapsto$\ 
  \AXC{$\phi$}\noLine
  \UIC{$\vdots$}\noLine
  \UIC{$\psi$}\noLine
  \UIC{$\vdots$}\noLine
  \UIC{$\chi$}
\end{prooftree}
as long as both derivations have the same context, and this
composition is clearly associative, with units the identity
derivations $\phi$.  We may write $p \colon \phi \sq[x] \psi$ to
indicate that $p$ is a derivation of $\psi$ from the assumption $\phi$
with context $x$, arriving at the rules
\begin{prooftree}
  \AXC{}\UIC{$1_\phi \colon \phi \sq[x] \phi$}
  \DisplayProof \qquad \qquad
  \AXC{$p \colon \phi \sq[x] \psi$} \AXC{$q \colon \psi \sq[x] \chi$}
  \BIC{$q \cmp p \colon \phi \sq[x] \chi$}
\end{prooftree}
and thus at a category of derivations in any given context $x$.

The substitution $p[t]$ of a term $t \colon Y \to X$ into a derivation
$p[x]$ with $x$ free is defined in the obvious way, and an induction
over the structure of derivations shows that the `substitute $t$'
mapping $t^*$ is a functor from the category of derivations in the
context $x$ to derivations in the context $y$ that commutes with the
finite-product structure given by the following.

If $p_i \colon \phi \sq[x] \psi_i$ for $i=1,2$, then we may use the
$\land$-introduction rule to form a derivation $\tup{p_1,p_2} \colon
\phi \sq[x] \psi_1 \land \psi_2$, and conversely given a derivation
$p$ of the latter type the elimination rules give $\pi_i \cmp p \colon
\phi \sq[x] \psi_i$.  Imposing the ($\beta$- and $\eta$-)equalities
\begin{equation*}
  \pi_i\tup{p_1,p_2} = p_i \qquad \qquad \tup{\pi_1 p, \pi_2 p} = p
\end{equation*}
then gives a `bijective' rule
\begin{prooftree}
  \AXC{$p_1 \colon \phi \sq[x] \psi_1$}
  \AXC{$p_2 \colon \phi \sq[x] \psi_2$}\doubleLine
  \BIC{$\tup{p_1, p_2} \colon \phi \sq[x] \psi_1 \land \psi_2$}
\end{prooftree}
where to move from bottom to top we compose with $\pi_i$, and this
gives binary products in each category of derivations.  As for
$\top$, we will say that any derivation $p \colon \phi \sq[x] \top$ is
equal to the canonical $!_\phi \colon \phi \sq[x] \top$, making $\top$
the terminal object in each category of derivations.

Similarly, there is a $\beta$ rule for equality:
\begin{prooftree}
  \AXC{}\UIC{$t=t$}
  \AXC{}\noLine\UIC{$\vdots$}\noLine
  \UIC{$\phi[t]$}
  \BIC{$\phi[t]$}
  \DisplayProof{}
  \quad = \quad
  \AXC{}\noLine\UIC{$\vdots$}\noLine
  \UIC{$\phi[t]$}
\end{prooftree}
and an $\eta$ rule:
\begin{prooftree}
  \AXC{$\llap{$p\,$} \vdots$}\noLine
  \UIC{$t=t'$}\noLine
  \UIC{$\llap{$q[t,t']\,$} \vdots$}\noLine
  \UIC{$\phi[t,t']$}
  \DisplayProof{}
  \quad = \quad
  \AXC{$\llap{$p\,$} \vdots$}\noLine
  \UIC{$t=t'$}
  \AXC{}\UIC{$t=t$}\noLine
  \UIC{$\llap{$q[t,t]\,$} \vdots$}\noLine
  \UIC{$\phi[t,t]$}
  \BIC{$\phi[t,t']$}
\end{prooftree}
and these set up a bijection
\begin{equation}
  \label{eq:8}
  \centering
  \leavevmode
  \AX$\phi, x=x' \fCenter\ \sq[x,x']\ \psi[x,x']$
  \doubleLine
  \UI$\phi \fCenter\ \sq[x]\ \psi[x,x]$
  \DisplayProof
\end{equation}
between derivations of the indicated types
\cite{jacobs99:_categ_logic_and_type_theor}.  There is also a
`coherence' rule
\begin{prooftree}
  \AXC{$\vdots$} \noLine\UIC{$t=t$} \UIC{$\top$} \UIC{$t=t$}
  \DisplayProof{}
  \qquad = \qquad
  \AXC{$\vdots$} \noLine \UIC{$t = t$}
\end{prooftree}
which makes sure that $\top_X \Leftrightarrow x = x$, so that $x=x$ is the
terminal object in the category of derivations in the context $x$.

\begin{dfn}
  A (regular) theory $T$ over a signature $S$ is given by a collection
  of axioms (derivation constants, perhaps including purely equational
  axioms $t=t'$) together with a collection of equations between
  derivations built from those axioms and the above rules.
\end{dfn}

The terms of a signature, together with the equational axioms $t=t'$
of a theory over that signature, give rise to a category $\cat{B}_T$
with finite products --- the `multisorted Lawvere theory' associated
to the theory.  In this category an object is a type $X_1, X_2,
\ldots, X_n$, and a morphism from $X_1, X_2, \ldots, X_n$ to $Y_1,
Y_2, \ldots, Y_m$ is given by an $m$-tuple $\tup{t_1, t_2, \ldots,
  t_m}$ of terms, where each $t_i \colon X_1, X_2, \ldots, X_n \to
Y_i$.  Thus a theory $T$ gives rise to a pseudofunctor $\cat{E}_T(-)
\colon \B^\op_T \to \Cat$, which takes a type $X$ to the
finite-product category $\cat{E}_T(X)$ of formulas and derivations
whose context is of type $X$, and takes a term $t \colon X \to Y$ to
the substitution functor $t^* \colon \cat{E}_T(Y) \to \cat{E}_T(X)$.

\subsection{Regular fibrations}
\label{sec:regular-fibrations}

In this section we define the structures that serve as fibrational
models of regular theories.  We also recall and discuss Lawvere's
notion of comprehension in a fibration.

\begin{dfn}
  \label{dfn:11}
  Let $\cat{B}$ be a category with finite products.  The following
  squares are pullbacks in $\cat{B}$
  (\cite{seely83:_hyper_natur_deduc_and_beck_condit},
  cf.~\cite[p.~9]{lawvere70:_equal}) for any morphisms $t,t'$.
  \begin{gather*}
    \xymatrix{
      X \pbcorner \ar[r]^{\tup{X,t}} \ar[d]_t \ar@{}[dr]|{\mathrm{(A)}}
      & X \times Y \ar[d]^{t\times Y} \\
      Y \ar[r]_{d} & Y \times Y
    }
    \qquad
    \xymatrix{
      X \pbcorner \ar[r]^X \ar[d]_X \ar@{}[dr]|{\mathrm{(B)}}
      & X \ar[d]^{d} \\
      X \ar[r]_{d} & X \times X
    } \\
    \intertext{and}
    \xymatrix{
      X' \times X \pbcorner \ar[r]^{X' \times t} \ar[d]_{t' \times X}
      \ar@{}[dr]|{\mathrm{(C)}}
      & X' \times Y \ar[d]^{t' \times Y} \\
      Y' \times X \ar[r]_{Y' \times t} & Y' \times Y
    }
  \end{gather*}
  Also, if $tu = sv$ is a pullback, then so is its product with any
  object:
  \begin{gather*}
    \xymatrix{
      P \times Z \pbcorner \ar[r]^{u \times Z} \ar[d]_{v \times Z}
      \ar@{}[dr]|{\mathrm{(D)}}
      & X \times Z \ar[d]^{t \times Z} \\
      X' \times Z \ar[r]_{s \times Z} & Y \times Z
    }
  \end{gather*}
  and similarly for products on the right.

  The squares (A), (B) and (C), and those built from them using (D)
  and pasting side-by-side, are called \emph{product-absolute}
  pullbacks \cite{walters08:_froben}, because they are preserved by
  any functor that preserves products.
\end{dfn}

\begin{rem}
  \label{rem:7}
  The coassociativity square for the diagonal $d$ is
  product-absolute:
  \begin{equation*}
    \vcenter{
      \xymatrix{
        X \ar[r]^{d} \ar[d]_{d} & X^2 \ar[d]^{1 \times
          d} \\
        X^2 \ar[r]_{d \times 1} & X^3
      }
    }
    \qquad = \qquad
    \vcenter{
      \xymatrix{
        X \ar[r]^{(1,d)} \ar[d]_d & X^3 \ar[d]|{d
          \times X^2} \ar[r]^{X \times p_2} & X^2 \ar[d]^{d
          \times X} \\
        X^2 \ar[r]_d & X^4 \ar[r]_{X^2 \times p_2} & X^3
      }
    }
  \end{equation*}
  See the example after definition~5 at \cite{trimble13:_notes}.
\end{rem}

\begin{dfn}
  \label{dfn:7}
  A \emph{regular fibration} is a fibration $\E \colon \B^\op \to
  \Cat$, such that
  \begin{enumerate}
  \item $\B$, and $\E X$ for each object $X$ of $\B$, have finite
    products (the product in $\B$ is denoted $(\times, \term )$
    and that in each $\E X$ as $(\cap, \top)$);
  \item $f^* = \E f$, for each morphism $f$ of $\B$, has a left
    adjoint, denoted $\exists_f$ or $f_!$, and (hence) preserves
    finite products;
  \item these adjoints satisfy Frobenius reciprocity
    (def.~\ref{dfn:5}) and the Beck--Chevalley (def.~\ref{dfn:4})
    conditions with respect to product-absolute pullbacks
    (def.~\ref{dfn:11}) in $\B$.
  \end{enumerate}
  A morphism of regular fibrations is a product-preserving morphism of
  bifibrations, and a transformation of such is simply a
  transformation of fibration-morphisms.  These make up the 2-category
  $\bicat{RegFib}$.
\end{dfn}

Our regular fibrations are (nearly) those of
\cite{pavlovic96:_maps_ii}.  A similar definition is given in
\cite{jacobs99:_categ_logic_and_type_theor}, the only difference being
that the latter sort of regular fibration is required to have all
fibres preordered.  These we call \emph{ordered} regular fibrations.
They form a full sub-2-category $\bicat{OrdRegFib} \mon
\bicat{RegFib}$.

\begin{rem}
  \label{rem:8}
  In logical terms, the point of the Frobenius condition is that
  together with the Beck--Chevalley conditions it ensures that
  $\exists_t \phi$ is equivalent to $\exists \xi . t[\xi] = y \wedge
  \phi[\xi]$.  See \cite[Theorem,~p.~8]{lawvere70:_equal}.
\end{rem}

\begin{dfn}
  The \emph{internal language} of a regular fibration $\cat{E} \to
  \cat{B}$ is the regular theory defined as follows:
  \begin{itemize}
  \item The sorts and terms are those of the Lawvere theory $\cat{B}$,
    so that a sort is a finite list of objects of $\cat{B}$, with
    products $X \times Y$ identified with lists $X,Y$, and a term is
    either a variable (product projection) or the application
    (composition) of a function symbol (morphism of $\cat{B}$) to a
    tuple of terms.
  \item The predicates and derivations of sort $X$ are given by the
    objects and morphisms of $\cat{E}(X)$.  That is, a predicate $P$
    of sort $X_1,\ldots,X_n$ is an object $\den{P}$ of $\cat{E}$ in
    the fibre over $X_1 \times \cdots \times X_n$, conjunction $\land$
    and quantification $\exists$ are given by the regular structure of
    $p$, and a derivation is a $p$-vertical morphism of $\cat{E}$.
  \end{itemize}
\end{dfn}

\begin{dfn}
  \label{dfn:13}
  The \emph{soundness theorem} \cite[theorem 2.1.6]{oosten08:_realiz}
  says that if $\cat{E} \to \cat{B}$ is a regular fibration, then to
  each proof of a sequent
  \begin{equation*}
    P_1,\ldots,P_n \sq[x] Q
  \end{equation*}
  where $\vec x$ contains the free variables of the $P_i,Q$, there
  corresponds a vertical morphism $\den{P_1}\times \cdots \times
  \den{P_n} \to \den{Q}$ in $\cat{E}$ over the type of $x$.

  We therefore say that a fibration \emph{satisfies} a sequent if such
  a vertical morphism exists.
\end{dfn}

\begin{prp}[{\cite[Theorem, \textsection
  8]{seely83:_hyper_natur_deduc_and_beck_condit}}]
  \label{prp:8}
  If a hyperdoctrine satisfies the Beck--Chevalley condition
  (def.~\ref{dfn:4}) for the product-absolute pullbacks of
  def.~\ref{dfn:11}, then it satisfies the condition for an arbitrary
  pullback $tu=sv$ if and only if it satisfies
  \begin{gather*}
    t[m] = s[m'] \sq[] \exists \xi.(u[\xi] = m \land v[\xi] = m') \\
    \intertext{and}
    u[p] = u[p'], v[p] = v[p'] \sq[] p = p'
  \end{gather*}
  that is, if the hyperdoctrine `knows\!' that the diagram is a pullback.  
\end{prp}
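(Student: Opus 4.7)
The plan is to interpret the Beck--Chevalley condition for the square $tu=sv$ entirely inside the internal language and to read off its content as precisely the two displayed sequents.

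First I would spell out the canonical BC map $\beta_\phi \colon \exists_u v^* \phi \to t^* \exists_s \phi$ logically. Writing $u \colon P \to X$, $v \colon P \to X'$, $t \colon X \to Y$, $s \colon X' \to Y$, the derivation $\beta_\phi$ sends a witness $\pi$ of $\exists \pi .\, u[\pi] = m \land \phi[v(\pi)]$ to the witness $\xi := v(\pi)$ of $\exists \xi .\, s[\xi] = t[m] \land \phi[\xi]$, with $s[\xi] = t[m]$ justified by $tu = sv$. BC for the square is the assertion that $\beta_\phi$ is invertible in $\E(X)$, naturally in $\phi$.

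For the ``if'' direction, I would assume the two sequents and construct the inverse derivation $\gamma_\phi$. Given $\xi$ with $s[\xi] = t[m]$ and a proof of $\phi[\xi]$, sequent~(1) extracts $\pi \in P$ with $u[\pi] = m$ and $v[\pi] = \xi$; equality-elimination then transports $\phi[\xi]$ to $\phi[v(\pi)]$. The composite $\beta \cmp \gamma$ is the identity by construction, since $v(\pi) = \xi$. The composite $\gamma \cmp \beta$ reconstructs from $\pi$ a new lift $\pi'$ with $u[\pi'] = u[\pi]$ and $v[\pi'] = v[\pi]$, and sequent~(2) forces $\pi' = \pi$; this establishes that $\beta_\phi$ is an isomorphism.

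For the ``only if'' direction, I would derive each sequent by evaluating BC at a carefully chosen predicate after extending the context. To obtain sequent~(1), I pull the square back along the projection $\pi_2 \colon X \times X' \to X'$; this pasting is built from product-absolute squares of types (C) and (D), so by the hypothesis plus the Beck--Chevalley calculus (and Frobenius) the pulled-back square still satisfies BC. Evaluate at $\phi[\xi] := (\xi = m')$, with $m' \colon X'$ the new free variable. Then $\exists_u v^* \phi$ computes to $\exists \pi.\, u[\pi] = m \land v[\pi] = m'$ while $t^* \exists_s \phi$ computes to $\exists \xi.\, s[\xi] = t[m] \land \xi = m'$, which the equality $\eta$-rule~\eqref{eq:8} reduces to $t[m] = s[m']$. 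The BC isomorphism then yields sequent~(1). To obtain sequent~(2), I would exploit that $\beta_\phi$ has not merely a section but a two-sided inverse: taking $\phi$ to be a predicate that isolates an individual element $p' \in P$ (in a suitably enlarged context, via the diagonal on $P$), the identity $\gamma \cmp \beta = 1$ applied to a second witness $p$ with $u[p] = u[p']$ and $v[p] = v[p']$ identifies $p$ with the lift produced by $\gamma$ from $(u[p'],v[p'])$, which must be $p'$; this is sequent~(2).

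The main obstacle is the logical bookkeeping in the ``only if'' direction: choosing the right predicate in an appropriately extended context so that BC for the enlarged pullback delivers the stated sequents, and verifying that BC is indeed transported along the auxiliary reindexings. The underlying idea --- ``BC for the square = internal pullbackness of the square'' --- is clear, but making the translation rigorous requires a disciplined use of Frobenius reciprocity together with BC for the product-absolute squares of definition~\ref{dfn:11}.
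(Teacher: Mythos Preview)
The paper does not give its own proof of this proposition: immediately after the statement it says only that ``Seely's proof of prop.~\ref{prp:8} goes through unchanged for a regular fibration.'' So there is nothing to compare your argument against except Seely's original, and your sketch is essentially the standard argument that appears there. The ``if'' direction is exactly right: sequent~(1) supplies a section of the canonical BC map and sequent~(2) upgrades it to a two-sided inverse. Your derivation of sequent~(1) in the ``only if'' direction, by instantiating at an equality predicate after pulling back along a projection, is also the correct idea.

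The one place where your sketch is thin is the extraction of sequent~(2). Saying ``take $\phi$ to isolate an individual element $p'$ via the diagonal on $P$'' is the right intuition, but as written it is not yet a derivation: you need to say which square's BC you are invoking (it is the one obtained by pulling the original square back along a projection from $P$, so that $P$-variables become free), which predicate you instantiate at (the equality predicate $v[\pi]=v[p']$, or equivalently the diagonal pushed forward), and then check that the resulting isomorphism, read back through \eqref{eq:8} and Frobenius, literally yields $u[p]=u[p'],\,v[p]=v[p']\sq p=p'$. You flag this yourself as ``the main obstacle,'' and it is; once that bookkeeping is written out the argument is complete and matches Seely's.
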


Seely's proof of prop.~\ref{prp:8} goes through unchanged for a
regular fibration.

The connection with regular categories (def.~\ref{dfn:14}) is as
follows.

\begin{prp}
  A category $\C$ is regular if and only if its subobject fibration
  $\operatorname{Sub}(\C) \to \C$ that sends $S \mon X$ to $X$ is a
  (necessarily ordered) regular fibration.
\end{prp}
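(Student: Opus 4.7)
The plan is to establish both implications by matching the two definitions clause-by-clause, with the subobject fibration providing the shared structural bridge.

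For the forward direction, assuming $\C$ is regular I would verify each requirement of def.~\ref{dfn:7} for $\operatorname{Sub}(\C) \to \C$. Finite limits in $\C$ supply finite products in the base, and in each fibre $\operatorname{Sub}(X)$ give the terminal subobject $1_X$ and intersections (pullbacks of monos) as binary products. For each $f \colon X \to Y$, the pullback functor $f^*$ acquires a left adjoint $\exists_f$ defined on a mono $m \colon S \mon X$ by $\exists_f m := \operatorname{im}(fm)$; the adjunction $\exists_f \dashv f^*$ is precisely the universal property of images. Frobenius and the Beck--Chevalley condition for arbitrary pullbacks both follow from pullback-stability of images in $\C$, so the restricted BC condition of def.~\ref{dfn:7} is a fortiori satisfied.

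For the backward direction, I would recover finite limits, image factorisations, and their pullback-stability from the fibrational data. Finite products on $\C$ are given, and Cartesian lifts in the fibration supply pullbacks of monos along arbitrary maps; combined with binary products this yields equalisers via the standard pullback-of-diagonal construction, so $\C$ has all finite limits. To image-factor $f \colon X \to Y$, I would set $\operatorname{im}(f) := \exists_f(1_X)$: the adjunction unit $1_X \leq f^*\exists_f(1_X)$ transposes to a morphism $e \colon X \to \exists_f(1_X)$ with $me = f$, and if $f = m'e'$ through any other mono $m' \mon Y$ then $1_X \leq f^*(m')$, so $\exists_f(1_X) \leq m'$ by the adjunction, giving the required minimality.

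The main obstacle will be pullback-stability of images: for $f\colon X \to Y$ and $g\colon Z \to Y$ with pullback $X \times_Y Z$, one needs $g^*\exists_f(1_X) \cong \exists_{f'}(1_{X\times_Y Z})$, that is, Beck--Chevalley for an arbitrary pullback square, while def.~\ref{dfn:7} only supplies it for product-absolute squares. The key observation is that every pullback of $f$ along $g$ in $\C$ is already product-absolute in disguise: it coincides with the pullback of the diagonal $d \colon Y \to Y \times Y$ along $\langle f\pi_1, g\pi_2\rangle \colon X \times Z \to Y \times Y$, which is exactly shape (A) of def.~\ref{dfn:11}. Applying the restricted BC of def.~\ref{dfn:7} to this product-absolute square then delivers pullback-stability of images, completing the proof that $\C$ is regular.
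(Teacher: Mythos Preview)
Your forward direction and the construction of equalisers and images in the backward direction are correct and essentially match the paper's argument.

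The gap is in your pullback-stability step. The square you describe --- the pullback of $d_Y \colon Y \to Y \times Y$ along $f \times g \colon X \times Z \to Y \times Y$ --- is \emph{not} of shape (A). A type-(A) square has the specific form where one vertical leg is $t$ and the opposite one is $t \times Y$, i.e.\ the identity on one factor; your square has $f \times g$ with neither factor forced to be an identity. So you have not exhibited the arbitrary pullback square as product-absolute, and the restricted Beck--Chevalley hypothesis of def.~\ref{dfn:7} does not apply directly. (Attempting to decompose $f \times g$ as $(f \times Y)(X \times g)$ and paste two squares together runs into the same problem: the second pullback, of $\langle 1,f\rangle$ along $X \times g$, is again not product-absolute unless you already know the pullback of $f$ along $g$ exists and is preserved --- which is what you are trying to prove.)

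The paper closes this gap differently: it invokes Seely's criterion (prop.~\ref{prp:8}), which says that once BC holds for the product-absolute squares, BC holds for an arbitrary pullback precisely when the fibration internally `knows' that square is a pullback, i.e.\ satisfies the two sequents displayed there. For the subobject fibration these sequents are automatic, because they are statements about actual subobjects and actual pullbacks in $\C$. This is the missing ingredient you need.
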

\begin{proof}
  If $\C$ is a regular category, then the adjunctions $\exists_f
  \dashv f^*$ come from pullbacks and images in $\C$ \cite[lemma
  A1.3.1]{johnstone02:_sketc_of_eleph} as does the Frobenius property
  [\textit{op.~cit.}, lemma A1.3.3].  The terminal object of
  $\operatorname{Sub}(X) = \operatorname{Sub}(\C)_X$ is the identity
  $1_X$ on $X$, and binary products in the fibres
  $\operatorname{Sub}(X)$ are given by pullback.  These products are
  preserved by reindexing functors $f^*$ because the $f^*$ are right
  adjoints, and the projection to $\C$ clearly preserves them too.
  The Beck--Chevalley condition follows from pullback-stability of
  images in $\C$.

  Conversely, suppose $\operatorname{Sub}(\C) \to \C$ is a regular
  fibration.  We need to show that $\C$ has equalizers (to get finite
  limits) and pullback-stable images.  But the equalizer of $f, g
  \colon X \rightrightarrows Y$ is $(f,g)^*d$.  For images, let
  $\operatorname{im} f = \exists_f \top_X$ as in \cite[lemma
  A1.3.1]{johnstone02:_sketc_of_eleph}.  Pullback-stability follows
  from the Beck--Chevalley condition, together with the fact that
  $\operatorname{Sub}(\C) \to \C$ `knows', in the sense of
  prop.~\ref{prp:8}, that any pullback is indeed a pullback.
\end{proof}

We will write $\operatorname{Arr}(\C) \to \C$ for the codomain
projection out of the category $[\cat{2}, \C]$ of morphisms of a
category $\C$.  It is well known that this is a regular fibration if
and only if $\C$ has finite limits; the non-trivial parts of the proof
are essentially as above.

Images $\im t = \exists_t \top_X = t_! \top_X$ as above make sense in
any regular fibration, and can be made functorial: for a morphism
\begin{equation*}
  \xymatrix{
    Y \ar[rr]^g \ar[dr]_t & & Z \ar[dl]^{t'} \\
    & X
  }
\end{equation*}
in $\B/X$, the morphism $\im g \colon \im t \to \im t'$ is the
composite
\begin{equation*}
  \xymatrix{t_! \top_X \ar[r]^-{\sim} & t'_! g_!
    \top_X \ar[r] & t'_! \top_Z}
\end{equation*}
where the second morphism is $t'_!$ applied to the unique $g_! \top_X
\to \top_Z$; if $g$ is the identity then the composite is the
identity, by the coherence laws for the pseudofunctor $t \mapsto t_!$
together with uniqueness of maps into a terminal object.  For a
composable pair $g, g'$ of morphisms over $X$ we get
\begin{equation*}
  \xymatrix{
    t_! \top_X \ar[d]_{\sim} & t'_! \top_Z \ar[d]_{\sim} & t''_!
    \top_W \\
    t'_! g_! \top_X \ar[ur] \ar[d]_{\sim} & t''_!g'_! \top_Z \ar[ur]
    \\
    t''_!g'_!g_! \top_X \ar[ur] \ar@/_2.5pc/[uurr]
  }
\end{equation*}
where the rectangular cell commutes by naturality and the other by
functoriality of $t''_!$ and uniqueness of maps into terminals again.
So image is a functor $\im \colon \B/X \to \E X$, for each $X \in \B$.

\begin{dfn}[{\cite{lawvere70:_equal}}]
  \label{dfn:22}
  A regular fibration $\E$ over $\B$ \emph{has comprehension} or
  \emph{is comprehensive} if for each $X \in \B$ the functor $\im
  \colon \B/X \to \E X$ has a right adjoint $P \mapsto \{P\} \colon \E
  X \to \B/X$, called \emph{extension}.  $\E$ has \emph{full
    comprehension} if each such extension functor is fully faithful,
  making each $\E X$ a reflective subcategory of $\B/X$.
\end{dfn}

This means that for each $P \in \E X$ there is a morphism $i_P
\colon \{P\} \to X$ such that for each $t \colon Y \to X$ there is a
bijection between factorizations
\begin{gather*}
\xymatrix{
    Y \ar[rr] \ar[dr]_t  & & \{P\} \ar[dl]^{i_P} \\
    & X
  }
\end{gather*}
in $\B$ and morphisms
\begin{gather*}
  \xymatrix{t_! \top_Y \ar[r] & P}
\end{gather*}
in $\E X$.  Notice that these are the same as morphisms $\top_Y \to
t^*P$ in $\E Y$, and hence correspond to maps $\top_Y \to P$ over $t$
in the total category $\E$ (cf.~the definition of `subset types',
i.e.~comprehension, in
\cite[def.~4.6.1]{jacobs99:_categ_logic_and_type_theor}).

For an object $X$ in the base of a regular fibration, the equality
predicate over $X$ is given by the image $d_! \top_X$ of the diagonal
morphism $d \colon X \to X \times X$.  The Beck--Chevalley condition
for squares of type (B) in def.~\ref{dfn:11} requires that the unit $1
\tc d^* d_!$ of the adjunction $d_! \dashv d^*$ be invertible.  Using
Frobenius reciprocity we can show
\begin{align*}
  d_! \top \cap d_! \top & \iso d_!(\top \cap d^*d_! \top) \\
  & \iso d_!(\top \cap \top) \\
  & \iso d_! \top
\end{align*}
But this isomorphism means that the following square is a pullback:
\begin{equation*}
  \xymatrix{
    d_! \top \ar[r]^1 \ar[d]_1 \pbcorner & d_! \top \ar[d] \\
    d_! \top \ar[r] & \mathbf{1}
  }
\end{equation*}
and that is equally to say that $d_! \top$ is subterminal in $\E(X
\times X)$, so that there can be at most one proof of any equality
(cf.~\cite[prop.~3.4]{lack10:_bicat}).

\begin{dfn}
  \label{dfn:33}
  Equality in a regular fibration $\E$ over $\B$ is \emph{extensional}
  (cf.~the `very strong equality' of
  \cite[3.4.2]{jacobs99:_categ_logic_and_type_theor}) if two parallel
  morphisms $f,g \colon Y \rightrightarrows X$ in $\B$ are equal
  whenever the (then necessarily unique) morphism
  \begin{equation*}
    \top_Y \longrightarrow \den{f y = g y} = (f,g)^*d_! \top_X
  \end{equation*}
  in $\E Y$ exists.
\end{dfn}

For the following result compare
\cite[Theorem,~p.~13]{lawvere70:_equal} and
\cite[exercise~4.6.6]{jacobs99:_categ_logic_and_type_theor}.

\begin{prp}
  \label{prp:7}
  A comprehensive regular fibration over $\B$ has extensional equality
  if and only if, for any parallel pair $f,g \colon Y
  \rightrightarrows X$, the map $i \colon \{ \den{f y = g y} \} \to Y$
  exhibits its domain as the equalizer of $f$ and $g$.
\end{prp}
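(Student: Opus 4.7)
The plan is to extract everything from two ingredients: the comprehension adjunction $\im \dashv \{-\}$ applied to the predicate $\den{fy = gy} = (f,g)^* d_! \top_X$ in $\E Y$, and the Beck--Chevalley condition for pullback square (B) of def.~\ref{dfn:11}, which says the unit $1 \tc d^* d_!$ is invertible. Write $C = \{\den{fy = gy}\}$ for the comprehension and $i \colon C \to Y$ for its structure map. The adjunction says that, for any $h \colon Z \to Y$, factorizations $k \colon Z \to C$ with $ik = h$ correspond bijectively to vertical morphisms $\top_Z \to h^* \den{fy = gy}$ in $\E Z$.

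The key computation I would do first is: whenever $fh = gh$, one has
\begin{equation*}
  h^* \den{fy = gy} = (fh, gh)^* d_! \top_X = (fh)^* d^* d_! \top_X \iso (fh)^* \top_X = \top_Z,
\end{equation*}
using that $(fh, gh) = d \circ fh$ and the Beck--Chevalley isomorphism $d^* d_! \top \iso \top$. In particular a canonical map $\top_Z \to h^* \den{fy = gy}$ exists whenever $fh = gh$, and (by subterminality of $d_! \top_X$, noted in the text preceding def.~\ref{dfn:33}) it is the only such map.

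For the forward direction, assuming extensional equality, I would first apply the adjunction to the identity $1_C \colon C \to C$ over $Y$ to obtain a map $\top_C \to i^* \den{fy = gy} = (fi, gi)^* d_! \top_X$; extensional equality applied to $fi, gi \colon C \rightrightarrows X$ then forces $fi = gi$. For the universal property, given $h \colon Z \to Y$ with $fh = gh$, the key computation above supplies the vertical morphism $\top_Z \to h^* \den{fy=gy}$, and the adjunction converts it into the unique $k \colon Z \to C$ with $ik = h$. Thus $i$ is the equalizer of $f, g$.

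For the reverse direction, suppose $i$ is always such an equalizer, and suppose a (necessarily unique) vertical map $\top_Y \to \den{fy = gy}$ exists. Taking $h = 1_Y$ in the adjunction, this map corresponds to a factorization $k \colon Y \to C$ of $1_Y$ through $i$, so $ik = 1_Y$. Since $i$ is the equalizer of $f$ and $g$ we have $fi = gi$, whence $f = fik = gik = g$. The main things to get right are the direction of the adjunction correspondence (maps out of $\top_Z$ in the fibre versus factorizations in the base) and the application of Beck--Chevalley for square~(B); beyond these, each step is formal.
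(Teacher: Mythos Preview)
Your proof is correct and follows essentially the same approach as the paper's: both use the comprehension adjunction to set up a bijection between factorizations of $h$ through $i$ and maps $\top_Z \to h^*\den{fy=gy}$, then argue in each direction via extensionality. Your write-up is more explicit than the paper's rather terse version --- in particular you spell out separately that $fi = gi$ (by applying the adjunction to $1_C$) and the Beck--Chevalley computation showing $h^*\den{fy=gy} \iso \top_Z$ when $fh = gh$, both of which the paper leaves implicit in the single sentence ``if equality is extensional, then $t$ factors through $i$ if and only if $ft = gt$''.
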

\begin{proof}
  For any $t \colon Z \to Y$ in $\B$, there is a bijection between
  morphisms $\top_Z \to \den{ftz = gtz}$ in $\E Z$ and morphisms $t
  \to i$ in $\B/Y$, there being therefore at most one of the latter.
  If equality is extensional, then $t$ factors through $i$ if and only
  if $ft = gt$, making $i$ the equalizer of $f$ and $g$.  Conversely,
  taking $t = 1$, there is a morphism $1 \to i$ if and only if $f =
  g$, but this then corresponds to a morphism $\top = \im 1 \to
  \den{fy=gy}$.
\end{proof}

The adjunction $\im \dashv \{-\}$ gives, for each $t \colon Y \to X$,
a unit
\begin{equation*}
  \xymatrix{
    Y \ar[rr]^{e_t} \ar[dr]_t & & \{ \im t \} \ar[dl]^{i_t} \\
    & Z
  }
\end{equation*}
We will say that \emph{$t$ is an injection} if this $e_t$ is
invertible.  Notice that injections in an ordered fibration must be
monomorphisms.  Conversely, if comprehension is full then $\hom(Q,P)
\iso \hom(\im i_Q,P) \iso \hom(i_Q,i_P)$, so that if $i_P$ is a
monomorphism then there is at most one morphism into $P$ from any
object in the same fibre, and so if every injection is a monomorphism
then $\E$ is ordered.

The following is proved in
\cite[prop.~4.9.3]{jacobs99:_categ_logic_and_type_theor} in a somewhat
more general context than ours, but only for ordered fibrations.

\begin{prp}
  \label{prp:21}
  A regular fibration has extensional equality if and only if each
  diagonal $d \colon X \to X \times X$ is an injection (supposing $\{
  \im d \}$ to exist).
\end{prp}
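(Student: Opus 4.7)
The plan is to exploit the comprehension adjunction $\im \dashv \{-\}$ at the single object $\im d \in \E(X \times X)$, which provides, for any $f,g \colon Y \to X$, a bijection between morphisms $\top_Y \to (f,g)^* \im d = \den{fy = gy}$ in $\E Y$ and factorizations of $(f,g) \colon Y \to X \times X$ through $i_d \colon \{\im d\} \to X \times X$. Only the existence of $\{\im d\}$ is required for this, matching the proposition's hypothesis.

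For the ($\Leftarrow$) direction, assume each $e_d$ is invertible and fix $f,g$ together with a map $\top_Y \to \den{fy = gy}$. The adjunction yields $h \colon Y \to \{\im d\}$ with $i_d h = (f,g)$, and setting $k = e_d^{-1} h$ gives $dk = i_d e_d e_d^{-1} h = (f,g)$, whence $f = \pi_1 d k = k = \pi_2 d k = g$.

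For the ($\Rightarrow$) direction, assume extensional equality; I must show $e_d$ is invertible. The identity of $\{\im d\}$ over $X\times X$ corresponds under the adjunction to a canonical morphism $\top_{\{\im d\}} \to i_d^* \im d = \den{\pi_1 i_d w = \pi_2 i_d w}$, and extensional equality then forces $\pi_1 i_d = \pi_2 i_d$. Thus $i_d$ factors through the equalizer $d \colon X \to X \times X$ of the two projections, giving $r \colon \{\im d\} \to X$ with $dr = i_d$. Since $d$ is monic (being split by $\pi_1$), the equation $d(re_d) = i_d e_d = d$ forces $re_d = 1_X$, so $r$ is a split epimorphism.

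The main obstacle is the opposite composite: $i_d$ is not a priori a monomorphism, so $i_d(e_d r) = i_d$ does not immediately give $e_d r = 1$. Instead I will prove that $r$ itself is monic, which together with being split epic forces $r$ to be an isomorphism, whence $e_d = r^{-1}$. For $h_1, h_2 \colon W \to \{\im d\}$ with $r h_1 = r h_2 = k$, both are morphisms over $X \times X$ with common structure map $dk = i_d h_i$, and hence correspond under the adjunction to morphisms $\top_W \to (dk)^* \im d = k^* d^* d_! \top_X$. The Beck--Chevalley condition for square (B) of def.~\ref{dfn:11}, discussed just before def.~\ref{dfn:33}, supplies $d^* d_! \top_X \cong \top_X$, so this hom-set is a singleton and $h_1 = h_2$.
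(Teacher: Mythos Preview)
Your proof is correct, and the ($\Leftarrow$) direction matches the paper's. For ($\Rightarrow$), however, you take a genuinely different route. The paper observes that, since $d_!\top$ is subterminal, both ``there is a morphism $\top_Y \to \den{fy=gy}$'' and ``$(f,g)$ factors through $d$'' are truth-valued, so extensionality gives a natural bijection $\hom(-,d) \iso \hom(-,i_d)$; it then argues (Yoneda-style) that this must be post-composition with $e_d$, because $e_d$ is the unique morphism $d \to i_d$, and concludes that $e_d$ is invertible. You instead construct the putative inverse $r$ explicitly and then show it is monic by proving that factorizations of any $dk$ through $i_d$ are unique, using the Beck--Chevalley isomorphism $d^*d_!\top \iso \top$. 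Your argument is a little more hands-on and has the pleasant side effect of showing directly that $i_d$ is monic (your monicity step only uses $i_d h_1 = i_d h_2$, not the stronger $r h_1 = r h_2$); once you notice this, you could shortcut the end of the proof by deducing $e_d r = 1$ straight from $i_d(e_d r) = i_d$. Both approaches rest on the same key fact---subterminality of $\im d$---but exploit it differently.
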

\begin{proof}
  For any parallel pair $f,g$, there are bijections
  \begin{prooftree}
    \AXC{$\xymatrix{\top \ar[r] & (f,g)^*d_! \top}$}
    \UIC{$\xymatrix{\im\, (f,g) \ar[r] & \im d}$}
    \UIC{$\xymatrix{Y \ar[rr] \ar[dr]_{(f,g)} & & \{ \im d \}
        \ar[dl]^{i_d} \\ & X \times X}$}
  \end{prooftree}
  If $d$ is an injection, then factorizations of the last form are in
  bijection with factorizations of $(f,g)$ through $d$, but since $d$
  is monic, there can be at most one such, which exists precisely when
  $f = g$.  Conversely, to say that equality is extensional is to say
  that morphisms $\top \to \den{fy = gy}$ are in bijection with
  factorizations of $(f,g)$ through $d$, but by the correspondence
  above the former are also in bijection with maps $(f,g) \to i_d$,
  naturally in $(f,g)$.  Taking $f = g = 1$ shows that there is
  exactly one morphism $d = (1,1) \to i_d$, which must be $e_d$, and
  because the induced map $\hom(-,d) \iso \hom(-,i_d)$ is an
  isomorphism $e_d$ must be invertible.
\end{proof}

The following proposition does not seem to have been published before
in this particular form, but it is a generalization of a very
well-known fact.  Although the fibration that sends a category $\C$ to
$[\C^\op, \Set]$ is not regular, as noted already by Lawvere
\cite{lawvere70:_equal}, it does have full comprehension, with the
extension of a presheaf given by its category of elements.  In that
case the proposition reduces to the fact
\cite[exercise~III.8(a)]{mac92:_sheav_in_geomet_and_logic} that for
any presheaf $P$ on $\C$ there is an equivalence $[\C^\op, \Set]/P
\eqv [(\int P)^\op, \Set]$.

\begin{prp}
  \label{prp:24}
  Let $P$ be a predicate over $X$ in a regular fibration $\E$ with
  full comprehension.  There is then an equivalence
  \begin{equation}
    \label{eq:19}
    \E \{ P \}\  \eqv\  \E X / P
  \end{equation}
\end{prp}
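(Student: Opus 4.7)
My approach is to construct mutually inverse functors $F \colon \E\{P\} \to \E X/P$ and $G \colon \E X/P \to \E\{P\}$ using pushforward along $i_P$ together with the comprehension adjunction. Full comprehension supplies the iso $\iota \colon (i_P)_!\top_{\{P\}} = \im i_P \cong P$ (the counit of $\im \dashv \{-\}$ at $P$, invertible by fullness), which lifts pushforward into the slice: set $F(Q) = ((i_P)_!Q \to P)$ with structure map $\iota \circ (i_P)_!(!_Q)$. In the other direction, a morphism $r \colon R \to P$ in $\E X$ transposes under $\im \dashv \{-\}$---using the counit isos $\im i_R \cong R$ and $\im i_P \cong P$ supplied by fullness---to a unique $\bar r \colon \{R\} \to \{P\}$ over $X$, and I define $G(r) = \bar r_!\top_{\{R\}}$.

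The direction $F \circ G \cong \mathrm{id}$ is immediate from pseudo-functoriality of pushforward: since $i_P\bar r = i_R$ by construction, we get $F(G(r)) = (i_P)_!\bar r_!\top \cong (i_R)_!\top = \im i_R \cong R$, and the induced structure map matches $\im \bar r = r$ modulo the above isomorphisms.

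The main step is $G \circ F \cong \mathrm{id}$. Setting $R = (i_P)_!Q$ and letting $r$ be the structure map of $F(Q)$, one must show $\bar r_!\top_{\{R\}} \cong Q$ in $\E\{P\}$. I would establish this via a \emph{change-of-base property for comprehension}: a natural isomorphism $\{(i_P)_!Q\} \cong \{Q\}_{\{P\}}$ in $\B/X$, with the forgetful map on the right factoring through $i_P$. Granted this, $\bar r$ is identified with the canonical projection $\{Q\}_{\{P\}} \to \{P\}$ in $\B/\{P\}$, and then $\bar r_!\top_{\{R\}} = \im_{\{P\}}(\{Q\}_{\{P\}} \to \{P\}) \cong Q$ by a final application of full comprehension at $\{P\}$.

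The main obstacle is this change-of-base lemma. A direct proof proceeds by a Yoneda argument in $\B/X$, chaining the adjunctions $\im \dashv \{-\}$ (at $X$), $(i_P)_! \dashv (i_P)^*$, and $\im \dashv \{-\}$ (at $\{P\}$); the resulting $\hom$-sets match up only modulo a Beck--Chevalley-style interchange between $(i_P)^*$ and $\im$ which, though not listed explicitly among the axioms of a regular fibration, can be extracted directly from the full comprehension hypothesis. Once this last identity is in place, both quasi-inverse isomorphisms are natural in their arguments, and the equivalence~\eqref{eq:19} follows.
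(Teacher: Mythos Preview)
Your approach differs substantially from the paper's and, as written, has a real gap at exactly the point you flag as ``the main obstacle''.

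The paper argues as follows. Full comprehension makes each $\E X$ a reflective subcategory of $\B/X$. Reflective subcategories are stable under slicing, so $\E X / P$ is reflective in $(\B/X)/i_P \eqv \B/\{P\}$. But $\E\{P\}$ is also reflective in $\B/\{P\}$, again by full comprehension. The proof finishes by observing that the two reflectors have the same essential image: for any $t \colon Y \to \{P\}$, the two units (one for each reflection) are literally the \emph{same} morphism of $\B$, so one is invertible if and only if the other is. No pushforwards, no Beck--Chevalley, no explicit inverse functors.

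Your explicit $F$ and $G$ are well-defined, and $F\circ G \cong \mathrm{id}$ is fine. The problem is the change-of-base lemma you need for $G \circ F \cong \mathrm{id}$. Unwinding your Yoneda argument, the ``Beck--Chevalley-style interchange'' you invoke amounts to asking that the unit $Q \to (i_P)^*(i_P)_! Q$ be invertible for every $Q \in \E\{P\}$. But this is the Beck--Chevalley condition for the square
\[
\xymatrix{
  \{P\} \ar[r]^1 \ar[d]_1 & \{P\} \ar[d]^{i_P} \\
  \{P\} \ar[r]_{i_P} & X
}
\]
which is a pullback only when $i_P$ is a monomorphism. In the non-ordered setting the paper explicitly allows, $i_P$ need \emph{not} be monic (see the discussion just before prop.~\ref{prp:21}), so this square is not even a pullback, let alone product-absolute; the condition is certainly not among the axioms of a regular fibration, and you have not shown how full comprehension alone yields it. Your claim that it ``can be extracted directly from the full comprehension hypothesis'' is exactly the content of the proposition itself --- circularity looms.

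In short: your change-of-base lemma is true (it follows from the proposition), but your proposed proof of it does not go through. The paper's reflective-subcategory argument sidesteps the issue entirely and is considerably shorter.
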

\begin{proof}
  First we note two small facts: firstly, if a functor $F \colon \C
  \to \D$ is fully faithful, then so is its action $F_X \colon \C/X
  \to \D/FX$ on any slice of $\C$, and secondly, if $F \colon \C \to
  \D$ has a right adjoint $G$, then for any $Y$ in $\D$ the functor
  $G_Y \colon \D/Y \to \C/GY$ has a left adjoint given by $F_{GY}$
  followed by composition with $\epsilon_Y \colon GFY \to Y$.  These
  two facts together imply that reflective subcategories give rise to
  reflective subcategories of slice categories.

  If $\E$ has full comprehension, then $\E X$ is reflective in
  $\B/X$, and hence so is $\E X / P$ in $(\B/X)/i_P$.  A folklore
  result says that the latter is equivalent to $\B/\{P\}$:
  \begin{equation*}
    \vcenter{
      \xymatrix{
        Y \ar[rr]^t \ar[dr] & & \{P\} \ar[dl]^{i_P} \\
        & X
      }
    }
    \qquad \Leftrightarrow \qquad
    \vcenter{
      \xymatrix{
        Y \ar[d]^t \\
        \{P\}
      }
    }
  \end{equation*}
  So both $\E X / P$ and $\E\{P\}$ are reflective in $\B/\{P\}$.  To
  show that they are equivalent, it suffices to show that for each $t
  \colon Y \to \{P\}$ in $\B$, the unit $\eta_t$ is invertible in
  $\B/\{P\}$ if and only if the corresponding $\eta'_t$ is so in
  $(\B/X)/i_P$:
  \begin{equation*}
    \vcenter{
      \xymatrix{
        Y \ar[rr]^{\eta_t} \ar[dr]_t & & \{ \im t \} \ar[dl] \\
        & \{P\}
      }
    }
    \qquad  \qquad
    \vcenter{
      \xymatrix{
        Y \ar[rr]^{\eta_t} \ar[dr]_t \ar@/_1pc/[ddr]
        & & \{ \im t \} \ar[dl] \ar@/^1pc/[ddl]
        \\
        &\{P\} \ar[d] \\
        & X
      }
    }
  \end{equation*}
  But the two are manifestly given by the same morphism of $\B$, and
  invertibility in a slice category is equivalent to invertibility in
  the underlying category.
\end{proof}

\subsection{The classifying fibration of a regular theory}
\label{sec:class-fibr-regul}

As something of an aside, we will construct in this section the
`syntactic model' of a regular theory.  Most of this material is at
least sketched in \cite{seely83:_hyper_natur_deduc_and_beck_condit}
for the hyperdoctrine corresponding to a first-order theory, but an
explicit presentation of what remains for the regular case is useful
and illuminating.

We want to show firstly that a regular theory $T$ gives rise to a
bifibration $\cat{E}_T \to \cat{B}_T$.

\begin{prp}
  Let $T$ be a regular theory.  For each term $t \colon X \to Y$, the
  functor $t^* \colon \cat{E}_T(Y) \to \cat{E}_T(X)$ has a left
  adjoint $\exists_t$.
\end{prp}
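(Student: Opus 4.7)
The natural candidate, given the syntactic rules and the remark~\ref{rem:8} after def.~\ref{dfn:7}, is
\[
  (\exists_t \phi)[y] \;:=\; \exists \xi.\bigl(t(\xi) = y \;\land\; \phi[\xi]\bigr),
\]
where $\xi$ is a fresh bound variable of type $X$. My plan is to establish the adjunction $\exists_t \dashv t^*$ by exhibiting a natural bijection
\[
  \cat{E}_T(X)\bigl(\phi,\, t^*\psi\bigr) \;\cong\; \cat{E}_T(Y)\bigl(\exists_t\phi,\, \psi\bigr),
\]
rather than verifying triangle identities directly; functoriality of $\exists_t$ then follows automatically, though it could also be read off from $\exists$-introduction applied componentwise.

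The forward direction sends a derivation $p \colon \phi[x] \sq[x] \psi[t(x)]$ to the derivation $\hat p$ of $\psi[y]$ from $\exists \xi.(t(\xi)=y \land \phi[\xi])$ in context $y$, built as follows: use $\exists$-elimination to introduce the hypothesis $t(\xi)=y \land \phi[\xi]$ in context $y,\xi$; project out $\phi[\xi]$; apply $p$ with $\xi$ substituted for $x$ to obtain $\psi[t(\xi)]$; then use the equality-substitution rule with $t(\xi)=y$ to get $\psi[y]$, which contains no free $\xi$ so that $\exists$-elimination concludes $\psi[y]$ in context $y$. Conversely, given $q \colon (\exists_t\phi)[y] \sq[y] \psi[y]$, define $\check q \colon \phi[x] \sq[x] \psi[t(x)]$ by first applying $\exists$-introduction with witness $\xi := x$ (using the reflexivity axiom $t(x)=t(x)$) to pass from $\phi[x]$ to $(\exists_t\phi)[t(x)]$, then composing with $q[t(x)]$ obtained by substituting $t(x)$ for $y$.

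The heart of the proof is then to verify that $p \mapsto \hat p$ and $q \mapsto \check q$ are mutually inverse and natural in $\phi$ and $\psi$. Both verifications come down to manipulations using the $\beta\eta$-equations for $\land$, $=$ (in particular the bijection~(\ref{eq:8})) and the standard $\beta\eta$-equations for $\exists$ (these are implicit in the treatment but are what make $\exists$-elimination into a genuine universal property). For $\widehat{\check q} = q$ one uses the $\eta$-rule for $\exists$ together with the $\eta$-rule for equality to absorb the introduced witness back into the original existential; for $\widecheck{\hat p} = p$ one uses the $\beta$-rule for $\exists$ (the witness $\xi := x$ substitutes cleanly) together with the $\beta$-rule for equality to reduce $t(x)=t(x)$ against reflexivity.

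The main obstacle is really bookkeeping: tracking contexts carefully through each use of the $\exists$-elimination rule (to ensure the eigenvariable condition is respected when we substitute $t(x)$ for $y$ in $q$) and to ensure that the natural transformations $\phi \mapsto \phi'$ and $\psi \mapsto \psi'$ genuinely act on the bijection by pre- and post-composition, which uses that $t^*$ is already known to be a functor. Once naturality and the bijection are checked, the left adjoint is determined up to isomorphism of derivations, and the proposition follows.
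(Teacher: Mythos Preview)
Your proposal is essentially correct and follows the same overall strategy as the paper: the same definition of $\exists_t\phi$, the same construction of the transpose $\hat p$ via $\exists$-elimination and equality-substitution, and the same unit given by $\exists$-introduction with the witness $x$ and reflexivity $t(x)=t(x)$.

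The one substantive difference is in how the bijection is closed up. The paper establishes the adjunction via a universal arrow: it constructs the unit $\eta^t_\phi$, builds $\hat p$ from a given $p$, checks $t^*\hat p \circ \eta^t_\phi = p$ using the $\beta\eta$-rules for $\land$, $=$ and $\exists$, and then obtains \emph{uniqueness} of $\hat p$ by appeal to the normal form theorem for natural deduction \cite{prawitz06}. You instead construct the inverse $\check q$ explicitly and verify the two round-trips by hand. Your approach is more self-contained, but note that the $\beta\eta$-rules for $\exists$ you invoke are presented in the paper only \emph{after} this proposition, as consequences of the adjunction just proved; so strictly within the paper's development you would need to state them as primitive equations on derivations (as is done for $\land$ and $=$) before you can use them. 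The paper sidesteps this by outsourcing the issue to the normal form theorem, which of course itself rests on such rewriting rules --- so the difference is really one of packaging rather than of mathematical content.
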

\begin{proof}
  Define $\exists_t$ on formulas as
  \begin{equation*}
    \exists_t \phi = \exists \xi.(t[\xi] = y \land \phi[\xi])
  \end{equation*}
  It suffices to show that for any $\phi[x]$ of type $X$ there is a
  universal $\eta^t_\phi \colon \phi \sq[x] t^*\exists_t \phi$; that
  is, for any equivalence class of proofs $p \colon \phi \sq[x]
  t^*\psi$, there is a unique $\hat p \colon \exists_t \phi \sq[y]
  \psi$ such that $t^* \hat p \cmp \eta^t_\phi$ is equal to $p$.  The
  derivation $\eta^t_\phi$ is obtained by forming the derivation
  \begin{equation}
    \label{eq:9}
    \centering\leavevmode
    \AXC{$x=x'$} \AXC{}\UIC{$t[x] = t[x]$}
    \BIC{$t[x'] = t[x]$}
    \AXC{$x=x'$} \AXC{$\phi[x]$}
    \BIC{$\phi[x']$}
    \BIC{$t[x']=t[x] \land \phi[x']$}
    \UIC{$\exists \xi.(t[\xi] = t[x] \land \phi[\xi])$}
    \DisplayProof
  \end{equation}
  of type $\phi[x], x = x' \sq[x,x'] t^*\exists_t \phi$ and using the
  bijection \eqref{eq:8} above to get rid of the hypothesis $x=x'$.
  Given $p \colon \phi \sq[x] t^*\psi$, let $\hat p$ be
  \begin{prooftree}
    \AXC{$\exists \xi.(t[\xi] = y \land \phi[\xi])$}

    \AXC{}\UIC{$t[x]=y \land \phi[x]$}
    \UIC{$t[x] = y$}

    \AXC{}\UIC{$t[x]=y \land \phi[x]$}
    \UIC{$\phi[x]$}\noLine\UIC{$\vdots$}\noLine
    \UIC{$\psi[t[x]]$}

    \BIC{$\psi[y]$}

    \BIC{$\psi[y]$}
  \end{prooftree}
  The $\beta$ and $\eta$ equalities given above show that the composite
  $t^*\hat p \cmp \eta^t_\phi$ is equal to $p$, and uniqueness of $\hat
  p$ follows from the normal form theorem for natural deduction
  \cite{prawitz06}.  So we have another bijection
  \begin{prooftree}
    \AX$\exists_t \phi \fCenter\ \sq[y]\ \psi$
    \doubleLine
    \UI$\phi \fCenter\ \sq[x]\ t^*\psi$
  \end{prooftree}
\end{proof}

In particular, we have the usual rewriting rules, as given in
\cite{seely83:_hyper_natur_deduc_and_beck_condit}:
\begin{prooftree}
  \AXC{$\llap{$p\,$}\vdots$}\noLine
  \UIC{$\phi[t]$} \UIC{$\exists \xi . \phi[\xi]$}
  \AXC{}\UIC{$\phi[x]$}\noLine\UIC{$\vdots\rlap{$\,q[x]$}$}\noLine\UIC{$\psi$}
  \BIC{$\psi$}
  \DisplayProof{}
  \qquad = \qquad
  \AXC{$\llap{$p\,$}\vdots$}\noLine
  \UIC{$\phi[t]$}\noLine
  \UIC{$\llap{$q[t]\,$}\vdots$}\noLine
  \UIC{$\psi$}
\end{prooftree}
and
\begin{prooftree}
  \AXC{$\llap{$p\,$}\vdots$}\noLine
  \UIC{$\exists \xi . \phi[\xi]$} \noLine
  \UIC{$\llap{$q\,$}\vdots$}\noLine
  \UIC{$\psi$}
  \DisplayProof{}
  \qquad = \qquad
  \AXC{$\llap{$p\,$}\vdots$}\noLine
  \UIC{$\exists \xi . \phi[\xi]$}
  \AXC{}\UIC{$\phi[x]$}
  \UIC{$\exists \xi. \phi[\xi]$}
  \noLine
  \UIC{$\llap{$q\,$}\vdots$}\noLine
  \UIC{$\psi$}
  \BIC{$\psi$}
\end{prooftree}

For $\cat{E}_T \to \cat{B}_T$ to be a regular fibration, it must satisfy the
Frobenius and Beck--Chevalley conditions.  The former means that for
any term $t$ the canonical map $\exists_t (\phi \land t^*\psi) \sq[y]
(\exists_t \phi) \land \psi$ is an isomorphism.  This canonical map is
given \cite[definition D1.3.1(i)]{johnstone02:_sketc_of_eleph} by
\begin{prooftree}
  \AXC{$\phi \land t^* \psi \sq[x] t^* \psi$}
  \UIC{$\exists_t (\phi \land t^*\psi) \sq[y] \psi$}

  \AXC{$\phi \land t^* \psi \sq[x] \phi$}
  \AXC{$\exists_t \phi \sq[y] \exists_t \phi$}
  \UIC{$\phi \sq[x] t^* \exists_t \phi$}
  \BIC{$\phi \land t^* \psi \sq[x] t^* \exists_t \phi$}
  \UIC{$\exists_t(\phi \land t^* \psi) \sq[x] \exists_t \phi$}

  \BIC{$\exists_t(\phi \land t^* \psi) \sq[x] (\exists_t \phi) \land
    \psi$}
\end{prooftree}
So we must insist that in $\cat{E}_T$ the above proof, call it $p$, have a
formal inverse $\inv p \colon (\exists_t \phi) \land \psi \sq[x]
\exists_t(\phi \land t^* \psi)$, adding to the equations above $\inv p
p = 1$ and $p \inv p = 1$.

The Beck--Chevalley conditions for the product-absolute pullbacks (A),
(C) and (D) in def.~\ref{dfn:11} are shown as in \cite[\textsection
4]{seely83:_hyper_natur_deduc_and_beck_condit}.

\begin{prp}
  The Beck--Chevalley condition for \ref{dfn:11}(B) holds; that is,
  $\eta^d$, as defined by \eqref{eq:9}, is invertible.
\end{prp}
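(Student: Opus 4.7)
The plan is to exhibit a formal inverse to $\eta^d$ and to check that the required $\beta\eta$-equalities force the two composites to be identities, exactly as Seely treats the analogous case for square (A).

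First I would unfold the formula $d^* \exists_d \phi$. Writing the generic variable of $X \times X$ as $(y_1, y_2)$, we have
\begin{equation*}
  \exists_d \phi[y_1,y_2]\ =\ \exists \xi.\bigl((\xi,\xi) = (y_1,y_2) \land \phi[\xi]\bigr)
\end{equation*}
and substituting $d[x]=(x,x)$ yields $d^* \exists_d \phi[x] = \exists \xi.(\xi = x \land \xi = x \land \phi[\xi])$. Instantiating \eqref{eq:9} at $t=d$ gives $\eta^d$ as the evident derivation that pairs reflexivity $x = x$ (twice) with $\phi[x]$ and then introduces the existential.

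Second, I would construct the candidate inverse $\sigma \colon d^*\exists_d \phi \sq[x] \phi$ by $\exists$-elimination on the hypothesis $\exists \xi.(\xi = x \land \xi = x \land \phi[\xi])$: open the witness as $\xi = x \land \xi = x \land \phi[\xi]$, project out $\xi = x$ and $\phi[\xi]$, apply the equality-elimination rule to rewrite $\xi$ to $x$ inside $\phi$, and discharge the assumption. The two conjunct copies of $\xi = x$ are redundant but need to be handled so that the derivation pairs correctly with the introduction in $\eta^d$.

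Third, I would verify the compositions. The composite $\sigma \cmp \eta^d$ reduces to $1_\phi$ by the $\exists$-$\beta$ rewriting rule recalled just above (which replaces the introduce-then-eliminate pattern by substitution), followed by the $\beta$-rule for equality which collapses $\eta$-elimination along $x = x$ to the identity on $\phi[x]$, together with the standard $\beta$ rules for $\land$. The harder direction is $\eta^d \cmp \sigma = 1_{d^*\exists_d \phi}$: here one must apply the $\exists$-$\eta$ (commuting-conversion) rule so that the outer reintroduction of the existential can be pushed under the elimination, and then use the $\eta$ rule for equality (equivalently, the bijection \eqref{eq:8}) to reassemble the proof of $(\xi,\xi) = (x,x) \land \phi[\xi]$ from the witness originally assumed. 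The $\eta$ rule for $\land$ finally reconstitutes the pair. This last step is the main obstacle: making sure the $\eta$-rule for equality matches up the two copies of $\xi = x$ with the two reflexivities introduced by $\eta^d$, rather than some spurious combination. Once done, the two derivations are equal and $\eta^d$ is invertible, establishing the Beck--Chevalley condition for (B).
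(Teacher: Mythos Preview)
Your proposal is correct and follows essentially the same approach as the paper: construct an explicit inverse by $\exists$-elimination followed by equality-elimination, then verify the two round-trips using the $\beta$-rules in one direction and the $\eta$-rules for $\land$, $\exists$ and $=$ in the other. One small point: the paper keeps $d[\xi] = d[x]$ as a single equality $(x',x') = (x,x)$ at the product type rather than decomposing it into two copies of $\xi = x$; this avoids the bookkeeping you flag about matching up the two redundant copies, but the argument is otherwise identical.
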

\begin{proof}
  An inverse is given by
  \begin{prooftree}
    \AXC{$\exists \xi. d[\xi] = d[x] \land \phi[\xi]$}

    \AXC{$(x', x') = (x, x) \land \phi[x']$}
    \UIC{$(x', x') = (x, x)$}

    \AXC{$(x', x') = (x, x) \land \phi[x']$}
    \UIC{$\phi[x']$}
    \BIC{$\phi[x]$}

    \BIC{$\phi[x]$}
  \end{prooftree}
  That this derivation is a left inverse for $\eta^d_\phi$
  follows from the $\beta$-reductions given above, and conversely that
  it is a right inverse follows from the $\eta$-reductions for
  $\land$, $\exists$ and $=$.
\end{proof}

We can now perform the usual rites of categorical logic: a model of a
regular theory $T$ in a regular fibration $\cat{E} \to \cat{B}$ is a
morphism of regular fibrations from $\cat{E}_T \to \cat{B}_T$ to
$\cat{E} \to \cat{B}$, and it is easy to see that this is equivalent
to the traditional notion.  Completeness is automatic, because if a
sequent is true in every model then it is true in the syntactic model
and thence provable.

\section{Allegories and bicategories of relations}
\label{sec:alleg-their-compl}

In this section we recall the structures used to give the
bicategorical or relational semantics of regular theories, in the
locally ordered case.  First \emph{allegories} and then
\emph{bicategories of relations} are defined, and the latter are shown
to be the same as certain allegories.  Later on it will become clear
that they are also a special case of the \emph{regular equipments}
that we will define.

\subsection{Allegories and their completions}
\label{sec:allegories}

Here we define allegories and the idempotent splitting construction.
Nothing in this section is original.

\begin{dfn}[{\cite{freyd90:_categ_alleg,johnstone02:_sketc_of_eleph}}]
  An \emph{allegory} $\bA$ is a strict 2-category whose each
  hom-category $\bA(X,Y)$ is a poset with binary meets $\cap$,
  and that comes equipped with a strict involution $(-)^\circ \colon
  \bA^\op \to \bA$ that is the identity on objects and
  satisfies the \emph{modular law} for all suitably-typed morphisms
  $r$, $s$, $t$:
  \begin{equation}
    \label{eq:10}
    s r \cap t \leq (s \cap t r^\circ)r
  \end{equation}
  An allegory functor $F \colon \bA \to \bB$ is a 2-functor that
  preserves $\cap$ and $(-)^\circ$.  A transformation $\alpha \colon F
  \nt G$ is an oplax transformation (i.e.~$\alpha_Y \hcmp F s \leq G s
  \hcmp \alpha_X$ for any $s \in \bA(X,Y)$) whose components
  $\alpha_X$ have right adjoints (they are \emph{maps}).  There is
  then a 2-category $\bicat{All}$ of allegories, functors and
  transformations.
\end{dfn}
Note that hom-posets are not required to have top elements, and that
composition is not required to preserve local meets (although it must
preserve the local ordering).  Note also that the modular law as above
is equivalent to the dual form
\begin{equation}
  \label{eq:11}
  sr \cap t \leq s (r \cap s^\circ t)
\end{equation}

Morphisms are written as e.g.~$r \colon X \rel Y$.  A morphism is
called a \emph{map} if it has a right adjoint.  Maps are written as $f
\colon X \map Y$; the right adjoint of $f$ is $f^\bullet$.

We recall some basic facts about allegories.

\begin{lem}[{\cite[lemma A3.2.3]{johnstone02:_sketc_of_eleph}}]
  \label{lem:3}
  If $f \colon X \map Y$ is a map, then its right adjoint is
  $f^\circ$.  Further, the ordering on maps is discrete: if $f \leq g$
  then $f = g$.  Hence the evident sub-2-category $\Map(\bA)$ is a
  category.
\end{lem}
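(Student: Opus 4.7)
\emph{Plan.} The plan is to handle the two assertions in turn, using the modular law \eqref{eq:10} and its dual \eqref{eq:11} in tandem. Suppose first that $f\colon X\map Y$ is a map with right adjoint $g\colon Y\to X$, so that the unit $1_X \leq gf$ and counit $fg \leq 1_Y$ hold. The strategy is to derive $g \leq f^\circ$ and $f^\circ \leq g$ by sandwiching each between $1$ and a composite involving the other.

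The key step is to instantiate both forms of the modular law with $s = g$, $r = f$, $t = 1_X$. Since $1 \leq gf$, the left-hand side $gf \cap 1$ collapses to $1$, yielding simultaneously
\begin{align*}
  1 &\;\leq\; (g \cap f^\circ)\,f \;\leq\; f^\circ f, \\
  1 &\;\leq\; g\,(f \cap g^\circ) \;\leq\; gg^\circ .
\end{align*}
Combining these with the dualised counit $g^\circ f^\circ = (fg)^\circ \leq 1$ gives
\begin{equation*}
  g \;=\; 1 \cdot g \;\leq\; f^\circ f \cdot g \;=\; f^\circ(fg) \;\leq\; f^\circ
  \qquad\text{and}\qquad
  f^\circ \;=\; 1 \cdot f^\circ \;\leq\; gg^\circ \cdot f^\circ \;=\; g\,(g^\circ f^\circ) \;\leq\; g,
\end{equation*}
proving $g = f^\circ$.

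For the discreteness claim, suppose $f \leq f'$ are both maps, with right adjoints $f^\circ$ and $f'^\circ$ by the first part. Because the strict involution $(-)^\circ$ is a 2-functor $\bA^\op \to \bA$, it preserves the local order, giving $f^\circ \leq f'^\circ$ at once. For the reverse inequality I use the unit $1 \leq f^\circ f$ already established, the hypothesis $f \leq f'$, and the counit $f'f'^\circ \leq 1$:
\begin{equation*}
  f'^\circ \;=\; 1 \cdot f'^\circ \;\leq\; f^\circ f\cdot f'^\circ \;\leq\; f^\circ f'\cdot f'^\circ \;=\; f^\circ(f'f'^\circ) \;\leq\; f^\circ \cdot 1 \;=\; f^\circ .
\end{equation*}
Hence $f^\circ = f'^\circ$, and applying the involution once more recovers $f = f'$. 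The sub-2-category $\Map(\bA)$ thus has discrete hom-posets and so is a (1-)category.

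The main obstacle is identifying the right instances of the modular law in the first part; the single choice $s = g$, $r = f$, $t = 1$ in both forms is what unlocks the argument, since it is the only substitution that yields the two key inequalities $1 \leq f^\circ f$ and $1 \leq gg^\circ$ without presupposing what one is trying to prove. After that, the rest is routine manipulation of the adjunction data.
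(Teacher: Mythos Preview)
Your proof is correct and is essentially the standard argument (the paper itself gives no proof, deferring to Johnstone's \emph{Elephant}, where the same modular-law manipulation appears). The only point worth noting is that the discreteness claim can be shortened: once you know right adjoints are given by $(-)^\circ$, the general fact that $f \leq f'$ between left adjoints implies $f'^\bullet \leq f^\bullet$ for the right adjoints gives $f'^\circ \leq f^\circ$ directly, which together with your order-preservation of $(-)^\circ$ finishes it---but your explicit unit/counit computation is equally valid.
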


\begin{rem}
  It follows that the componentwise ordering on transformations
  between allegory functors is also discrete, so that the 2-category
  $\bicat{All}$ is just a 2-category.
\end{rem}

\begin{lem}
  \label{lem:6}
  If $r^\circ r \leq 1$ (e.g.~if $r = f^\bullet$ is a right adjoint),
  then the modular law \eqref{eq:10} is an identity, and if $s s^\circ
  \leq 1$ (e.g.~if $s = f$ is a map) then the dual modular law
  \eqref{eq:11} is an identity.
\end{lem}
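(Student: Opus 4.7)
The plan is to note that the modular law \eqref{eq:10} (respectively \eqref{eq:11}) already supplies one direction of the desired equality, so in each case only the reverse inequality needs proof. Both reverse inequalities follow from a single observation: composition is monotone in both arguments, and the hypothesis ($r^\circ r \leq 1$ or $s s^\circ \leq 1$) lets us absorb the extra factor.

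For the first claim, assume $r^\circ r \leq 1$. To establish $(s \cap t r^\circ) r \leq s r \cap t$, I would check the two meet-components separately. Monotonicity of composition on the left gives $(s \cap t r^\circ) r \leq s r$ from $s \cap t r^\circ \leq s$. For the second component, monotonicity again yields $(s \cap t r^\circ) r \leq (t r^\circ) r = t (r^\circ r)$, and the hypothesis together with monotonicity collapses this to $t \cdot 1 = t$. Combined with \eqref{eq:10}, this gives equality.

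For the second claim, assume $s s^\circ \leq 1$. The argument is entirely dual: from $r \cap s^\circ t \leq r$ and $r \cap s^\circ t \leq s^\circ t$, monotonicity of composition on the right yields $s (r \cap s^\circ t) \leq s r$ and $s (r \cap s^\circ t) \leq s s^\circ t \leq t$, hence $s(r \cap s^\circ t) \leq s r \cap t$. Combining with \eqref{eq:11} gives the equality.

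There is no real obstacle here; the content is just that in an allegory, composition respects $\leq$ in each variable, and the reverse direction of the modular law is a routine consequence of this together with the assumed subidentity. The parenthetical examples ($r = f^\bullet$, $s = f$) are immediate since for a map $f$ one has $f f^\circ \leq 1$ and $f^\circ f \leq 1$ by Lemma~\ref{lem:3} and the triangle inequalities for the adjunction $f \dashv f^\circ$.
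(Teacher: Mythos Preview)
Your proof is correct and follows essentially the same route as the paper's: establish the reverse inequality by monotonicity of composition in each variable and then absorb the extra factor via the hypothesis $r^\circ r \leq 1$ (resp.\ $s s^\circ \leq 1$). One small slip in your closing parenthetical: for a map $f$ the unit of $f \dashv f^\circ$ gives $1 \leq f^\circ f$, not $f^\circ f \leq 1$; this is harmless, though, since both examples only require the counit inequality $f f^\circ \leq 1$.
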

\begin{proof}
  $(s \cap t r) r^\circ \leq (s r \cap t) r^\circ r \leq s r \cap t$,
  and dually.
\end{proof}

\begin{lem}[{\cite[corollary~A3.1.6]{johnstone02:_sketc_of_eleph}}]
  The \emph{distributivity laws} hold:
  \begin{align*}
    (r \cap s) t & \leq r t \cap s t \\
    t (r \cap s) & \leq t r \cap t s 
  \end{align*}
  If $t t^\circ \leq 1$ then the first is an identity, and dually if
  $t^\circ t \leq 1$ then the second is an identity.
\end{lem}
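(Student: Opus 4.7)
The forward direction of both inequalities is immediate from monotonicity of composition. Since $r \cap s \le r$ and $r \cap s \le s$, applying composition with $t$ on the right gives $(r \cap s)t \le rt$ and $(r \cap s)t \le st$, hence $(r \cap s)t \le rt \cap st$. The symmetric argument handles $t(r \cap s) \le tr \cap ts$. This requires only that composition is order-preserving, which is part of the definition of an allegory.

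For the reverse inequality under the additional hypothesis, the plan is to apply the modular law with a judicious substitution and then use the hypothesis to absorb a spurious factor. For the first identity, assume $tt^\circ \le 1$. Apply the modular law \eqref{eq:10} with the substitutions $s \mapsto r$, $r \mapsto t$, $t \mapsto st$ to obtain
\[
rt \cap st \;\le\; (r \cap st\,t^\circ)\,t .
\]
The hypothesis gives $st\,t^\circ = s(tt^\circ) \le s$, so $r \cap st\,t^\circ \le r \cap s$, and monotonicity of composition on the right by $t$ yields $rt \cap st \le (r \cap s)t$, which combined with the forward inequality gives equality. The second identity is entirely dual: assuming $t^\circ t \le 1$, apply \eqref{eq:11} with $s \mapsto t$, $t \mapsto ts$ to get $tr \cap ts \le t(r \cap t^\circ ts)$, and absorb $t^\circ t \le 1$ to conclude.

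There is no real obstacle here; the only thing to be careful about is choosing the correct form of the modular law (either \eqref{eq:10} or \eqref{eq:11}) so that the factor that needs to be absorbed sits on the correct side of the intersection to be swallowed by the hypothesis $tt^\circ \le 1$ or $t^\circ t \le 1$. The two halves of the lemma pair up naturally with the two forms of the modular law.
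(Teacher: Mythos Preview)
Your proof is correct. The paper does not give its own proof of this lemma, merely citing Johnstone's \emph{Elephant}; your argument is essentially the standard one found there, using the modular law with the natural substitutions and then absorbing the extra $tt^\circ$ (resp.\ $t^\circ t$) factor via the hypothesis.
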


\begin{dfn}
  A \emph{tabulation} of $r \colon X \rel Y$ is a span of maps $f
  \colon Z \map X$, $g \colon Z \map Y$ such that $r = g f^\circ$ and
  $f^\circ f \cap g^\circ g = 1$.  An allegory is \emph{tabular} if
  every morphism has a tabulation; it is \emph{pre-tabular} if every
  morphism is contained in one that has a tabulation.  Clearly, an
  allegory whose every hom-poset has a top element is pre-tabular if
  and only if each such element has a tabulation.
\end{dfn}

\begin{dfn}
  \label{dfn:12}
  A \emph{unit} in an allegory $\bA$ is an object $U$ such that $1_U$
  is the top element of $\bA(U,U)$ and for any $X$ there exists a
  morphism $p \colon X \rel U$ satisfying $p^\circ p \geq 1$.  An
  allegory is called \emph{unitary} if it has a unit.
\end{dfn}

\begin{lem}[{\cite[lemmas~A3.2.8,~A3.2.9]{johnstone02:_sketc_of_eleph}}]
  \label{lem:4}
  \leavevmode
  \begin{enumerate}
  \item If $\bA$ has a unit then its hom-posets have top
    elements.
  \item If $U$ is a unit in $\bA$, then it is the terminal
    object of $\Map(\bA)$.
  \end{enumerate}
\end{lem}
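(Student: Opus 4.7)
The plan is to prove the two parts in order, using in each case the two defining properties of a unit: that $1_U$ is top in $\bA(U,U)$, and that for every $X$ there is some $p \colon X \rel U$ with $p^\circ p \geq 1_X$.

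For (1), fix objects $X, Y$ and pick $p \colon X \rel U$ and $q \colon Y \rel U$ witnessing unitality, so $p^\circ p \geq 1_X$ and $q^\circ q \geq 1_Y$. I claim $q^\circ p \colon X \rel Y$ is the top element of $\bA(X,Y)$. For any $r \colon X \rel Y$, the composite $q r p^\circ$ lives in $\bA(U,U)$, so by unitality $q r p^\circ \leq 1_U$. Pre- and post-composing with $q^\circ$ and $p$, and using monotonicity and the inequalities $1 \leq p^\circ p$, $1 \leq q^\circ q$, we get
\begin{equation*}
  r \;\leq\; q^\circ q\, r\, p^\circ p \;\leq\; q^\circ (q r p^\circ) p \;\leq\; q^\circ \cdot 1_U \cdot p \;=\; q^\circ p
\end{equation*}
as required.

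For (2), I first exhibit a map $X \map U$. The morphism $p \colon X \rel U$ from the unit structure satisfies $p^\circ p \geq 1_X$; on the other side, $p p^\circ \in \bA(U,U)$, so $p p^\circ \leq 1_U$ because $1_U$ is top there. Hence $p \dashv p^\circ$, so $p$ is a map, and by Lemma~\ref{lem:3} its right adjoint must indeed be $p^\circ$. For uniqueness, let $f, g \colon X \map U$ be any two maps. Then $g f^\circ \in \bA(U,U)$, so $g f^\circ \leq 1_U$, and using $1_X \leq f^\circ f$ we compute
\begin{equation*}
  g \;=\; g \cdot 1_X \;\leq\; g \cdot f^\circ f \;=\; (g f^\circ)\, f \;\leq\; 1_U \cdot f \;=\; f.
\end{equation*}
The symmetric calculation gives $f \leq g$, so $f$ and $g$ are comparable in the local order on $\bA(X,U)$; since by Lemma~\ref{lem:3} the order on maps is discrete, $f = g$.

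The main obstacle, such as it is, is simply being careful about where each inequality lives: the modular law is not needed, only monotonicity of composition together with the two defining properties of a unit, and the real work is packaging this into the key calculations $r \leq q^\circ (q r p^\circ) p \leq q^\circ p$ for (1) and $g = (g f^\circ) f \leq f$ for (2). Discreteness of the order on maps (Lemma~\ref{lem:3}) is what converts the two-sided inequality in (2) into a genuine equality.
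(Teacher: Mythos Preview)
Your proof is correct and is exactly the standard argument; the paper itself does not give a proof but simply cites Johnstone, and what you have written is essentially what one finds there. One very small remark: in part~(2), once you have $g \leq f$ you can already conclude $f = g$ by the discreteness of Lemma~\ref{lem:3}, so the symmetric calculation is redundant (or conversely, having shown both $g \leq f$ and $f \leq g$, antisymmetry of the hom-poset suffices and you need not invoke discreteness at all).
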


\begin{lem}[{\cite[lemma~A3.2.4]{johnstone02:_sketc_of_eleph}}]
  Suppose that $r \colon X \rel Y$ has a tabulation $(f,g)$, and that
  $i \colon W \map X$, $j \colon W \map Y$ are maps.  Then $j i^\circ
  \leq r$ if and only if there exists a map $h$ such that $i = f h$
  and $j = g h$.  Such a $h$ is necessarily unique.
\end{lem}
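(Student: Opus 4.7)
My plan is to dispense with the easy direction first, then construct $h$ by an explicit formula and verify its properties using the allegorical calculus above. For sufficiency, if $i = fh$ and $j = gh$ for a map $h$, then $j i^\circ = g(hh^\circ) f^\circ \leq g f^\circ = r$, since $hh^\circ \leq 1$.

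For necessity, assume $ji^\circ \leq gf^\circ$, equivalently $ij^\circ \leq fg^\circ$. The natural candidate is $h := f^\circ i \cap g^\circ j$. To show $fh = i$ (the case $gh = j$ being symmetric), I would apply the dual form of the modular law, which is an equality by lemma~\ref{lem:6} since $ff^\circ \leq 1$: with $s = f$, $r = g^\circ j$, $t = i$ it gives $f(g^\circ j \cap f^\circ i) = fg^\circ j \cap i$. Now $i \leq ij^\circ j \leq fg^\circ j$ (using $1 \leq j^\circ j$ because $j$ is a map, together with $ij^\circ \leq fg^\circ$), so $fg^\circ j \cap i = i$.

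Next I would show $h$ is itself a map. The counit is the easy half: from $h \leq f^\circ i$ we get $hh^\circ \leq f^\circ(ii^\circ)f \leq f^\circ f$, and symmetrically $hh^\circ \leq g^\circ g$, whose meet is $1$ by the tabulation identity. The unit $1 \leq h^\circ h$ is the main obstacle, and is the only step that uses $f^\circ f \cap g^\circ g = 1$ in a serious way. Having just established $hh^\circ \leq 1$, the two distributivity laws apply (with $t = h^\circ$ and $t = h$ respectively), allowing me to pull $h^\circ$ and $h$ through the meet: $h^\circ(f^\circ f \cap g^\circ g)h = h^\circ f^\circ f h \cap h^\circ g^\circ g h = i^\circ i \cap j^\circ j$. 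Hence $h^\circ h = i^\circ i \cap j^\circ j \geq 1$, since $i$ and $j$ are maps.

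For uniqueness, any map $h'$ with $fh' = i$ satisfies $h' \leq f^\circ f \cdot h' = f^\circ i$ (using the unit of $f \dashv f^\circ$), and likewise $h' \leq g^\circ j$, so $h' \leq h$; by lemma~\ref{lem:3} the order on maps is discrete, which upgrades this inequality to $h = h'$.
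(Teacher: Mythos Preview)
Your proof is correct and is essentially the standard argument from Johnstone's \emph{Elephant} (lemma~A3.2.4), which is precisely what the paper cites in lieu of giving its own proof. The candidate $h = f^\circ i \cap g^\circ j$, the use of the modular law as an equality for maps, and the appeal to discreteness of maps for uniqueness are exactly the expected steps.
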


\begin{cor}
  \label{cor:7}
  If the top element of $\bA(X,Y)$ has a tabulation $(f,g)$,
  then that span is a product cone.  Hence, if $\bA$ is a
  unitary pre-tabular allegory, then $\Map(\bA)$ has finite
  products.
\end{cor}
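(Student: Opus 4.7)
The proof is essentially a direct application of the preceding lemma (the one characterising factorisations through a tabulation), together with Lemma~\ref{lem:4}. Here is the plan.

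For the first claim, suppose $(f,g)$ is a tabulation of $\top_{X,Y} \in \bA(X,Y)$, with $f \colon Z \map X$ and $g \colon Z \map Y$. To show this is a product cone in $\Map(\bA)$, I take an arbitrary cone, that is, a pair of maps $i \colon W \map X$ and $j \colon W \map Y$, and exhibit a unique factorising map $h \colon W \map Z$ with $i = fh$ and $j = gh$. The key observation is that the hypothesis $j i^\circ \leq \top = gf^\circ$ of the preceding lemma is satisfied automatically, because $\top$ really is the top element. So the lemma yields the desired $h$, unique as a map. Because the ordering on maps is discrete (Lemma~\ref{lem:3}), there is nothing further to check.

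For the second claim, assume $\bA$ is unitary and pre-tabular. By Lemma~\ref{lem:4}(1), each hom-poset $\bA(X,Y)$ has a top element, and by the remark following the definition of pre-tabularity, each of these top elements then has a tabulation. Applying the first part of the corollary, the resulting span is a binary product of $X$ and $Y$ in $\Map(\bA)$. A terminal object is supplied by Lemma~\ref{lem:4}(2), which says that the unit $U$ is terminal in $\Map(\bA)$. Hence $\Map(\bA)$ has finite products.

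There is no real obstacle here; the whole point is to recognise that the universal property of a tabulation collapses, when the tabulated morphism is $\top$, to the universal property of a product in the subcategory of maps, and that the unit supplies the empty product. The only thing to be careful about is that ``cone'' in $\Map(\bA)$ means a pair of \emph{maps}, so that the tabulation lemma applies unchanged rather than requiring some extension to arbitrary morphisms.
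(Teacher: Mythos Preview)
Your proposal is correct and is precisely the argument the paper intends: the corollary is stated without proof because it follows immediately from the preceding lemma (factorisations through a tabulation) together with Lemma~\ref{lem:4}, exactly as you outline. The only superfluous remark is the appeal to discreteness of the ordering on maps, since uniqueness of $h$ is already part of the lemma's conclusion.
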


\begin{prp}[{\cite[theorem~A3.2.10]{johnstone02:_sketc_of_eleph}}]
  \label{prp:11}
  An allegory $\bA$ is unitary and tabular if and only if $\Map(\bA)$
  is a regular category.  In that case $\bA \eqv \Rel(\Map(\bA))$.  If
  $\C$ is a regular category, then $\Rel(\C)$ is a unitary tabular
  allegory, and $\C \eqv \Map(\Rel(\C))$.
\end{prp}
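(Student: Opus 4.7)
The plan is to prove the two directions separately, then use tabulations to establish the stated equivalences. I expect the forward direction, and in particular the construction of pullback-stable images in $\Map(\bA)$, to be the main technical work; the reverse direction and the equivalences are then largely bookkeeping.

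\textbf{Forward direction.} Assume $\bA$ is unitary and tabular, and argue that $\Map(\bA)$ is regular. First, $\Map(\bA)$ has finite products by Corollary~\ref{cor:7} together with Lemma~\ref{lem:4}(2), since the unit $U$ provides a terminal object and tabulations of top elements $X \times Y \rel X \times Y$ give binary products. For equalizers of $f, g \colon X \rightrightarrows Y$ in $\Map(\bA)$, I would tabulate the coreflexive $1_X \cap f^\circ g \colon X \rel X$; because this relation is a sub-identity and is symmetric (using $f^\circ g \cap 1 = (g^\circ f \cap 1)^\circ$ together with the modular law), its tabulation $(e, e') \colon E \rightrightarrows X$ forces $e = e'$ and makes $e$ a monomorphism in $\Map(\bA)$ that equalizes $f$ and $g$ (the factorization property follows from the defining property of tabulation and Lemma~\ref{lem:4}). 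This yields all finite limits.

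For images: given a map $f \colon X \map Y$, I would tabulate the symmetric coreflexive $1_Y \cap f f^\circ \colon Y \rel Y$ to obtain $m \colon I \map Y$ with $m^\circ m = 1$ (so $m$ is monic in $\Map(\bA)$), and then use the modular law to exhibit a unique map $e \colon X \map I$ with $m e = f$. The factorization $(e, m)$ will turn out to be the image factorization: if $f = m' e'$ is any other mono--map factorization, one checks that $m' m'^\circ \geq f f^\circ$, so $m$ is contained in $m'$ as subobjects of $Y$. Pullback-stability of these images in $\Map(\bA)$ is the most delicate point; here I would argue directly in $\bA$, using the modular law and the fact that pullbacks in $\Map(\bA)$ are computed by tabulating composites $g^\circ h$, to show that pulling back the coreflexive $1_Y \cap ff^\circ$ along a map corresponds to the coreflexive associated with the pulled-back map. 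This is the step where I expect to spend the most effort verifying diagrams.

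\textbf{Reverse direction and the equivalences.} Conversely, if $\C$ is regular, then $\Rel(\C)$ is a unitary tabular allegory: the involution is given by swapping the legs of a relation; the modular law follows from pullback-stability of images in $\C$ (see \cite[lemma~A3.2.1]{johnstone02:_sketc_of_eleph}); the terminal object of $\C$ is a unit; and any relation $R \mon X \times Y$ is tabulated by its two projections, which are maps since $R$ is a subobject. For the equivalence $\C \eqv \Map(\Rel(\C))$, I would send $f \colon X \to Y$ to its graph $\tup{1, f} \colon X \rel Y$ and check that the maps in $\Rel(\C)$ are precisely the functional relations, which by standard regular-category arguments are graphs of unique morphisms in $\C$.

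\textbf{The equivalence $\bA \eqv \Rel(\Map(\bA))$.} In the unitary tabular case, every morphism $r \colon X \rel Y$ of $\bA$ admits a tabulation $(f, g) \colon T \rightrightarrows X, Y$ with $r = g f^\circ$, and this presents $r$ as the relation in $\Map(\bA)$ represented by the jointly monic span $(f, g)$ (joint monicity from $f^\circ f \cap g^\circ g = 1$ together with the modular law). The assignment $r \mapsto (f,g)$ is full, faithful and essentially surjective because tabulations are unique up to isomorphism of maps and every jointly monic span of maps arises as the tabulation of its associated relation. Functoriality and compatibility with the allegorical structure ($\cap$, $(-)^\circ$, composition) are then routine verifications using the defining equations of tabulations and the modular law; I expect no obstacle here once the image construction is in hand.
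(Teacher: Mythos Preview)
The paper does not give its own proof of this proposition: it is stated with a citation to \cite[theorem~A3.2.10]{johnstone02:_sketc_of_eleph} and no argument is supplied. Your proposal is a reasonable outline of the standard proof as it appears in Johnstone, and the lemmas you invoke (Corollary~\ref{cor:7}, Lemma~\ref{lem:4}) are exactly the ones the paper has recalled in preparation for stating the result.

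One small remark on your sketch: when constructing the image of a map $f$, you propose to tabulate $1_Y \cap f f^\circ$, but since $f$ is a map you already have $f f^\circ \leq 1_Y$, so this is just $f f^\circ$ itself; the intersection is redundant. Similarly, your claim that pullback-stability is ``the most delicate point'' is accurate and is indeed where Johnstone's argument does the real work, but you have not actually sketched it --- you have only said you would ``argue directly in $\bA$''. If you were to write this up in full, that is the step that would need to be made concrete (the standard route is to show that a tabulation of $g^\circ f$ computes the pullback of $f$ along $g$ in $\Map(\bA)$, and then verify that image factorizations are preserved under this construction).
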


Next we recall the theory of idempotents in allegories and the
construction of the universal allegory in which a given class of
idempotents splits.

\begin{dfn}
  \label{dfn:35}
  An endomorphism $r \colon X \rel X$ in an allegory is called
  \begin{itemize}
  \item \emph{reflexive} if $1 \leq r$;
  \item \emph{transitive} if $r r \leq r$;
  \item \emph{symmetric} if $r^\circ = r$;
  \item \emph{coreflexive} if $r \leq 1$;
  \item \emph{idempotent} if $r r = r$.
  \end{itemize}
  A morphism that is reflexive, transitive and symmetric is called an
  \emph{equivalence}.
\end{dfn}

\begin{lem}[{\cite[lemma~A3.3.2]{johnstone02:_sketc_of_eleph}}]
  \label{lem:8}
  A symmetric transitive morphism is idempotent.  A coreflexive
  morphism is symmetric and idempotent.
\end{lem}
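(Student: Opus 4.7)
The plan is to chase inequalities using the modular law (\ref{eq:10}) and its dual (\ref{eq:11}), exploiting the symmetry or coreflexivity to turn a $(-)^\circ$ that appears in the modular law into something useful.

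For the first claim, transitivity immediately gives $rr \le r$, so the work is to show the reverse inequality $r \le rr$. Starting from $r = r\cdot 1 \cap r$ and applying the dual modular law with $s := r$, $t := r$, I obtain $r \le r(1 \cap r^\circ r)$; since $r$ is symmetric this is $r \le r(1 \cap r^2)$. Now monotonicity of composition gives $r(1 \cap r^2) \le r \cdot r^2 = r^3$, and two applications of transitivity collapse $r^3 \le r^2 \le r$. Combining, $r \le r^3 \le r^2$, so $rr = r$.

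For the second claim, I first note that $r \le 1$ implies $r^\circ \le 1^\circ = 1$. Applying the modular law (\ref{eq:10}) with $s := 1$ and $t := 1$ to the morphism $r$ itself yields $r \cap 1 \le (1 \cap r^\circ) r$, which simplifies to $r \le r^\circ r$ using $r \le 1$ and $r^\circ \le 1$. The reverse inequality $r^\circ r \le 1 \cdot r = r$ is immediate from $r^\circ \le 1$, so $r = r^\circ r$. Taking $(-)^\circ$ of both sides and using that the involution is contravariant and strict, I get $r^\circ = r^\circ r = r$, i.e.\ $r$ is symmetric. Substituting $r^\circ = r$ back into $r = r^\circ r$ gives $r = rr$, so $r$ is idempotent as well.

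I do not expect a serious obstacle here: both arguments are short modular-law manipulations of the kind already used in lemma~\ref{lem:6}. The one small subtlety to be careful about is that composition in an allegory is only laxly compatible with finite meets (as noted just before lemma~\ref{lem:6}), so I must avoid rewriting $r(1 \cap r^2)$ as $r \cap r^3$ or similar; only monotonicity of composition, not distributivity, is used above.
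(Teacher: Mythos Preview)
Your proof is correct. The paper does not give its own proof of this lemma --- it simply cites \cite[lemma~A3.3.2]{johnstone02:_sketc_of_eleph} --- so there is nothing in the paper to compare against. Your argument is essentially the standard one found in Johnstone.

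One minor streamlining is possible in the first claim: from $r \le r(1 \cap r^2)$ you can use transitivity directly as $1 \cap r^2 \le r^2 \le r$ to get $r(1 \cap r^2) \le r r = r^2$, avoiding the detour through $r^3$. But your version is perfectly valid, and your closing remark about being careful not to assume distributivity of composition over $\cap$ is well taken.
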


\begin{dfn}
  \label{dfn:16}
  An idempotent $r \colon X \rel X$ \emph{splits} if there is an
  object $X_s$ and a pair of morphisms $s \colon X_s \rel X$, $s'
  \colon X \rel X_s$ such that $s s' = r$ and $s' s = 1$.
\end{dfn}

\begin{lem}[{\cite[lemma~A3.3.3]{johnstone02:_sketc_of_eleph}}]
  \label{lem:7}
  If a symmetric idempotent $r \colon X \rel X$ splits as $r = s s'$,
  then $s' = s^\circ$.  If $r$ is reflexive then $s'$ is a map; if it
  is coreflexive then $s$ is a map.
\end{lem}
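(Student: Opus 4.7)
The plan is to use the modular law together with the symmetry $r^\circ = r$ to exhibit two factorisations of $r$ that together force $s' = s^\circ$; the two adjointness statements then fall out as one-line calculations.

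First I would unfold $r^\circ = r$ as $ss' = s'^\circ s^\circ$ and record the routine identities $s' r = s'$, $r s = s$ and $s^\circ r = s^\circ$, all immediate consequences of $s's = 1$ (together with $s^\circ s'^\circ = (s's)^\circ = 1$ for the last). The key, and only non-routine, step is then to regard the tautology $ss' = s'^\circ s^\circ$ as the intersection $s'^\circ s^\circ \cap ss' = r$ and apply to it both the modular law \eqref{eq:10} and its dual \eqref{eq:11}. Tidying the right-hand sides with the identities just listed, these two applications will yield
\begin{equation*}
  r \;=\; (s \cap s'^\circ)\,s^\circ \;=\; s'^\circ\,(s^\circ \cap s').
\end{equation*}

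Once these factorisations are in hand the rest is forced. From the first, $s' = s' r = s'(s \cap s'^\circ)\,s^\circ$; since $(s \cap s'^\circ) \leq s$, the middle piece satisfies $s'(s \cap s'^\circ) \leq s's = 1$, so $s' \leq s^\circ$. Dually, from the second, $s = r s = s'^\circ(s^\circ \cap s')\,s$ with the trailing piece bounded by $s's = 1$, whence $s \leq s'^\circ$; taking involutes yields $s^\circ \leq s'$, and combining gives $s' = s^\circ$.

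For the two adjointness claims, substituting $s' = s^\circ$ into the map conditions for $s'$ turns them into $s's'^\circ = s^\circ s = s's = 1$ (automatic) and $s'^\circ s' = ss^\circ = ss' = r$, so $s'$ is a map precisely when $r$ is reflexive; the mirror calculation gives $ss^\circ = r$ and $s^\circ s = 1$, so $s$ is a map precisely when $r$ is coreflexive. The only genuine obstacle is spotting the right substitutions in \eqref{eq:10} and \eqref{eq:11}; once the pair of factorisations of $r$ is obtained, everything else is bookkeeping.
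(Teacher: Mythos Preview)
Your argument is correct. The paper itself does not supply a proof of this lemma --- it simply cites \cite[lemma~A3.3.3]{johnstone02:_sketc_of_eleph} --- so there is no in-paper proof to compare against. Your use of the modular law and its dual applied to the tautological intersection $r = s'^\circ s^\circ \cap ss'$ to obtain the two factorisations $r = (s \cap s'^\circ)s^\circ = s'^\circ(s^\circ \cap s')$ is sound, and the subsequent squeezing argument (multiplying by $s'$ on the left and $s$ on the right respectively, then using $s's = 1$) cleanly yields $s' \leq s^\circ$ and $s^\circ \leq s'$. The adjointness verifications at the end are also fine: once $s' = s^\circ$, the unit/counit inequalities for $s' \dashv s$ reduce exactly to $1 \leq r$ and the automatic $s's = 1$, while those for $s \dashv s'$ reduce to $r \leq 1$ and the same automatic identity.

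One small expository point: in the final paragraph your chain $s's'^\circ = s^\circ s = s's = 1$ silently toggles between the identities $s' = s^\circ$ and $s'^\circ = s$ in a way that may read as circular on first pass. It would be clearer to write the map conditions for $s'$ directly as $s's'^\circ = s's = 1$ and $s'^\circ s' = ss' = r$, and similarly for $s$.
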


\begin{dfn}
  An allegory is \emph{effective} if all of its equivalences split.
\end{dfn}

\begin{prp}[{\cite[prop.~A3.3.6]{johnstone02:_sketc_of_eleph}}]
  \label{prp:12}
  A unitary tabular allegory $\bA$ is effective if and only if
  $\Map(\bA)$ is (Barr) exact.  A regular category $\C$ is exact if
  and only if $\Rel(\C)$ is effective.
\end{prp}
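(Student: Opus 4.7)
The plan is to exploit Proposition~\ref{prp:11}: for any unitary tabular allegory $\bA$ we have $\bA \eqv \Rel(\Map(\bA))$, and for any regular category $\C$ we have $\C \eqv \Map(\Rel(\C))$. Under these equivalences, the two halves of the proposition are translations of one another, so it suffices to prove the first. Recall that a regular category is exact precisely when every internal equivalence relation arises as the kernel pair of some morphism (equivalently, of its own coequalizer).

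The core of the argument is a dictionary between equivalences in $\bA$ and equivalence relations in $\Map(\bA)$. Given an equivalence relation $(d_0,d_1) \colon E \to X \times X$ in $\Map(\bA)$, set $r := d_1 d_0^\circ$. Using the tabulation identity $d_0^\circ d_0 \cap d_1^\circ d_1 = 1$ together with the modular law \eqref{eq:10} and the reflexivity, symmetry and transitivity maps of the equivalence relation, a routine calculation shows $1 \leq r$, $r^\circ = r$ and $rr \leq r$; so $r$ is an equivalence in the allegorical sense of Definition~\ref{dfn:35}. Conversely, the tabulation of any allegorical equivalence $r$ gives an equivalence relation in $\Map(\bA)$ (using Corollary~\ref{cor:7} for the factorisation through $X \times X$), and by Proposition~\ref{prp:11} these two constructions are mutually inverse up to equivalence.

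Next I would show that splittings of $r$ correspond exactly to effective quotients of the corresponding equivalence relation. If $r = ss^\circ$ with $s^\circ s = 1$, then by Lemma~\ref{lem:7} (applied to the reflexive symmetric idempotent $r$) the morphism $s^\circ \colon X \map X_s$ is a map. I would verify that $s^\circ$ coequalises $d_0,d_1$ in $\Map(\bA)$: using the adjunction $s^\circ \dashv s$ this reduces to $d_0 \leq s s^\circ d_1 = r d_1$, which follows from $rd_1 \geq d_1 d_0^\circ d_1 \geq d_0$ via symmetry of $r$ and the tabulation identity. Uniqueness of the factorisation, and the fact that the kernel pair of $s^\circ$ recovers $(d_0,d_1)$ (tabulate $(s^\circ)^\circ s^\circ = ss^\circ = r$), complete this direction. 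Conversely, given an effective quotient $q \colon X \map Y$ of $(d_0,d_1)$ in $\Map(\bA)$, $q$ is a regular epimorphism, which under prp:11 translates to $qq^\circ = 1_Y$; then $r = q^\circ q$ is a splitting of the required form.

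With the dictionary in place, effectivity of $\bA$ (every equivalence splits) is exactly exactness of $\Map(\bA)$ (every equivalence relation has an effective quotient), and the second statement follows by applying the first to $\Rel(\C)$ and invoking $\C \eqv \Map(\Rel(\C))$. The main obstacle is the verification that the coequalising property and the kernel-pair property do hold for $s^\circ$; both amount to diagrammatic manipulations in $\bA$ using only the modular law, the tabulation identity and Lemmas~\ref{lem:3}--\ref{lem:7}, but they must be carried out carefully because the allegorical and categorical universal properties are formally distinct.
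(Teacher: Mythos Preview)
The paper does not supply its own proof of this proposition; it is simply cited from Johnstone's \emph{Sketches of an Elephant} (A3.3.6). So there is nothing in the paper to compare against directly.

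Your argument is essentially the standard one and is correct. The reduction of the two halves to one another via Proposition~\ref{prp:11} is exactly right, and your dictionary between allegorical equivalences $r = d_1 d_0^\circ$ and internal equivalence relations $(d_0,d_1)$ in $\Map(\bA)$ is the crux of the matter. The verification that $s^\circ$ coequalises $d_0,d_1$ is fine: your chain $r d_1 = d_1 d_0^\circ d_1 = d_0 d_1^\circ d_1 \geq d_0$ (using symmetry for the middle equality and $d_1^\circ d_1 \geq d_0^\circ d_0 \cap d_1^\circ d_1 = 1$ for the inequality) is exactly what is needed, and then Lemma~\ref{lem:3} upgrades the inequality between maps to an equality. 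For the converse direction, the one step you might make more explicit is why a coequaliser $q$ in $\Map(\bA)$ satisfies $q q^\circ = 1$: this is because in a tabular allegory a map $q$ has $q q^\circ = 1$ precisely when $q$ is a cover (its image is the whole codomain), and coequalisers in a regular category are covers. With that in hand, $q^\circ q$ tabulates to the kernel pair, which by effectiveness of the quotient recovers $(d_0,d_1)$, so $r = q^\circ q$ and the splitting is $(q^\circ, q)$ as you say.
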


\begin{dfn}
  \label{dfn:17}
  Let $\bA$ be an allegory and $S$ a class of symmetric
  idempotents in $\bA$ that includes the identities.  Then the
  \emph{splitting} of $S$ is the allegory $\bA[\check S]$ with
  objects the elements $s \colon X \rel X$ of $S$ and morphisms $s
  \rel s'$ given by morphisms $m \colon X \rel X'$ such that $m s = m
  = s' m$.
\end{dfn}

\begin{prp}[{\cite[theorem~A3.2.10]{johnstone02:_sketc_of_eleph}}]
  \label{prp:9}
  $\bA[\check S]$ is an allegory, and there is a functor
  $\bA \to \bA[\check S]$, which preserves the unit, if
  $\bA$ has one.
\end{prp}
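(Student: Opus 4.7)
The plan is to verify, in order, that $\bA[\check S]$ carries all the structure of an allegory, and then describe the embedding.

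First I would define the composition and identities. For composable morphisms $m\colon s \rel s'$ and $n\colon s' \rel s''$ (meaning $ms=m=s'm$ and $ns'=n=s''n$), their composite is $nm$ in $\bA$, and this again satisfies the two equations: $(nm)s = n(ms) = nm$ and $s''(nm) = (s''n)m = nm$. The identity on an object $s$ is $s$ itself, which works because $ss=s$ (idempotence) and because any $m\colon s \rel s'$ satisfies $ms = m = s'm$. Associativity and unit laws then hold on the nose, inherited from $\bA$. The local order on $\bA[\check S](s,s')$ is simply the restriction of the order on $\bA(X,X')$.

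Next I would handle local meets and the involution, which is where the main subtlety lies. Given parallel $m,n\colon s \rel s'$, I claim $m \cap n$ (computed in $\bA$) is again a morphism $s \rel s'$ in $\bA[\check S]$. By distributivity one direction is immediate: $(m\cap n)s \leq ms \cap ns = m\cap n$. For the reverse, since $m=ms$, the modular law \eqref{eq:10} and symmetry of $s$ give
\begin{equation*}
  m \cap n = ms \cap n \leq (m \cap ns^\circ)s = (m \cap ns)s \leq (m\cap n)s.
\end{equation*}
The dual calculation with $s'$ on the left, using symmetry of $s'$ and the dual modular law, shows $s'(m\cap n) = m\cap n$. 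This is the step I expect to be the main obstacle, since it is the one place where the symmetry of the splitting class $S$ plays an essential role. For the involution, one sends $m\colon s\rel s'$ to $m^\circ$; applying $(-)^\circ$ to $ms=m=s'm$ and using $s^\circ = s$, $(s')^\circ = s'$ yields $sm^\circ = m^\circ = m^\circ s'$, so $m^\circ\colon s' \rel s$ lives in $\bA[\check S]$. The remaining equational laws for $(-)^\circ$ hold immediately because composition, meet and involution are all computed in $\bA$.

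With meets and involution in hand, the modular law for $\bA[\check S]$ is inherited directly from $\bA$, because the ambient operations agree with those used in its statement.

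Finally, for the functor, I would define $\bA \to \bA[\check S]$ on objects by $X \mapsto 1_X$ (which belongs to $S$ by hypothesis) and on morphisms by $m \mapsto m$; functoriality and preservation of $\cap$ and $(-)^\circ$ are obvious. If $\bA$ has a unit $U$, then $1_U$ is sent to the object $1_U$ of $\bA[\check S]$. Since $\bA[\check S](1_U,1_U) = \bA(U,U)$, the identity $1_U$ remains the top element there. For an arbitrary object $s\colon X \rel X$ of $\bA[\check S]$, choose $p\colon X \rel U$ with $p^\circ p \geq 1_X$ and take $q = ps\colon s \rel 1_U$ (clearly $q \cdot s = q = 1_U \cdot q$). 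Then
\begin{equation*}
  q^\circ q \;=\; s\, p^\circ p\, s \;\geq\; s \cdot 1_X \cdot s \;=\; s,
\end{equation*}
and $s$ is the identity on the object $s$, so $1_U$ remains a unit in $\bA[\check S]$.
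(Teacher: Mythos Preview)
Your proof is correct and complete. The paper itself does not supply a proof of this proposition, merely citing Johnstone's \emph{Sketches of an Elephant}, theorem~A3.2.10; the only further content in the paper is remark~\ref{rem:2}, which records exactly the functor you describe (sending $X$ to $1_X$ and morphisms to themselves) and observes that it is fully faithful. Your argument fills in the standard verification that Johnstone gives, and in particular correctly identifies the one genuinely non-trivial step: closure of the hom-posets under $\cap$, which requires the modular law together with the symmetry of the idempotents in $S$.
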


\begin{rem}
  \label{rem:2}
  The functor $\bA \to \bA[\check S]$ sends an object $X$
  to $1_X$ and a morphism $X \rel Y$ to the same morphism considered
  as a morphism $1_X \rel 1_Y$ of idempotents.  It is thus fully
  faithful.
\end{rem}

\begin{prp}[{\cite[prop.~A3.3.6]{johnstone02:_sketc_of_eleph}}]
  \label{prp:13}
  An allegory is (unitary and) tabular if and only if it is (unitary
  and) pre-tabular and all of its coreflexives split.  If $\bA$
  is (unitary and) pre-tabular and $\mathrm{crf}$ is the class of
  coreflexives in $\bA$, then the functor $\bA \to
  \bA[\check{\mathrm{crf}}]$ is universal from $\bA$ to
  (unitary and) tabular allegories.
\end{prp}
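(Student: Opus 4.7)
The plan is to prove two things: first, the characterization that an allegory is tabular iff it is pre-tabular and all coreflexives split; second, the universal property of the coreflexive-splitting. Throughout, the unitary case follows from the non-unitary one because by Prop.~\ref{prp:9} the functor $\bA \to \bA[\check{\mathrm{crf}}]$ preserves the unit, and the unit in $\bA$ corresponds under the splitting to the unit in $\bA[\check{\mathrm{crf}}]$.

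For the forward direction of the characterization, I first note that tabular trivially implies pre-tabular. To see that coreflexives split, let $r \leq 1_X$ and let $(f,g) \colon Z \rightrightarrows X$ tabulate $r$. Then $gf^\circ = r \leq 1_X$, so pre-composing with $f$ and using $1 \leq f^\circ f$ gives $g \leq f$. By Lemma~\ref{lem:3} the ordering on maps is discrete, so $g = f$. Then $r = ff^\circ$ with $f^\circ f = f^\circ f \cap g^\circ g = 1_Z$, which is exactly a splitting of $r$ in the sense of Def.~\ref{dfn:16} (with $s = f$, $s' = f^\circ$, noting that Lemma~\ref{lem:7} already predicts $s' = s^\circ$ and $s$ a map).

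For the backward direction, let $r \colon X \rel Y$ be arbitrary and choose $r' \geq r$ with tabulation $(f,g) \colon Z \rightrightarrows X, Y$. Form the morphism $c = (g^\circ r f) \cap 1_Z$ on $Z$; this is coreflexive by construction, hence by Lemma~\ref{lem:8} symmetric idempotent, and by hypothesis splits as $c = i i^\circ$ with $i^\circ i = 1_W$ and $i$ a map (Lemma~\ref{lem:7}). I will then verify that $(fi, gi)$ tabulates $r$. One inclusion $gi(fi)^\circ = g(g^\circ rf \cap 1_Z) f^\circ \leq r$ follows by applying the modular law (which is an equality here since $g$ is a map, by Lemma~\ref{lem:6}) and using $gg^\circ \leq 1$ and $r \leq gf^\circ$. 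For the reverse inclusion, applying the dual modular law in the form $sr \cap t \leq s(r \cap s^\circ t)$ with $s = g$, $r = 1_Z$, $t = rf$ gives $g \cap rf \leq g(g^\circ rf \cap 1_Z)$, and combining with the modular-law consequence $r \leq (g \cap rf) f^\circ$ (obtained from $r \leq gf^\circ$) yields $r \leq gi(fi)^\circ$. The tabulation identity $(fi)^\circ(fi) \cap (gi)^\circ(gi) = 1_W$ then follows by using that pre-composition by $i^\circ$ and post-composition by $i$ each preserve binary meets (again Lemma~\ref{lem:6}), reducing the expression to $i^\circ(f^\circ f \cap g^\circ g) i = i^\circ \cdot 1_Z \cdot i = 1_W$. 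This modular-law computation is the main obstacle; every step is routine but the combination is delicate.

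For the universal property, suppose $\bA$ is pre-tabular and let $F \colon \bA \to \bB$ be an allegory functor with $\bB$ tabular. By the characterization just proved, every coreflexive in $\bB$ splits, and splittings are essentially unique. Define $\bar F \colon \bA[\check{\mathrm{crf}}] \to \bB$ by sending a coreflexive $c$ in $\bA$ to a chosen splitting of $F(c)$ in $\bB$, and a morphism $m \colon c \rel c'$ of $\bA[\check{\mathrm{crf}}]$ (that is, $m \in \bA$ with $mc = m = c'm$) to the induced morphism between splittings; functoriality and preservation of $\cap$, $(-)^\circ$ and adjoints are all straightforward consequences of essential uniqueness of splittings. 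That $\bar F$ composed with $\bA \to \bA[\check{\mathrm{crf}}]$ recovers $F$ follows from Remark~\ref{rem:2}: objects $X$ of $\bA$ go to $1_X$, and any splitting of $F(1_X) = 1_{FX}$ is canonically isomorphic to $FX$. Uniqueness of the extension up to natural isomorphism is forced by the fact that the functor $\bA \to \bA[\check{\mathrm{crf}}]$ is fully faithful and every object of the latter must be sent to some splitting of the image of the corresponding coreflexive.
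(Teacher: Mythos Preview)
The paper does not give its own proof of this proposition; it is simply cited from Johnstone's \emph{Sketches of an Elephant}, prop.~A3.3.6. Your argument is essentially the standard one found there, and the main computations are correct: the forward direction is fine, and in the backward direction your use of the modular laws to show $r = (gi)(fi)^\circ$ and the distributivity identities to verify $(fi)^\circ(fi) \cap (gi)^\circ(gi) = 1_W$ are both sound. (Your justification of the inclusion $g(g^\circ r f \cap 1_Z)f^\circ \leq r$ is slightly over-elaborate: it follows immediately from $g^\circ r f \cap 1_Z \leq g^\circ r f$ together with $gg^\circ \leq 1$ and $ff^\circ \leq 1$, no modular law or $r \leq gf^\circ$ needed.)

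There is one genuine omission in your treatment of the universal property: you never verify that $\bA[\check{\mathrm{crf}}]$ is itself tabular, which is required for the phrase ``universal from $\bA$ to tabular allegories'' to make sense. This is not hard --- any coreflexive in $\bA[\check{\mathrm{crf}}]$ on an object $c \colon X \rel X$ is a morphism $d \leq c \leq 1_X$, hence already a coreflexive in $\bA$, hence an object of $\bA[\check{\mathrm{crf}}]$ over which $d$ splits; and pre-tabularity transfers by composing a tabulation in $\bA$ with the splitting maps $c \rel 1_X$ --- but it should be stated.
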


\begin{prp}[{\cite[prop.~A3.3.9]{johnstone02:_sketc_of_eleph}}]
  \label{prp:14}
  If $\bA$ is any allegory, and $\mathrm{eqv}$ is its class of
  equivalences, then $\bA[\check{\mathrm{eqv}}]$ is effective,
  and tabular if $\bA$ is, and the functor $\bA \to
  \bA[\check{\mathrm{eqv}}]$ is universal from $\bA$ to
  effective allegories.
\end{prp}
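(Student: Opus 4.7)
My plan is to verify the three claims (effectiveness, tabularity when $\bA$ is tabular, and universality) in turn, adapting the pattern of proposition~\ref{prp:13}. A useful preliminary observation is that the meet of two equivalences is again an equivalence: reflexivity and symmetry are immediate, while transitivity follows from the distributivity laws, since $(r \cap r')(r \cap r') \leq r(r \cap r') \cap r'(r \cap r') \leq rr \cap r'r' = r \cap r'$.

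For effectiveness, I would first observe that an endomorphism $m \colon s \rel s$ in $\bA[\check{\mathrm{eqv}}]$ is an equivalence there precisely when $m$ is an equivalence in $\bA$ with $s \leq m$. Indeed, reflexivity in $\bA[\check{\mathrm{eqv}}]$ transcribes to $s \leq m$, symmetry is preserved under $(-)^\circ$, and transitivity $mm \leq m$ combined with $mm \geq ms = m$ (from $s \leq m$) forces $mm = m$; since $1 \leq s \leq m$, $m$ itself is an equivalence in $\bA$. Then $m$ splits in $\bA[\check{\mathrm{eqv}}]$ through the object $m$, with both legs given by the morphism $m$ regarded as elements of $\bA[\check{\mathrm{eqv}}](s,m)$ and $\bA[\check{\mathrm{eqv}}](m,s)$; the equations $ms = sm = mm = m$ give well-definedness and the splitting identities.

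For tabularity, given $m \colon s \rel s'$ in $\bA[\check{\mathrm{eqv}}]$ (so $ms = m = s'm$ in $\bA$) and a tabulation $m = gf^\circ$ in $\bA$ with $f^\circ f \cap g^\circ g = 1_Z$, I would set
\[ e = f^\circ s f \cap g^\circ s' g, \]
an equivalence on $Z$ by the preliminary observation. I then claim that $sf$ and $s'g$ give maps $e \rel s$ and $e \rel s'$ in $\bA[\check{\mathrm{eqv}}]$ that tabulate $m$. The composite recovers $m$ since $(s'g)(sf)^\circ = s'(gf^\circ)s = s'ms = m$, and the meet of unit composites $(sf)^\circ(sf) \cap (s'g)^\circ(s'g)$ is just $e$. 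For the adjunction $sf \dashv f^\circ s$ in $\bA[\check{\mathrm{eqv}}]$ one checks the unit $e \leq f^\circ s f = (sf)^\circ(sf)$ and the counit $(sf)(f^\circ s) = s(ff^\circ)s \leq s = 1_s$ (using $ff^\circ \leq 1$ since $f$ is a map), and dually for $s'g$.

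For universality, let $\bB$ be an effective allegory and $F \colon \bA \to \bB$ an allegory functor. Since $F$ preserves identities, $\cap$, $(-)^\circ$ and order, each equivalence $Fs$ splits by effectiveness as $Fs = \sigma_s \sigma_s^\circ$ with $\sigma_s^\circ \sigma_s = 1$ (lemma~\ref{lem:7}). I would define $\tilde F \colon \bA[\check{\mathrm{eqv}}] \to \bB$ by $\tilde F(s) = \operatorname{dom}\sigma_s$ on objects and $\tilde F(m) = \sigma_{s'}^\circ \cdot Fm \cdot \sigma_s$ on morphisms; using the factorization $Fm = \sigma_{s'}\tilde F(m)\sigma_s^\circ$ (valid because $Fs' \cdot Fm \cdot Fs = Fm$), functoriality, preservation of $\cap$ and $(-)^\circ$, and agreement with $F$ along the inclusion $\bA \to \bA[\check{\mathrm{eqv}}]$ all follow, while uniqueness of $\tilde F$ up to isomorphism comes from the uniqueness of splittings in $\bB$. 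The main obstacle is the tabularity step: identifying the right equivalence $e$ and verifying that $(sf, s'g)$ tabulates $m$ in $\bA[\check{\mathrm{eqv}}]$ requires careful bookkeeping with the modular law and the compatibility identities $ms = m = s'm$.
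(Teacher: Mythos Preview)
The paper does not supply its own proof of this proposition; it is stated with a citation to Johnstone's \emph{Elephant} and left at that.  There is therefore nothing in the paper to compare your argument against directly.

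That said, your outline is correct and is essentially the standard argument one finds in Johnstone.  The three parts --- splitting an equivalence $m$ on $s$ through the object $m$ itself, tabulating $m \colon s \rel s'$ via the equivalence $e = f^\circ s f \cap g^\circ s' g$ on the apex of a tabulation in~$\bA$, and extending $F$ along chosen splittings of the $Fs$ --- are exactly the right moves.  One small point worth flagging: in the universality step, showing that $\tilde F$ preserves $\cap$ is slightly more delicate than your sketch suggests, since $\sigma_{s'}^\circ(-)\sigma_s$ does not obviously distribute over meets.  The clean way is to use your factorization $Fm = \sigma_{s'}\,\tilde F(m)\,\sigma_s^\circ$ together with the fact that $\sigma_s^\circ \sigma_s = 1$ and $\sigma_{s'}^\circ \sigma_{s'} = 1$, which makes the relevant distributivity laws into equalities.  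With that in hand the verification goes through.
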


\begin{dfn}
  If $\C$ is a finitely complete category, let $\Span(\C)$ denote the
  2-category of spans in $\C$.  The allegory $\Span'(\C)$ is the local
  poset reflection of $\Span(\C)$.  It is unitary and pre-tabular
  \cite[example~3.3.8]{johnstone02:_sketc_of_eleph}.
\end{dfn}

\begin{cor}[of props \ref{prp:11}, \ref{prp:12}, \ref{prp:13} and \ref{prp:14}]
  \label{cor:2}
  \leavevmode
  \begin{enumerate}
  \item If $\C$ is a finitely complete category, then its regular
    completion is given by
    \begin{equation*}
      \C_{\mathrm{reg/lex}} \eqv
      \Map(\Span'(\C)[\check{\mathrm{crf}}])
    \end{equation*}
  \item If $\C$ is a regular category, then its exact completion is
    given by
    \begin{equation*}
      \C_{\mathrm{ex/reg}} \eqv
      \Map(\Rel(\C)[\check{\mathrm{eqv}}])
    \end{equation*}
  \end{enumerate}
\end{cor}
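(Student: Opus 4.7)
My plan is to derive each part by chaining together the four cited propositions, so that the proof is essentially a verification of universal properties rather than any fresh construction.

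\textbf{Part (1).} The starting point is that $\Span'(\C)$ is unitary and pre-tabular, as noted in the definition. By prop.~\ref{prp:13}, the functor $\Span'(\C) \to \Span'(\C)[\check{\mathrm{crf}}]$ is universal from $\Span'(\C)$ to unitary tabular allegories, and $\Span'(\C)[\check{\mathrm{crf}}]$ is itself unitary and tabular. Then prop.~\ref{prp:11} gives a 2-equivalence between unitary tabular allegories and regular categories, implemented by $\Map$ and $\Rel$; in particular $\Map(\Span'(\C)[\check{\mathrm{crf}}])$ is a regular category. To identify this as the regular completion, I would check the universal property: for a regular category $\D$, a lex functor $F \colon \C \to \D$ extends to $\Span(F) \colon \Span(\C) \to \Span(\D)$ which, since $F$ preserves pullbacks, descends through the local poset reflection and composes with the image map $\Span'(\D) \to \Rel(\D)$ to give an allegory functor $\Span'(\C) \to \Rel(\D)$. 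Conversely, any such allegory functor restricts to a lex functor on maps, since $\Map(\Span'(\C)) \eqv \C$. Composing with the universal property of prop.~\ref{prp:13} (applied to the unitary tabular $\Rel(\D)$) and the 2-equivalence of prop.~\ref{prp:11} produces the required bijection between lex functors $\C \to \D$ and regular functors $\Map(\Span'(\C)[\check{\mathrm{crf}}]) \to \D$.

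\textbf{Part (2).} Here I begin with the regular category $\C$, so by prop.~\ref{prp:11} the allegory $\Rel(\C)$ is unitary and tabular. Applying prop.~\ref{prp:14} to $\Rel(\C)$, the functor $\Rel(\C) \to \Rel(\C)[\check{\mathrm{eqv}}]$ is universal from $\Rel(\C)$ to effective allegories, and the codomain is effective \emph{and} tabular (since $\Rel(\C)$ is tabular). It remains unitary because the splitting functor preserves units (prop.~\ref{prp:9}). By prop.~\ref{prp:12}, an effective unitary tabular allegory has a Barr exact category of maps, so $\Map(\Rel(\C)[\check{\mathrm{eqv}}])$ is exact. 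For the universal property, a regular functor $F \colon \C \to \D$ into an exact $\D$ induces an allegory functor $\Rel(F) \colon \Rel(\C) \to \Rel(\D)$; because $\Rel(\D)$ is effective (prop.~\ref{prp:12}), prop.~\ref{prp:14} uniquely factors this through $\Rel(\C)[\check{\mathrm{eqv}}]$, and prop.~\ref{prp:11} then takes this back down to a regular functor into $\D \eqv \Map(\Rel(\D))$.

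\textbf{Main obstacle.} The four propositions do essentially all of the work, so the real content is only in verifying that the 2-equivalences and universal properties compose correctly: one has to check that lex (respectively regular) functors on the base categories correspond exactly to allegory functors on their span or relation allegories, and that the 2-equivalence $\Map \dashv \Rel$ of prop.~\ref{prp:11} is natural enough for these correspondences to mesh with the universal factorizations of props.~\ref{prp:13} and~\ref{prp:14}. These checks are routine given the groundwork already laid, and no new constructions are needed beyond what has been invoked.
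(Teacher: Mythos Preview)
Your proposal is correct and matches the paper's approach: the paper gives no explicit proof at all, simply stating the result as a corollary of the four cited propositions, so your chaining of those propositions (together with the routine checks that lex and regular functors correspond to the relevant allegory functors) is precisely the argument the reader is expected to supply.
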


\subsection{Bicategories of relations}
\label{sec:alleg-bicat-relat}

\begin{dfn}[{\cite{carboni87:_cartes_bicat_i}}]
  \label{dfn:15}
  A (locally ordered) \emph{cartesian bicategory} is a locally
  partially ordered 2-category $\bicat{C}$ satisfying the following:
  \begin{enumerate}
  \item $\bicat{C}$ is symmetric monoidal: there is a pseudofunctor
    $\otimes \colon \bicat{C} \times \bicat{C} \to \bicat{C}$ together
    with natural isomorphisms $\alpha$, $\lambda$, $\rho$ and $\sigma$
    satisfying the usual coherence conditions;
  \item every object of $\bicat{C}$ is a commutative comonoid, that
    is, comes equipped with maps
    \begin{equation*}
      d_X \colon X \map X \otimes X
      \qquad e_X \colon X \map I
    \end{equation*}
    whose right adjoints we write $d^\bullet_X,
    e^\bullet_X$, where $I$ is the tensor unit, satisfying the
    obvious associativity, symmetry and unitality axioms, and this is
    the only such comonoid structure on $X$;
  \item \label{item:6} every morphism $r \colon X \rel Y$ is a lax
    comonoid morphism:
    \begin{equation*}
      d_Y \hcmp r \leq (r \otimes r) \hcmp d_X
      \qquad e_Y \hcmp r \leq e_X
    \end{equation*}
  \end{enumerate}
  A \emph{cartesian functor} between cartesian bicategories is a
  (strong) monoidal 2-functor, and a cartesian transformation is an
  oplax transformation whose components are maps, as for allegories.
\end{dfn}

\begin{prp}[{\cite[theorem~1.6]{carboni87:_cartes_bicat_i}}]
  \label{prp:15}
  A 2-category $\bicat{C}$ is a cartesian bicategory if and only if
  the following hold:
  \begin{enumerate}
  \item $\Map(\bicat{C})$ has finite 2-products (given by $\otimes$
    and $I$).
  \item The hom-posets of $\bicat{C}$ have finite products $\cap,
    \top$, and $1_I$ is the terminal object of $\bicat{C}(I,I)$.
  \item The tensor product defined as
    \begin{equation*}
      r \otimes s = (p_1^\bullet r p_1) \cap (p_2^\bullet s
      p_2) 
    \end{equation*}
    where the $p_i$ are the product projections, is functorial.
  \end{enumerate}
\end{prp}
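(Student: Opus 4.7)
The plan is to prove each direction by exploiting the correspondence between the comonoid structure on objects of a cartesian bicategory (forward direction) and the product structure of $\Map(\bicat{C})$ (backward direction).

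For the forward direction, assume $\bicat{C}$ is cartesian. From the comonoid maps $d_X, e_X$ define projections $p_i \colon X_1 \otimes X_2 \map X_i$ via the counit $e$ and the unitors, and pairings $\tup{f,g} = (f \otimes g) \cmp d$ for maps $f,g$ of common domain. That $X \otimes Y$ is then a product in $\Map(\bicat{C})$ follows from the uniqueness clause of the comonoid structure on each object, together with the fact that maps, being left adjoints, must preserve that structure strictly. For condition 2, the top element of $\bicat{C}(X,Y)$ is $e_Y^\bullet \cmp e_X$ and binary meets are given by $r \cap s = d_Y^\bullet \cmp (r \otimes s) \cmp d_X$; the universal properties follow from the lax comonoid condition on $r,s$ combined with the adjunctions $d \dashv d^\bullet$ and $e \dashv e^\bullet$. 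The uniqueness clause further forces $d_I, e_I$ to be identities, so that $1_I$ is top in $\bicat{C}(I,I)$. Condition 3 is then a direct calculation using these definitions and the adjunctions for the projections.

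For the backward direction, assume conditions 1--3. Define $d_X \colon X \to X \otimes X$ as the diagonal $\tup{1,1}$ in $\Map(\bicat{C})$ and $e_X \colon X \to I$ as the unique map into the terminal object; these are maps in $\bicat{C}$ by construction, and the comonoid axioms hold automatically by the universal property of products. The symmetric monoidal structure on objects of $\bicat{C}$ is inherited from $\Map(\bicat{C})$ via condition 1, with functoriality on general 1-cells supplied by condition 3. Uniqueness of this comonoid structure follows from the universal property of products: any other map $X \to X \otimes X$ is determined by its composites with the projections.

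The principal obstacle is verifying the lax comonoid property for an arbitrary $r \colon X \rel Y$, namely $d_Y \cmp r \leq (r \otimes r) \cmp d_X$ and $e_Y \cmp r \leq e_X$. The second is easier: it reduces to showing $e_X$ is top in $\bicat{C}(X,I)$, which follows from condition 2 (which gives topness in $\bicat{C}(I,I)$) together with the terminal property of $I$ and the adjunction $e_X \dashv e_X^\bullet$. For the first, expand the right-hand side via condition 3 to $((p_1^\bullet r p_1) \cap (p_2^\bullet r p_2)) \cmp d_X$ and use the adjunctions $p_i \dashv p_i^\bullet$ and $d_X \dashv d_X^\bullet$, together with the identities $p_i \cmp d_X = 1_X$ and $p_i \cmp d_Y = 1_Y$, to transpose the inequality into one that collapses to $r \leq r$. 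The subtle point is the interaction of composition with meets in a 2-category where these are not yet known to distribute; one must apply condition 3 and the projection adjunctions carefully, avoiding circularity, to obtain the inequality in the required direction. Once the lax axioms are in place, the remaining cartesian bicategory structure falls out routinely from conditions 1--3.
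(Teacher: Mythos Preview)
The paper does not give its own proof of this proposition; it is simply cited from Carboni--Walters, so there is nothing to compare against beyond the original source. Your outline is correct and follows essentially the argument given there.

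On the ``subtle point'' you flag in the backward direction: no distributivity of composition over $\cap$ is needed. For the lax counit, $e_X$ is indeed top in $\bicat{C}(X,I)$: one has $\top_{X,I} \leq \top_{X,I}\,e_X^\bullet e_X \leq 1_I\, e_X = e_X$, using first the unit of $e_X \dashv e_X^\bullet$ and then that $1_I$ is top in $\bicat{C}(I,I)$. For the lax comultiplication, transpose via $d_Y \dashv d_Y^\bullet$ to $r \leq d_Y^\bullet(r\otimes r)d_X$, and observe that in fact $d_Y^\bullet(r\otimes s)d_X = r\cap s$ holds for all $r,s$. The direction $\leq$ is monotonicity of composition plus $p_i d = 1$; for $\geq$, transpose $d_Y(r\cap s)d_X^\bullet \leq p_i^\bullet r p_i$ through $p_i \dashv p_i^\bullet$ on both sides to obtain $(r\cap s)(p_i d_X)^\bullet \leq r$, which is just $r\cap s \leq r$. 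Condition~3 (functoriality of $\otimes$) is then what ensures that the associator, unitors and symmetry inherited from $\Map(\bicat{C})$ are natural with respect to \emph{all} morphisms of $\bicat{C}$, not merely maps, and that $1_X \otimes 1_Y = 1_{X\otimes Y}$.
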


\begin{rem}
  \label{rem:10}
  This definition clearly makes sense even if $\bicat{C}$ is not
  locally ordered, and indeed is the one given in
  \cite[defs~3.1,~4.1]{carboni08:_cartes_ii}, but we will stick to the
  locally ordered ones until section \ref{sec:cartesian-equipments}.
  The local finite products referred to are given by: $r \cap s =
  d^\bullet (r \otimes s) d$ and $\top = e^\bullet
  e$.
\end{rem}

\begin{dfn}[{\cite[def.~2.1]{carboni87:_cartes_bicat_i}}]
  An object $X$ in a cartesian bicategory is called \emph{Frobenius}
  (Carboni--Walters say \emph{discrete}) if it satisfies
  \begin{equation}
    \label{eq:12}
    d \hcmp d^\bullet = (d^\bullet \otimes 1) \hcmp (1
    \otimes d)
  \end{equation}
  or, in other words, if the (in fact, either, see
  \cite[lemma~3.2]{walters08:_froben}) Beck--Chevalley condition holds
  for $d_X$'s associativity square $(1 \otimes d)d =
  (d \otimes 1)d$.

  A \emph{bicategory of relations} is a cartesian bicategory in which
  every object is Frobenius.
\end{dfn}

\begin{rem}
  \label{rem:3}
  By \cite[remark~2.2]{carboni87:_cartes_bicat_i} the unit $I$ is
  always Frobenius, and $X \otimes Y$ is Frobenius if $X$ and $Y$ are.
  So a full sub-2-category of a bicategory of relations that contains
  $I$ and is closed under $\otimes$ is again a bicategory of
  relations.
\end{rem}

\begin{prp}[{\cite[theorem~2.4]{carboni87:_cartes_bicat_i}}]
  \label{prp:17}
  A bicategory of relations $\bB$ is compact closed, that is,
  there is an identity-on-objects involution $(-)^\circ \colon
  \bB^\op \to \bB$ and a natural isomorphism
  \begin{equation*}
    \bB(X \otimes Y, Z) \iso \bB(X, Z \otimes Y)
  \end{equation*}
  In addition, two dual forms of the modular law hold:
  \begin{align}
    \label{eq:15}
    (r \otimes 1) d & \leq (1 \otimes r^\circ) d r
    \\
    \label{eq:16}
    d^\bullet (r \otimes 1) & \leq r d^\bullet (1 \otimes
    r^\circ)
  \end{align}
  with equality in the first if $r^\circ r \leq 1$ (e.g.~if $r =
  f^\bullet$ is a right adjoint) and in the second if $r r^\circ \leq
  1$ (e.g.~if $r = f$ is a map) (cf.~lemma~\ref{lem:6}).
\end{prp}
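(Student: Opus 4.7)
The plan is to first build the compact closed structure from the Frobenius equation \eqref{eq:12}, then extract the involution $(-)^\circ$ and the hom-set isomorphism as formal consequences, and finally derive the modular inequalities by transposing the lax comonoid morphism axiom through the resulting self-duality.

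For compact closure, I would define for each object $X$ the cap and cup of a self-duality $X \dashv X$ by
\[
\eta_X = d_X \hcmp e_X^\bullet \colon I \to X \otimes X,
\qquad
\epsilon_X = e_X \hcmp d_X^\bullet \colon X \otimes X \to I.
\]
The triangle identities $(\epsilon_X \otimes 1)(1 \otimes \eta_X) = 1 = (1 \otimes \epsilon_X)(\eta_X \otimes 1)$ then reduce, via functoriality of $\otimes$ and the coassociativity/counitality of the comonoid $(d_X,e_X)$ (and its mate monoid $(d_X^\bullet,e_X^\bullet)$), to a single application of the Frobenius equation \eqref{eq:12} together with the units/counits of the adjunctions $d_X \dashv d_X^\bullet$ and $e_X \dashv e_X^\bullet$. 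With a self-duality on every object, $\bB$ is compact closed, so the involution $r \mapsto r^\circ = (\epsilon_Y \otimes 1_X)(1_Y \otimes r \otimes 1_X)(1_Y \otimes \eta_X)$ and the natural isomorphism $\bB(X \otimes Y, Z) \iso \bB(X, Z \otimes Y)$ follow by standard compact closed arguments, as do strict involutivity and identity-on-objects.

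For the modular law \eqref{eq:15}, the key ingredient is axiom \ref{item:6} of def.~\ref{dfn:15}: every morphism, in particular $r^\circ \colon Y \to X$, is a lax comonoid morphism, so $d_X \hcmp r^\circ \leq (r^\circ \otimes r^\circ) d_Y$. Whiskering with $1_Y$ on the left, precomposing with $r$ on the right, and then using the triangle identities to reduce the resulting diagram yields $(r \otimes 1_X) d_X \leq (1_Y \otimes r^\circ) d_Y \hcmp r$. Dually, \eqref{eq:16} arises by applying $(-)^\circ$ to \eqref{eq:15} (using that the involution reverses inequalities in the locally ordered setting and exchanges $d$ with $d^\bullet$). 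The equality cases then drop out automatically: if $r^\circ r \leq 1$, the lax comonoid axiom applied directly to $r$ gives
\[
(1_Y \otimes r^\circ) d_Y r \;\leq\; (1 \otimes r^\circ)(r \otimes r) d_X = (r \otimes r^\circ r) d_X \;\leq\; (r \otimes 1) d_X,
\]
the reverse inequality; dually for \eqref{eq:16} when $r r^\circ \leq 1$.

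The main obstacle is the triangle identity verification: it requires a careful sequencing of the Frobenius equation, the comonoid axioms, and the relevant adjunction data, and is essentially the bicategorical analogue of the classical fact that a Frobenius algebra in a monoidal category is self-dual. Once compact closure is in place, the remaining steps are largely formal: the involution and the hom-set isomorphism are part of the standard compact closed package, and the modular laws reduce to the interaction between the self-duality just constructed and the lax comonoid axiom.
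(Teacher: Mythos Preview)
Your approach matches the paper's sketch: define $\eta_X = d_X e_X^\bullet$ and $\epsilon_X = e_X d_X^\bullet$, verify the triangle identities from the Frobenius law, and read off the involution and the hom-isomorphism from compact closure, with the modular laws then coming from the lax-comonoid axiom transported through the self-duality. One slip to fix: the involution $(-)^\circ$ \emph{preserves} the local order (it is a 2-functor $\bB^{\op}\to\bB$, reversing 1-cells but not 2-cells), so your passage from \eqref{eq:15} to \eqref{eq:16} works precisely because applying $(-)^\circ$ to $(r\otimes 1)d \leq (1\otimes r^\circ)d\,r$ yields $d^\bullet(r^\circ\otimes 1)\leq r^\circ d^\bullet(1\otimes r)$, which is \eqref{eq:16} with $r$ replaced by $r^\circ$.
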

\begin{proof}[Sketch of proof]
  The bijection is given by composition with $1 \otimes \eta_Y$ in one
  direction and $1 \otimes \zeta_Y$ in the other, where
  \begin{align*}
    \eta_Y & = \xymatrix{ I \ar[r]^{e_Y^\bullet} & Y
      \ar[r]^-{d_Y} & Y \otimes Y } \\
    \zeta_Y & = \xymatrix{Y \otimes Y \ar[r]^-{d_Y^\bullet} & Y
      \ar[r]^{e_Y} & I}
  \end{align*}
  One then shows that these are the unit and counit for a duality $Y
  \dashv Y$.  The bijection above is natural in $X$ and $Z$ and
  `extranatural' in $Y$, meaning that the correspondence
  \begin{prooftree}
    \AXC{$\xymatrix{X \otimes Y' \ar[r]^{1 \otimes s} & X \otimes Y
      \ar[r]^-{r} &  Z}$}
    \UIC{$\xymatrix{X \ar[r]_-{r'} & Z \otimes Y \ar[r]_{1 \otimes
        s^\circ} & Z \otimes Y'}$}
  \end{prooftree}
  holds, where $r \mapsto r'$ is transposition and $(-)^\circ$ is
  given by composition with $1 \otimes \eta$ on one side and $\zeta
  \otimes 1$ on the other.
\end{proof}

\begin{lem}[{\cite[corollary~2.6]{carboni87:_cartes_bicat_i}},
  cf.~lemma~\ref{lem:3}]
  \label{lem:9}
  In a bicategory of relations, if $f$ is a map then $f^\bullet =
  f^\circ$, and if $f$ and $g$ are maps and $f \leq g$ then $f = g$.
\end{lem}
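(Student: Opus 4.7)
The plan is to derive the first claim by uniqueness of right adjoints. Since $\bB$ is locally partially ordered (Def.~\ref{dfn:15}), any morphism has at most one right adjoint, so it suffices to exhibit an adjunction $f \dashv f^\circ$: that is, to verify $1_X \leq f^\circ f$ and $f f^\circ \leq 1_Y$. The tools are the modular laws of Prop.~\ref{prp:17}, combined with the fact that in any cartesian bicategory every map is a \emph{strict} comonoid morphism. This strictness follows from Prop.~\ref{prp:15}(1): since $\Map(\bB)$ has finite products given by $(\otimes, I)$, the universal property forces $d_Y \cdot f = (f \otimes f) \cdot d_X$ and $e_Y \cdot f = e_X$ as equalities, sharpening the lax inequalities of Def.~\ref{dfn:15}(3).

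For the unit inequality, I instantiate \eqref{eq:15} at $r = f$ to get $(f \otimes 1_X) d_X \leq (1_Y \otimes f^\circ) d_Y f$; substituting the strict comonoid identity on the right and applying bifunctoriality of $\otimes$ rewrites the right-hand side as $(f \otimes f^\circ f) d_X$. Whiskering on the left by $e_Y \otimes 1_X$ and invoking both $e_Y f = e_X$ and the strict counit law $(e_X \otimes 1_X) d_X = 1_X$ collapses the two sides to $1_X$ and $f^\circ f$ respectively. The counit inequality $f f^\circ \leq 1_Y$ is obtained by a formally dual manipulation of \eqref{eq:16} at $r = f$, using the mate identities $f^\bullet d_Y^\bullet = d_X^\bullet (f^\bullet \otimes f^\bullet)$ and $f^\bullet e_Y^\bullet = e_X^\bullet$ (derived from the strict comonoid equalities by taking right adjoints) together with the Frobenius axiom for $\bB$ to realign the tensor factors.

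For the second claim, suppose $f \leq g$ with both maps. The involution $(-)^\circ$, given by composition with $\eta$ and $\zeta$ (proof of Prop.~\ref{prp:17}), preserves 2-cells, so $f^\circ \leq g^\circ$; by~(i) these agree with the right adjoints $f^\bullet$ and $g^\bullet$. Then, using in turn the unit of $f \dashv f^\circ$, the inequality $f^\circ \leq g^\circ$ and monotonicity of composition, and the counit of $g \dashv g^\circ$,
\begin{equation*}
g \;=\; g \cdot 1_X \;\leq\; g \cdot (f^\circ f) \;\leq\; (g \cdot g^\circ) \cdot f \;\leq\; 1_Y \cdot f \;=\; f,
\end{equation*}
which combined with $f \leq g$ yields $f = g$.

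The main obstacle is the counit inequality $f f^\circ \leq 1_Y$. The unit side admits a clean whiskering of the modular law using the \emph{forward} strict comonoid identities for $f$, but on the counit side the corresponding \emph{right-adjoint} identities interact with the compact-closed definition of $f^\circ$ in a more subtle way, and the Frobenius axiom is needed to realign the tensor factors so that exactly $f f^\circ$ is isolated. Once this piece is in place, the remaining argument is a routine exercise in units, counits, and monoidal coherence, to which the standard strictification of $\bB$ will be applied silently throughout.
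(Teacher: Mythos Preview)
The paper does not supply its own proof of this lemma (it is stated with a bare citation to Carboni--Walters), so the only real question is whether your argument goes through.

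Your derivation of the unit $1_X \leq f^\circ f$ from \eqref{eq:15} together with the strict comonoid identity $d_Y f = (f\otimes f)d_X$ is correct, and the second claim (discreteness of maps) is fine once the first is established.

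The gap is the counit $f f^\circ \leq 1_Y$: this is \emph{not} formally dual to the unit case in the way you suggest. For the unit you instantiated \eqref{eq:15} at $r=f$ and substituted the forward identity $d_Y f = (f\otimes f)d_X$, which applies because $d_Y$ is postcomposed with $f$. For the counit you propose \eqref{eq:16} at $r=f$, i.e.\ $d_Y^\bullet(f\otimes 1)\leq f\,d_X^\bullet(1\otimes f^\circ)$, together with the mate $f^\bullet d_Y^\bullet = d_X^\bullet(f^\bullet\otimes f^\bullet)$. But the left side has $d_Y^\bullet$ composed with $f$, not $f^\bullet$, so the mate does not apply; and precomposing with $e_X^\bullet\otimes 1_Y$ to isolate $ff^\circ$ on the right yields $d_Y^\bullet(fe_X^\bullet\otimes 1_Y)\leq ff^\circ$ --- an inequality in the wrong direction. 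Frobenius does not obviously repair this.

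There is a short fix using only what you have already proved. Since $(-)^\circ\colon\bB^\op\to\bB$ is a 2-functor, it carries $f\dashv f^\bullet$ to $(f^\bullet)^\circ\dashv f^\circ$; in particular $(f^\bullet)^\circ$ is a map, so your unit argument gives $1_X \leq f^\bullet(f^\bullet)^\circ$. Now combine the two unit inequalities with the two available counits:
\[
f^\bullet \;\leq\; f^\circ f\cdot f^\bullet \;\leq\; f^\circ,
\qquad
f^\circ \;\leq\; f^\bullet(f^\bullet)^\circ\cdot f^\circ \;\leq\; f^\bullet,
\]
using respectively $1_X\leq f^\circ f$ then $ff^\bullet\leq 1_Y$, and $1_X\leq f^\bullet(f^\bullet)^\circ$ then the counit $(f^\bullet)^\circ f^\circ\leq 1_Y$ of $(f^\bullet)^\circ\dashv f^\circ$. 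Hence $f^\circ=f^\bullet$, and in particular $ff^\circ=ff^\bullet\leq 1_Y$.
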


\begin{lem}
  \label{lem:5}
  If $\C$ is a regular category, then $\Rel(\C)$ is a bicategory of
  relations.
\end{lem}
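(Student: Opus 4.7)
The plan is to verify the two required conditions directly: that $\Rel(\C)$ is a cartesian bicategory, and that every object of $\Rel(\C)$ is Frobenius.

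To check the cartesian bicategory structure, I would apply prop.~\ref{prp:15}. By prop.~\ref{prp:11}, $\Map(\Rel(\C)) \eqv \C$, so since $\C$ is regular and in particular finitely complete, $\Map(\Rel(\C))$ has finite (locally discrete) 2-products. The hom-posets $\Rel(\C)(X,Y)$ are subobject posets $\mathrm{Sub}_\C(X \times Y)$, which have binary meets (pullback of subobjects) and a top element (the maximal subobject), and the unit object in $\Rel(\C)$ is the terminal $\mathbf{1}$ of $\C$, whose self-product is terminal and hence has $1_\mathbf{1}$ as its top subobject. Finally, the tensor product formula $r \otimes s = (p_1^\bullet r p_1) \cap (p_2^\bullet s p_2)$ reduces, in $\Rel(\C)$, to the usual cartesian product of relations $R \subseteq X \times Y$ and $S \subseteq X' \times Y'$, and functoriality is a routine verification using pullback-stability of images in $\C$.

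For the Frobenius axiom, I would observe that the required equation $d \hcmp d^\bullet = (d^\bullet \otimes 1) \hcmp (1 \otimes d)$ is precisely the Beck--Chevalley condition for the coassociativity pullback of $d_X$ recalled in rem.~\ref{rem:7}:
\begin{equation*}
  \xymatrix{
    X \ar[r]^-{d} \ar[d]_{d} & X \times X \ar[d]^{1 \times d} \\
    X \times X \ar[r]_-{d \times 1} & X \times X \times X
  }
\end{equation*}
Since $\C$ is regular, its subobject fibration $\mathrm{Sub}(\C) \to \C$ is a regular fibration and, by pullback-stability of images, satisfies Beck--Chevalley for \emph{all} pullbacks (by prop.~\ref{prp:8}, together with the fact that $\mathrm{Sub}(\C)$ knows when a square is a pullback). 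Translating this condition across the standard correspondence $d^\bullet = d^\circ$ (in the bicategory-of-relations sense, $d^\bullet$ is the right adjoint of the map $d$, which as a relation is just the opposite of the graph, and this is the image-pushforward $d_!$ on the fibration side) gives precisely \eqref{eq:12}.

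The main obstacle is nothing conceptually deep but rather bookkeeping: keeping the three languages --- subobject fibration of $\C$, allegory $\Rel(\C)$, and cartesian bicategory --- aligned, in particular being careful about the identification of $d^\bullet$ (right adjoint in $\Rel(\C)$), $d^\circ$ (involution on the allegory), and $d_!$ (opfibration pushforward). Once these are in hand, both parts reduce to well-known consequences of regularity.
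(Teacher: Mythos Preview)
Your proposal is correct and follows the same overall route as the paper: verify the three conditions of prop.~\ref{prp:15}, then check the Frobenius axiom.  The details differ somewhat.  For conditions 1 and 2 the paper simply says they ``clearly hold'', whereas you invoke prop.~\ref{prp:11} to identify $\Map(\Rel(\C))$ with $\C$; this is fine, though note that prop.~\ref{prp:11} is overkill (one only needs that every map in $\Rel(\C)$ arises from a morphism of $\C$, which is elementary).  For the functoriality of $\otimes$ you call it a ``routine verification'', but this is precisely the step on which the paper spends most of its effort, giving an explicit internal-language computation using Frobenius reciprocity and Beck--Chevalley for product-absolute pullbacks; you should at least indicate which of those ingredients are needed.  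For the Frobenius axiom the paper again reasons directly in the internal language (it reduces to symmetry and transitivity of equality), while you argue via Beck--Chevalley for the coassociativity square; both work, and in fact you can get away with less than prop.~\ref{prp:8}, since by rem.~\ref{rem:7} that square is already product-absolute, so the regular-fibration axioms alone suffice without needing that $\operatorname{Sub}(\C)$ satisfies Beck--Chevalley for \emph{all} pullbacks.
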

\begin{proof}
  Conditions 1 and 2 of prop.~\ref{prp:15} clearly hold.  For the
  third, we may reason in the internal language of $\C$.  Clearly
  \begin{equation*}
    1 \otimes 1 = \den{x = x \wedge x' = x'} = 1
  \end{equation*}
  Suppose $X \overset{r}{\rel} Y \overset{s}{\rel} Z$ and $X'
  \overset{r'}{\rel} Y' \overset{s'}{\rel} Z'$.  Then $sr \otimes
  s'r'$ is the meaning of
  \begin{align*}
    & (\exists \upsilon. r(x,\upsilon) \wedge s(\upsilon,z)) \wedge
    (\exists \upsilon'. r'(x',\upsilon') 
    \wedge s'(\upsilon',z')) \\
    & \Leftrightarrow \exists \upsilon. (r(x,\upsilon) \wedge s(\upsilon,z) \wedge
    \upsilon^*\exists \upsilon'. r'(x',\upsilon') 
    \wedge s'(\upsilon',z')) \\
    & \Leftrightarrow \exists \upsilon. (r(x,\upsilon) \wedge s(\upsilon,z) \wedge
    \exists \upsilon'. \upsilon^* (r'(x',\upsilon') 
    \wedge s'(\upsilon',z'))) \\
    & \Leftrightarrow \exists \upsilon, \upsilon'. r(x,\upsilon) \wedge
    s(\upsilon,z) \wedge r'(x',\upsilon') 
    \wedge s'(\upsilon',z')
  \end{align*}
  by two uses of Frobenius reciprocity and one of Beck--Chevalley (for
  a product-absolute pullback), and this last is the meaning of $(s
  \otimes s')(r \otimes r')$.  Finally, the Frobenius law is
  \begin{align*}
    & \exists \xi'. (x_1, x_2) = (\xi',\xi') = (x_3, x_4) \\
    & \Leftrightarrow
    \exists \xi'. (x_1, \xi') = (x_3, x_3) \wedge (x_2,x_2) = (x_4,
    \xi')
  \end{align*}
  which follows simply from transitivity and symmetry of $=$.
\end{proof}

\begin{prp}
  \label{prp:16}
  A bicategory of relations is the same thing as a unitary pre-tabular
  allegory.
\end{prp}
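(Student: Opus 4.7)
The plan is to verify both directions of the equivalence by matching the concrete consequences of the bicategory-of-relations structure developed in prop.~\ref{prp:17} against the characterisation of cartesian bicategories given in prop.~\ref{prp:15} and the axioms of unitary pre-tabular allegories.

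For the direction $(\Rightarrow)$, suppose $\bB$ is a bicategory of relations. The involution $(-)^\circ$ comes from prop.~\ref{prp:17}, and local binary meets and top elements are given by $r \cap s = d^\bullet (r \otimes s) d$ and $\top = e^\bullet e$ as in rem.~\ref{rem:10}. The modular law \eqref{eq:10} follows from the inequality \eqref{eq:15} of prop.~\ref{prp:17} together with functoriality of $\otimes$:
\[
s r \cap t \;=\; d^\bullet (s \otimes t)(r \otimes 1) d
\;\leq\; d^\bullet (s \otimes t)(1 \otimes r^\circ) d\, r
\;=\; (s \cap t r^\circ)\, r .
\]
For unitarity (def.~\ref{dfn:12}), the tensor unit $I$ works: $1_I$ is top in $\bB(I,I)$ by prop.~\ref{prp:15}(2), and for each $X$ the counit $\leq$ unit of the adjunction $e_X \dashv e_X^\bullet$ gives $e_X^\circ e_X = e_X^\bullet e_X \geq 1_X$. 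For pre-tabularity it is enough to tabulate tops, since every $r \colon X \rel Y$ satisfies $r \leq \top_{X,Y} = e_Y^\bullet e_X$. The projections $p_X, p_Y$ from the product $X \otimes Y$ in $\Map(\bB)$ supply the required tabulation, the identities $p_Y p_X^\circ = e_Y^\bullet e_X$ and $p_X^\circ p_X \cap p_Y^\circ p_Y = 1$ being straightforward consequences of the Frobenius equation \eqref{eq:12} and compact closure.

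For the direction $(\Leftarrow)$, suppose $\bA$ is a unitary pre-tabular allegory. By cor.~\ref{cor:7}, $\Map(\bA)$ has finite products; I would use these to define the tensor on $\bA$ by $r \otimes s = p_1^\circ r p_1 \cap p_2^\circ s p_2$ and then verify the hypotheses of prop.~\ref{prp:15}: finite products in $\Map(\bA)$ come from cor.~\ref{cor:7}, local meets from the allegory structure and local top elements from lemma~\ref{lem:4}(1), terminality of $1_I$ in $\bA(I,I)$ is immediate from def.~\ref{dfn:12}, and functoriality of $\otimes$ is a direct calculation using the modular law together with lemma~\ref{lem:6} (applied to the projections $p_i$, which are maps).

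The main obstacle is then the last step: showing that each object $X$ is Frobenius, i.e., that $d_X d_X^\bullet = (d_X^\bullet \otimes 1)(1 \otimes d_X)$. The idea is that the coassociativity square for $d_X$ is a product-absolute pullback in $\Map(\bA)$ (rem.~\ref{rem:7}), and in a unitary pre-tabular allegory one may tabulate the relevant composites and use the modular law to compare the two sides of \eqref{eq:12}. The inequality $\leq$ holds from the modular law applied to $r = d^\bullet$, while the reverse follows by taking mates of the diagonal's unit and exploiting that $d$ is a map (so that the dual modular law \eqref{eq:11} is an equality by lemma~\ref{lem:6}). Once Frobenius is established, the compact-closed description of $(-)^\circ$ from prop.~\ref{prp:17} coincides with the allegorical involution, and the construction is evidently inverse to that of the first direction.
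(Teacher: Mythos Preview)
Your forward direction is essentially the paper's argument: involution from compact closure, modular law from \eqref{eq:15}, unit from the tensor unit, and pre-tabularity from the product projections tabulating the top elements. That part is fine.

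The converse direction, however, is handled quite differently in the paper. Rather than verifying the hypotheses of prop.~\ref{prp:15} and the Frobenius law directly, the paper embeds the allegory $\bA$ fully and faithfully into its coreflexive splitting $\bA[\check{\mathrm{crf}}]$ (remark~\ref{rem:2}), which is unitary and tabular (prop.~\ref{prp:13}) and hence equivalent to $\Rel(\C)$ for a regular category $\C$ (prop.~\ref{prp:11}), hence a bicategory of relations by lemma~\ref{lem:5}. By remark~\ref{rem:3} it then suffices to check that the image of $\bA$ is closed under $\otimes$ and contains $I$, which follows because allegory functors preserve tabulations and the inclusion preserves units, maps, and thus top morphisms. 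The paper explicitly remarks that a direct proof along your lines is possible, but that it ``essentially amounts to translating the proof of lemma~\ref{lem:5}'' through this equivalence.

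The practical difference is that the embedding argument offloads the hard calculation (functoriality of $\otimes$ and the Frobenius law) onto the already-established lemma~\ref{lem:5}, where one can reason in the internal language of a regular category. Your sketch of the Frobenius verification is the weakest part of your plan: the inequality $\leq$ does come from the modular law, but your argument for the reverse inequality (``taking mates of the diagonal's unit and exploiting that $d$ is a map'') is not really an argument --- lemma~\ref{lem:6} gives equality in the modular law only under hypotheses like $r^\circ r \leq 1$, which is not obviously available here, and you have not indicated what concretely replaces the internal-language reasoning of lemma~\ref{lem:5}. If you want to pursue the direct route you will need to spell this step out carefully; otherwise the paper's embedding trick gets you there with much less work.
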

\begin{proof}
  Suppose $\bB$ is a bicategory of relations.  It is thus a
  locally partially ordered 2-category equipped with an
  identity-on-objects involution.  It satisfies the the modular law by
  \cite[remark~2.9(ii)]{carboni87:_cartes_bicat_i} and so is an
  allegory.  The tensor unit $I$, the terminal object of
  $\Map(\bB)$, is a unit (def.~\ref{dfn:12}): there is a unique
  map $X \to I$ for any $X$, and $1_I$ is the top element of
  $\bB(I,I)$ by prop.~\ref{prp:15}.  By corollary~\ref{cor:7}
  the product projections tabulate the top elements, so $\bB$ is
  pre-tabular.

  Conversely\footnote{This part of the proof was suggested by Mike
    Shulman.  A direct proof is possible, but essentially amounts to
    translating the proof of lemma~\ref{lem:5} through the equivalence
    $\bA[\check{\mathrm{crf}}] \eqv \Rel(\C)$.}, let $\bA$ be a
  unitary pre-tabular allegory.  By remark~\ref{rem:2}, $\bA$ embeds
  faithfully into $\bA[\check{\mathrm{crf}}]$, which is unitary and
  tabular by prop.~\ref{prp:13}, hence equivalent by
  prop.~\ref{prp:11} to $\Rel(\C)$ for $\C$ the regular category
  $\Map(\bA)$, hence a bicategory of relations by lemma~\ref{lem:5}.
  So by remark~\ref{rem:3} it suffices to show that $\bA$ is closed
  under $\times$ in $\bA[\check{\mathrm{crf}}]$.  Any allegory functor
  must preserve tabulations (because it preserves $\cap$ and
  $(-)^\circ$), while the inclusion $\bA \to
  \bA[\check{\mathrm{crf}}]$ preserves the unit and the property of
  being a map, and thus preserves top morphisms.  So the tabulation
  \begin{equation*}
    X \lmap X \times Y \map Y
  \end{equation*}
  of $\top_{XY}$ in $\bA$ is a tabulation of $\top_{1_X 1_Y}$ in
  $\bA[\check{\mathrm{crf}}]$, and therefore $1_{X \times Y}
  \iso 1_X \times 1_Y$.
\end{proof}

\begin{thm}
  The 2-category $\bicat{BiRel}$ of bicategories of relations and
  cartesian functors and transformations is equivalent to the locally
  full sub-2-category $\bicat{UPtAll}$ of $\bicat{All}$ on the unitary
  pre-tabular allegories and unit-preserving functors.
\end{thm}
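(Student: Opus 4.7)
The plan is to upgrade the bijection between bicategories of relations and unitary pre-tabular allegories established in prop.~\ref{prp:16} to a full 2-equivalence, by showing that cartesian functors match unit-preserving allegory functors, and cartesian transformations match allegory transformations.

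First I would verify that on objects the passage is already an equivalence by prop.~\ref{prp:16}: a bicategory of relations has an underlying unitary pre-tabular allegory, and conversely a unitary pre-tabular allegory has a canonical cartesian bicategory structure coming from the products in $\Map(\bA)$ (corollary~\ref{cor:7}) together with the symmetric monoidal and comonoid structure induced by the diagonals and projections. The Frobenius law \eqref{eq:12} holds in any bicategory of relations by hypothesis, and in the allegory case it can be read off from the tabular bicategory $\bA[\check{\mathrm{crf}}] \eqv \Rel(\Map(\bA))$ into which $\bA$ embeds (remark~\ref{rem:2}).

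Next I would check that the two notions of morphism agree. Given a cartesian functor $F \colon \bB \to \bB'$ between bicategories of relations, I would verify that $F$ preserves the unit (since it is strong monoidal), the involution $(-)^\circ$ (since by prop.~\ref{prp:17} this is constructed from the comonoid unit and counit, all of which $F$ preserves as a cartesian functor whose action on maps preserves the products in $\Map$), and the local meets (which by remark~\ref{rem:10} are given by $r \cap s = d^\bullet (r \otimes s) d$, a formula involving only $\otimes$ and the comonoid structure). Hence $F$ is a unit-preserving allegory functor. Conversely, given a unit-preserving allegory functor $G \colon \bA \to \bA'$, preservation of the unit and of tops (combined with preservation of $\cap$ and $(-)^\circ$, hence of tabulations) guarantees that $G$ restricts to a finite-product-preserving functor $\Map(\bA) \to \Map(\bA')$; this in turn shows that $G$ is strong monoidal with respect to the cartesian monoidal structures built via prop.~\ref{prp:15}, and preserves the comonoid structures. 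The two assignments are manifestly mutually inverse, since in each direction the underlying action on hom-posets is unchanged.

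Finally, the 2-cells: a cartesian transformation is, by def.~\ref{dfn:15}, an oplax transformation whose components are maps, and this is exactly the definition of an allegory transformation. So the 2-cells coincide on the nose, and the local fullness of both sub-2-categories matches. The main potential obstacle is the verification in step~two that cartesian and allegory structure-preservation are interdefinable: one must be careful that preservation of the tensor and comonoid structure on the cartesian side really does force preservation of $(-)^\circ$ and $\cap$ on the allegory side, and conversely, but in each case the relevant structure is built from data that the other kind of functor preserves, so the check is essentially bookkeeping once the equivalence on objects is in hand.
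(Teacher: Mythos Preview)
Your proposal is correct and follows essentially the same approach as the paper: having already matched objects via prop.~\ref{prp:16} and noting that the 2-cells are literally defined the same way on both sides, the only work is to show that a 2-functor is cartesian if and only if it is a unit-preserving allegory functor, which you do (as the paper does) by observing that each flavour of structure-preservation forces the other because $\cap$, $(-)^\circ$, $\otimes$, and the comonoid data are all interdefinable via the formulas of prop.~\ref{prp:15}, prop.~\ref{prp:17}, and remark~\ref{rem:10}. Your write-up is somewhat more explicit than the paper's, but the argument is the same.
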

\begin{proof}
  It suffices to show that a 2-functor is a cartesian functor if and
  only if it is a unit-preserving allegory functor.  But a strong
  monoidal functor must preserve products in categories of maps, hence
  $d$ and $t$ and their right adjoints, hence $\cap$ and
  $(-)^\circ$, and also the unit object.  Conversely, a functor that
  preserves $\cap$ and $(-)^\circ$ must preserve (tabulations and
  thus) products, and so preserve the tensor product.
\end{proof}

%%% Local Variables: 
%%% mode: latex
%%% TeX-master: "main"
%%% End: 

\chapter{2- and 3-categories}
\label{cha:2-categories}

This chapter prepares the ground for the next by recalling some
existing definitions and facts regarding higher categories, and
developing some new ones that will be needed later.

The next section reviews some notions of formal category theory in a
2-category, and defines monoidal 2-categories and functors between
them, along with a few other 3-dimensional notions.  The subsequent
section is where our original work begins: we want to define a
3-category of `2-profunctors' while avoiding the long and tedious
calculations that would be needed to prove that it is a 3-category.
Instead we will mimic the definition of $\bicat{Prof}$ as the
2-category of presheaf categories and cocontinuous functors.  This is
clearly a 2-category, because it is a sub-2-category of $\Cat$.  So in
section~\ref{sec:limits-colimits} we review the relevant facts about
2-dimensional limits and colimits, and define 2-dimensional ends and
coends and show how they may be computed in $\Cat$.  In the next
chapter we will define $\Biprof$ as the locally full sub-3-category of
$\Bicat$ on the `presheaf 2-categories' and the colimit-preserving
functors.

\section{Adjunctions and monads}
\label{sec:2-categories}

\subsection{Adjunctions}
\label{sec:adjunctions-monads}

We take as known the notion of adjoint morphisms in a 2-category.  In
that setting there is a useful generalization of adjoint
transposition.

\begin{dfn}[{\cite{kelly74:_review}}]
  \label{dfn:6}
  Given adjunctions $f \dashv u$ and $f' \dashv u'$ in a 2-category
  $\bK$, the \emph{mate} of a 2-cell
  \begin{equation*}
    \xymatrix{
      x \ar[r] \ar[d]_{f} & x' \ar[d]^{f'} \\
      y \ar[r] & y' \ar@{}[ul]|{\displaystyle\Downarrow}
    }
  \end{equation*}
  is the 2-cell
  \begin{equation*}
    \vcenter{
      \xymatrix{
        x \ar[r] & x' \\
        y \ar[r] \ar[u]^u & y' \ar[u]_{u'} \ar@{}[ul]|{\displaystyle\Downarrow}
      }
    }
    \qquad = \qquad
    \vcenter{
      \xymatrix{
      y \ar[r]^u \ar[dr]_1^{}="a" & x \ar@{}[];"a"|(.7){\displaystyle\Downarrow} \ar[r]
      \ar[d]^{f} & x' \ar[d]_{f'} \ar[dr]^1_{}="b"
      \\
      & y \ar[r] & y' \ar@{}[];"b"|(.7){\displaystyle\Downarrow}
      \ar@{}[ul]|{\displaystyle\Downarrow} \ar[r]_{u'} & x'
      }
    }
  \end{equation*}
  given by pasting with the counit of $f \dashv u$ and the unit of $f'
  \dashv u'$.  Dually, the mate of a square with opposite sides $u$
  and $u'$ is given by pasting with the unit of the first adjunction
  and the counit of the other.  This correspondence is bijective, by
  the triangle equalities.

  The mate of an invertible cell is not in general invertible.  On the
  other hand, given a square each of whose sides has a right adjoint,
  its mate, defined as above, has a further mate with respect to the
  other pair of opposite sides.  It then follows from the triangle
  equalities that this `double mate' is invertible if the original
  2-cell was.
\end{dfn}

\begin{dfn}
  \label{dfn:4}
  Given a bifibration $\E$ over a category $\B$, a commuting square
  in $\B$ is sent by $\E$ to a square in $\Cat$ that is filled by an
  isomorphism and each of whose sides has a left adjoint.
  \begin{equation*}
    \vcenter{
      \xymatrix{
        Y \ar[r] \ar[d] & W \ar[d] \\
        X \ar[r] & Z
      }
    }
    \qquad {\Large \mapsto} \qquad
    \vcenter{
      \xymatrix{
        \E Y \ar@{}[dr]|{\iso} & \E W \ar[l] \\
        \E X \ar[u] & \E Z \ar[l] \ar[u]
      }
    }
  \end{equation*}
  Because this 2-cell is invertible, there are two possible mates that
  could be taken: we say that \emph{the Beck--Chevalley condition
    holds} for the square in $\B$ if \emph{both} of the mates of its
  image are invertible.
\end{dfn}

\subsection{Monads and modules}
\label{sec:monads-modules}

We review the notions of monads and modules in a 2-category.  For
background, etc.~see \cite{kelly74:_review,street72}.  This material
is classical, but the presentation of modules in terms of a canonical
distributive law seems to be new.  It gives a pleasing
characterization of the Kleisli and Eilenberg--Moore completions of a
2-category $\bK$ as being locally full in the 2-category
$\mathrm{Mod}(\bK)$ on the left- or right-free modules.

\begin{dfn}
  \label{dfn:9}
  A \emph{monad} in a 2-category $\bK$ is given by a morphism $t
  \colon x \to x$ together with 2-cells $\mu \colon t t \tc t$ and
  $\eta \colon 1 \tc t$ that make $t$ a monoid in the monoidal
  category $\bK(x,x)$.  A \emph{comonad} in $\bK$ is a monad in
  $\bK^\co$.
\end{dfn}

Given a monad $t \colon x \to x$ in $\bK$ and any other object $y \in
\bK$, there is a monad $t^* = \bK(t,y) \colon \bK(x,y) \to \bK(x,y)$
in $\Cat$ given by pre-composition with $t$, and of course there is
also a post-composition monad $t_* = \bK(z,t)$ on $\bK(z,x)$ for any
object $z$.

\begin{dfn}
  \label{dfn:10}
  If $t \colon x \to x$ is a monad in $\bK$, then a \emph{left
    $t$-module} is an algebra in the usual sense for (one of) the
  monad(s) $t_*$: it is given by an object $z$, a morphism $a \colon z
  \to x$ and a 2-cell $\alpha \colon t a \tc a$ satisfying the
  appropriate identities.  A \emph{right $t$-module} is a
  $t^*$-algebra.  The category $\mathrm{LMod}(t,z)$ is the category of
  algebras for the monad $\bK(z,t)$; the category $\mathrm{RMod}(t,y)$
  is the category of $\bK(t,y)$-algebras.

  Left and right \emph{comodules} for a comonad are defined
  analogously.

  Given monads $t$ and $s$ on $x$ and $y$ respectively, the associator
  of $\bK$ gives rise to an invertible distributive law $t^* s_* \iso
  s_* t^*$, so that both of these composites are themselves monads on
  $\bK(x,y)$ with equivalent categories of algebras.  The category
  $\mathrm{Mod}(t,s)$ of \emph{bimodules from $t$ to $s$} is the
  category of algebras for this composite monad, which we will call
  $\bK(t,s)$.  Objects of this category will be written thus: $m
  \colon t \prof s$.
\end{dfn}

Standard facts about distributive laws \cite{beck69:_distr_laws} then
show that there is a commuting square of monadic functors:
\begin{equation*}
  \xymatrix{
    &
    *\txt{$\mathrm{Mod}(t,s)$ \\ $\eqv \xalg{\bK(t,s)}$}
    \ar[dl]_{U_{\hat{s_*}}} \ar[dr]^{U_{\hat{t^*}}} \\
    *\txt{$\mathrm{RMod}(t,y)$\\ $\eqv \xalg{\bK(t,y)}$}
    \ar[dr]_{U_{t^*}} & &
    *\txt{$\mathrm{LMod}(s,x)$\\ $\eqv \xalg{\bK(x,s)}$}
    \ar[dl]^{U_{s_*}} \\
    & \bK(x,y)
  }
\end{equation*}
and that each of $\bK(t,y)$ and $\bK(x,s)$ canonically induces a monad
on the other's category of algebras called $\hat{t^*}$ and $\hat{s_*}$
respectively.

\begin{dfn}
  \label{dfn:21}
  Given a monad $t \colon x \to x$ in a 2-category $\bK$, the
  \emph{Eilenberg--Moore object} $x^t$ of $t$ is, if it exists, the
  universal left $t$-module, i.e.~a representation of the functor $y
  \mapsto \mathrm{LMod}(t,y)$.  In more concrete terms, the EM object
  comes equipped with the structure of a left $t$-module $u^t \colon
  x^t \to x$, composition with which sets up an equivalence
  $\bK(y,x^t) \eqv \mathrm{LMod}(t,y)$.

  The \emph{Kleisli object} $x_t$ of $t$ is the universal right
  $t$-module: it comes with a right $t$-module structure $f_t \colon x
  \to x_t$ that mediates an equivalence $\bK(x_t,z) \eqv
  \mathrm{RMod}(t,z)$.  Equivalently, it is the EM object of $t$
  considered as a monad in $\bK^\op$.

  The \emph{co-Kleisli} and \emph{co-Eilenberg--Moore} objects of a
  comonad in $\bK$ are its Kleisli and EM objects in $\bK^\co$.  (The
  co- prefix will be omitted where it is unnecessary.)
\end{dfn}

Eilenberg--Moore objects are weighted limits \cite{street76:_limit},
and so Kleisli objects are colimits.  The theory of completions under
colimits is well understood, and leads in this case to the following.

\begin{dfn}
  \label{dfn:24}
  The \emph{Kleisli completion} $\mathrm{Kl}(\bK)$
  \cite{lack02:ftm_ii} of a 2-category $\bK$ is the full
  sub-2-category of $[\bK^\op, \Cat]$ on those functors that are
  Kleisli objects of monads on representable functors.  It is
  convenient to take the objects of $\mathrm{Kl}(\bK)$ to be the
  monads in $\bK$ themselves.

  The \emph{Eilenberg--Moore completion} $\mathrm{EM}(\bK)$ of $\bK$
  is $\mathrm{Kl}(\bK^\op)^\op$.

  The \emph{co-Kleisli} and \emph{co-Eilenberg--Moore} completions of
  $\bK$ are then $\mathrm{Kl}^\co(\bK) = \mathrm{Kl}(\bK^\co)^\co$ and
  $\mathrm{EM}^\co(\bK) = \mathrm{EM}(\bK^\co)^\co$.
\end{dfn}

We may follow \cite{lack02:ftm_ii} and give a more hands-on
description of the Kleisli completion: if $t \colon x \to x$ and $s
\colon y \to y$ are monads in $\bK$ as above, then morphisms $t \to s$
in $\mathrm{Kl}(\bK)$ are transformations $\bK(1,x)_{\bar t} \tc
\bK(1,y)_{\bar s}$ in $[\bK^\op, \Cat]$ between the Kleisli objects of
$\bar t = \bK(1,t)$ and $\bar s = \bK(1,s)$.  The universal property
of the domain makes the category of these equivalent to
$\mathrm{RMod}(\bar t, \bK(1,y)_{\bar s})$.  The Yoneda lemma then
shows that this category is in turn equivalent to
$\mathrm{RMod}(\hat{t^*}, \bK(x,y)_{s_*})$ --- that is,
$\mathrm{Kl}(\bK)(t,s)$ is the category of algebras for the monad
induced by $t$ on the Kleisli category of the monad $\bK(x,s)$.  This
can be expanded in two ways, corresponding to the two different
constructions of the Kleisli category: the first takes
$\bK(x,y)_{s_*}$ to be the full subcategory of $\bK(x,y)^{s_*} =
\mathrm{LMod}(s,x)$ on the free $s$-modules, and the monad induced by
$t$ to be simply precomposition with $t$.  Note that this makes
$\mathrm{Kl}(\bK)(t,s)$ equivalent to the full subcategory of
$\mathrm{Mod}(t,s)$ on the `left-free bimodules' from $t$ to $s$
(cf.~\cite[p.~166]{wood85:_proar_ii}).  The second description of
$\mathrm{Kl}(\bK)(t,s)$ uses the direct presentation of the Kleisli
category \cite[VI.5]{mac98:_categ_workin_mathem}: $\bK(x,y)_{s_*}$ has
objects the morphisms $x \to y$ in $\bK$, and as morphisms $a \tc b
\colon x \to y$ the 2-cells $a \tc sb$ in $\bK$, with identities and
composition given by $\eta^s$ and the usual Kleisli composition.  Then
the monad induced by $t$ is given by pre-(Kleisli-)composition with
$\eta^s t$, but the monad axioms make this the same as precomposing
the underlying $\bK$-morphism with $t$.  Working everything out as in
\cite{lack02:ftm_ii}, we see that a Kleisli morphism from $t$ to $s$
is given by a morphism $a \colon x \to y$ in $\bK$ together with a
2-cell $\alpha \colon at \tc sa$ that satisfies:
\begin{equation*}
  \xymatrix{
    a t t \ar[d]_{a \mu} \ar[r]^{\alpha t} & s a t \ar[r]^{s \alpha} &
    s s a \ar[d]^{\mu a} \\
    a t \ar[rr]_{\alpha} & & s a
  }
  \qquad
  \xymatrix{
    a \ar[r]^{a \eta} \ar[dr]_{\eta a} & a t \ar[d]^{\alpha} \\
    & s a
  }
\end{equation*}
and a 2-cell $a \tc b$ is given by a 2-cell $\phi \colon a \tc s b$ of
$\bK$ satisfying:
\begin{equation*}
  \xymatrix{
    a t \ar[r]^{\alpha} \ar[d]_{\phi t} & s a \ar[r]^{s \phi} \ar[r] &
    s s b \ar[d]^{\mu b} \\
    s b t \ar[r]_{s \beta} & s s b \ar[r]_{\mu b} & s b
  }
\end{equation*}
Notice that a Kleisli morphism $t \to s$ is precisely a `monad
op-functor' from $t$ to $s$ in the sense of \cite{street72}.  Such a
morphism determines and is determined by
\cite[section~2.1]{lack02:ftm_ii} an essentially-commuting square
\begin{equation*}
  \xymatrix{
    \bK(1,x) \ar[r] \ar[d] \ar@{}[dr]|{\iso} & \bK(1,x)_{\bK(1,t)}
    \ar[d] \\
    \bK(1,y) \ar[r] & \bK(1,y)_{\bK(1,s)}
  }
\end{equation*}
Similarly, a `monad opfunctor transformation' is precisely a `free'
Kleisli 2-cell, i.e.~one of the form $\eta \hcmp \phi' \colon a \tc b
\tc sb$, hence a commuting cylinder of the following form:
\begin{equation*}
  \xymatrix{
    \bK(1,x)
    \ar@/_2ex/[d]^{}="s1" \ar@/^2ex/[d]_{}="t1"
    \ar@{}"s1";"t1"|{\Leftarrow}
    \ar[r]
    & \bK(1,x)_{\bK(1,t)}
    \ar@/_2ex/[d]^{}="s2" \ar@/^2ex/[d]_{}="t2"
    \ar@{}"s2";"t2"|{\Leftarrow} \\
    \bK(1,y) \ar[r] & \bK(1,y)_{\bK(1,s)}
  }
\end{equation*}

\begin{dfn}
  \label{dfn:25}
  Given modules $m \colon t \prof s$ and $n \colon s \prof r$, where
  $t,s,r$ are monads on $x,y,z$ respectively, the \emph{composite} $n
  \hcmp m$ is given by the following coequalizer in $\bK(x,z)$
  \cite[p.~165]{wood85:_proar_ii}, \cite[4.1]{carboni87}:
  \begin{equation*}
    \xymatrix{
      n s m \ar@<+.5ex>[r] \ar@<-.5ex>[r] & n m \ar[r] & n \hcmp m
    }
  \end{equation*}
  where the parallel morphisms are the actions of $s$ on $n$ and $m$
  and their codomain is the composite $n m$ in $\bK$.  This is a
  reflexive coequalizer, with section $n \eta m$.  If it is preserved by
  $\bK(t,r)$ then it is reflected by the monadic functor
  $\mathrm{Mod}(\bK)(t,r) \to \bK(x,z)$
  \cite[prop.~4.3.2]{borceux94:_handb_categ_algeb_vol_2} and so $n
  \hcmp m$ is a module from $t$ to $r$.

  If the hom-categories of $\bK$ admit all such coequalizers, and if
  these are preserved by composition on either side, then this formula
  defines the composition operation of a 2-category
  $\mathrm{Mod}(\bK)$.  The identity on a monad $t$ is $t$ equipped
  with its left and right self-actions, and the 2-category axioms
  follow from the universal property of the coequalizer above.  (Note
  that $\mathrm{Mod}(\bK)$ will almost never be strict even if $\bK$
  is, because composition depends on a choice of colimit.)
\end{dfn}

Observe that if $m$ in the above is a left-free module (i.e.~a
morphism in $\mathrm{Kl}(\bK)$) then we may write $m = s m'$, and the
pair to be coequalized is then
\begin{equation*}
  \xymatrix{
    n s s m' \ar@<+.5ex>[r]^{\nu s m'} \ar@<-.5ex>[r]_{n \mu m'}
    & n s m' 
  }
\end{equation*}
where $\nu$ is the action of $s$ on $n$ and $\mu$ is the
multiplication of $s$.  But because $\nu$ is an algebra for
$\bK(s,z)$, it is the coequalizing map
\cite[lemma~4.3.3]{borceux94:_handb_categ_algeb_vol_2} in
\begin{equation*}
  \xymatrix{
    n s s \ar@<+.5ex>[r]^{\nu s} \ar@<-.5ex>[r]_{n \mu}
    & n s \ar[r]^\nu & n
  }
\end{equation*}
Moreover, this is a split, hence absolute, coequalizer, so that
whiskering by $m'$ yields the composite $n \hcmp s m' \eqv n m'$, with
$\nu m'$ as the coequalizing map.  This fact ensures that
$\mathrm{Kl}(\bK)$ always exists, even if $\mathrm{Mod}(\bK)$ does
not.  It is also not hard to see that the former will be strict if
$\bK$ is, because composition of left-free modules can be taken to be
just composition in $\bK$.

Thus there are inclusions
\begin{equation*}
  \mathrm{Mnd}^\op(\bK) \to \mathrm{Kl}(\bK) \to
  \mathrm{Mod}(\bK) \leftarrow \mathrm{EM}(\bK) \leftarrow
  \mathrm{Mnd}(\bK)
\end{equation*}
which are all the identity on objects.  Here $\mathrm{Mnd}(\bK)$ is
the 2-category described by \cite{street72}, $\mathrm{Mnd}^\op(\bK)$
is $\mathrm{Mnd}(\bK^\op)^\op$, and $\mathrm{EM}(\bK)$ is the
Eilenberg--Moore completion of \cite{lack02:ftm_ii},
i.e.~$\mathrm{Kl}(\bK^\op)^\op$.

In chapter~4 we will consider structures called (proarrow)
\emph{equipments}, which have several definitions that we will try to
relate.  For the purposes of the following definition, we may take an
equipment to be given by a pair of 2-categories with the same objects
and a locally fully faithful identity-on-objects functor between them
\cite{wood82:proarrows_i}.

\begin{dfn}[{\cite{garner13:_enric}}]
  \label{dfn:32}
  Given an equipment $\bK \to \bM$ such that $\mathrm{Mod}(\bM)$
  exists as a 2-category, its \emph{Kleisli completion} is given by
  the functor
  \begin{equation*}
    \mathrm{Kl}(\bK \to \bM) = \mathrm{Kl}_{\bK}(\bM) \to
    \mathrm{Mod}(\bM)
  \end{equation*}
  whose domain is the locally full sub-2-category of
  $\mathrm{Kl}(\bM)$ on the morphisms whose underlying 1-cell in $\bM$
  is in the image of the functor $\bK \to \bM$.
\end{dfn}

\subsection{Pseudo-monads and monoidal bicategories}
\label{sec:pseudo-monads}

Recall \cite{gurski07} that $\Gray$ is the symmetric monoidal closed
category of strict 2-categories and strict 2-functors, with the
`pseudo' Gray tensor product as $\otimes$.  The hom is the right
adjoint $\mathrm{Ps}(\bicat{C},-)$ to $-\otimes \bicat{C}$, where
$\mathrm{Ps}(\bicat{C},\bicat{D})$ is the 2-category of strict
2-functors, \emph{pseudonatural} transformations and modifications.
Recall next that $\xcat{\Gray}$ is the usual category of categories
enriched in $\Gray$, and that every tricategory is equivalent to a
$\Gray$-category.  These are almost strict 3-categories, except that
the interchange law holds only up to coherent isomorphism.  So we may
pretend that our 3-categories are almost-strict in this sense.

\begin{dfn}
  A \emph{monoidal 2-category} $\bB$ is given by a 3-category $\Sigma
  \bB$ with a single object, which by the coherence theorem is
  essentially the same thing as a (strict) monoid object in $\Gray$.
\end{dfn}

\begin{dfn}[{\cite{day97:_monoid_hopf}}]
  \label{dfn:26}
  A \emph{monoidal functor} $(\bB,\otimes,i) \to (\bB',\boxtimes,i')$
  is given by an ordinary functor $F \colon \bB \to \bB'$ together
  with transformations $\mu, \eta$, with components
  \begin{equation*}
    \mu_{xy} \colon Fx \boxtimes Fy \to F(x\otimes y)
    \qquad \eta_{\ast} \colon i' \to Fi
  \end{equation*}
  and modifications $a,l,r$, with components (omitting tensor symbols)
  \begin{equation*}
    \xymatrix{
      F(x)F(y)F(z) \ar[r]^{\mu F(z)} \ar[d]_{F(x) \mu}
      \ar@{}[dr]|{a} & F(x y)F(z) \ar[d]^{\mu} \\
      F(x)F(y z) \ar[r]_{\mu} & F(x y z)
    }
    \qquad
    \xymatrix{
      F x \ar[r]^-{F(x) \eta} \ar[dr]_1^{}="s" & F(x) F(i)
      \ar[d]|{\mu} & F x \ar[l]_-{\eta F(x)} \ar[dl]^1_{}="s2" \\
      & F x \ar@{}"s";[u]|(.4){r} \ar@{}"s2";[u]|(.4){l}
    }
  \end{equation*}
  that satisfy
  \begin{multline}
    \label{eq:21}
    \xymatrix{
      F(x)F(y)F(z)F(w) \ar[r]^{\mu F(z) F(w)} \ar[d]_{F(x) F(y) \mu}
      \ar[dr]|{F(x) \mu F(w)}
      & F(xy)F(z)F(w) \ar[dr]^{\mu F(w)} \ar@{}[d]|{a F(w)}
      \\
      F(x)F(y)F(zw) \ar[dr]_{F(x) \mu} \ar@{}[r]|{F(x)a}
      & F(x)F(yz)F(w) \ar[r]^{\mu F(w)} \ar[d]|{F(x) \mu}
      & F(xyz)F(w) \ar[d]^\mu
      \\
      & F(x)F(yzw) \ar[r]_{\mu} \ar@{}[ur]|{a}
      & F(xyzw)
    }
    \\ = \\
    \xymatrix{
      F(x)F(y)F(z)F(w) \ar[r]^{\mu F(z) F(w)} \ar[d]_{F(x) F(y) \mu}
      \ar@{}[dr]|{\mu \mu}
      & F(xy)F(z)F(w) \ar[d]^{F(xy) \mu} \ar[dr]^{\mu F(w)}
      \\
      F(x)F(y)F(zw) \ar[r]^{\mu F(zw)} \ar[dr]_{F(x) \mu}
      & F(xy)F(zw) \ar[dr]|{\mu} \ar@{}[d]|{a} \ar@{}[r]|{a}
      & F(xyz)F(w) \ar[d]^\mu
      \\
      & F(x)F(zyw) \ar[r]_\mu
      & F(xyzw)
    }
  \end{multline}
  where $\mu \mu$ is the relevant interchange isomorphism, and
  \begin{multline}
    \label{eq:1}
    \xymatrix{
      F(x)F(y) \ar[dr]^{F(x) \eta F(y)} \\
      & F(x)F(i)F(y) \ar[r]^{\mu F(y)} \ar[d]_{F(x) \mu} \ar@{}[dr]|{a}
      & F(x)F(y) \ar[d]^{\mu} \\
      & F(x)F(y) \ar[r]_{\mu} & F(xy)
    }
    \\ = \\
    \xymatrix{
      F(x)F(y) \ar[r]^{F(x) \eta F(y)} \ar[d]_{F(x) \eta F(y)}
      \ar[dr]|{1}="x" \ar@{}"x";[r]|(.4){r F(y)}
      \ar@{}"x";[d]|(.4){F(x) l}
      & F(x)F(i)F(y) \ar[d]^{\mu F(y)} \\
      F(x)F(i)F(y) \ar[r]_{F(x) \mu} & F(x)F(y) \ar[dr]^\mu \\
      & & F(xy)
    }
  \end{multline}
\end{dfn}

\begin{dfn}
  \label{dfn:29}
  A (pseudo-)\emph{monoid} in a monoidal 2-category $\bB$ is a
  monoidal functor $\mathbf{1} \to \bB$, and the 2-category
  $\mathrm{PsMon}(\bB)$ is defined as in
  \cite{mccrudden00:_balan,day97:_monoid_hopf} to be the 2-category of
  monoidal functors, transformations and modifications from
  $\mathbf{1}$ to $\bB$.  If $F \colon \bB \to \bB'$ is a monoidal
  2-functor then $\mathrm{PsMon}(F) \colon \mathrm{PsMon}(\bB) \to
  \mathrm{PsMon}(\bB')$ is a 2-functor
  \cite[section~2]{mccrudden00:_balan}.

  A (pseudo-)\emph{monad} \cite{marmolejo99:_distr_laws_for_pseud} in
  a 3-category $\tricat{T}$ is given by an object $x \in \tricat{T}$
  and a pseudomonoid in $\tricat{T}(x,x)$.
\end{dfn}

\begin{dfn}[{\cite{marmolejo99:_distr_laws_for_pseud}}]
  \label{dfn:34}
  Given a pseudomonad $T$ on a 2-category $\bK$, its 2-category
  \mbox{$T$-$\bicat{Alg}$} of \emph{algebras} is given by the
  following:
  \begin{itemize}
  \item An object is an object $x$ of $\bK$ and a morphism $a \colon T
    x \to x$ together with invertible 2-cells
    \begin{equation*}
      \xymatrix{
        T^2 x \ar[r]^{\mu x} \ar[d]_{T a}
        \ar@{}[dr]|{\alpha} & T x \ar[d]^{a} \\
        T x \ar[r]_{a} & x
      }
      \qquad
      \xymatrix{
        x \ar[r]^-{\eta x} \ar[dr]_1^{}="s" & T x
        \ar[d]^{a} \\
        & F x \ar@{}"s";[u]|(.4){\upsilon}
      }
    \end{equation*}
    that satisfy equations (6) and (7) of
    \cite{marmolejo99:_distr_laws_for_pseud} (these are much the same
    as equations~\eqref{eq:21} and \eqref{eq:1} above, altered in the
    only way that makes sense).
  \item A morphism $(x,a) \to (y,b)$ is given by a morphism $f \colon
    x \to y$ and an invertible 2-cell
    \begin{equation*}
      \xymatrix{
        T x \ar[r]^{T f} \ar[d]_a \ar@{}[dr]|\phi & T y \ar[d]^b \\
        x \ar[r]_f & y
      }
    \end{equation*}
    that satisfies equations (9) and (10) of
    \cite{marmolejo99:_distr_laws_for_pseud}.
  \item A 2-cell between algebra morphisms is given by a 2-cell
    between the undelying morphisms in $\bK$ that makes the evident
    `cylinder' commute.
  \end{itemize}
\end{dfn}

Now if $t \colon x \to x$ is a monad in a 3-category $\tricat{T}$, we
can define the 2-categories of left and right $t$-modules as
\begin{equation*}
  \mathrm{LMod}(t,y) = \mbox{$\tricat{T}(y,t)$-$\bicat{Alg}$}
  \qquad
  \mathrm{RMod}(t,z) = \mbox{$\tricat{T}(t,z)$-$\bicat{Alg}$}
\end{equation*}
These assignments are functorial in the objects $y$ and $z$
[\textit{op.~cit.}], and we may define Eilenberg--Moore and Kleisli
objects as representations of these functors just as before.
Similarly, the Kleisli completion of a 3-category $\tricat{T}$ can be
defined as the full sub-3-category of $[\tricat{T}^\op, \Bicat]$ on
the Kleisli objects of representable monads.  From this we could,
following the reasoning of the previous section, define monad
morphisms, EM/Kleisli 2- and 3-cells, and so on.  We won't do that
fully here, but we will touch on the matter again in
section~\ref{sec:defin-equipm}.

\begin{rem}[{cf.~\cite[section~2]{kelly97:_proper_like_struc}}]
  \label{rem:9}
  If $T$ is a pseudo-monad on and $A$ an object of $\bK$, and if
  evaluation at $A$ has a right adjoint $\langle -, A \rangle \colon
  \bK \to [\bK,\bK]$, then in the equivalence
  \begin{prooftree}
    \AXC{$T A \longrightarrow A$}
    \UIC{$T \longrightarrow \langle A,A \rangle$}
  \end{prooftree}
  a morphism above is a $T$-algebra if and only if its transpose below
  is a morphism of pseudo-monoids.  The proof is simply a matter of
  unwinding the definitions and using the $T$-algebra and adjunction
  axioms.
\end{rem}

\section{Limits and colimits}
\label{sec:limits-colimits}

In this section we treat the material on colimits in $\Cat$ that will
be needed in order to define the 3-category of 2-profunctors in the
next chapter.  Everything from section~\ref{sec:2-extranaturality}
onwards is original, except where noted.

\subsection{Representables and colimits}
\label{sec:representables}

Some notation:

\begin{dfn}
  If $H \colon \bL^\op \times \bK \to \Cat$ is a (pro)functor, and $F
  \colon \bK' \to \bK$ and $G \colon \bL' \to \bL$, then we write
  \begin{equation*}
    H(G,F) \quad = \quad \xymatrix@C+1pc{ \bL'^\op \times \bK'
      \ar[r]^{G^\op \times F} & \bL^\op \times \bK \ar[r]^-{H} & \Cat}
  \end{equation*}
  A profunctor of the form $H(1,F)$ or $H(G,1)$ is called
  \emph{representable} or \emph{corepresentable}.
\end{dfn}

\begin{dfn}
  \label{dfn:19}
  If $H \colon \bL^\op \times \bK \to \Cat$ is a (pro)functor, then we
  write an object $h \in H(\ell,k)$ as $h \colon \ell \prof k$, and
  call it a \emph{heteromorphism} from $\ell$ to $k$.  We also write
  the action of morphisms of $\bK$ and $\bL$ on $h$ as e.g.~$k'
  \xrightarrow{f} k \xslashedrightarrow{h} \ell \xrightarrow{g}
  \ell'$.
\end{dfn}

The usual generalities hold, up to the expected level of weakness, for
representable 2-functors.  In particular, there is a 2-categorical
Yoneda lemma.

\begin{prp}[{\cite[1.9, 1.11]{street80:_fibrat_in_bicat}}]
  \label{prp:2}
  For a 2-category $\bK$ and a functor $F \colon \bK \to \Cat$,
  there is an equivalence
  \begin{equation*}
    [\bK, \Cat](\bK(k,-), F) \simeq Fk
  \end{equation*}
  If $G \colon \bK \to \Cat$ is representable as $G \simeq \bK(k,-)$
  then there is an object $x \in Gk$ such that
  \begin{enumerate}
  \item \label{item:1} For any $y \in Gj$, there is a morphism $f
    \colon k \to j$ in $\bK$ and an isomorphism $f_*x = Gf(x) \iso y$.
  \item \label{item:2} For any $g,h \colon k \to j$ in $\bK$ and $a
    \colon g_*k \to h_*k$ in $Gj$, there is a \emph{unique} 2-cell $q
    \colon g \tc h$ in $\bK$ such that $q_*k = a$.
  \end{enumerate}
\end{prp}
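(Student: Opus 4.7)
The plan is to construct mutually quasi-inverse functors
\[
\Phi \colon [\bK,\Cat](\bK(k,-), F) \to Fk,
\qquad
\Psi \colon Fk \to [\bK,\Cat](\bK(k,-), F),
\]
where $\Phi(\alpha) = \alpha_k(1_k)$ is evaluation at the identity, and $\Psi(x)$ is the transformation whose component at $j$ is the functor $f \mapsto f_*x = F(f)(x)$, with action on a 2-cell $\tau \colon f \tc f'$ given by $F(\tau)_x \colon f_*x \to f'_*x$. First I would verify that $\Psi(x)$ is a pseudonatural transformation: the naturality isomorphism against $g \colon j \to j'$ is assembled from the pseudofunctoriality iso $F(g)F(f)(x) \iso F(gf)(x)$, and the transformation axioms reduce to the coherence of $F$. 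Naturality of $\Psi$ itself in $x$ is immediate from the functoriality of each $F(f)$.

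Next I would check that the two composites are pseudonaturally isomorphic to identities. For $\Phi\Psi$, one has $\Phi\Psi(x) = F(1_k)(x) \iso x$ via the unit iso of the pseudofunctor $F$. For $\Psi\Phi$, given $\alpha \colon \bK(k,-) \tc F$, the component of $\Psi\Phi(\alpha)$ at $j$ sends $f$ to $F(f)(\alpha_k(1_k))$; the pseudonaturality square for $\alpha$ at $f$, evaluated at $1_k \in \bK(k,k)$, supplies an isomorphism $F(f)(\alpha_k(1_k)) \iso \alpha_j(f \cmp 1_k) \iso \alpha_j(f)$, natural in $f$ and compatible with modifications. The main obstacle is chasing these coherences through the pseudofunctoriality of $F$ and the pseudonaturality of $\alpha$; this is routine but tedious, and the paper's convention of silently strictifying coherence allows most of it to be elided.

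For the two consequences, suppose $G \cequiv \bK(k,-)$ and let $x \in Gk$ be the image under the equivalence $[\bK,\Cat](\bK(k,-),G) \cequiv Gk$ of (the composite of the equivalence with) the identity transformation. Then for each $j$, composition with $x$ defines a functor $\bK(k,j) \to Gj$ given by $f \mapsto f_*x$, and this must itself be an equivalence of categories, since the equivalence of $G$ with $\bK(k,-)$ is pseudonatural. Essential surjectivity of this functor on objects is claim~\ref{item:1}; its full faithfulness on morphisms is claim~\ref{item:2}, giving for each $a \colon g_*x \to h_*x$ a unique $q \colon g \tc h$ with $q_*x = a$.
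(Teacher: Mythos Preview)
The paper does not give its own proof of this proposition: it is stated with a citation to Street \cite[1.9, 1.11]{street80:_fibrat_in_bicat} and used as background. Your argument is the standard one and is correct; the evaluation-at-the-identity functor $\Phi$ and the action functor $\Psi$ you describe are exactly how the bicategorical Yoneda equivalence is constructed, and your derivation of the two enumerated consequences from essential surjectivity and full faithfulness of the componentwise equivalence $\bK(k,j) \to Gj$ is the right reading of what the statement records. (You have also silently corrected the paper's typo $g_*k, h_*k$ to $g_*x, h_*x$, which is indeed what is meant.)
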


\begin{cor}
  \label{cor:5}
  The Yoneda embedding $\bK \to [\bK^\op, \Cat]$ is 2-fully-faithful,
  that is, locally fully faithful and essentially surjective on
  morphisms.
\end{cor}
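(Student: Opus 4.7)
The plan is to derive this directly from the 2-categorical Yoneda lemma (Prop.~\ref{prp:2}). Being 2-fully-faithful amounts to asking that for each pair of objects $j, k \in \bK$ the induced functor on hom-categories
\begin{equation*}
  y_{jk} \colon \bK(j,k) \longrightarrow [\bK^\op, \Cat](\bK(-,j), \bK(-,k))
\end{equation*}
be an equivalence of categories, since an equivalence is both fully faithful (giving local full faithfulness of $y$) and essentially surjective on objects (giving essentially surjective on 1-cells).

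First I would apply Prop.~\ref{prp:2} in its contravariant form, which follows by replacing $\bK$ with $\bK^\op$: for any functor $F \colon \bK^\op \to \Cat$ there is an equivalence $[\bK^\op, \Cat](\bK(-,j), F) \simeq Fj$. Specializing to $F = \bK(-,k)$ gives
\begin{equation*}
  [\bK^\op, \Cat](\bK(-,j), \bK(-,k)) \simeq \bK(j,k).
\end{equation*}

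Next I would identify this equivalence with $y_{jk}$. Tracing through the Yoneda equivalence, a morphism $g \colon j \to k$ is sent to the transformation whose component at $\ell$ is post-composition $g \circ (-) \colon \bK(\ell,j) \to \bK(\ell,k)$, which is by definition the action of the Yoneda embedding on $g$. So $y_{jk}$ is (one direction of) the Yoneda equivalence and is therefore itself an equivalence. Equivalently, conditions (\ref{item:1}) and (\ref{item:2}) of Prop.~\ref{prp:2}, applied to the representable functor $G = \bK(-,j)$ with representing element $x = 1_j$, translate directly: (\ref{item:1}) yields essential surjectivity of $y_{jk}$ on objects and thus of $y$ on 1-cells, while (\ref{item:2}) yields that $y_{jk}$ is fully faithful on morphisms and hence $y$ is locally fully faithful.

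There is no real obstacle here; the only thing to check is the bookkeeping that passing from the covariant statement of Prop.~\ref{prp:2} to the contravariant form used above, and the identification of the Yoneda equivalence with $y_{jk}$, go through at the pseudo level (not strictly). This is routine given the coherence conventions adopted earlier in the chapter, so the corollary is essentially immediate from the preceding proposition.
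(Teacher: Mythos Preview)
Your proposal is correct and matches the paper's approach: the paper states this as an immediate corollary of Prop.~\ref{prp:2} without giving an explicit proof, and your argument---applying the Yoneda equivalence with $F = \bK(-,k)$ and identifying it with the action of the Yoneda embedding on hom-categories---is precisely the intended derivation.
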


\begin{rem}
  \label{rem:5}
  One consequence of this is the following: suppose $H \colon \bL^\op
  \times \bK \to \Cat$ is a functor, and that for each $\ell \in \bL$
  there is an object $\bar H \ell \in \bK$ and a representation
  $H(\ell,-) \eqv \bK(\bar H \ell, -)$, i.e~that the corresponding
  functor $\bL^\op \to [\bK, \Cat]$ takes its values in
  representables.  Then there is an essentially unique way to make
  $\bar H$ into a functor $\bL \to \bK$, and $H$ is then equivalent to
  $\bK(\bar H, 1)$: if $g \colon \ell \to m$ is a morphism of $\bL$,
  then by Yoneda there are universal objects $\bar \ell \in H(\ell,
  \bar H \ell)$ and $\bar m \in H(m, \bar H m)$, so that $g^*\bar m$
  induces, by property \ref{item:1} of the proposition, a morphism
  $\bar H g \colon \bar H \ell \to \bar H m$ such that $(\bar H
  g)_*\bar\ell \iso g^*\bar m$.  The comparison maps of $\bar H$ arise
  from the 2-cells given by property \ref{item:2} in the proposition
  above, and their uniqueness implies their coherence.
\end{rem}

Of course, the dual property also holds, and we may sum up the two as
follows:

\begin{cor}
  \label{cor:4}
  A pointwise (co)representable profunctor is (co)representable.
\end{cor}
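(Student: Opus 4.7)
The plan is essentially to observe that Remark~\ref{rem:5} already carries out the work for the representable case, and that the corepresentable case follows by duality. So the proof really amounts to extracting a statement from the construction given in that remark.

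In more detail, suppose $H \colon \bL^\op \times \bK \to \Cat$ is pointwise representable, meaning that for each $\ell \in \bL$ there is an object $\bar H \ell \in \bK$ together with an equivalence $H(\ell,-) \eqv \bK(\bar H \ell,-)$. The task is to promote the assignment $\ell \mapsto \bar H \ell$ to a functor $\bL \to \bK$ and to exhibit an equivalence $H \eqv \bK(\bar H -,-)$ of functors $\bL^\op \times \bK \to \Cat$. First, I would pick, for each $\ell$, a universal heteromorphism $\bar\ell \in H(\ell, \bar H \ell)$ realising the representation. For a morphism $g \colon \ell \to m$ in $\bL$, the object $g^*\bar m \in H(\ell, \bar H m)$ is then classified, via part~\ref{item:1} of prop.~\ref{prp:2}, by a morphism $\bar H g \colon \bar H \ell \to \bar H m$ with $(\bar H g)_*\bar \ell \iso g^*\bar m$. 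Comparison 2-cells witnessing pseudofunctoriality are then produced by the uniqueness clause~\ref{item:2} of the Yoneda lemma, and the same uniqueness forces them to satisfy all the coherence axioms automatically.

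Having defined the pseudofunctor $\bar H \colon \bL \to \bK$, the equivalence $H \eqv \bK(\bar H-,-)$ is pointwise an instance of the representation chosen above, and the naturality in $\ell$ is again enforced by the Yoneda uniqueness in part~\ref{item:2}. The dual argument, applied in $\bL^\op$ or equivalently by replacing $H$ with $H^\op \colon \bK^\op \times \bL \to \Cat$, handles the corepresentable case and yields a pseudofunctor $\bar H \colon \bK \to \bL$ with $H \eqv \bK(-, \bar H -)$.

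The main (and essentially only) obstacle is a matter of presentation rather than of substance: checking that the 2-cells produced by clause~\ref{item:2} of prop.~\ref{prp:2} assemble into a coherent pseudofunctor. But since they are uniquely determined by the universal property of the chosen representations, the associativity and unit modifications of $\bar H$ are forced, so coherence is free. Thus the corollary follows essentially formally from the 2-dimensional Yoneda lemma.
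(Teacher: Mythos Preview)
Your proposal is correct and follows precisely the paper's own approach: the corollary is simply a summary of Remark~\ref{rem:5} together with its dual, and you have unpacked exactly that. One minor slip: in your dual case the equivalence should read $H \eqv \bL(-,\bar H-)$ rather than $\bK(-,\bar H-)$, since $\bar H$ lands in $\bL$.
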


A 2-category, strict or not, may have colimits of varied strictness.
We will be mainly concerned with the weakest sort.

\begin{dfn}[{\cite[1.12]{street80:_fibrat_in_bicat}}]
  \label{dfn:8}
  Let $F \colon \bJ \to \bK$ and $W \colon \bJ^\op \to \Cat$ be
  functors.  If the functor $[\bJ^\op,\Cat](W,\bK(F,1))$ is
  representable as
  \begin{equation*}
    \bK(W\star F, 1) \simeq [\bJ^\op,\Cat](W,\bK(F,1))
  \end{equation*}
  then we call the representing object $W \star F$ the
  \emph{2-colimit} (or just the \emph{colimit}) of $F$ \emph{weighted
    by} $W$.  (This is known in much of the literature as a
  \emph{bicolimit}.)  The \emph{conical} colimit $\colim F$ of $F$ is
  $\Delta\mathbf{1} \star F$, where $\Delta\mathbf{1}$ is the constant
  functor at the terminal category.
\end{dfn}

\begin{rem}
  From this we may immediately derive two dual forms of the Yoneda
  lemma:
  \begin{equation}
    \label{eq:17}
    \{ \bJ(j,1), F \} \simeq F j \qquad \bJ(1,j) \star F \simeq F j
  \end{equation}
  (Cf.~\cite[(3.10)]{kelly82}.)
\end{rem}

Stricter kinds of colimit are useful in constructing the above sort.
If $\bK$ is a strict 2-category, then the \emph{pseudo-colimit}
\cite[1.14]{street80:_fibrat_in_bicat} of the functors in the
definition is representable via an \emph{isomorphism}
\begin{equation*}
  \bK(W \star_{\mathrm{ps}} F,k) \iso [\bJ^\op,\Cat](W,\bK(F,k))
\end{equation*}
If both $\bK$ and $\bJ$ are strict 2-categories, and the functors $F$
and $W$ are strict too, then we can consider the \emph{strict
  pseudo-colimit} \cite[6.10]{lack10}, which satisfies the property of
the pseudocolimit with the 2-category $[\bJ^\op,\Cat]$ replaced by the
2-category $\mathrm{Ps}(\bJ^\op,\Cat)$ of strict functors,
pseudonatural transformations and modifications.  The \emph{strict
  colimit} is the same, except that now the functor 2-category
involved is that of strict functors, strict transformations and
modifications.  This last is the $\cat{Cat}$-colimit, in the usual
enriched sense.

Pseudo-colimits, strict ones in particular, are thus \textit{a
  fortiori} 2-colimits, and moreover strict pseudo-colimits are strict
colimits whose weights are suitably `cofibrant' \cite[6.10]{lack10}.
Further, if $\bK$ is a strict 2-category, then for any 2-category
$\bJ$ there is a strict $\bJ'$ such that $[\bJ,\bK] \sim
\mathrm{Ps}(\bJ', \bK)$ \cite[6.12]{lack10}, and so for a diagram $F
\colon \bJ \to \bK$ and a weight $W \colon \bJ^\op \to \Cat$, there
are strict functors $F' \colon \bJ' \to \bK$ and $W' \colon \bJ'^\op
\to \Cat$ such that
\begin{equation*}
  [\bJ^\op, \Cat](W, \bK(F,1)) \eqv \mathrm{Ps}(\bJ'^\op, \Cat)(W',
  \bK(F',1))
\end{equation*}
That is, the strict pseudo-colimit of the strictified functors is
equivalent to the 2-colimit of the originals.  So a strict 2-category
that has all strict (i.e.~$\cat{Cat}$-weighted) colimits also has all
strict pseudo-colimits and hence all 2-colimits.  In particular,
$\Cat$ is strictly 2-cocomplete and so is 2-cocomplete.

\subsection{2-extranaturality}
\label{sec:2-extranaturality}

We will need to talk about 2-categorical ends and coends.

\begin{dfn}
  Let $T \colon \bK^\op \times \bK \to \bL$ be a functor and $\ell$ be
  an object of $\bL$.  A family $\beta_k \colon \ell \to T(k,k)$ is
  \emph{extranatural} (in $k$) if for each $f \colon k \to j$ in $\bK$
  there is an invertible 2-cell $\beta_f$
  \begin{equation*}
    \xymatrix{
      \ell \ar[r]^{\beta_k} \ar[d]_{\beta_j} \ar@{}[dr]|{\iso
        \mathrlap{\,\beta_f}} &
      T(k,k) \ar[d]^{T(k,f)} \\
      T(j,j) \ar[r]_{T(f,j)} & T(k,j)
    }
  \end{equation*}
  satisfying the following (fairly obvious) axioms:
  \begin{enumerate}
  \item $\beta_f$ is natural in $f$: for $p \colon f \tc g$
    \begin{equation*}
      \vcenter{
        \xymatrix{
          \ell \ar[r]^{\beta_k} \ar[d]_{\beta_j}
          \ar@{}[dr]|{\mathllap{\beta_g\,} \iso} &
          T(k,k) \ar@/_1.25pc/[d]_(.35){T(k,g)}^{}="t"
          \ar@/^1.25pc/[d]^{T(k,f)}_{}="s"
          \ar@{=>}"s";"t"_{T(k,p)}\\
          T(j,j) \ar[r]_{T(g,j)} & T(k,j)
        }
      } \qquad = \qquad
      \vcenter{
        \xymatrix{
          \ell \ar[r]^{\beta_k} \ar[d]_{\beta_j} \ar@{}[dr]|{\iso
            \mathrlap{\,\beta_f}} &
          T(k,k) \ar[d]^{T(k,f)} \\
          T(j,j) \ar[r]^{T(f,j)}_{}="s" \ar@/_2pc/[r]_{T(g,j)}^{}="t"
          \ar@{=>}"s";"t"|{T(p,j)}
          & T(k,j)
        }
      }
    \end{equation*}
  \item $\beta_{1_k} = 1_{\beta_k}$, modulo the unitors of $T$ and
    $\bL$:
    \begin{equation*}
      \vcenter{
        \xymatrix{
          \ell \ar[r]^{\beta_k} \ar[d]_{\beta_k} \ar@{}[dr]|{\iso
            \mathrlap{\,\beta_{1_k}}} &
          T(k,k) \ar[d]_(.25){T(k,1)}^{}="x"
          \ar@/^1.25pc/[d]^{1}_{}="y"
          \ar@{}"x";"y"|{\iso} \\
          T(k,k) \ar[r]^{T(1,k)}_{}="a"
          \ar@/_1.25pc/[r]_{1}^{}="b"
          \ar@{}"a";"b"|{\iso}
          & T(k,k)
        }
      }
      \qquad = \qquad
      \vcenter{
        \xymatrix{
          \ell \ar[r]^{\beta_k} \ar[d]_{\beta_k} \ar@{}[dr]|{=
            \mathrlap{\,1_{1 \beta_k}}} & 
          T(k,k) \ar[d]^1 \\
          T(k,k) \ar[r]_1 & T(k,k)
        }
      }
    \end{equation*}
  \item $\beta$ respects composition: for $gf \colon k \to j \to i$
    \begin{equation*}
      \vcenter{
        \xymatrix{
          \ell \ar[r]^{\beta_k} \ar[d]_{\beta_i} \ar@{}[dr]|{\iso
            \mathrlap{\,\beta_{gf}}} &
          T(k,k) \ar[d]^{T(k,gf)} \\
          T(i,i) \ar[r]_{T(gf,j)} & T(k,i)
        }
      }
      \quad = \quad
      \vcenter{
        \xymatrix{
          \ell \ar[dd]_{\beta_i} \ar[rr]^{\beta_k} \ar[dr]|{\beta_j} &
          & T(k,k) \ar[d]^{T(k,f)} \ar@{}[dl]|{\mathllap{\beta_f\,}
            \iso} \\
          & T(j,j) \ar[r]^{T(f,j)} \ar[d]_(.25){T(j,g)}
          \ar[dr]|{T(f,g)} \ar@{}[dl]|{\iso \mathrlap{\,\beta_g}} &
          T(k,j) \ar[d]^{T(k,g)} \\
          T(i,i) \ar[r]_{T(g,i)} & T(j,i) \ar[r]_{T(f,i)} & T(k,i)
        }
      }
    \end{equation*}
    where the triangles at the lower right contain the obvious
    compositors of $T$ and ($T$ applied to) the unitors of $\bK^\op
    \times \bK$, and the compositors of $T$ on the boundaries are left
    implicit.
  \end{enumerate}
  There is an obvious notion of modification between two extranatural
  transformations, i.e.~an object-indexed family of 2-cells that
  commute in the evident way with the naturality 2-cells, so we get a
  category $\mathrm{Exnat}(\ell,T)$.
\end{dfn}

\begin{lem}
  If $T,S \colon \bK^\op \times \bK \to \bL$ are functors, $\alpha
  \colon T \tc S$ is a transformation and the family $\beta_k \colon
  \ell \to T(k,k)$ is extranatural in $k$, then the family
  $\alpha_{k,k} \beta_k$ is again extranatural.
\end{lem}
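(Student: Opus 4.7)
The plan is to define the required family of invertible 2-cells $(\alpha\beta)_f$ by whiskering $\beta_f$ with the appropriate component of $\alpha$ and then using the naturality of $\alpha$ in each of its two arguments to move $\alpha$ from the inside to the outside of the square.

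Concretely, for $f\colon k\to j$ in $\bK$, consider the pasted 2-cell
\[
\xymatrix@C+1pc{
\ell \ar[r]^{\beta_k} \ar[d]_{\beta_j}
 \ar@{}[dr]|{\iso\,\beta_f}
& T(k,k) \ar[r]^{\alpha_{k,k}} \ar[d]^{T(k,f)}
 \ar@{}[dr]|{\iso\,\alpha_{1,f}}
& S(k,k) \ar[d]^{S(k,f)} \\
T(j,j) \ar[r]_{T(f,j)} \ar[d]_{\alpha_{j,j}}
 \ar@{}[dr]|{\iso\,\alpha_{f,1}}
& T(k,j) \ar[r]_{\alpha_{k,j}} \ar@{=}[d]
& S(k,j) \ar@{=}[d] \\
S(j,j) \ar[r]_{S(f,j)} & S(k,j) \ar@{=}[r] & S(k,j)
}
\]
where the upper-right and lower-left squares are the naturality squares of $\alpha$ at the morphisms $(1_k,f)$ and $(f,1_j)$ of $\bK^\op\times\bK$ respectively (these are invertible because $\alpha$ is a pseudo/strong transformation). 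This defines $(\alpha\beta)_f$ as the pasting, which is invertible as a composite of invertibles.

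It then remains to verify the three extranaturality axioms. For the naturality-in-$f$ axiom, given $p\colon f\tc g$ in $\bK(k,j)$, apply $T$ and $S$ to $(1,p)$ and $(p,1)$ to obtain 2-cells; the desired equality then reduces to pasting the naturality-in-$f$ axiom for $\beta_f$ on the left with the modification-like naturality of $\alpha$ (applied to the 2-cells $(1,p)$ and $(p,1)$) on the right. For the unit axiom, both sides collapse to the unit axiom for $\beta_{1_k}$ together with the unit compatibility of $\alpha$ (naturality of $\alpha$ at identities, together with the unit coherence isomorphisms of $T,S$). For the composition axiom at $gf$, one inserts two copies of the $\alpha$-naturality pasting on the right of the two $\beta_f, \beta_g$ squares and uses that whiskering by $\alpha$ commutes with horizontal pasting; the central equality then comes exactly from the composition axiom for $\beta$, and the equality of the outer boundaries follows from the functor-compositor coherence of $\alpha$.

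The main obstacle is not conceptual but bookkeeping: the three axioms each involve a nontrivial pasting diagram, and one must check that the compositors of $T$ and $S$, the whiskers of $\alpha$, and the $\beta_f$'s fit together coherently. This is essentially a mechanical exercise once one adopts string-diagram or pasting-diagram notation, and no choice is needed at any step beyond the obvious one of using the naturality of $\alpha$ to turn $\alpha\circ T(-,-)$ into $S(-,-)\circ\alpha$ on each side of the square.
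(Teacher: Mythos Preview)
Your proposal is correct and follows essentially the same approach as the paper: both define $(\alpha\beta)_f$ by pasting $\beta_f$ with the two naturality cells of $\alpha$ (what the paper writes as $\alpha_{k,f}$ and $\alpha_{f,j}$), and both then verify the three extranaturality axioms. The paper, like you, dismisses the naturality-in-$f$ and unit axioms as straightforward, and spends its effort on the composition axiom, where the argument reduces to showing that two different pastings of $\alpha$'s naturality cells (one via $\alpha_{k,g}\cdot\alpha_{f,j}$, the other via $\alpha_{f,i}\cdot\alpha_{j,g}$) agree, which follows from the composition and naturality axioms for $\alpha$ applied to the decompositions $(f,g)\iso(k,g)(f,j)\iso(f,i)(j,g)$ in $\bK^\op\times\bK$ --- exactly the ``functor-compositor coherence of $\alpha$'' you invoke.
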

\begin{proof}
  For $f \colon k \to j$ as before, the structure 2-cell $(\alpha
  \beta)_f$ is
  \begin{equation*}
    \xymatrix{
      \ell \ar@{}[dr]|{\iso\mathrlap{\,\beta_f}} \ar[r]^{\beta_k}
      \ar[d]_{\beta_j} &
      T(k,k) \ar[r]^{\alpha_{k,k}} \ar[d]^(.7){T(k,f)} & S(k,k)
      \ar[dd]^{S(k,f)} \\ 
      T(j,j) \ar[r]^{T(f,j)} \ar[d]_{\alpha_{j,j}} & T(k,j)
      \ar@{}[dl]|{\iso\mathrlap{\,\alpha_{f,j}}} 
      \ar@{}[ur]|{\iso\mathrlap{\,\alpha_{k,f}}}
      \ar[dr]^{\alpha_{k,j}} \\
      S(j,j) \ar[rr]_{S(f,j)} & & S(k,j)
    }
  \end{equation*}
  The naturality and unit axioms follow more or less obviously from
  this and the corresponding properties of $\beta$ and $\alpha$.

  Using the definition above, and the composition axioms for $\beta$
  and $\alpha$, we may expand $(\alpha\beta)_{gf}$ to
  \begin{equation*}
    \xymatrix{
      \ell \ar[rr] \ar[dr] \ar[dd]
      &
      & T(k,k) \ar[r] \ar[d]
      \ar@{}[dr]|{\alpha_{k,f}}
      & S(k,k) \ar[d] \\
      & T(j,j) \ar[r] \ar[dr] \ar[d] \ar@{}[ur]|{\beta_f}
      & T(k,j) \ar[r] \ar[d]
      & S(k,j) \ar[dd] \\
      T(i,i) \ar[r] \ar[d] \ar@{}[ur]|{\beta_g}
      & T(j,i) \ar[r] \ar[d]
      & T(k,i) \ar[dr] \ar@{}[ur]|{\alpha_{k,g}} \\
      S(i,i) \ar[r] \ar@{}[ur]|{\alpha_{g,i}}
      & S(j,i) \ar[rr] \ar@{}[ur]|{\alpha_{f,i}}
      &
      & S(k,i)
    }
  \end{equation*}
  which is to equal
  \begin{equation*}
    \xymatrix{
      \ell \ar[rr] \ar[dr] \ar[dd]
      & 
      & T(k,k) \ar[r] \ar[d]
      & S(k,k) \ar[dd] \\
      & T(j,j) \ar[r] \ar[dr] \ar[d] \ar@{}[ur]|{\beta_f}
      & T(k,j) \ar[dr] \ar@{}[ur]|{\alpha_{k,f}} \\
      T(i,i) \ar[r] \ar[d] \ar@{}[ur]|{\beta_g}
      & T(j,i) \ar[dr] \ar@{}[r]|{\alpha_{j,g}}
      & S(j,j) \ar[r] \ar[dr] \ar[d] \ar@{}[u]|{\alpha_{f,j}}
      & S(k,j) \ar[d] \\
      S(i,i) \ar[rr] \ar@{}[ur]|{\alpha_{g,i}}
      &
      & S(j,i) \ar[r]
      & S(k,i)
    }
  \end{equation*}
  So it suffices to show that the composite 2-cells, $\gamma$ and
  $\delta$, say, from
  \begin{equation*}
    T(j,j) \to T(k,j) \to S(k,j) \to S(k,i)
  \end{equation*}
  to
  \begin{equation*}
    T(j,j) \to T(j,i) \to S(j,i) \to S(k,i)
  \end{equation*}
  in the lower right-hand corners are equal.  The diagonals of these
  are the boundaries of $\alpha_{f,g}$, and by gluing $\gamma$ and
  $\delta$ together along this and their boundaries we get two `cones'
  of 2-cells, one relating $\alpha_{f,g}$ to $\alpha_{j,g}$ and
  $\alpha_{f,i}$ and the other relating it to $\alpha_{f,j}$ and
  $\alpha_{k,g}$, whose commutativity implies that $\gamma$ and
  $\delta$ are equal.  Because $(f,i)(j,g) \iso (f,g) \iso (k,g)(f,j)$
  in $\bK^\op \times\bK$, the naturality and composition axioms for
  $\alpha_{f,g}$ show that these cones do indeed commute, so that
  $\alpha\beta$ satisfies the composition axiom and hence is
  extranatural.
\end{proof}

If $\bK$ is a 2-category, then the family $1_k \colon \mathbf{1} \to
\bK(k,k)$ is extranatural in $k$, and in fact this is the universal
extranatural transformation out of $\mathbf{1}$, in the following
sense.

\begin{prp}[Extranatural Yoneda]
  Let $H \colon \bK^\op \times \bK \to \Cat$ be a functor.  Then there
  is an equivalence of categories
  \begin{equation*}
    \mathrm{Exnat}(\mathbf{1},H) \eqv \Nat(\hom_{\bK},H)
  \end{equation*}
  given from right to left by composition with the extranatural $1
  \colon \mathbf{1} \xnt \bK$.
\end{prp}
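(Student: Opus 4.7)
The plan is to exhibit a quasi-inverse $\Psi \colon \mathrm{Exnat}(\mathbf{1},H) \to \Nat(\hom_\bK, H)$ to the functor $\Phi$ given by composition with $1 \colon \mathbf{1} \xnt \bK$, and to check both triangles up to isomorphism.  Note that $\Phi$ lands in $\mathrm{Exnat}(\mathbf{1}, H)$ by the preceding lemma (whiskering an extranatural family by a transformation is again extranatural) together with the fact that the identities $1_k \colon \mathbf{1} \to \bK(k,k)$ form an extranatural family with $1_f = 1_f$ itself as structure 2-cell.

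For $\Psi$: given $\beta \in \mathrm{Exnat}(\mathbf{1},H)$, I would define $\Psi(\beta)_{k,j} \colon \bK(k,j) \to H(k,j)$ on objects by $f \mapsto H(k,f)(\beta_k)$ and on 2-cells $p \colon f \tc g$ by $H(k,p)_{\beta_k}$; functoriality is inherited from $H(k,-)$.  The pseudonaturality 2-cells for a morphism $(u,v) \colon (k,j) \to (k',j')$ in $\bK^\op \times \bK$ must fill
\begin{equation*}
  \xymatrix{
    \bK(k,j) \ar[r]^{\bK(u,v)} \ar[d]_{\Psi(\beta)_{k,j}}
    & \bK(k',j') \ar[d]^{\Psi(\beta)_{k',j'}} \\
    H(k,j) \ar[r]_{H(u,v)} & H(k',j')
  }
\end{equation*}
and I would build these from $\beta_u$: at $f \colon k \to j$ the two routes give $H(u,vf)(\beta_k)$ and $H(k', vfu)(\beta_{k'}) = H(k',vf) H(k',u)(\beta_{k'})$, and applying $H(k',vf)$ to $\beta_u^{-1} \colon H(k',u)(\beta_{k'}) \to H(u,k)(\beta_k)$ supplies the required iso (modulo a $H$-compositor).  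Naturality in $f$ of these 2-cells follows from the naturality axiom for $\beta$, while the unit and composition axioms for a pseudonatural transformation unwind into exactly the unit and composition axioms for the extranatural family $\beta$ (with some bookkeeping between the $H$-compositors and $\bK$-compositors).  The assignment on morphisms is immediate: a modification $m \colon \beta \Rrightarrow \beta'$ yields a modification $\Psi(m)$ with components $H(k,f)(m_k)$.

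For the two inverse isomorphisms: $\Phi\Psi(\beta)_k = \Psi(\beta)_{k,k}(1_k) = H(k,1_k)(\beta_k) \iso \beta_k$ via the unitor of $H$, with the extranatural structure 2-cells matching up using the unit axiom of $\beta$; and $\Psi\Phi(\alpha)_{k,j}(f) = H(k,f)(\alpha_{k,k}(1_k)) \iso \alpha_{k,j}(\bK(k,f)(1_k)) = \alpha_{k,j}(f)$ via the pseudonaturality 2-cell of $\alpha$ at $(1_k, f)$, with the pseudonaturality 2-cells matching up by the coherence of $\alpha$.

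The main obstacle is the bookkeeping in step two: checking that the three axioms of an extranatural transformation $\beta$ translate cleanly into the three axioms of a pseudonatural transformation $\Psi(\beta)$.  This is essentially mechanical pasting-diagram manipulation in $\Cat$, but it requires some care because the extranaturality square has one variable contravariant and one covariant whereas pseudonaturality treats them uniformly, so the covariant action $H(k,f)$ plays a dual role in the two structures.  Once this correspondence is established, the rest of the verification -- functoriality of $\Psi$, the two unit isomorphisms, and the fact that they are themselves modifications -- follows formally from the 2-Yoneda lemma (proposition~\ref{prp:2}) and the coherence axioms already in hand.
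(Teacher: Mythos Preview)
Your proposal is correct and follows essentially the same approach as the paper: the inverse sends $\beta$ to the transformation with components $f \mapsto H(k,f)(\beta_k)$, the pseudonaturality cells are built from $\beta_u$ together with unitors and compositors of $H$, and the two unit isomorphisms are respectively the unitor $H(k,1_k)(\beta_k) \iso \beta_k$ and the pseudonaturality cell $(\alpha_{k,f})_{1_k}$ of $\alpha$ at $(1_k,f)$.  The paper's proof is more terse and omits the axiom-by-axiom translation you outline, but the substance is the same.
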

\begin{proof}
  Given an extranatural $\beta \colon \mathbf{1} \xnt H$ we get a
  natural $\hat \beta$ with components $\hat \beta_{k,j} \colon f
  \mapsto H(k,f)(\beta_k)$ (we could equally choose the isomorphic
  $H(f,j)(\beta_j)$).  For morphisms $g,h$, the mediating 2-cell
  $\hat\beta_{g,h}$ comes from the unitors of $H$, the compositors of
  $\bK$, and $\beta_g$.  These are all suitably natural and thus so is
  $\hat\beta$.

  If $\alpha \colon \bK \tc H$ is natural, then the isomorphisms
  $(\alpha_{k,f})_{1_k}$ provide the components of an invertible
  modification $\hat{\alpha 1} \mdf \alpha$, which in fact is natural
  in $\alpha$.  The equivalence is completed by the fact that $\beta_k
  \iso H(k,1_k)(\beta_k)$.
\end{proof}

In enriched category theory, (co)ends are $\hom$-weighted (co)limits
\cite[section~3.10]{kelly82}.  Because in our setting $\Nat(F,G)$ is
equivalent to the $F$-weighted limit $\{\!F,G\}$ of $G$
\cite[prop.~1.15]{street80:_fibrat_in_bicat}, we have shown that
$\Cat$ admits `2-ends'
\begin{equation*}
 \int_k H(k,k) \eqv \mathrm{Exnat}(\mathbf{1},H).
\end{equation*}
If $T \colon \bK^\op \times \bK \to \bL$, we find that
\begin{equation*}
  \mathrm{Exnat}(\ell,T) \eqv \mathrm{Exnat}(\mathbf{1},\bL(\ell,T))
  \eqv \{\bL,\bL(\ell,T)\}
\end{equation*}
where we write $\bL$ for $\hom_{\bL}$.  If any of these is
representable as a functor of $\ell$ we may call the representing
object the end $\int_kT(k,k)$ of $T$.

The following result is immediate.
\begin{prp}
  \label{prp:1}
  If $F,G \colon \bK \to \bL$ are functors, then
  \begin{equation*}
    \Nat(F,G) \iso \mathrm{Exnat}(\mathbf{1},\bL(F-,G-))
  \end{equation*}
  and hence
  \begin{equation*}
    \Nat(F,G)\; \eqv \int_k \bL(Fk,Gk).
  \end{equation*}
\end{prp}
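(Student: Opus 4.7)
The plan is to unfold both sides of the claimed isomorphism and observe that they consist of identical data subject to identical axioms. A functor $\mathbf{1} \to \bL(Fk,Gk)$ is the same as a single morphism $\alpha_k := \beta_k(\ast) \colon Fk \to Gk$ in $\bL$, so the object-components of an extranatural $\beta \colon \mathbf{1} \xnt \bL(F-,G-)$ are in bijection with the components of a pseudonatural transformation $\alpha \colon F \tc G$. This sets up the underlying-object part of the correspondence.

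Next, for $H = \bL(F-,G-)$ and $f \colon k \to j$ in $\bK$, one has $H(k,f) = (Gf)_*$ and $H(f,j) = (Ff)^*$. Evaluated at $\ast$, the two legs of the extranaturality square $\mathbf{1} \to H(k,j)$ yield $Gf \cdot \alpha_k$ and $\alpha_j \cdot Ff$ respectively, so the invertible structure 2-cell $\beta_f$ is precisely an invertible 2-cell of the form $Gf \cdot \alpha_k \Rightarrow \alpha_j \cdot Ff$ that witnesses pseudonaturality. It remains to check that the three extranaturality axioms (naturality of $\beta_f$ in $f$, the unit axiom $\beta_{1_k} = 1$, and compatibility with composition) translate term-by-term into the three axioms for a pseudonatural transformation. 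This is where the bookkeeping lies: one has to verify that the compositors and unitors of $H$ in each variable are precisely built from those of $F$, $G$, and the associators/unitors of $\bL$, so that the axioms match on the nose. The main (and only) obstacle is this matching, which is mechanical once one notes that $H$'s two-variable functoriality is governed by whiskering in $\bL$.

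For the categorical structure, a modification on either side is a $k$-indexed family of 2-cells $\alpha_k \Rightarrow \alpha'_k$ in $\bL$, and the single coherence axiom relating such a family to the structure 2-cells $\beta_f$ on one side is literally the same equation as the one relating it to the $\alpha_f$ on the other. Hence the assignment $\alpha \mapsto \beta$ is an isomorphism of categories $\Nat(F,G) \iso \mathrm{Exnat}(\mathbf{1},\bL(F-,G-))$.

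The second statement is then immediate: by the extranatural Yoneda proposition proved just above, $\mathrm{Exnat}(\mathbf{1},H) \eqv \{\hom_{\bK},H\}$, and the latter is by definition the 2-end $\int_k H(k,k)$. Combining these equivalences with the isomorphism of the first part gives $\Nat(F,G) \eqv \int_k \bL(Fk,Gk)$, as claimed.
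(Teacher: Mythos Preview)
Your proposal is correct and is exactly the unwinding-of-definitions argument that the paper intends: the paper itself gives no proof beyond declaring the result ``immediate'', so what you have written is the spelled-out version of that claim, matching the data of a pseudonatural transformation against that of an extranatural family into $\bL(F-,G-)$ term by term. The only minor remark is that for the second displayed equivalence you need not invoke the extranatural Yoneda proposition at all, since in the paper $\int_k H(k,k)$ is \emph{defined} to be (a representation of) $\mathrm{Exnat}(\mathbf{1},H)$; the equivalence is then literal rather than mediated.
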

For strictly enriched categories this is a definition rather than a
theorem \cite[section~2.2]{kelly82}, but here it shows that our
definition of 2-ends is the right one.

Coends are dual: if $S \colon \bK^\op \times \bK \to \bL$ we have that
\begin{equation*}
  \mathrm{Exnat}(S, \ell) \eqv \mathrm{Exnat}(\mathbf{1},\bL(S, \ell))
  \eqv \{\bL, \bL(S,\ell)\}
\end{equation*}
and a representation of any of these may be called the coend $\int^k
S(k,k)$ of $S$.

\subsection{The free cocompletion of a 2-category}
\label{sec:free-cocompl-bicat}

We want to show now that if $\bK$ is a 2-category then its \emph{free
  2-cocompletion} is given by $\Pw\bK = [\bK^\op, \Cat]$.  We know that
$\Cat$ is cocomplete, and suspect that colimits in $\Pw\bK$ will be
calculated pointwise: let $F \colon \bJ \to \Pw\bK$ and $W \colon
\bJ^\op \to \Cat$ and set $(W \star F)k = W \star F(-,k)$.  Then, using
prop.~\ref{prp:1},
\begin{align*}
  \Pw\bK(W \star F, G)
  & \eqv \int_k \Cat(W \star F(-,k), Gk) \\
  & \eqv \int_k [\bJ^\op, \Cat](W, \Cat(F(-,k), Gk)) \\
  & \eqv [\bJ^\op, \Cat](W, \int_k \Cat(F(-,k), Gk)) \\
  & \eqv [\bJ^\op, \Cat](W, \Pw\bK(F-,G))
\end{align*}
As a corollary to this and prop.~\ref{prp:2}, one easily verifies the
\emph{co-Yoneda lemma}: if $Y \colon \bK \to \Pw\bK$ is the Yoneda
embedding, and $W \colon \bK^\op \to \Cat$ is a weight, then
\begin{equation*}
  W \star Y \eqv W
\end{equation*}

\begin{prp}
  \label{prp:3}
  The (strict) 2-category $\Pw\bK = [\bK^\op, \Cat]$ is the free
  cocompletion of $\bK$, in that for a cocomplete 2-category $\bL$
  there is a 2-equivalence
  \begin{equation*}
    [\bK, \bL] \sim \mathrm{Cocont}(\Pw\bK, \bL)
  \end{equation*}
  given from right to left by composition with the Yoneda embedding
  $\bK \to \Pw\bK$.
\end{prp}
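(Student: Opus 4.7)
The plan is to prove this by constructing the quasi-inverse $F \mapsto \tilde F$ as a left Kan extension along $Y$, defined pointwise by the colimit formula $\tilde F W = W \star F$. This exists because $\bL$ is cocomplete, and by the co-Yoneda lemma displayed above we have $\tilde F Y k = \bK(-,k) \star F \eqv F k$, so $\tilde F \cmp Y \eqv F$, which gives essential surjectivity on objects of the composition-with-$Y$ 2-functor.

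Next I would show that $\tilde F$ is cocontinuous. Given a weight $V \colon \bJ^\op \to \Cat$ and diagram $H \colon \bJ \to \Pw\bK$, the pointwise formula for colimits in $\Pw\bK$ established just before the proposition gives $(V \star H)(k) \eqv V \star H(-,k)$, and expanding everything using the representation of weighted 2-colimits as 2-coends (via the identifications $W \star F \eqv \int^k W k \times F k$ that follow from prop.~\ref{prp:1} combined with the defining equivalences of weighted colimits and 2-ends), a Fubini-style interchange of 2-coends — justified by the extranaturality arguments in section~\ref{sec:2-extranaturality} — yields $\tilde F(V \star H) \eqv V \star (\tilde F \cmp H)$. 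Conversely, for any cocontinuous $G \colon \Pw\bK \to \bL$, the co-Yoneda lemma gives $W \eqv W \star Y$ in $\Pw\bK$, so $G W \eqv G(W \star Y) \eqv W \star (G \cmp Y) = \widetilde{G Y}(W)$, giving the essential uniqueness of the extension.

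For the 2-dimensional content, I would lift the construction $F \mapsto \tilde F$ to hom-categories: a transformation $\alpha \colon F \tc F'$ induces $\tilde \alpha \colon \tilde F \tc \tilde F'$ with components $\tilde \alpha_W = W \star \alpha$, and similarly for modifications. Conversely, restriction along $Y$ sends $\beta \colon \tilde F \tc \tilde F'$ to $\beta Y \colon F \tc F'$. That these are mutually quasi-inverse equivalences $[\bK, \bL](F,F') \eqv \mathrm{Cocont}(\Pw\bK, \bL)(\tilde F, \tilde F')$ follows, on the one hand, from $\tilde{(\beta Y)} \eqv \beta$ (using cocontinuity of $\beta$ applied to $W \eqv W \star Y$ together with the co-Yoneda lemma) and, on the other, from $(\tilde \alpha) Y \eqv \alpha$ (again by the co-Yoneda lemma at representables).

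The main obstacle will be the 2-dimensional Fubini step that proves cocontinuity of $\tilde F$: writing the double 2-coend $V \star (H(-,-) \star F)$ in two ways and showing that the resulting comparison is an equivalence requires using the universal property of 2-ends from section~\ref{sec:2-extranaturality} carefully, checking that the mediating extranatural transformations are coherent. Everything else amounts to bookkeeping with the 2-categorical Yoneda lemma (prop.~\ref{prp:2}) and its corollaries~\ref{cor:5} and~\ref{cor:4}, and the co-Yoneda lemma displayed just before the proposition.
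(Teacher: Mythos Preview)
Your overall strategy matches the paper's: define the inverse by $F \mapsto (W \mapsto W \star F)$, use the co-Yoneda lemma for $\tilde F \cmp Y \eqv F$ and for $G W \eqv W \star (GY)$, and handle the 2-dimensional part via Yoneda. The one real difference is in how you plan to prove cocontinuity of $\tilde F$. You propose a Fubini interchange of 2-coends and correctly flag this as the ``main obstacle''. The paper sidesteps that obstacle entirely: from the defining equivalence
\[
  \bL(W \star F,\ell) \;\eqv\; [\bK^\op,\Cat]\bigl(W,\ \bL(F-,\ell)\bigr)
\]
one sees that $\bL((-)\star F,\ell)$ is a representable on $\Pw\bK$, hence takes colimits to limits; unwinding this for a colimit $V\star H$ in $\Pw\bK$ gives $(V\star H)\star F \eqv V\star(\tilde F H)$ directly, with no coend manipulation needed. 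This is both shorter and avoids the coherence checks you anticipate. A minor further point: you assert the pointwise formula for $\tilde F$ without saying why it is functorial in $W$; the paper obtains this by applying corollary~\ref{cor:4} to the two-variable functor $(W,F)\mapsto [\bK^\op,\Cat](W,\bL(F,1))$, which simultaneously gives functoriality in both variables and hence also the action on transformations and modifications that you describe by hand.
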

\begin{proof}
  The inverse to $- \cmp Y$ sends $F \colon \bK \to \bL$ to
  $\overline{F} \colon W \mapsto W \star F$.  Applying
  corollary~\ref{cor:4}, and the cocompleteness of $\bL$, to the
  functor
  \begin{equation*}
    (W,F) \mapsto [\bK^\op, \Cat](W, \bK(F,1))
  \end{equation*}
  shows that $(W,F) \mapsto W\star F$ extends to a functor $[\bK^\op,
  \Cat] \times [\bK,\bL] \to \bL$, and hence fixing $F$ does yield a
  functor $(- \star F) \colon \Pw\bK \to \bL$, which is cocontinuous
  essentially because representables are continuous (cf.~\cite[section
  3.3]{kelly82}).

  We want to show that this inverse is a 2-equivalence.  The co-Yoneda
  lemma above implies that if $H \colon \Pw\bK \to \bL$ is
  cocontinuous then $H(W) \eqv H(W \star Y) \eqv W \star H Y$, showing
  that $F \mapsto \overline{F} = (- \star F)$ is essentially
  surjective.  That it is 2-fully-faithful again follows easily from
  the Yoneda lemma.
\end{proof}

\subsection{Computing colimits}
\label{sec:computing-colimits}

An explicit description of conical colimits in $\Cat$ is not too
difficult to find, thanks in part to a classical result due to
Grothendieck and Verdier.  First recall the following:

\begin{dfn}[{\cite[1.10]{street80:_fibrat_in_bicat}}]
  \label{dfn:27}
  If $\bJ$ is a 2-category and $D \colon \bJ \to \Cat$ is a functor
  then the \emph{2-category of elements} $\elt D$ of $D$ is given as
  follows:
  \begin{itemize}
  \item an object is an object $j \in \bJ$ together with an object $x
    \in Dj$;
  \item a morphism from $x \in Dj$ to $y \in Di$ is given by a
    morphism $m \colon j \to i$ in $\bJ$ together with a morphism
    $m_*x \to y$ in $Di$ (where $m_*x = (Dm)(x)$);
  \item a 2-cell from $(m,m_*x \to y)$ to $(n, n_*x \to y)$ is given
    by a 2-cell $\alpha \colon m \tc n$ such that $\alpha_* x \colon
    m_* x \to n_* x$ fits into a commuting triangle over $y$.
  \end{itemize}
  The projection $\elt D \to \bJ$ is a strict functor, and a morphism
  $(m, f \colon m_*x \to y)$ in $\elt D$ is called \emph{opcartesian}
  when $f$ is invertible.  Of course, if $\bJ$ is an ordinary category
  then so is $\elt D$.
\end{dfn}

\begin{prp}[{\cite[expos\'e VI, def.~6.3]{grothendieck72:sga4_2}}]
  If $\cat{C}$ is an ordinary category and $D \colon \cat{C} \to \Cat$
  is a pseudofunctor, then the 2-colimit of $D$ is obtained by taking
  the category of elements $\elt D$ of $D$ and formally inverting the
  opcartesian morphisms.
\end{prp}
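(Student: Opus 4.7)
The plan is to verify directly that $(\elt D)[\Sigma^{-1}]$, where $\Sigma$ is the class of opcartesian morphisms in $\elt D$, has the universal property of the conical 2-colimit $\Delta\mathbf{1} \star D$. That is, I would construct, for each category $k$, an equivalence
\begin{equation*}
\Cat\bigl((\elt D)[\Sigma^{-1}],\, k\bigr) \eqv [\cat{C}^\op, \Cat]\bigl(\Delta\mathbf{1},\, \Cat(D,k)\bigr),
\end{equation*}
2-natural in $k$, and then invoke corollary~\ref{cor:4}. Unpacking the right-hand side, a pseudonatural transformation $\Delta\mathbf{1} \tc \Cat(D,k)$ amounts to a family of functors $F_c \colon Dc \to k$ together with invertible natural transformations $\alpha_f \colon F_c \tc F_{c'} \hcmp Df$ for each $f \colon c \to c'$ in $\cat{C}$, subject to the pseudonaturality axioms coming from the pseudofunctoriality of $D$; a modification $(F,\alpha) \mdf (G, \beta)$ is a family $\phi_c \colon F_c \tc G_c$ commuting with $\alpha_f$ and $\beta_f$. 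Unpacking the left-hand side, by the universal property of Gabriel--Zisman localisation a functor $(\elt D)[\Sigma^{-1}] \to k$ is the same data as a functor $H \colon \elt D \to k$ inverting every morphism in $\Sigma$, with plain natural transformations as 2-cells.

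The comparison rests on the canonical factorisation in $\elt D$: any morphism $(f, \phi) \colon (c,x) \to (c', y)$, where $\phi \colon Df(x) \to y$ is a morphism of $Dc'$, factors uniquely as an opcartesian map $(f, 1_{Df(x)})$ followed by a vertical map $(1_{c'}, \phi)$. Hence to give $H$ on $\elt D$ is to give its fibrewise restrictions $F_c = H \hcmp \iota_c \colon Dc \to k$ along the fibre inclusions $\iota_c \colon x \mapsto (c,x)$, together with the opcartesian images $H(f, 1)_x \colon F_c(x) \to F_{c'}(Df(x))$, naturally in $x$. Inverting opcartesians amounts to the $H(f,1)$ being invertible, and in one direction I would set $\alpha_f = H(f,1)$; in the other, given $(F, \alpha)$ I would define $H(c, x) = F_c(x)$ and $H(f, \phi) = F_{c'}(\phi) \hcmp (\alpha_f)_x$.

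The level of 2-cells is essentially immediate: both a modification and a natural transformation $H \tc H'$ consist of a family of morphisms $F_c(x) \to F'_c(x)$ natural in $x$ for each $c$, and the naturality of such a family at a general morphism $(f, \phi)$ is forced by naturality on verticals together with the identification $\alpha_f = H(f, 1)$, so the remaining compatibility conditions in the two descriptions coincide.

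The main obstacle I expect is the verification that the pseudonaturality axioms for $\alpha$ correspond exactly to functoriality of $H$ on composable pairs. The unit axiom is straightforward, since $(1_c, 1_x)$ is the identity; the substantive content is the compositor axiom for $\alpha$, which must be matched against the functoriality equation $H\bigl((g,1) \hcmp (f,1)\bigr) = H(g,1) \hcmp H(f,1)$, using that $(g,1) \hcmp (f,1) = (gf, \gamma^D_{f,g})$, where $\gamma^D_{f,g}$ is the compositor of $D$. Carrying out this book-keeping, though conceptually clear, is where the real work sits.
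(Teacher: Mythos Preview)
Your proposal is correct and follows essentially the same route as the paper's sketch: the paper first identifies $\elt D$ as the \emph{lax} conical colimit via $\operatorname{Lax}(D,\Delta\B) \eqv [\elt D, \B]$, then observes that pseudonatural transformations on the left correspond precisely to functors on the right that invert opcartesian morphisms, yielding $\operatorname{Ps}(D,\Delta\B) \eqv [\elt D[S^{-1}], \B]$. Your direct construction of the equivalence, via the opcartesian--vertical factorisation and the matching of pseudonaturality against functoriality on opcartesian composites, is exactly the content of that two-step argument unpacked; the only cosmetic difference is that the paper names the intermediate lax-colimit property explicitly before restricting.
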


In more detail, one verifies that $\elt D$ is the \emph{lax conical
  colimit}\footnote{Note that lax transformations $D \to
  \Delta\B$ correspond to \emph{oplax} transformations
  $\Delta\mathbf{1} \to \Cat(D,\B)$, so that a lax conical
  colimit in this sense is actually an oplax weighted colimit.} of
$D$, i.e.~that there is an equivalence
\begin{equation*}
  \operatorname{Lax}(D,\Delta\B) \eqv [\elt D, \B]
\end{equation*}
that is natural in the ordinary category $\B$, where the left-hand
side is the category of lax transformations and modifications from $D$
to the constant functor at $\B$.  Furthermore, the pseudonatural
transformations on the left correspond to the functors on the right
that invert the opcartesian morphisms of $\elt D$, so that
\begin{equation*}
  \operatorname{Ps}(D,\Delta\B) \eqv [\elt D, \B]_{S^{-1}}
  \eqv [\elt D[S^{-1}], \B]
\end{equation*}
where $S$ is the class of opcartesian morphisms of $\elt D$ and the
notation $[-,-]_{S^{-1}}$ denotes the full subcategory of the functor
category on those functors that invert the elements of $S$.  The
objects of the category of fractions $\elt D[S^{-1}]$ are those of
$\elt D$, and its morphisms are zig-zags of morphisms in $\elt D$
in which the backwards-pointing components are in $S$.

To extend this result to the case of diagrams indexed by 2-categories,
we first recall that there is a monoidal adjunction $\pi \dashv d$, in
which $\pi = \pi_0 \colon \cat{Cat} \to \cat{Set}$ is the `connected
components' functor and $d \colon \cat{Set} \to \cat{Cat}$ the
`discrete category' functor.  Then a suitably `weak' version of the
usual change-of-enrichment arguments, or simply direct calculation,
verify the following.

\begin{prp}
  There is an adjunction
  \begin{equation*}
    [\pi_* \bK, \B] \eqv [\bK, d_* \B]
  \end{equation*}
  in which the functors $\pi_* \dashv d_*$ apply $\pi$ or $d$
  hom-wise.  Moreover, this adjunction descends to the case of
  functors that invert a class $S$ of morphisms of $\bK$:
  \begin{equation*}
    [\pi_* \bK, \B]_{S^{-1}} \eqv [\bK, d_* \B]_{S^{-1}}
  \end{equation*}
\end{prp}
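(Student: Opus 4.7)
My strategy is to exploit the local discreteness of $d_*\B$: a pseudofunctor into a locally discrete 2-category is rigidly determined by data that factors through $\pi_*$.  I would do the object-level correspondence first, then extend to transformations (modifications being trivial), and finally observe that inversion of morphisms is preserved and reflected under the correspondence.

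\emph{Object level.}  First I would unpack what a pseudofunctor $F \colon \bK \to d_*\B$ is.  Since every hom-category of $d_*\B$ is discrete, every 2-cell of $d_*\B$ is an identity.  Hence $F$ sends each 2-cell of $\bK$ to an identity, which forces $Fu = Fv$ whenever there is a 2-cell $u \sqar v$; equivalently, $F$ identifies morphisms lying in the same connected component of a hom-category.  Moreover the compositor and unitor 2-cells $F(gf) \cong Fg \cdot Ff$ and $F(1_X) \cong 1_{FX}$ of $F$ are 2-cells in $d_*\B$, hence identities, so $F$ is strict on composition and identities.  These are exactly the data of an ordinary functor $\bar F \colon \pi_*\bK \to \B$ (well-definedness of $\bar F$ on hom-sets uses that composition in $\bK$ descends along $\pi_0$, which in turn uses that $\pi = \pi_0$ is monoidal).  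Conversely, any ordinary $G \colon \pi_*\bK \to \B$ lifts to a strict 2-functor $\bK \to d_*\B$ by precomposing with the canonical map $\bK \to \pi_*\bK$ and postcomposing with the inclusion $\B \to d_*\B$, and this assignment is inverse to $F \mapsto \bar F$.

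\emph{Transformation level.}  Given two such pseudofunctors and the corresponding ordinary functors, a pseudonatural transformation $\alpha \colon F \nt G$ consists of a component $\alpha_X \colon FX \to GX$ in $d_*\B$ for each $X$, together with an invertible naturality 2-cell for each $f \colon X \to Y$ of $\bK$; but this 2-cell lives in $d_*\B$, so it is an identity, making the naturality strict.  The coherence axioms then become automatic.  It follows that such an $\alpha$ is precisely a natural transformation between the underlying ordinary functors $\bar F, \bar G \colon \pi_*\bK \to \B$.  Modifications between such transformations live in the identity hom-sets of $d_*\B$ and so are trivial, so I get an isomorphism (hence a fortiori an equivalence) of hom-categories.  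Assembling these pieces gives the first equivalence.

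\emph{Descent to $S^{-1}$.}  A morphism of $\bK$ is sent by the comparison $\bK \to d_*\B$ through $\pi_*\bK \to \B$ to a morphism of the ordinary category $\B$; and since $d_*\B$ is locally discrete, a morphism of $d_*\B$ is invertible if and only if it is invertible in $\B$.  Hence a pseudofunctor $\bK \to d_*\B$ inverts (the image of) $S$ exactly when the corresponding $\pi_*\bK \to \B$ inverts (the image of) $S$.  The equivalence of the previous paragraph therefore restricts to the claimed equivalence on full subcategories of $S^{-1}$-inverting functors.

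\emph{Main obstacle.}  The statement is essentially formal once one recognizes that $d_*\B$ being locally discrete collapses all higher-dimensional coherence data to identities.  The only slightly delicate point is checking that composition and units in $\pi_*\bK$ really are well defined, i.e.\ that $\pi_0$ applied hom-wise does produce a category; this is where monoidality of $\pi \dashv d$ (in particular, $\pi$ preserving finite products up to natural isomorphism) is needed, and it is also what ensures that transformations compose correctly on both sides of the equivalence.
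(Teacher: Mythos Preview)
Your proposal is correct and follows essentially the same line as the paper's proof: both exploit the local discreteness of $d_*\B$ to force all 2-cells (compositors, unitors, naturality constraints) to be identities, so that a pseudofunctor $\bK \to d_*\B$ amounts to an ordinary functor $\pi_*\bK \to \B$, and similarly for transformations; the $S^{-1}$ claim then follows because invertibility in $d_*\B$ coincides with invertibility in $\B$. You are a bit more explicit than the paper about why $\pi_*\bK$ is a well-defined category (via monoidality of $\pi$), which is a reasonable point to raise but does not represent a different strategy.
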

\begin{proof}
  A functor $\bK \to d_*\B$ must take any 2-cell of $\bK$ to an
  identity, and therefore identify any pair of connected morphisms,
  which defines an essentially unique functor out of $\pi_* \bK$.  The
  former inverts a specified morphism if and only if the latter
  inverts its equivalence class, because their image in $\B$ is the
  same and invertibility in $d_*\B$ is precisely invertibility in
  $\B$.

  A transformation between two such functors takes objects of $\bK$ to
  morphisms of $\B$, and morphisms to strictly commuting squares.
  Naturality of these with respect to 2-cells means that any pair of
  connected morphisms must be assigned the same square, and this
  specifies a unique transformation out of $\pi_*\bK$.
\end{proof}

\begin{prp}[\footnote{\label{fn:1}Note to arXiv version: Dorette Pronk
    informs me that she and Laura Scull had arrived independently at
    this result some time ago, but were unable to find any published
    description of it, and have not yet published it themselves.}]
  \label{prp:25}
  If $D \colon \bJ \to \Cat$, then
  \begin{equation*}
    \operatorname{Lax}(D, \Delta \B) \eqv [\elt D, d_*\B]
    \eqv [\pi_*\elt D, \B]
  \end{equation*}
  and
  \begin{equation*}
    \operatorname{Ps}(D, \Delta \B) \eqv [\elt D,
    d_*\B]_{S^{-1}}
    \eqv [\pi_*\elt D, \B]_{S^{-1}} \eqv [(\pi_* \elt D)[S^{-1}],
    \B]
  \end{equation*}
  so that the lax colimit of $D$ is $\pi_* \elt D$, and the colimit of
  $D$ is got by inverting the images of the opcartesian morphisms of
  $\elt D$ in this category.
\end{prp}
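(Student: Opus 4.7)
The plan is to reduce to the Grothendieck--Verdier result already recalled for ordinary $\cat{C}$ by combining it with the $\pi_* \dashv d_*$ adjunction of the preceding proposition. The first task is to extend Grothendieck--Verdier from ordinary indexing categories to $2$-categorical ones, obtaining the first equivalence
\[
  \operatorname{Lax}(D, \Delta\B) \;\eqv\; [\elt D,\, d_*\B].
\]
Given a lax transformation $\alpha \colon D \nt \Delta\B$, I unpack it in the usual way: components $\alpha_{j} \colon Dj \to \B$ for each object $j \in \bJ$, and for each $m \colon j \to i$ in $\bJ$ a natural family $\alpha_m \colon \alpha_j \tc \alpha_i \cmp Dm$. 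The data $\alpha$ sends an object $(j,x) \in \elt D$ to $\alpha_j(x) \in \B$ and a morphism $(m, f \colon m_* x \to y)$ to the composite $\alpha_j(x) \xrightarrow{(\alpha_m)_x} \alpha_i(m_* x) \xrightarrow{\alpha_i(f)} \alpha_i(y)$; functoriality of this assignment amounts exactly to the coherence axioms for lax $\alpha$. Conversely, given a functor $\elt D \to d_*\B$, one extracts components and structure cells by restriction to opcartesian fibres and to morphisms sitting over $\bJ$-morphisms. Modifications correspond to natural transformations under this assignment by a parallel unpacking.

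The essential new ingredient over the ordinary case is handling $2$-cells of $\bJ$. Any $2$-cell $\alpha \colon m \tc n$ in $\bJ$ gives rise, via def.~\ref{dfn:27}, to $2$-cells in $\elt D$ between morphisms sitting over $m$ and $n$. But the target $d_*\B$ is locally discrete, so a $2$-functor to it must send these $\elt D$-$2$-cells to identities; dually, a lax transformation into $\Delta\B$ carries no $2$-cell data since modifications into a locally discrete target are trivial. Hence the equivalence is insensitive to these $2$-cells, and the construction above is natural and bijective-on-cells. This is straightforward but requires care to line up correctly; it will be the main technical obstacle.

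With this in hand, the second equivalence $[\elt D, d_*\B] \eqv [\pi_*\elt D, \B]$ is an immediate application of the preceding proposition's adjunction (restricted to its object part, since both sides have no $2$-cells). For the pseudo-statement, I note that a lax transformation is pseudo precisely when each $\alpha_m$ is invertible, which by the explicit construction corresponds exactly to the associated functor inverting the opcartesian morphisms of $\elt D$ (as in the ordinary Grothendieck--Verdier setting recalled just above the proposition). So the restricted equivalences $\operatorname{Ps}(D, \Delta\B) \eqv [\elt D, d_*\B]_{S^{-1}} \eqv [\pi_*\elt D, \B]_{S^{-1}}$ follow at once. The final equivalence $[\pi_*\elt D, \B]_{S^{-1}} \eqv [(\pi_*\elt D)[S^{-1}], \B]$ is exactly the defining universal property of the category of fractions. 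Naturality in $\B$ is automatic throughout since every step is given by post-composition with a fixed construction on $\elt D$.
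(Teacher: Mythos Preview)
Your proposal is correct and takes essentially the same approach as the paper: both set up the explicit bijection between lax transformations $D \tc \Delta\B$ and functors $\elt D \to d_*\B$ (with the key observation that local discreteness of $d_*\B$ absorbs the 2-cells of $\elt D$), then invoke the preceding $\pi_* \dashv d_*$ proposition for the remaining equivalences and the universal property of fractions for the last step. The paper spells out in slightly more detail exactly which coherence 2-cells in $\elt D$ get collapsed and why the round-trip composites are identities, but your outline identifies all the same moves.
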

\begin{proof}
  (In light of the previous proposition, only the first in each chain
  of equivalences requires proof.)  Let $A \colon \int D \to d_*\B$ be
  a functor.  The inclusions $\delta_i \colon D i \to \int D$ of the
  fibres of $D$ give a family of (ordinary) functors $(A \delta_i
  \colon D i \to \B)_i$.  For each $m \colon i \to j$ and $x \in Di$,
  the canonical opcartesian $(m,1) \colon x \to m_* x$ gives a family
  $\delta_m x \colon \delta_i x \to \delta_j m_* x$, and $A \delta_m
  x$ is then natural in $x$, because given $k \colon x \to x'$ both
  sides of the naturality square are equal to $A(m,m_*k)$.  It is easy
  to check then that $m \mapsto A \delta_m$ is functorial --- the two
  morphisms $x \to (nm)_*x$ in $\int D$ that are required to be equal
  for $\delta$ to be a lax transformation are instead just isomorphic,
  but $A$ turns this 2-cell into an identity.  Similarly, a 2-cell
  $\phi \colon m \tc n$ gives rise to a 2-cell between the two
  morphisms $x \to n_*x$ that naturality would require to be equal,
  but applying $A$ ensures that their images are equal in $d_*\B$.  So
  $A\delta$ is a lax transformation, and because the components of
  $\delta$ are opcartesian, $A\delta$ will be pseudo-natural if $A$
  inverts them.  Moreover, if $\mu \colon A \tc B$ is a
  transformation, then so is each $\mu\delta_i$, and these form a
  modification $A\delta \mdf B\delta$ by virtue of the interchange law
  for $\Cat$, which applies because $d_*\B$ is locally discrete, and
  this assignment is functorial by interchange again.

  Conversely, suppose given a lax transformation $\alpha \colon D \tc
  \Delta \B$ and consider a morphism
  \begin{equation*}
    (m \colon i \to j , f \colon m_* x \to y) \colon (i,x) \to (j,y)
  \end{equation*}
  in $\int D$.  We get a configuration like this:
  \begin{equation*}
    \xymatrix{
      & D i \ar[dr]^{\alpha_i} \ar[dd]|{Dm}^{}="rs"_{}="lt" \\
      \mathbf{1} \ar@{}[];"lt"|{f \displaystyle\Downarrow} \ar[ur]^x \ar[dr]_y & &
      \B \ar@{}[];"rs"|{{\displaystyle\Downarrow} \alpha_m} \\
      & D j \ar[ur]_{\alpha_j}
    }
  \end{equation*}
  which defines a morphism $\alpha_i x \to \alpha_j y$ in $\B$.  This
  assignment is functorial because of the coherence of the $\alpha_m$
  with respect to identities and composition in $\bJ$, and it takes
  2-cells $\phi \colon f \tc g$ to identities in $\B$ because of the
  naturality condition on $\alpha$.  If $\alpha$ is pseudo, moreover,
  then this functor $\hat \alpha$ clearly inverts any morphism $(m,f)$
  in $\int D$ for which $f$ is invertible.  If $p \colon \alpha \mdf
  \beta$ is a modification, then the morphisms $p_i x \colon \alpha_i
  x \to \beta_i x$ assemble, by the modification axiom and naturality
  of each $p_i$, into a natural transformation $\hat \alpha \tc \hat
  \beta$.

  Applying this recipe to the transformation $A \delta$ arising from a
  functor $A \colon \int D \to d_* \B$ gives a functor whose value at
  a morphism $(m,f)$ as above is
  \begin{equation*}
    \xymatrix{
      A x \ar[r]^{A(m,1)} & A m_* x \ar[r]^{A (1,f)} & A y
    }
  \end{equation*}
  which is the $A$-image of the opcartesian--vertical factorization of
  $(m,f)$.  Now the latter is only isomorphic in $\int D$ to $(m,f)$
  itself, but as before applying $A$ makes this an equality.  So the
  functor arising from $A \delta$ is equal to $A$.  In the other
  direction, the functor $\hat \alpha \colon \int D \to d_* \B$
  corresponding to a lax transformation $\alpha$ is equal to
  $\alpha_i$ on each fibre $D i$, and applying it to $\delta_m$
  produces exactly $\alpha_m$.  Finally, a simple calculation shows
  that this correspondence is also bijective on morphisms.
\end{proof}

\subsection{Coends again}
\label{sec:coends-again}

As in ordinary category theory, there are useful relationships between
(co)ends and (co)limits.  If the $W$-weighted colimit $W \star F$ of
$F$ (def.~\ref{dfn:8}) exists in $\bK$ we find that
\begin{align*}
  \bK(W \star F, k)
  & \eqv \{W, \bK(F-, k)\} \\
  & \eqv \int_j \Cat(Wj, \bK(Fj, k)) \\
  & \eqv \int_j \bK(Wj \otimes Fj, k)
\end{align*}
if the tensors $Wj \otimes Fj$ exist, so that
\begin{equation}
  \label{eq:2}
  W \star F \eqv \int^j Wj \otimes Fj
\end{equation}
as usual \cite[section 3.10]{kelly82}.  Dually, of course, we may, by
only a slight generalization of prop.~\ref{prp:1}, write
\begin{equation*}
  \{W, G\} \eqv \int_j Wj \pitchfork Gj
\end{equation*}
as long as the necessary cotensors exist.

\begin{dfn}
  \label{dfn:18}
  The category $\Dsc$ is the subcategory of the (unaugmented) simplex
  category $\Delta$ generated by the following diagram:
  \begin{equation*}
    \xymatrix{
      \mathbf{3}
      \ar@<1ex>[r];[] \ar[r];[] \ar@<-1ex>[r];[] &
      \mathbf{2}
      \ar@<1ex>[r];[] \ar@<-1ex>[r];[] & \ar[l];[]
      \mathbf{1}
    }
  \end{equation*}
  A \emph{codescent diagram} in $\bK$ is a functor $\Dsc^\op \to \bK$.
  The conical colimit of such a diagram is called its \emph{codescent
    object}.
\end{dfn}

Coends may be expressed as codescent objects (cf.~the usual
presentation of 1-dimensional coends as coequalizers, as in
\cite[(2.2)]{kelly82}): for any 2-category $\bK$, the co-Yoneda lemma
lets us write $\bK(k,\ell) \eqv \bK(k,-) \star \bK(-,\ell)$, and the
(dual of the) construction of weighted 2-limits in
\cite{street87:_correc_fibrat} yields $\bK(k,\ell)$ as the following
codescent object, where we adopt a tensor-style notation as in
$\bK^k_j \bK^j_\ell = \coprod_j \bK(k,j) \times \bK(j,\ell)$, etc.,
with the obvious summation convention for repeated indices:
\begin{equation}
  \label{eq:3}
  \xymatrix{
  \bK^k_j \bK^j_i \bK^i_h \bK^h_\ell
  \ar@<1ex>[r] \ar[r] \ar@<-1ex>[r] &
  \bK^k_j \bK^j_i \bK^i_\ell
  \ar@<1ex>[r] \ar@<-1ex>[r] & \ar[l]
  \bK^k_j \bK^j_\ell
  \ar[r] &
  \bK^k_\ell
  }
\end{equation}
The morphisms in the diagram are the actions of $\bK$ on itself given
by composition and the insertion of identities.  The whole diagram is
functorial in $(k,\ell)$ and so presents $\hom_\bK$ as a codescent
object in the functor category.  (The diagram is also the `canonical
presentation' \cite{le02:_becks} of the algebra $\hom_{\bK}$ for the
2-monad on $[\ob \bK, [\bK, \Cat]]$ whose algebras are functors
$\bK^\op \times \bK \to \Cat$.)

Now if $T \colon \bK^\op \times \bK \to \bL$ is a functor, then
applying the functor $(- \star T)$ (which is cocontinuous, as observed
in the proof of prop.~\ref{prp:3}) to the codescent diagram
\eqref{eq:3} for $\hom_{\bK^\op}$ yields the following codescent
object in $\bL$:
\begin{equation}
  \label{eq:4}
  \xymatrix{
  T^k_j \bK^j_i \bK^i_k
  \ar@<1ex>[r] \ar[r] \ar@<-1ex>[r] &
  T^k_j \bK^j_k
  \ar@<1ex>[r] \ar@<-1ex>[r] & \ar[l]
  T^k_k
  \ar[r] &
  \int^k T(k,k)
  }
\end{equation}
In more detail, $\bigl(-\star T\bigr)$ preserves codescent objects and
coproducts, so we get e.g.
\begin{align*}
  \biggl(\coprod_{j,i} \bK(-, j) \times \bK(j,i) \times \bK(i,-)
  \biggr) \star T
  & \simeq \coprod_{j,i} \Bigl ( \bK(-, j) \times \bK(j,i) \times
    \bK(i,-) \Bigr) \star T \\
  & \simeq \coprod_{j,i} \Bigl(\bigl(\bK^\op \times \bK\bigr)(-,(i,j))
  \star T \Bigr) \times \bK(j,i) \\
  & \simeq T^i_j \bK^j_i
\end{align*}
where in the second line we use the fact that product with $\bK(j,i)$
is the tensor in the functor category and so is preserved by $(-\star
T)$, and in the last we use the Yoneda equivalence \eqref{eq:17}.  It
follows that the rightward arrows in \eqref{eq:4} are given by
composition in $\bK$ and the left and right actions of $\bK$ on $T$.
The results of the previous section now yield an explicit recipe for
computing coends in $\Cat$.

Codescent objects also figure in the 2-categorical version of Beck's
theorem, which we shall need for theorem~\ref{thm:kleisli-2prof}.

\begin{prp}[2-monadicity~theorem, {\cite[theorem~3.6]{le02:_becks}}]
  \label{prp:5}
  Let $U \colon \bK \to \bL$ be a 2-functor with left adjoint $F$.
  The canonical functor $\bK \to \bL^{UF}$ into the 2-category of
  (pseudo) $UF$-algebras is an equivalence if and only if $U$ reflects
  adjoint equivalences and $\bK$ has and $U$ preserves colimits of
  codescent diagrams whose $U$-image has an absolute colimit.
\end{prp}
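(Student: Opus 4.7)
The plan is to follow the 1-dimensional proof of Beck's monadicity theorem, with isomorphisms replaced by adjoint equivalences and reflexive coequalizers replaced by codescent objects. Write $T = UF$ for the induced pseudo-monad on $\bL$, and $\Phi \colon \bK \to \bL^T$ for the comparison 2-functor sending $x \in \bK$ to the algebra $(Ux, U\epsilon_x, \ldots)$ built from the adjunction counit and its coherence cells.

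For the necessity direction, I would assume $\Phi$ is an equivalence, so that $U$ factors up to equivalence through the forgetful 2-functor $U^T \colon \bL^T \to \bL$, and verify the two conditions for $U^T$. A $T$-algebra morphism whose underlying $\bL$-cell is an adjoint equivalence lifts to an adjoint equivalence of algebras by transporting the pseudo-inverse, unit and counit across the algebra structure cells; a routine diagram chase against the axioms of definition~\ref{dfn:34} shows that the lifted data still forms an algebra morphism. For the colimit condition, given a codescent diagram $D \colon \Dsc^\op \to \bL^T$ whose underlying $U^T D$ admits an absolute colimit $\ell$ in $\bL$, absoluteness entails that $T$ preserves this colimit, so the algebra structures on the $Dj$ assemble into a unique compatible algebra structure on $\ell$ making it the codescent object in $\bL^T$; $U^T$ preserves this colimit by construction.

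For sufficiency I would construct a candidate inverse $\Psi \colon \bL^T \to \bK$. Given a pseudo $T$-algebra $(y,a,\alpha,\upsilon)$, form the codescent diagram
\[
  FT^2 y \;\Rrightarrow\; FTy \;\rightrightarrows\; Fy
\]
in $\bK$, with face and degeneracy maps built from $FTa$, $F\mu_y$, $\epsilon_{Fy}$ and $F\eta_y$, and with the invertible 2-cells supplied by $\alpha$ and $\upsilon$. Applying $U$ yields a codescent diagram in $\bL$ whose colimit is $y$; it is absolute because it carries a canonical contraction built from $\eta_y$, $\eta_{Ty}$ and the algebra structure, giving the 2-dimensional analogue of a split simplicial object. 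By hypothesis $\bK$ has, and $U$ preserves, the codescent object of this diagram, which I define to be $\Psi(y,a)$. Functoriality of codescent objects extends $\Psi$ to a 2-functor. To verify $\Phi\Psi \eqv 1_{\bL^T}$: preservation by $U$ gives $U\Psi(y,a) \eqv y$, and the transferred algebra structure matches $a$ by the universal property of the codescent object. To verify $\Psi\Phi \eqv 1_{\bK}$: on $\Phi(x)$ the codescent diagram in $\bL$ is the bar construction on $Ux$, absolute with colimit $Ux$; the morphism $\epsilon_x \colon FUx \to x$ assembles with its higher coherence cells into a cocone in $\bK$, inducing a canonical comparison $\Psi\Phi(x) \to x$ whose $U$-image is an adjoint equivalence. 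The reflection hypothesis then promotes this comparison to an adjoint equivalence in $\bK$.

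The principal obstacle is the bookkeeping of coherence 2-cells. One must specify the full codescent diagram attached to a pseudo-algebra — not merely its face maps but also the invertible 2-cells generated from $\alpha$ and $\upsilon$ — and then exhibit an honest 2-dimensional contraction witnessing that the colimit of its $U$-image is absolute, so that it is preserved by every 2-functor. This requires the right notion of \emph{contracted} codescent diagram; once that machinery is in place, which is the substantive content of le Creurer, Marmolejo and Vitale's argument, the rest is a 2-dimensional transcription of Beck's original proof, with the triangle identities and the pseudo-algebra coherence axioms doing the work that the triangle identities alone do in the 1-categorical setting.
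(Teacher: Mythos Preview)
The paper does not give a proof of this proposition: it is stated with a citation to \cite[theorem~3.6]{le02:_becks} and used as a black box (in the proof of theorem~\ref{thm:kleisli-2prof}), with no argument supplied. So there is no proof in the paper to compare your proposal against.

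That said, your sketch is the expected 2-categorification of Beck's argument and matches the strategy of the cited reference: build the candidate inverse by taking the codescent object of the bar resolution of a pseudo-algebra, use absoluteness of the $U$-image of that resolution to know the colimit exists and is preserved, and invoke reflection of adjoint equivalences for the comparison $\Psi\Phi(x)\to x$. Your own caveat is accurate: the real work is in specifying the coherent 2-cells that make the bar resolution of a pseudo-algebra into an honest codescent diagram and in exhibiting the contraction that makes its image absolute; this is exactly what the cited paper carries out, and your outline defers to it at precisely that point.
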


%%% Local Variables: 
%%% mode: latex
%%% TeX-master: "main"
%%% End: 

\chapter{Equipments}
\label{cha:equipments}

This chapter contains our main results.  The first section constructs
the 3-category of 2-profunctors as promised, and shows that it has
well-behaved Kleisli objects for pseudo-monads.  This then gives a
correspondence between pseudo-monads on a 2-category $\bK$ and
identity-on-objects functors $\bK \to \bM$, which is the basis for our
comparison of notions of equipment in section~\ref{sec:defin-equipm}.
We stop short of trying to construct a 2- or 3-category of equipments
directly from the monad definition: this could certainly be done, and
section~\ref{sec:more-equipments} discusses how one might go about it,
but it is more than we need.  The correspondence on objects and
morphisms that we give is enough to get the results that we want, so
it is a natural stopping point.

Section~\ref{sec:equipm-fibr} then defines cartesian equipments as
cartesian objects in the 2-category of equipments constructed in the
previous section, and shows that there is a fully faithful functor
from the 2-category of regular fibrations into that of cartesian
equipments.  Axioms are given that ensure that a given cartesian
equipment is in the image of this functor; these we call
\emph{regular} equipments, and the full sub-2-category of cartesian
equipments on the regular ones is therefore equivalent to the
2-category of regular fibrations.  The last subsection compares
comprehension for predicates in a regular fibration
(def.~\ref{dfn:22}) with tabulation for morphisms and with
Eilenberg--Moore objects for co-monads in a regular equipment.

Finally, section~\ref{sec:effective-topos} examines the two
constructions of the effective topos through the lens of the preceding
material, showing how the equivalence we have given between regular
fibrations and equipments can be used to relate them.

\section{2-profunctors and equipments}
\label{sec:biprofunctors}

A 2-profunctor $H \colon \bK \prof \bL$, as we have said, will be a
functor $\bL^\op \times \bK \to \Cat$.  By the results of
section~\ref{sec:free-cocompl-bicat}, this is essentially the same
thing as a cocontinuous functor $\Pw\bK \to \Pw\bL$.  Because
2-categories of the form $\Pw\bK$ are strict, composition of such
functors is associative and unital on the nose.  So one would hope
that $\Biprof$ would turn out to be a $\cat{Gray}$-category, or even a
strict 3-category, but it is neither: whiskering a transformation by a
functor fails to be strictly functorial.

\subsection{The tricategory $\Biprof$}
\label{sec:tricategory-biprof}

\begin{dfn}
  $\Biprof$ is the tricategory whose objects are 2-categories $\bK,
  \bL, \ldots$ and whose homs are given by $\hom(\bK, \bL) =
  \mathrm{Cocont}(\Pw\bK, \Pw\bL)$.
\end{dfn}
By this definition, all of the structure of $\Biprof$ bar the objects
is imported directly from (a suitably large version of) $\Bicat$.
Because $\Bicat$ is known to be a tricategory \cite[section
6.3]{gurski07}, then, so is $\Biprof$.

Suppose we are given profunctors $H \colon \bK \prof \bL$ and $G
\colon \bL \prof \bM$, corresponding to $\hat{H} \colon \Pw\bK
\to \Pw\bL$ and $\hat{G} \colon \Pw\bL \to \Pw\bM$.  Then we can
compose $H$ with $G$ by composing $\hat{H}$ directly with
$\hat{G}$ and passing back across the equivalence of
prop.~\ref{prp:3} to get a profunctor $GH \colon \bL \prof \bM$:
\begin{equation*}
  GH(m,k) \eqv \hat{G}(\hat{H}(Y k))(m) \eqv
  H(-,k) \star G(m,-)
\end{equation*}
By the coend formula \eqref{eq:2} for weighted limits, this gives:
\begin{equation}
  \label{eq:5}
  G \cmp H \eqv \int^\ell G(-, \ell) \times H(\ell, -)
\end{equation}
just as for ordinary profunctors.  So we may switch freely between
profunctors considered as cocontinuous functors between 2-presheaf
categories, composed as ordinary functors, and profunctors considered
as $\Cat$-valued functors composed as above.

It follows immediately from this and the cartesian closedness of
$\Cat$ that
\begin{prp}
  \label{prp:4}
  $\Biprof$ has stable local colimits; that is, the colimits in
  $\Biprof(\bK,\bL) \eqv [\bL^\op \times \bK, \Cat]$ are preserved by
  composition with a profunctor on either side.
\end{prp}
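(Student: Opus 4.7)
The plan is to reduce the claim to standard facts about $\Cat$ via the coend formula~\eqref{eq:5}. Since $\Cat$ is 2-cocomplete, so is $\Biprof(\bK,\bL) \eqv [\bL^\op \times \bK, \Cat]$, and its local 2-colimits are computed pointwise in $\Cat$. Fix a diagram $D \colon \bJ \to \Biprof(\bK,\bL)$ with weight $W$, and some $G \colon \bL \prof \bM$; I would evaluate $\bigl(G \cmp (W \star D)\bigr)(m,k)$ using~\eqref{eq:5} and the pointwise description of $W \star D$, then use two standard facts to push $W \star -$ all the way out to the front.

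The two facts are: (i) for fixed $G(m,\ell)$, cartesian product $G(m,\ell) \times -$ in $\Cat$ has the internal hom as a right adjoint and so preserves 2-colimits in its second argument; and (ii) a coend is itself a 2-colimit (a codescent object, as shown around~\eqref{eq:4}), so it commutes with any other 2-colimit. Combining these,
\begin{equation*}
  \bigl(G \cmp (W \star D)\bigr)(m,k)
  \;\eqv\; W \star \int^\ell G(m,\ell) \times D(-)(\ell,k)
  \;\eqv\; \bigl(W \star (G \cmp D)\bigr)(m,k),
\end{equation*}
so that $G \cmp -$ preserves local 2-colimits. The argument for $- \cmp H$ on the other side is entirely symmetric.

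The main thing to watch for is 2-categorical bookkeeping: everything in sight is an equivalence rather than an equality, so interchanging $W \star -$ with the coend and the product implicitly appeals to the cocontinuity of $W \star -$ in its second argument, established in the proof of prop.~\ref{prp:3} (or equivalently to the fact that representables are continuous). No genuinely new difficulty arises beyond the 2-coherences already built into the setup.
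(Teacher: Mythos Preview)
Your proposal is correct and takes essentially the same approach as the paper: the paper's entire proof is the remark that the result ``follows immediately from this [the coend formula~\eqref{eq:5}] and the cartesian closedness of $\Cat$'', which is exactly what you have unpacked. Your added observation that the coend is itself a 2-colimit (so commutes with $W \star -$) makes explicit the one step the paper leaves implicit.
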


Sending a functor $F \colon \bK \to \bL$ to the profunctor $\bL(1,F)
\colon \bK \prof \bL$ gives a mapping from $\Bicat$ to $\Biprof$ that
is the identity on objects and locally fully faithful by the Yoneda
lemma (corollary~\ref{cor:5}).  The co-Yoneda lemma shows that it is
functorial, i.e.~that $\bM(1,G) \cmp \bL(1,F) \eqv \bM(1,GF)$.
(Indeed, more is true: by the same lemma, we have, for functors $F
\colon \bK \to \bL$ and $G \colon \bJ \to \bM$, and a profunctor $H
\colon \bL \prof \bM$, that
\begin{equation*}
  \bM(G,1) \cmp H \cmp \bL(1,F) \eqv H(G,F)
\end{equation*}
as for the analogous functor $\Cat \to \bicat{Prof}$
\cite{wood82:proarrows_i,shulman08:_framed_bicat_and_monoid_fibrat}.)

The functor $\widehat{\bL(1,F)} \colon \Pw\bK \to \Pw\bL$ (sometimes
called the \emph{Yoneda extension} of $F$) takes a weight $W$ and an
object $\ell \in \bL$ to $W \star \bL(\ell, F-)$.  As a functor of
$\ell$, this is the pointwise colimit $W \star \bL(1,F)$, where
$\bL(1,F)$ is taken as a functor $\bK \to \Pw\bL$.  Dually,
$\widehat{\bL(F,1)}$ takes $V \in \Pw\bL$ and $k \in \bK$ to $V
\star \bL(Fk,-)$; but by the co-Yoneda lemma this is just $VFk$, so
that $\widehat{\bL(F,1)}$ is the pullback-along-$F$ functor $F^*$.
Using this, we may calculate
\begin{align*}
%  & \mathrel{\phantom{\eqv}}
  \Pw\bL(\widehat{\bL(1,F)} W, V) % \\
  & \eqv \Pw\bK(W, \Pw\bL(\bL(1,F), V)) \\
  & \eqv \Pw\bK(W, VF) & \text{by Yoneda} \\
  & \eqv \Pw\bK(W, \widehat{\bL(F,1)} V)
\end{align*}
Thus $\widehat{\bL(1,F)}$ is left adjoint to $\widehat{\bL(F,1)}$,
and so we have
\begin{equation}
  \label{eq:7}
  \bL(1,F) \dashv \bL(F,1)  
\end{equation}
in $\Biprof$.  The functor $\Bicat \to \Biprof$ is therefore a
proarrow equipment, in a suitable 3-categorical sense.

\subsection{Kleisli objects in $\Biprof$}
\label{sec:kleisli-objects}

The Kleisli object of a monad $H \colon \C \prof \C$ in $\bicat{Prof}$
is given by the category whose objects are those of $\C$ and whose
homs $\C_H(a,b)$ are given by $H(a,b)$.  Identities and composition
are defined using the unit and multiplication of $H$.  So a monad on
$\C$ in $\bicat{Prof}$ is essentially the same thing as a functor $\C
\to \D$ that is bijective on objects.  Things are much the same in our
2-categorical setting.

\begin{thm}
  \label{thm:kleisli-2prof}
  $\Biprof$ has \emph{tight Kleisli objects}: if $T \colon \bK \prof
  \bK$ is a monad in $\Biprof$, then there is a 2-category $\bK_T$ and
  a functor $F_T \colon \bK \to \bK_T$ such that composition with the
  right $T$-module $\bK_T(1,F_T)$ gives rise to an equivalence
  $\Biprof(\bK_T,-) \sim \mathrm{RMod}(T,-)$.
\end{thm}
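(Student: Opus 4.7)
The plan is to build $\bK_T$ concretely and then prove its universal property via the 2-monadicity theorem, prop.~\ref{prp:5}. For the construction, take $\bK_T$ to have the same objects as $\bK$ and hom-categories $\bK_T(a,b) := T(a,b)$. Composition comes from the multiplication $\mu \colon T \cmp T \tc T$ via the coend coprojection $T(b,c) \times T(a,b) \to (T \cmp T)(a,c)$ of~\eqref{eq:5}, while identities come from the unit $\eta \colon \hom_\bK \tc T$. Associativity and unitality up to coherent isomorphism are precisely the pseudomonoid axioms for $(T,\mu,\eta)$ in $\Biprof(\bK,\bK)$, so $\bK_T$ is a genuine bicategory. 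Define $F_T \colon \bK \to \bK_T$ to be the identity on objects with action on homs $\eta_{a,b} \colon \bK(a,b) \to T(a,b)$; its compositor and unitor come from the coherence cells of $\eta$.

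Next, identify the universal module. Since $F_T$ is bijective on objects, the representable profunctor $\bK_T(1,F_T) \colon \bK \prof \bK_T$ satisfies $\bK_T(1,F_T)(k,b) = \bK_T(F_T k, b) = T(k,b)$, so it agrees with $T$ on the nose, and $\mu$ equips it with a canonical right $T$-action. Composition with this right $T$-module defines the comparison functor $\Biprof(\bK_T,\bL) \to \mathrm{RMod}(T,\bL)$, whose underlying action on profunctors is the restriction $\bar R := - \cmp \bK_T(1,F_T)$.

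To see that $\bar R$ is 2-monadic, use~\eqref{eq:7}: the adjunction $\bK_T(1,F_T) \dashv \bK_T(F_T,1)$ in $\Biprof$, run through the contravariant 2-functor $\Biprof(-,\bL)$, becomes an adjunction $L \dashv \bar R$ on hom-2-categories with $L := -\cmp \bK_T(F_T,1)$. A short coend calculation, using that $F_T$ is bijective on objects, identifies $\bK_T(F_T,1) \cmp \bK_T(1,F_T) \eqv T$ as a pseudomonoid, with unit $\eta$ and multiplication $\mu$; consequently the monad $\bar R L$ on $\Biprof(\bK,\bL)$ is $- \cmp T$, whose algebras are by definition $\mathrm{RMod}(T,\bL)$. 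It remains to invoke prop.~\ref{prp:5}.

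The codescent-colimit half of 2-monadicity is immediate from stable local cocompleteness (prop.~\ref{prp:4}): composition with any profunctor is cocontinuous on either side, so both $\bar R$ and $L$ preserve all local colimits, while $\Biprof(\bK_T,\bL)$ has them because it is equivalent to a 2-presheaf 2-category. The main obstacle is the other hypothesis, that $\bar R$ reflect adjoint equivalences. Intuitively this should hold because $F_T$ is identity on objects: a profunctor $M \colon \bK_T \prof \bL$ is pinned down up to equivalence by its restriction $M F_T$ together with its $T$-action, and a 2-cell between such profunctors that becomes a pointwise equivalence after restriction must already be one. Carrying this argument through carefully and collecting the pieces via prop.~\ref{prp:5} then yields the desired equivalence $\Biprof(\bK_T,\bL) \sim \mathrm{RMod}(T,\bL)$, tightly realized by the ordinary functor $F_T \colon \bK \to \bK_T$.
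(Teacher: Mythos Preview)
Your proposal is correct and follows the same route as the paper: build $\bK_T$ from the monad data, then verify its universal property via the 2-monadicity theorem (prop.~\ref{prp:5}) applied to the adjunction induced by $\bK_T(1,F_T) \dashv \bK_T(F_T,1)$, using prop.~\ref{prp:4} for the codescent hypothesis and the identity-on-objects property of $F_T$ for reflection of adjoint equivalences. The only difference is emphasis: where you assert in one line that the bicategory axioms for $\bK_T$ \emph{are} the pseudomonoid axioms for $T$, the paper makes this precise by proving that the forgetful functor $[\bK^\op \times \bK,\Cat] \to \Span(\Cat)(\bK_0,\bK_0)$ is monoidal (def.~\ref{dfn:26}), so that it carries pseudomonoids to pseudomonoids --- and a pseudomonoid in $\Span(\Cat)$ on a discrete object is exactly a bicategory --- and this verification of the coherence conditions \eqref{eq:21} and \eqref{eq:1} is where most of the paper's effort goes.
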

\begin{proof}
  A 2-category $\bK$ is the same thing as a pseudo double category
  \cite[section~1.9]{grandis99:_limit} whose category of objects is
  discrete, and this in turn means that $\bK$ is a monad in
  $\Span(\Cat)$ on the discrete category $\bK_0 =
  \operatorname{ob} \bK$.  We will show first that the forgetful
  functor
  \begin{equation*}
    \Biprof(\bK, \bK) = [\bK^\op \times \bK, \Cat] \longrightarrow
    [\bK_0 \times \bK_0, \Cat] = \Span(\Cat)(\bK_0, \bK_0)
  \end{equation*}
  is monoidal (def.~\ref{dfn:26}) and so takes the monad $T$ to a
  monad in $\Span(\Cat)$ (def.~\ref{dfn:29}).

  The monoidal structure on $[\bK^\op\times\bK, \Cat]$ is given by
  profunctor composition \eqref{eq:5}.  Writing the image of a
  profunctor $H$ under the restriction functor above as $\tilde H$, we
  find that the composite $\tilde G \tilde H$ in $[\bK_0\times\bK_0,
  \Cat]$ is given by
  \begin{equation*}
    \tilde G \tilde H(k,\ell)
    = G^k_j H^j_\ell
    = \coprod_j G(k,j) \times H(j,\ell)
  \end{equation*}
  The identity for this composition is the equality predicate on
  $\bK_0$, which sends $k,\ell$ to the terminal category $\mathbf{1}$
  if $k=\ell$, or to the empty category $\nullset$ otherwise.
  Equivalently, it is the identity span on $\bK_0$.

  The required comparison morphisms are then given by the codescent
  morphism \eqref{eq:4}
  \begin{equation*}
    G^k_j H^j_\ell = \coprod_j G(k,j) \times H(j,\ell) \longrightarrow
    \int^j G(k,j) \times H(j,\ell) = (GH)^k_\ell
  \end{equation*}
  and the identity-assigning functor $1_- \colon \mathbf{1} \to
  \bK(k,k)$.  We must show that these satisfy the conditions of
  def.~\ref{dfn:26}.

  For any three composable profunctors, there is a diagram $\Dsc^\op
  \times \Dsc^\op \to [\bK^\op \times \bK, \Cat]$ (where $\Dsc$ is as
  in def.~\ref{dfn:18}) whose colimit is their composite.  By
  universality, this may be calculated directly, or as the colimit of
  the colimit of either of the two adjuncts $\Dsc^\op \to [\Dsc^\op,
  [\bK^\op \times \bK, \Cat]]$ of the diagram.  This gives the
  injections
  \begin{equation*}
    \xymatrix{
      G^k_j H^j_i K^i_\ell \ar[r] \ar[d] &
      G^k_j (HK)^j_\ell \ar[d] \\
      (GH)^k_i K^i_\ell \ar[r] &
      (GHK)^k_\ell
    }
  \end{equation*}
  and the canonical isomorphism filling this square is the required
  associator $a$.  The commutative cube formed from the six
  different such squares associated to a fourfold composite is exactly
  the coherence condition \eqref{eq:21} for $a$.

  To express the coherence condition on the unitors, we first note
  that the required $l$ and $r$ for the composite of $G$ and
  $H$ arise from the left unitor of $G$ and the right unitor of $H$:
  $l$ is the 2-cell in
  \begin{equation*}
    \xymatrix{
      G^k_j H^j_\ell \ar[rr]^1_{}="s" \ar[dr] & & G^k_j H^j_\ell \\
      & G^k_j \bK^j_i H^i_\ell \ar@{}[];"s"|{\lambda} \ar[ur]
    }
  \end{equation*}
  whose components are the induced isomorphisms $(g,h) \iso (g1, h)$,
  where $g1$ is the action by $G$ on $g$ of the appropriate identity
  morphism of $\bK$, and similarly for $r$.  The coherence
  condition itself then requires that the morphism in $(GH)^k_\ell$:
  \begin{equation}
    \label{eq:18}
    (g,h) \leftarrow (g1, h) \leftarrow (g,1,h) \to (g, 1h) \to (g,h)
  \end{equation}
  be the identity.  This is equal to the composite of (the formal
  inverse of)
  \begin{equation*}
    (g,h) \to (g,1,h) \to (g1,h) \to (g,h)
  \end{equation*}
  with
  \begin{equation*}
    (g,h) \to (g,1,h) \to (g,1h) \to (g,h)
  \end{equation*}
  where in both cases the factor $(g,h) \to (g,1,h)$ is given by the
  action of the splitting in the codescent diagram.  In each of these
  the underlying morphism of $\Dsc$ is the identity, and the morphism
  $1^*(g,h) \to (g,h)$ is the composite $(g,h) \to (g1,h) \to (g,h)$,
  respectively $(g,h) \to (g,1h) \to (g,h)$, of inverses.  Both
  morphisms are thus identities, and so their formal composite in the
  coend $(GH)^k_\ell$ is also the identity.  Hence the functor
  $[\bK^\op \times \bK, \Cat] \to [\bK_0 \times \bK_0, \Cat]$ is
  indeed monoidal.

  The monad $T$ in $\Biprof$ is sent by this functor to a 2-category
  $\bK_T$, with objects those of $\bK$ and hom-categories
  $\bK_T(k,\ell)$ the values $T(k,\ell)$ of $T$.  Identities in
  $\bK_T$ are the $\eta$-images of identities in $\bK$, and
  composition is given by the action of $\mu$.  The unit $\eta$ of $T$
  is a morphism of monoids in $\Biprof(\bK,\bK)$ and thus is sent to a
  functor $F_T \colon \bK \to \bK_T$, which of course is the identity
  on objects.

  It remains to show that the profunctor $\bK_T(1,F_T)$ is the
  universal right $T$-module.  The adjunction $\bK_T(1,F_T) \dashv
  \bK_T(F_T,1)$ \eqref{eq:7} gives rise to an adjunction
  \begin{equation*}
    \xymatrix@C+7ex{
      \Biprof(\bK, \bL) \ar@{}[r]|{\bot}
      \ar@<1ex>[r]^{- \cmp \bK_T(F_T,1)}
      &
      \Biprof(\bK_T, \bL) \ar@<1ex>[l]^{- \cmp \bK_T(1,F_T)}
    }
  \end{equation*}
  The unitors of $T$ supply an equivalence $T \eqv \bK_T(F_T, F_T)$
  (whose components are identities), which respects their monad
  structures essentially by definition --- the unit and counit of the
  adjunction above are given by the unit and multiplication of $T$.
  Thus $\mathrm{RMod}(T,\bL)$ is equivalent to the category of
  algebras for the monad induced by the adjunction above, and so there
  is a canonical comparison functor $\Biprof(\bK_T,\bL) \to
  \mathrm{RMod}(T,\bL)$ given by composition with the module
  $\bK_T(1,F_T)$.  To show that this functor is an equivalence, then,
  and hence that this module is the universal one, it suffices to show
  that the right adjoint $- \cmp \bK_T(1,F_T) \colon H \mapsto
  H(1,F_T)$ above is monadic, in the sense of prop.~\ref{prp:5}.  We
  already know (prop.~\ref{prp:4}) that $\Biprof$ has stable local
  colimits, so that $\Biprof(\bK_T, \bL)$ has, and $- \cmp
  \bK_T(1,F_T)$ preserves, the required codescent objects.  It remains
  only to show that this functor reflects adjoint equivalences: if
  $\alpha \colon G \tc H$ is a transformation such that $\alpha \cmp
  \bK_T(1,F_T) \colon G(1,F_T) \tc H(1,F_T)$ is an equivalence, then
  because $F_T$ is the identity on objects, the components of $\alpha$
  are precisely the components of $\alpha \cmp \bK_T(1,F_T)$, and
  hence if the latter are all equivalences then so are the former.
\end{proof}

\begin{rem}
  \label{rem:6}
  Any functor $F \colon \bK \to \bL$ in $\Bicat$ gives rise to a monad
  $\bL(F,F) \eqv \bL(F,1) \cmp \bL(1,F)$ in $\Biprof$.  It is easy to
  see that the Kleisli object of this monad is the \emph{full image}
  of $F$: its objects are those of $\bK$ and the hom from $k$ to
  $\ell$ is $\bL(Fk, F\ell)$.
\end{rem}

Suppose $G \colon \bK \to \bL$ is a functor.  An action of $T$ on $G$
has a mate
\begin{prooftree}
  \AXC{$\bL(1,G) \hcmp T \longrightarrow \bL(1,G)$}
  \UIC{$T \longrightarrow \bL(G,G)$}
\end{prooftree}
and the first underlies a right $T$-module if and only if the second
is a morphism of pseudo-monoids, by remark~\ref{rem:9}.  The morphism
$T \to \bL(G,G)$ corresponding to a right action is then sent by the
construction of theorem~\ref{thm:kleisli-2prof} to a functor $\bK_T
\to \bK_{\bL(G,G)}$ into the full image of $G$ that is the identity on
objects and whose action on hom-categories is given by the components
of the monoid morphism.  This then composes with the fully faithful
$\bK_{\bL(G,G)} \to \bL$ given by the action of $G$ on objects to give
a functor $\bK_T \to \bL$.  The unit axiom for a morphism of monoids
then shows that the composite of this functor with the canonical $\bK
\to \bK_T$ is equivalent to $G$.  This shows how to compute the
functor $\bK_T \to \bL$ corresponding to a representable right
$T$-module, a recipe that it is difficult to extract from the proof of
theorem~\ref{thm:kleisli-2prof}.

\begin{cor}
  The corepresentable profunctor $\bK_T(F_T,1)$ exhibits $\bK_T$ as
  the Eilenberg--Moore object of $T$.
\end{cor}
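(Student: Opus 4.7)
The plan is to dualize the proof of theorem~\ref{thm:kleisli-2prof}, using the adjunction $\bK_T(1,F_T) \dashv \bK_T(F_T,1)$ from \eqref{eq:7} to reduce the Eilenberg--Moore universal property to the same monadicity argument employed there, only carried out with post-composition rather than pre-composition. The initial observation is that, under this adjunction, the right $T$-module structure on $\bK_T(1,F_T)$ produced in theorem~\ref{thm:kleisli-2prof} has a mate (def.~\ref{dfn:6}) which is a left $T$-module structure on $\bK_T(F_T,1)$; equivalently, by remark~\ref{rem:9}, both structures correspond to the same morphism of pseudo-monoids $T \to \bK_T(F_T,F_T)$, namely the one arising from $F_T$ in the proof of theorem~\ref{thm:kleisli-2prof}.

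For universality, I would observe that $\bK_T(1,F_T) \dashv \bK_T(F_T,1)$ induces, for any 2-category $\bL$, an adjunction
\begin{equation*}
  \xymatrix@C+5ex{
    \Biprof(\bL,\bK) \ar@{}[r]|{\bot}
    \ar@<1ex>[r]^{\bK_T(1,F_T) \cmp -}
    &
    \Biprof(\bL,\bK_T) \ar@<1ex>[l]^{\bK_T(F_T,1) \cmp -}
  }
\end{equation*}
by post-composition, whose induced monad on $\Biprof(\bL,\bK)$ is post-composition with $\bK_T(F_T,1) \cmp \bK_T(1,F_T) \eqv \bK_T(F_T,F_T) \eqv T$, using remark~\ref{rem:6} and the unitors of $T$. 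Algebras for this monad are precisely the left $T$-modules in $\bL$, so the canonical comparison functor into the category of algebras is $H \mapsto \bK_T(F_T,1) \cmp H$, exhibiting $\bK_T(F_T,1)$ as the candidate universal left module.

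It then suffices, by prop.~\ref{prp:5}, to check that the right adjoint $\bK_T(F_T,1) \cmp -$ is monadic. Preservation of the relevant codescent objects is immediate from the stability of local colimits in $\Biprof$ (prop.~\ref{prp:4}). For reflection of adjoint equivalences, the co-Yoneda lemma gives $\bigl(\bK_T(F_T,1) \cmp G\bigr)(k,\ell) \eqv G(F_Tk,\ell)$, and since $F_T$ is the identity on objects, a componentwise equivalence of $\bK_T(F_T,1) \cmp \alpha$ yields a componentwise equivalence of $\alpha$ itself.

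The main subtlety, and where I expect the real work to lie, is the mate-calculus verification that the two left $T$-module structures in play --- the one obtained by mating the right module structure on $\bK_T(1,F_T)$ across the adjunction, and the one abstractly produced by the induced monad on $\Biprof(\bL,\bK)$ --- coincide, so that the abstract comparison functor genuinely agrees with composition with $\bK_T(F_T,1)$. This is a direct transcription of the analogous step in the proof of theorem~\ref{thm:kleisli-2prof} and presents no essentially new difficulty.
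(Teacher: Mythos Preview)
Your proposal is correct and follows exactly the paper's approach: the paper's proof consists of a single sentence instructing the reader to apply the argument of theorem~\ref{thm:kleisli-2prof} to the post-composition adjunction $\bK_T(1,F_T)\cmp- \dashv \bK_T(F_T,1)\cmp-$ between $\Biprof(\bL,\bK)$ and $\Biprof(\bL,\bK_T)$, which is precisely what you have unpacked. Your worry about the mate-calculus subtlety is not singled out in the paper and is indeed no harder than the corresponding step for right modules.
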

\begin{proof}
  Apply the argument of theorem~\ref{thm:kleisli-2prof} to the
  adjunction
  \begin{equation*}
    \xymatrix@C+7ex{
      \Biprof(\bL, \bK) \ar@{}[r]|{\bot}
      \ar@<1ex>[r]^{\bK_T(1,F_T) \cmp -}
      &
      \Biprof(\bL, \bK_T) \ar@<1ex>[l]^{\bK_T(F_T,1) \cmp -}
    }
  \end{equation*}
  to show that $\Biprof(\bL,\bK_T) \sim \mathrm{LMod}(T,\bL)$.
\end{proof}

\begin{cor}
  \label{cor:1}
  Precomposition with $\bK_T(1,F_T)$ preserves and detects
  representables.
\end{cor}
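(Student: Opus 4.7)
The preservation half is immediate from the functoriality of the embedding $\Bicat \to \Biprof$ established earlier: for any functor $G \colon \bK_T \to \bL$ one has
\begin{equation*}
  \bL(1, G) \cmp \bK_T(1, F_T)\ \eqv\ \bL(1, G F_T),
\end{equation*}
which is representable in $\Biprof(\bK, \bL)$.

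For detection, suppose $H \in \Biprof(\bK_T, \bL)$ and fix a profunctor equivalence $H \cmp \bK_T(1, F_T) \eqv \bL(1, G')$ for some $G' \colon \bK \to \bL$. By theorem~\ref{thm:kleisli-2prof}, the left-hand side carries a canonical right $T$-module structure, and one can transport this across the chosen equivalence to a right $T$-action on $\bL(1, G')$. I would then take the mate of this action under the adjunction $\bL(1, G') \dashv \bL(G', 1)$ of \eqref{eq:7} to obtain a 2-cell $T \tc \bL(G', G')$ in $\Biprof(\bK, \bK)$, and invoke remark~\ref{rem:9} to conclude that this mate is a morphism of pseudo-monoids --- exactly the argument already used inside the proof of theorem~\ref{thm:kleisli-2prof}.

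Next, by remark~\ref{rem:6} the Kleisli object of the monad $\bL(G', G') \eqv \bL(G', 1) \cmp \bL(1, G')$ is the full image $\bK_{\bL(G', G')}$ of $G'$ in $\bL$, equipped with a fully faithful inclusion into $\bL$. Feeding the pseudo-monoid morphism just constructed into the universal property of $\bK_T$ (theorem~\ref{thm:kleisli-2prof}) produces an identity-on-objects functor $\bK_T \to \bK_{\bL(G', G')}$; post-composing with the inclusion yields $\tilde G \colon \bK_T \to \bL$ satisfying $\tilde G F_T \eqv G'$. Thus
\begin{equation*}
  \bL(1, \tilde G) \cmp \bK_T(1, F_T)\ \eqv\ \bL(1, \tilde G F_T)\ \eqv\ \bL(1, G')\ \eqv\ H \cmp \bK_T(1, F_T),
\end{equation*}
and applying the equivalence $\Biprof(\bK_T, \bL) \sim \mathrm{RMod}(T, \bL)$ of theorem~\ref{thm:kleisli-2prof} then yields $H \eqv \bL(1, \tilde G)$, so $H$ is representable. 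The main obstacle in the full write-up is bookkeeping in this last step: one must verify that the canonical $T$-action on $\bL(1, \tilde G) \cmp \bK_T(1, F_T)$ coming from the action on $\bK_T(1, F_T)$ agrees, under $\tilde G F_T \eqv G'$, with the transported action on $\bL(1, G')$. This compatibility is built into the combined application of remark~\ref{rem:9} and the Kleisli universal property, but it must be tracked carefully through the mate correspondences.
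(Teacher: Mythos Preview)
Your proof is correct, but it takes a significantly longer route than the paper's. The paper's argument for detection is just two lines: since $F_T$ is the identity on objects, every object of $\bK_T$ has the form $F_T k$ for some $k \in \bK$, so $H(-,k) = H(-,F_T k) \eqv \bL(-,G'k)$ for each $k$; hence $H$ is pointwise representable, and therefore representable by remark~\ref{rem:5} (equivalently corollary~\ref{cor:4}). No monad machinery is invoked at all --- the crucial fact is simply that $F_T$ is bijective on objects.

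Your approach, by contrast, reconstructs the representing functor $\tilde G \colon \bK_T \to \bL$ explicitly: transport the $T$-module structure, take mates to obtain a pseudo-monoid morphism $T \to \bL(G',G')$, factor through the full image of $G'$, and then appeal to the Kleisli universal property. This is precisely the recipe the paper spells out in the paragraph \emph{following} remark~\ref{rem:6}, where it explains how to compute the functor out of $\bK_T$ corresponding to a representable right $T$-module. So you have not so much proved the corollary as re-derived that recipe and then applied it. The payoff is that your argument exhibits $\tilde G$ concretely, whereas the paper's proof of the corollary only asserts its existence via representability and defers the explicit description to the later discussion. For the bare statement of the corollary, though, the paper's route is far cleaner and sidesteps the module-compatibility bookkeeping you rightly flag as delicate.
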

\begin{proof}
  If $H \colon \bK_T \prof \bL$ is representable then clearly so is
  $H(1,F_T)$.  Suppose conversely that $H(1,F_T)$ is representable, as
  $\bL(1,G)$, say.  Then, because $F_T$ is the identity on objects,
  each $H(-,k) = H(-,F_T k)$ is a representable presheaf $\bL(-,G k)$,
  and so by remark~\ref{rem:5} $H$ is representable.
\end{proof}

This means in particular that representable right modules correspond
to representable profunctors out of the Kleisli object.  It also means
that if $T = \bK(1,T')$ is a representable monad, then the Kleisli
object $\bK_T$ of $T$ in $\Biprof$ is also the Kleisli object of $T'$
in $\Bicat$; because $T$ is representable, the right adjoint
$\bK_T(F_T,1)$ of the Kleisli morphism, arising as it does from the
right $T$-module structure of $T$ itself, is representable, as
$\bK(1,U_T)$, say.  But the second-to-last corollary shows that this
is not necessarily the case for Eilenberg--Moore objects.

As an aside, we can say something similar about coproducts in
$\Biprof$ (cf.~axioms 4 and 5 of \cite{wood85:_proar_ii}):

\begin{prp}
  $\Biprof$ has representable coproducts: if $\{\bK_i\}_i$ is a small
  family of 2-categories, then its coproduct $\coprod_i \bK_i$ in
  $\Bicat$ is also its coproduct in $\Biprof$.  As before, the
  injections are representable and preserve and jointly detect
  representables, and their adjoints together exhibit the coproduct of
  the $\bK_i$ as their product.
\end{prp}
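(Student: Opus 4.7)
The plan is to reduce everything to the definition $\Biprof(\bK, \bL) = [\bL^\op \times \bK, \Cat]$ together with standard universal properties of coproducts. First I would verify that in $\Bicat$ the 2-functor $\bL^\op \times -$ preserves coproducts, giving $\bL^\op \times \coprod_i \bK_i \eqv \coprod_i (\bL^\op \times \bK_i)$. Since a functor out of a coproduct in $\Bicat$ is determined, up to equivalence, by a family of functors out of the summands, this yields
\[
\Biprof\Bigl(\coprod_i \bK_i,\, \bL\Bigr) \eqv \prod_i \Biprof(\bK_i,\bL),
\]
and inspection shows the canonical equivalence is induced by composition with the representable profunctors $(\coprod_j \bK_j)(1, \iota_i) \colon \bK_i \prof \coprod_j \bK_j$, using the identity $G \cmp \bL(1, F) \eqv G(1, F)$ recorded earlier in the excerpt. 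This establishes $\coprod_i \bK_i$ as the coproduct in $\Biprof$ with representable injections.

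For the product claim, the same argument with variance swapped gives $\Biprof(\bL, \coprod_i \bK_i) \eqv \prod_i \Biprof(\bL, \bK_i)$, exhibiting $\coprod_i \bK_i$ as a product in $\Biprof$. The projections are the corepresentable profunctors $(\coprod_j \bK_j)(\iota_i, 1)$, which are the right adjoints of the injections by \eqref{eq:7}.

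It remains to verify that the injections preserve and jointly detect representables. Preservation: if $G \colon \coprod_j \bK_j \prof \bL$ is representable as $\bL(1, F)$, then precomposing with $(\coprod_j \bK_j)(1, \iota_i)$ yields $\bL(1, F \cmp \iota_i)$, again representable, by the same composition identity. For joint detection --- the only substantive step --- suppose each restriction $H(1, \iota_i)$ is representable by some $F_i \colon \bK_i \to \bL$. The $F_i$ assemble, via the universal property of $\coprod_j \bK_j$ in $\Bicat$, into a functor $F \colon \coprod_j \bK_j \to \bL$. Under the coproduct decomposition of $\Biprof(\coprod_j \bK_j, \bL)$ above, both $H$ and $\bL(1, F)$ have component profile $(F_i)_i$, so $H \eqv \bL(1, F)$ is representable. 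The main obstacle is the initial distributivity $\bL^\op \times \coprod_i \bK_i \eqv \coprod_i (\bL^\op \times \bK_i)$ in $\Bicat$: though standard, verifying it at the appropriate level of weakness requires care, since one must check not only the underlying hom-categories but also the coherence of the pseudofunctorial structure.
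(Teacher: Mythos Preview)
Your argument is correct, and its overall shape matches the paper's: establish the coproduct property in $\Biprof$ by reducing to the coproduct property in $\Bicat$, handle the product side dually, and deal with representability separately.

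The one genuine difference is in how you make the reduction. You distribute the product over the coproduct, arguing $\bL^\op \times \coprod_i \bK_i \eqv \coprod_i (\bL^\op \times \bK_i)$ and then appealing to the universal property of $\coprod_i(\bL^\op \times \bK_i)$ for functors into $\Cat$. As you note, this distributivity step is the one place where some care is needed. The paper instead \emph{curries}: a profunctor $\bK \prof \bL$ is the same datum as a functor $\bK \to [\bL^\op, \Cat]$, so one applies the $\Bicat$-coproduct property directly to maps into $[\bL^\op, \Cat]$ and uncurries at the end. This sidesteps the distributivity verification entirely. For the product direction the paper uses the observation $(\coprod_i \bK_i)^\op = \coprod_i \bK_i^\op$ and runs the same currying argument. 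Your route is perfectly sound, but the currying trick is worth knowing, since it removes exactly the obstacle you flagged.

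For detection of representables, your argument (assemble the $F_i$ into $F$ and compare profiles) is essentially a hands-on version of what the paper does by invoking the ``pointwise representable implies representable'' corollary; the content is the same.
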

\begin{proof}
  Let $\{\iota_i \colon \bK_i \to \coprod_i \bK_i\}_i$ be the obvious
  injections, and assume given a family $\{\bK_i \prof \bL\}_i$.  Then
  there are equivalences
  \begin{prooftree}
    \AXC{$\{\bK_i \prof \bL\}_i$}
    \UIC{$\bK_i \to [\bL^\op, \Cat]\}_i$}
    \UIC{$\coprod \bK_i \to [\bL^\op, \Cat]$}
    \UIC{$\coprod \bK_i \prof \bL$}
  \end{prooftree}
  showing that $\coprod \bK_i$ is again a coproduct in $\Biprof$.  The
  equivalence (corollary~\ref{cor:4}) between representability and
  pointwise representability shows that the profunctor in the bottom
  line is representable if and only if the family in the top line is
  so.  Finally, much the same argument (together with the fact that
  $(\coprod \bK_i)^\op = \coprod \bK_i^\op$) shows that the
  corepresentables $\bK_i(\iota_i, 1)$ mediate an equivalence between
  profunctors $\bL \prof \coprod \bK_i$ and families $\{\bL \prof
  \bK_i\}_i$.
\end{proof}

\subsection{Equipments and their morphisms}
\label{sec:defin-equipm}

We now want to argue that the various notions of proarrow equipment in
the literature are either subsumed by or at least clearly related to
the notion of pseudo-monad in $\Biprof$, or, what is essentially the
same thing, the Kleisli object of one such.  We will see, however,
that even though monads and monad morphisms capture the right notion
of equipments and functors between them, the situation is more subtle
when it comes to transformations.  Here we will treat only the case of
equipments over (i.e.~monads on) 2-categories that are locally
discrete, because that is the important one, but we will touch in the
general case again in chapter~\ref{cha:concl-future-work}.

\begin{dfn}
  An \emph{equipment} is, equivalently, a monad in $\Biprof$ on a
  1-category $\cat{K}$ or an identity-on-objects functor
  $\cat{K}\to\bM$.
\end{dfn}

We will follow \cite{lack12:_enhan} in calling $\cat{K}$ the category
of \emph{tight} morphisms of the equipment, a morphism in $\bM$ being
called tight if it is the image of a morphism of $\cat{K}$.

\begin{dfn}[{\cite{wood82:proarrows_i}}]
  \label{dfn:20}
  An \emph{equipment in the sense of Wood} is given by 2-categories
  $\bicat{K}$ and $\bicat{M}$ with the same objects, where $\bK$ is
  strict, and a 2-functor $(-)_\bullet \colon \bicat{K} \to \bicat{M}$
  that is the identity on objects and locally fully faithful, and such
  that the image $f_\bullet$ of every morphism $f$ of $\bicat{K}$ has
  a right adjoint $f^\bullet$ in $\bicat{M}$ (i.e.~the functor
  $(-)_\bullet$ factors through $\Map(\bicat{M})$).
\end{dfn}

Leaving out the condition on the existence of right adjoints, it is
clear that an identity-on-objects functor $\bK \to \bicat{M}$ (out of
a strict 2-category) that is locally fully faithful is the same thing
as an identity-on-objects functor $\cat{K} \to \bicat{M}$ out of a
locally discrete 2-category.  By the results of the previous section,
this is the same thing as the Kleisli object of an essentially unique
monad in $\Biprof$ on $\cat{K}$.

An equipment that satisfies the condition that tight morphisms have
right adjoints we will call a \emph{map-equipment}.  A map-equipment
in which every morphism with a right adjoint is tight will be called
\emph{chordate} \cite{lack12:_enhan}.

\begin{dfn}[{\cite{carboni98:change_base_geom_ii}}]
  An \emph{equipment in the sense of Carboni et.~al.}~is given by a
  category $\cat{K}$ together with a 2-functor $M \colon \cat{K}^\op
  \times \cat{K} \to \Cat$.  A \emph{pointed equipment} in their sense
  is given by such an equipment together with a transformation
  $\hom_{\cat{K}} \nt M$.
\end{dfn}

Any monad in $\Biprof$ has a canonical underlying pointed equipment in
this sense.  Conversely, to give the structure of such a monad on a
pointed equipment $M$ is precisely to specify how heteromorphisms in
the putative Kleisli 2-category $\cat{K}_M$ are to be composed.

\begin{dfn}[{\cite{shulman08:_framed_bicat_and_monoid_fibrat}}]
  An \emph{equipment in the sense of Shulman} (or a \emph{framed
    bicategory}) is given by a pseudo double category whose underlying
  span $\cat{K} \leftarrow \cat{M} \rightarrow \cat{K}$ in $\Cat$ is a
  two-sided bifibration (def.~\ref{dfn:30}).

  One half of this property is equivalent to requiring that every
  vertical morphism have a \emph{horizontal companion} in the sense of
  \cite{grandis04:_adjoin}: the companion of $f \colon x \to y$ is a
  horizontal morphism $f_\bullet \colon x \prof y$ equipped with cells
  \begin{equation}
    \label{eq:26}
    \xymatrix{
      x \ar[r]|+^{f_\bullet} \ar[d]_f \ar@{}[dr]|{\displaystyle\Downarrow}
      & y \ar[d]^1 \\
      y \ar[r]|+_1 & y
    }
    \qquad
    \xymatrix{
      x \ar[r]|+^1 \ar[d]_1 \ar@{}[dr]|{\displaystyle\Downarrow}
      & x \ar[d]^f \\
      x \ar[r]|+_{f_\bullet} & y
    }
  \end{equation}
  that compose vertically and horizontally to the identities on $f$
  and $f_\bullet$.  Similarly, the other half of the bifibration
  property requires every vertical morphism $f$ to have a
  \emph{horizontal adjoint} $f^\bullet$, which is then right adjoint
  to $f_\bullet$ in the horizontal 2-category of $(\cat{K},\cat{M})$,
  i.e.~the 2-category of cells with identity vertical boundaries (we
  will call such cells \emph{globular} and write
  $\cat{M}_{\mathrm{gl}}$ for this 2-category).
\end{dfn}

In \cite[appendix~C]{shulman08:_framed_bicat_and_monoid_fibrat} it is
shown that every (map-)equipment in the sense of Wood gives rise to a
framed bicategory (as long as the former's 2-category $\bK$ of tight
maps is strict), and vice versa, and it is stated that these
constructions are inverses up to isomorphism.  In more detail, from a
framed bicategory as above we get an identity-on-objects functor from
$\cat{K}$ to the horizontal 2-category $\bM$ of $(\cat{K},\cat{M})$,
which sends a vertical map $f \colon x \to y$ to its companion
$f_\bullet \colon x \prof y$.  This is then a map-equipment $\cat{K}
\to \bM$.  In the other direction, given a map-equipment $\cat{K} \to
\bM$ over a locally discrete 2-category, there is a pseudo double
category $(\cat{K},\mathrm{Sq}_{\cat{K}}(\bM))$ with the same objects,
with $\cat{K}$ as vertical category, the morphisms of $\bM$ as
horizontal morphisms and cells
\begin{equation}
  \label{eq:24}
  \xymatrix{
    x \ar[r]|+^m \ar[d]_f \ar@{}[dr]|{\displaystyle\Downarrow} & y \ar[d]^g \\
    z \ar[r]|+_n & w
  }
\end{equation}
the 2-cells $g_\bullet M \tc N f_\bullet$ in $\bM$.  (If we write $F_T
\colon \cat{K} \to \bM$ for the inclusion, then the category of cells
$\mathrm{Sq}_{\cat{K}}(\bM)$ is the category of elements $\int
\bM(F_T,F_T)$.)  By
\cite[prop.~C.3]{shulman08:_framed_bicat_and_monoid_fibrat} this is a
framed bicategory.  Clearly, these constructions are inverses up to an
isomorphism that is the identity on $\cat{K}$:
\begin{equation}
  \label{eq:27}
  \xymatrix{
    & \cat{K} \ar[dr] \ar[dl] \\
    \bM \ar[rr]_-{\sim} & & (\mathrm{Sq}_{\cat{K}} \bM)_{\mathrm{gl}}
  }
  \qquad
  \xymatrix{
    \cat{M} \ar[rr]^-{\sim} \ar[dr] & &
    \mathrm{Sq}_{\cat{K}}(\cat{M}_{\mathrm{gl}}) \ar[dl] \\
    & \cat{K} \times \cat{K}
  }
\end{equation}
that on the right arising from the bijection between globular cells
$g_\bullet M \tc N f_\bullet$ and cells of the form \eqref{eq:24} in
$(\cat{K},\cat{M})$.

\begin{dfn}
  \label{dfn:31}
  An \emph{equipment profunctor} from $(\cat{K},T)$ to $(\cat{L},S)$ is
  a profunctor $H \colon \cat{K} \prof \cat{L}$ that underlies a monad
  (op-)morphism $H T \to S H$, that is, an algebra for the monad
  $\hat{T^*}$ given by precomposition with $T$ on the Kleisli
  2-category $\Biprof(\cat{K},\cat{L})_{S_*}$ of the monad given by
  postcomposition with $S$ (cf.~section~\ref{sec:monads-modules}).

  An \emph{equipment morphism} is a representable equipment
  profunctor.
\end{dfn}

We will now compare this definition to the others.

Wood \cite{wood85:_proar_ii} defines an equipment morphism to be a
functor $F \colon \cat{K} \to \cat{L}$ that fits into a square
\begin{equation}
  \label{eq:25}
  \xymatrix{
    \cat{K} \ar[d]_F \ar@{}[dr]|{\eqv} \ar[r] & \cat{K}_T
    \ar[d]^{\tilde F} \\
    \cat{L} \ar[r] & \cat{L}_S
  }
\end{equation}
To give such a lift $\tilde F$ of $F$ is equivalently to give a right
$T$-module structure on $F_S F$, by the universal property of
$\cat{K}_T$ ($\tilde F$ will be representable if $F$ is, by
corollary~\ref{cor:1}).  The equivalence in the square is also
essentially unique, given $F$ and $\tilde F$, for the same reason.

Recall that the injection $F_S \colon \cat{L} \to \cat{L}_S$ satisfies
$S \eqv \cat{L}_S(F_S,F_S)$, so that
\begin{equation*}
  S_* \eqv \cat{L}_S(F_S,F_S)_* \eqv \cat{L}_S(F_S,1)_* \hcmp
  \cat{L}_S(1,F_S)_*
\end{equation*}
This latter monad is a representable profunctor in any tricategory
$\Biprof'$ of 2-categories and profunctors large enough to contain
$\Biprof(\cat{K},\cat{L})$ as an object, and as such its Kleisli
2-category may be constructed by the above recipe.  Thus by the
adjunction $\cat{L}_S(1,F_S) \dashv \cat{L}_S(F_S,1)$ \eqref{eq:7} we
have
\begin{equation*}
  \Biprof(\cat{K},\cat{L})(1,S_*) \eqv
  \Biprof(\cat{K},\cat{L})(\cat{L}_S(1,F_S)_*, \cat{L}_S(1,F_S)_*)
\end{equation*}
and the Kleisli 2-category of the latter is simply the full image of
the functor $\cat{L}_S(1,F_S)_*$ (remark~\ref{rem:6}) --- its objects
are profunctors $H \colon \cat{K} \prof \cat{L}$ and the hom-object
from $H$ to $H'$ is
\begin{equation*}
  \Biprof(\cat{K},\cat{L})(\cat{L}_S(1,F_S) \hcmp H, \cat{L}_S(1,F_S)
  \hcmp H') 
  \eqv
  \Biprof(\cat{K},\cat{L})(H, S H')
\end{equation*}
Precomposition $T^*$ with $T$ is a monad on this 2-category, and its
algebras are the monad op-morphisms (thus equipment profunctors) $T
\to S$, or equivalently the right $T$-modules whose underlying
morphism is of the form $\cat{L}_S(1,F_S) \hcmp H$.  But as noted above,
the latter are precisely the right $T$-modules $\cat{L}_S(1,F_S) \hcmp
H T \to \cat{L}_S(1,F_S) \hcmp H$ that arise from squares of the form
\eqref{eq:25} above, with representable modules corresponding to
representable profunctors $H = \cat{L}(1,F)$.  So equipment morphisms
in the sense of def.~\ref{dfn:31} are equivalent to morphisms
\eqref{eq:25} in the sense of Wood.

Now suppose given an equipment morphism $(F,\tilde F) \colon T \to S$
of the form \eqref{eq:25}.  The functor $\tilde F$ gives rise to a
right $T$-module by composition with the canonical one, and this has a
transpose $T \to \cat{L}_S(\tilde F F_T, \tilde F F_T)$, which
naturality of transposition shows is the composite
\begin{equation*}
  \xymatrix{
    T \ar[r]^-\sim & \cat{K}_T(F_T, F_T) \ar[r] & \cat{L}_S(\tilde F
    F_T, \tilde F F_T)
  }
\end{equation*}
Here the right-hand morphism arises from the unit $1 \to
\cat{L}_S(\tilde F, \tilde F)$, which is the effect on hom-categories
of $\tilde F$.  The codomain of this is equivalent (as a monad,
because $\tilde F F_T$ and $F F_S$ are equivalent $T$-modules) to
$S(F,F)$, so that the Kleisli objects of the two are equivalent under
$\cat{K}_T$.  We thus get an equivalence of factorizations of $\bar F$
\begin{equation*}
  \xymatrix{
    \cat{K}_T \ar[r] \ar[d] &
    \cat{K}_{S(F,F)} \ar[dl]^{}="s"_{}="t"|{\sim} \ar[d] \\
    \cat{K}_{\cat{L}_S(\tilde F F_T, \tilde F F_T)} \ar[r] &
    \cat{L}_S
    \ar@{}[ul];"t"|{\eqv} \ar@{}[];"s"|{\eqv}
  }
\end{equation*}
where the functors out of $\cat{K}_T$ and the diagonal one are the
identity on objects and the others are fully faithful --- that on the
right acts as $F$ on objects and that on the bottom as $\tilde F$.
The two composites $\cat{K}_T \to \cat{L}_S$ are canonically
equivalent to $\tilde F$, because they give rise to equivalent
modules; the left-and-bottom factorization is essentially $F_T$
itself, but the top-and-right factorization takes the values of $F$ on
objects.  This shows that in an equipment morphism of the form
\eqref{eq:25}, we can always, up to canonical equivalence, take
$\tilde F$ to coincide strictly with $F$ on objects.

A morphism $(\cat{K},\cat{M}) \to (\cat{L},\cat{N})$ of framed
bicategories is defined
\cite[def.~6.5]{shulman08:_framed_bicat_and_monoid_fibrat} to be a
pseudo-functor between their underlying double categories, i.e.~a pair
of functors between their vertical and horizontal categories that
commute with the projections, together with invertible globular cells
witnessing functoriality.  This data immediately gives rise to a
morphism of equipments
\begin{equation*}
  \xymatrix{
    \cat{K} \ar[r] \ar[d] & \cat{M}_{\mathrm{gl}} \ar[d] \\
    \cat{L} \ar[r] & \cat{N}_{\mathrm{gl}}
  }
\end{equation*}
Conversely, given a morphism $(F,\tilde F) \colon (\cat{K},T) \to
(\cat{L},S)$ of equipments, its tight part $F$ is a functor between
the vertical parts of their corresponding double categories.  As noted
above, $\tilde F$ can be taken to coincide with $F$ on objects, and it
acts on a general cell $g_\bullet M \tc N f_\bullet$ in
$\mathrm{Sq}_{\cat{K}}(\cat{K}_T)$ to form
\begin{equation*}
  F(g)_\bullet \tilde F(M) \overset{\sim}{\to}
  \tilde F(g_\bullet) \tilde F(M) \overset{\sim}{\to} \tilde
  F(g_\bullet M) \to \tilde F(N f_\bullet) \overset{\sim}{\to} \tilde
  F(N) \tilde F(f_\bullet) \overset{\sim}{\to} F(N) F(f)_\bullet
\end{equation*}
Vertical functoriality follows from the naturality and associativity
of $\tilde F$'s compositor and the pseudo-naturality of the
equivalence $\tilde F F_T \eqv F_S F$, so that we get a morphism of
spans $(\cat{K},\mathrm{Sq}_{\cat{K}}(\cat{K}_T)) \to
(\cat{L},\mathrm{Sq}_{\cat{L}}(\cat{L}_S))$, and the 2-functoriality
of $\tilde F$ makes this into a morphism of double categories.  This
assignment is in fact strictly functorial, because of how the
comparison cells of a composite functor are defined.

When we come to define equipment 2-cells, however, we run into a
problem.  A transformation between double functors assigns a vertical
morphism of the target double category to each object of the source,
and a cell of the target to each horizontal morphism of the source
(subject to some axioms).  So for functors arising from equipment
morphisms $F,G \colon (\cat{K},T) \to (\cat{L},S)$, a double
transformation would send $k \in \cat{K}$ to $\alpha_k \colon Fk \to
Gk$ in $\cat{L}$, and $m \colon k \prof k'$ to some $\alpha_m \colon
(\alpha_{k'})_\bullet F m \tc G m (\alpha_k)_\bullet$, such that
horizontal identities are sent to identity cells, and the cell
assigned to a composite is the horizontal composite of the cells
assigned to the components.  This amounts precisely to an \emph{oplax}
transformation $\tilde F \tc \tilde G$, with tight components, but we
have no recipe for producing these from our abstract monad machinery.
By the above discussion, a Kleisli 2-cell in the sense of
section~\ref{sec:monads-modules} between monad op-morphisms would
amount to a morphism of right $T$-modules from $\cat{L}_S(1, F_S F)$
to $\cat{L}_S(1, F_S G)$, which corresponds to a \emph{pseudonatural}
transformation $\tilde F \tc \tilde G$, whose components are not
required to be tight.  A `free' Kleisli 2-cell would be one that fits
into a cylinder
\begin{equation*}
  \xymatrix{
    \cat{K}
    \ar@/_2ex/[d]^{}="s1" \ar@/^2ex/[d]_{}="t1"
    \ar@{}"s1";"t1"|{\Leftarrow}
    \ar[r]
    & \cat{K}_T
    \ar@/_2ex/[d]^{}="s2" \ar@/^2ex/[d]_{}="t2"
    \ar@{}"s2";"t2"|{\Leftarrow} \\
    \cat{L} \ar[r] & \cat{L}_S
  }
\end{equation*}
thus amounting to a transformation $\tilde F \to \tilde G$ with tight
components, that is however still required to be pseudonatural.

We will take the easy way out by noting that, just as 2-categories and
pseudofunctors form a strict 2-category, so do equipments and their
morphisms: in a 3-fold composite of squares \eqref{eq:25}, the 1-cells
are uniquely determined, and the equivalence filling the composite
square is determined up to unique isomorphism, as noted above.  This
then forms a category that we will call $\bEqt_1$, and the preceding
discussion supplies a functor $\mathrm{Sq} \colon \bEqt_1 \to
\bicat{FrBicat}_1$ into the category (underlying the strict
2-category) of framed bicategories and their morphisms that we have
already seen to be essentially surjective on objects.  To show that it
is surjective on morphisms, let $G \colon \mathrm{Sq}_{\cat{K}}(\bM)
\to \mathrm{Sq}_{\cat{L}}(\bN)$ be a double functor.  To show that $G$
is equal to the `conjugate' of $\mathrm{Sq}(G_{\mathrm{gl}})$ by the
relevant isomorphisms \eqref{eq:27} it suffices to show that $G$
commutes with the process of passing between squares of the following
form:
\begin{equation*}
  \xymatrix{
    \bullet \ar[r]|+^m \ar[d]_f \ar@{}[dr]|{\displaystyle\Downarrow} & \bullet
    \ar[d]^g \\
    \bullet \ar[r]|+_n & \bullet
  }
  \qquad
  \xymatrix{
    \bullet \ar[r]|+^m \ar[d]_1 \ar@{}[drr]|{\displaystyle\Downarrow} &
    \bullet \ar[r]^{g_\bullet} & \bullet \ar[d]^1 \\
    \bullet \ar[r]_{f_\bullet} & \bullet \ar[r]|+_n & \bullet
  }
\end{equation*}
But this process is given by pasting with the universal squares
\eqref{eq:26}, and these are preserved by double functors by
\cite[prop.~6.4]{shulman08:_framed_bicat_and_monoid_fibrat}.  As for
injectivity, given two functors $F,G \colon (\cat{K},\bM) \to
(\cat{L},\bN)$ in $\bEqt_1$, the definition of $\mathrm{Sq}(F)$ and
$\mathrm{Sq}(G)$ uses all of the structure of the two, namely their
action on objects, morphisms and 2-cells and their functoriality
constraints, and if they differ in any of these then so will their
images under $\mathrm{Sq}(-)$.  So this functor is an equivalence.  We
can now simply define an equipment 2-cell to be a double
transformation between the appropriate double functors.

\section{Equipments and fibrations}
\label{sec:equipm-fibr}

In this section we define cartesian equipments
(section~\ref{sec:cartesian-equipments}), and show that the 2-category
of them receives an `equipment-of-matrices' functor from that of
regular fibrations that moreover is fully faithful
(section~\ref{sec:comp-with-regul}).  In that same section we give
axioms on a cartesian equipment the ensures it is in the image of this
functor.  Section~\ref{sec:kleisli-compl-compr} then shows that a
regular fibration has comprehension in the sense of def.~\ref{dfn:22}
if and only if the corresponding equipment has \emph{tabulations} in a
sense that we will define, and that this holds if and only if every
co-monad in the equipment has an Eilenberg--Moore object.

Henceforth we will use the term `equipment' to mean a map-equipment
over a locally discrete 2-category.  It follows from the discussion of
the previous section that equipments and equipment functors and
transformations between them form a 2-category $\bEqt$ equivalent to
Shulman's strict 2-category of framed bicategories
\cite[prop.~6.8]{shulman08:_framed_bicat_and_monoid_fibrat}.  It
carries a monoidal structure given by the cartesian product of
equipments.

\begin{dfn}
  \label{dfn:23}
  A cell in a map-equipment $\bK \to \bM$
  \begin{equation*}
    \xymatrix{
      X \ar[r]|+ \ar[d] \ar@{}[dr]|{\displaystyle\Downarrow} & X' \ar[d] \\
      Y \ar[r]|+ & Y'
    }
  \end{equation*}
  is \emph{exact} if its mate in $\bM$ is invertible.
\end{dfn}

A commuting square of tight maps gives rise to two distinct vertically
invertible cells, so that there are two senses in which it can be said
to be exact.

\subsection{Cartesian equipments}
\label{sec:cartesian-equipments}

\begin{dfn}
  Let $\bM$ be a cartesian monoidal 2-category.  A \emph{cartesian
    object} in $\bM$ is a pseudomonoid $M$ in $\bM$ whose
  multiplication map is right adjoint to the diagonal at $M$, and
  whose unit map is right adjoint to the map to the terminal object:
  \begin{equation*}
    \xymatrix{M \times M \ar@/_1em/[r]_{\otimes} \ar@{}[r]|{\bot} & M
      \ar@/_1em/[l]_{d_M} }
    \qquad
    \xymatrix{M \ar@/_1em/[r]_{!} \ar@{}[r]|{\top} & \mathbf{1}
      \ar@/_1em/[l]_{I} }
  \end{equation*}
  Clearly, such objects form a full sub-2-category of
  $\operatorname{PsMon}(\bM)$ (def.~\ref{dfn:29}).

  A \emph{cartesian equipment} is a cartesian object in $\bEqt$.  The
  full sub-2-category of the latter on the former will be called
  $\bicat{CartEqt}$.
\end{dfn}

To give a right adjoint $G \colon (\cat{L},\cat{N}) \to
(\cat{K},\cat{M})$ to a morphism $F$ of framed bicategories is, by
(the dual of)
\cite[prop.~8.4]{shulman08:_framed_bicat_and_monoid_fibrat}, to give
the following:
\begin{enumerate}
\item for each object $\ell \in \cat{L}$, a universal morphism $e_\ell
  \colon F G \ell \to \ell$;
\item for each horizontal morphism $n \colon \ell \prof \ell'$ a cell
  $\epsilon_n$
  \begin{equation*}
    \xymatrix{
      F G \ell \ar[r]|+^{F G n} \ar[d]_{e_k} \ar@{}[dr]|{\displaystyle\Downarrow} &
      F G \ell' \ar[d]^{e_{\ell'}} \\
      \ell \ar[r]|+_n & \ell'
    }
  \end{equation*}
  such that any cell as on the left below factors as on the right:
  \begin{equation*}
    \vcenter{
      \xymatrix{
        F k \ar[r]|+^{F m} \ar[d]_{f} \ar@{}[dr]|{\displaystyle\Downarrow} &
        F k' \ar[d]^{f'} \\
        \ell \ar[r]|+_n & \ell'
      }
    }
    \qquad = \qquad
    \vcenter{
      \xymatrix{
        F k \ar[r]|+^{F m} \ar[d]_{F g} \ar@{}[dr]|{\displaystyle\Downarrow} &
        F k' \ar[d]^{F g'} \\
        F G \ell \ar[r]|+^{F G n} \ar[d]_{e_k} \ar@{}[dr]|{\displaystyle\Downarrow}
        &
        F G \ell' \ar[d]^{e_{\ell'}} \\
        \ell \ar[r]|+_n & \ell' 
      }
    }
  \end{equation*}
  where the upper square on the right is the $F$-image of a unique
  square in $(\cat{K},\cat{M})$;
\item \label{item:3} such that horizontal composites of universal
  cells and identity cells on universal vertical morphisms are again
  universal.
\end{enumerate}
The first condition supplies a right adjoint $G_0$ for $F_0$, the
second a right adjoint $G_1$ for $F_1$ that makes $(G_0,G_1)$ a
morphism of spans, and the third ensures that this is a double
functor.  This shows that to give a framed bicategory
$(\cat{K},\cat{M})$ the structure of a cartesian object is to give
finite products in both $\cat{K}$ and $\cat{M}$ that are preserved by
the projections and by the composition and identity functors.

We can transfer these conditions across the equivalence $\bEqt \eqv
\bicat{FrBicat}$.  A family of universal vertical morphisms $e_\ell$
in $\mathrm{Sq}_{\cat{L}}(\bN)$ gives, trivially, a family of
universal tight morphisms in the equipment $(\cat{L},\bN)$.  Suppose
given also a family of universal cells $\epsilon_n$: for each functor
$\tilde F_{kk'} \colon \bM(k,k') \to \bN(Fk,Fk')$ define a map on
objects in the opposite direction by $G'_{kk'} n = h_{k'}^\bullet (Gn)
{h_{k}}_\bullet$, where the $h_k$ are the units of the adjunction $F_0
\dashv G_0$.  Then the mate of the universal $\eta_m$, unit of the
adjunction $F_1 \dashv G_1$, is a globular cell $m \tc G' \tilde F m$,
and it is universal from $m$ to $G'$ by the universal property of
$\eta_m$.  Hence $F$ has \emph{local} right adjoints $\tilde F_{kk'}
\dashv G'_{kk'}$ for each pair $k,k'$ of objects, the $G'$ being
functorial because $G$ is so locally.

Conversely, suppose given a morphism $(F,\tilde F) \colon
(\cat{K},\bM) \to (\cat{L},\bN)$ of equipments, together with objects
$G \ell$ and universal 1-cells $e_\ell \colon F G \ell \to \ell$ in
$\cat{L}$, and universal 2-cells $\epsilon'_n \colon \tilde F_{\ell
  \ell'} G'_{\ell \ell'} n \tc n$ supplying adjunctions $\tilde
F_{kk'} \dashv G'_{kk'}$ as above.  Clearly the universal 1-cells are
also universal vertical morphisms in $\mathrm{Sq}_{\cat{L}}(\bN)$.
Define $\tilde G n = G'_{Gk Gk'}(e_{\ell'}^\bullet n
{e_{\ell}}_\bullet)$.  Then the transpose of
$\epsilon'_{e_{\ell'}^\bullet n {e_{\ell}}_\bullet}$, as on the left
below
\begin{equation*}
  \xymatrix{
    F G \ell \ar[r]|+^{\tilde F \tilde G n} \ar[d]_{e_\ell}
    \ar@{}[dr]|{\displaystyle\Downarrow}
    & F G \ell' \ar[d]^{e_{\ell'}} \\
    \ell \ar[r]|+_n & \ell'
  }
  \qquad \qquad
  \xymatrix{
    F k \ar[r]|+^{\tilde F m} \ar[d]_{f} \ar@{}[dr]|{\displaystyle\Downarrow} &
    F k' \ar[d]^{f'} \\
    \ell \ar[r]|+_n & \ell'
  }
\end{equation*}
is universal in $\mathrm{Sq}_{\cat{L}}(\bN)$: in a cell as on the
right, the vertical maps are $f = e_\ell \hcmp Gg$ for a unique $g
\colon k \to G \ell$, and similarly for $f'$, and so the cell
corresponds to a unique 2-cell in $\bN$
\begin{equation*}
 \tilde F(g'_\bullet \hcmp m \hcmp g_\bullet)
 \overset{\sim}{\longrightarrow}
 Fg'_\bullet \hcmp \tilde Fm \hcmp Fg^\bullet \longrightarrow
 e_{\ell'}^\bullet n {e_{\ell}}_\bullet
\end{equation*}
which we can chase through the following bijections:
\begin{prooftree}
  \AXC{$\xymatrix{\tilde F(g'_\bullet \hcmp m \hcmp g_\bullet) \ar[r] &
      e_{\ell'}^\bullet n {e_{\ell}}_\bullet}$}
  \UIC{$\xymatrix{g'_\bullet \hcmp m \hcmp g_\bullet \ar[r] & \tilde G
      n}$}
  \UIC{$
    \xymatrix{
      k \ar[r]|+^{m} \ar[d]_{g}
      \ar@{}[dr]|{\displaystyle\Downarrow} &
      k' \ar[d]^{g'} \\
      G \ell \ar[r]|+_{\tilde G n} & G \ell'
    }$}
\end{prooftree}
So we get a family of universal cells in $\mathrm{Sq}_{\cat{L}}(\bN)$,
as required.  In short, a right adjoint to an equipment functor $(F,
\tilde F)$ is a right adjoint $G$ of $F$ together with local right
adjoints $\tilde F_{kk'} \dashv G'_{kk'}$ for $\tilde F$, such that
the resulting $\tilde G$ is functorial
(cf.~\cite[theorem~3.19]{carboni98:change_base_geom_ii}).

The above gives rise straightforwardly to a description of cartesian
objects in $\bEqt$ that extends the characterizations of cartesian
bicategories in \cite[thm.~1.6]{carboni87:_cartes_bicat_i} and
\cite[def.~4.1,~prop.~4.2]{carboni08:_cartes_ii}.

\begin{prp}
  \label{prp:18}
  To give cartesian structure on an object $\cat{K} \to \bM$ of
  $\bEqt$ is to give either:
  \begin{enumerate}
  \item \label{item:4} equipment morphisms as follows, that give
    $\cat{K}$ finite products:
    \begin{equation*}
      \xymatrix{
        \mathbf{1} \ar[d] \ar[r]^{\mathbf{1}} & \cat{K} \ar[d] & \cat{K}
        \times \cat{K} \ar[l]_{\times} \ar[d] \\
        \mathbf{1} \ar[r]_{\mathbf{1}} & \bM & \bM \times \bM
        \ar[l]^{\otimes}
      }
    \end{equation*}
    such that $m \wedge m' = d^\bullet(m \otimes m')d_\bullet$ and
    $\top = e^\bullet 1_{\mathbf{1}} e_\bullet$ provide finite
    products in the hom-categories of $\bM$; or
  \item finite products in $\cat{K}$ and the hom-categories of $\bM$,
    such that $(1_{\mathbf{1}})_\bullet \iso
    \top_{\mathbf{1},\mathbf{1}}$ and
    \begin{align*}
      m \otimes m' & = p^\bullet m p_\bullet \wedge q^\bullet m'
      q_\bullet
    \end{align*}
    is functorial, or equivalently such that the universal cells in
    $\mathrm{Sq}_{\cat{K}}(\bM)$ derived as above from the local
    products in $\bM$ satisfy the coherence conditions of
    (\ref{item:3}) above.
  \end{enumerate}
  Of course, the corresponding framed bicategory then has products as
  described above; in particular, $\mathrm{Sq}(\otimes)$ is the
  cartesian product on $\mathrm{Sq}_{\cat{K}}(\bM)$.
\end{prp}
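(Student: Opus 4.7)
The plan is to apply the characterization of right adjoints in $\bEqt$ just established to the diagonal equipment morphism $d \colon \cat{K} \to \cat{K} \times \cat{K}$ and the terminal morphism $! \colon \cat{K} \to \mathbf{1}$. By definition, a cartesian structure on $\cat{K} \to \bM$ is a pseudomonoid $(I, \otimes)$ in $\bEqt$ in which $d \dashv \otimes$ and $! \dashv I$, so everything reduces to unpacking these two adjunctions.

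Unpacking $d \dashv \otimes$ via the preceding characterization yields three pieces of data: (a) a right adjoint of the tight part of $d$, i.e.\ finite binary products in $\cat{K}$ with $\otimes$ as product functor and counit the projection pair $e = (p, q)$; (b) for each pair $k, k'$, a local right adjoint $\wedge \colon \bM(k,k') \times \bM(k,k') \to \bM(k,k')$ to the diagonal on each hom-category, i.e.\ binary products in each hom-category of $\bM$; and (c) the coherence condition that horizontal composites of universal cells and identity cells on universal tight maps are again universal. The general formula $\tilde G n = G'(e^\bullet n e_\bullet)$ derived in the preceding discussion gives the action of $\otimes$ on hom-categories as $\tilde\otimes(m, m') = p^\bullet m p_\bullet \wedge q^\bullet m' q_\bullet$; dually, taking the mate of the local universal cell against the companion--conjoint structure of $d$ produces the formula $m \wedge m' = d^\bullet(m \otimes m') d_\bullet$. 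The same unpacking applied to $! \dashv I$ yields the terminal object of $\cat{K}$, terminal objects $\top_{k,k'}$ in each hom-category, the isomorphism $(1_\mathbf{1})_\bullet \iso \top_{\mathbf{1},\mathbf{1}}$, and the formula $\top_{k,k'} = e^\bullet 1_\mathbf{1} e_\bullet$.

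Descriptions (1) and (2) are now two bookkeepings of this same data. Description (1) groups (a) as the equipment morphisms $I$ and $\otimes$ giving $\cat{K}$ finite products, and then presents the local products in $\bM$ via the derived formulas $m \wedge m' = d^\bullet (m \otimes m') d_\bullet$ and $\top = e^\bullet 1_\mathbf{1} e_\bullet$. Description (2) instead groups (a) and (b) as ambient finite products in $\cat{K}$ and in each hom-category of $\bM$, and encodes the coherence (c) as functoriality of the formula $m \otimes m' = p^\bullet m p_\bullet \wedge q^\bullet m' q_\bullet$. The main obstacle is verifying that this functoriality really is equivalent to (c): one must express the local product cells in $\bM$ as cells in $\mathrm{Sq}_{\cat{K}}(\bM)$ via the companion--conjoint structure \eqref{eq:26} of the projections $p$ and $q$, and then check that universality of horizontal composites of such cells is precisely the assertion that the binary operation on hom-categories assembled from the local $\wedge$s and from transposition across $p_\bullet \dashv p^\bullet$ and $q_\bullet \dashv q^\bullet$ is functorial. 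This is tedious but routine diagram-chasing with the units and counits of these adjunctions together with those of the local product adjunctions; once it is done, the coherence of the derived pseudomonoid structure $\otimes$ and its product axioms in $\cat{K}$ follow automatically from the uniqueness clauses of the adjoint characterization.
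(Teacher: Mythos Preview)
Your proposal is correct and is essentially the same as the paper's approach: the paper does not give a separate proof of this proposition but simply remarks that ``the above gives rise straightforwardly to a description of cartesian objects in $\bEqt$'', where ``the above'' is precisely the characterization of right adjoints in $\bEqt$ (tight right adjoint plus local right adjoints plus functoriality of the induced $\tilde G$) that you are applying to $d$ and $!$. Your write-up is an honest expansion of that one-line justification, and the identification of the coherence condition~(\ref{item:3}) with functoriality of $\otimes$ is exactly the point the paper's preceding paragraph makes.
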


By comparing this with prop.~\ref{prp:15} and remark~\ref{rem:10} we
can immediately conclude the following.
\begin{prp}
  \label{prp:10}
  A chordate cartesian equipment is the same thing as a cartesian
  bicategory.
\end{prp}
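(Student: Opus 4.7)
The plan is to deduce the proposition directly from propositions \ref{prp:15} and \ref{prp:18} by matching the two characterizations term-for-term, using the defining feature of chordate equipments.

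First I would note that in a chordate equipment $\cat{K} \to \bM$ the chordate condition identifies the tight sub-2-category $\cat{K}$ with $\Map(\bM)$, and the functor with the canonical inclusion; so a chordate equipment is no more and no less than a 2-category $\bM$ with `enough right adjoints', and a morphism of chordate equipments is the same as a map-preserving functor between such. Under this identification, a cartesian-object structure in $\bEqt$ is, by proposition \ref{prp:18}(1), exactly the data of equipment morphisms
\begin{equation*}
  \otimes \colon \bM \times \bM \to \bM, \qquad I \colon \mathbf{1} \to \bM
\end{equation*}
whose tight parts equip $\Map(\bM)$ with finite products, together with the requirement that
\begin{equation*}
  m \wedge m' = d^\bullet (m \otimes m') d_\bullet, \qquad
  \top = e^\bullet\, 1_{\mathbf{1}}\, e_\bullet
\end{equation*}
give finite products in the hom-categories of $\bM$; as part of being a pseudomonoid in $\bEqt$, the tensor $\otimes$ is moreover (pseudo)functorial. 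But these are precisely conditions (1)--(3) of proposition \ref{prp:15}, and via remark \ref{rem:10} this is exactly the data of a cartesian bicategory in the sense appropriate to prop.~\ref{prp:15}.

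Conversely, given a cartesian bicategory $\bicat{C}$, the inclusion $\Map(\bicat{C}) \to \bicat{C}$ is an identity-on-objects, locally fully faithful 2-functor whose image consists of left adjoints, hence an equipment in the sense of definition \ref{dfn:20}; it is chordate essentially by the definition of $\Map$. The cartesian structure on $\bicat{C}$ supplied by proposition \ref{prp:15} then satisfies the conditions of proposition \ref{prp:18}(1), with the local-product formulas holding by remark \ref{rem:10}, and so exhibits $\Map(\bicat{C}) \to \bicat{C}$ as a cartesian object in $\bEqt$.

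The two passages are visibly mutually inverse up to equivalence, since both preserve the underlying 2-category $\bM$ along with the tensor, unit and comonoid data. I do not anticipate any genuine obstacle: the only thing to check is the routine match between pseudomonoid axioms in $\bEqt$ and the functoriality/coherence data on $\otimes$ built into a cartesian bicategory, and the corresponding match on morphisms (monoidal equipment functors versus cartesian functors). Both have effectively been handled already by the characterizations of propositions \ref{prp:15} and \ref{prp:18}, so the proof reduces to citing them.
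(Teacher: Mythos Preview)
Your proposal is correct and follows exactly the paper's own approach: the paper states that the result follows immediately by comparing prop.~\ref{prp:18} with prop.~\ref{prp:15} and remark~\ref{rem:10}, and you have simply spelled out that comparison, using the chordate condition to identify $\cat{K}$ with $\Map(\bM)$ so that the characterizations line up term-for-term.
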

This will enable us to make use, in what follows, of results from
e.g.~\cite{walters08:_froben} and \cite{lack10:_bicat} that are proved
there for cartesian bicategories, as long as we are careful to
distinguish between maps and tight maps.

Clearly, a monoid morphism between cartesian framed bicategories is a
double functor whose components are (strong) monoidal, and likewise a
monoid 2-cell is a pair of monoidal transformations that underlie a
double transformation, meaning that their components commute with the
monoidal constraints of the functors involved.

An equipment morphism that preserves the products $\times$ in the base
category of its domain preserves the global tensor $\otimes$ if and
only if it preserves the local products $\wedge$, and this is what a
monoid morphism in $\bEqt$ between cartesian objects amounts to.  On
the other hand, even though a `local' description of monoid 2-cells
could probably be derived, we will have no use for one, and will stick
with the `global' description provided by the framed-bicategory
perspective.

\subsection{Comparison with regular fibrations}
\label{sec:comp-with-regul}

The following proposition constructs from a regular fibration $\E$ a
cartesian equipment of `matrices' in $\E$.  The next result then shows
that this construction is part of a fully-faithful functor $\Matr(-)
\colon \bicat{RegFib} \to \bicat{CartEqt}$.  The image
$\bicat{RegEqt}$ of this functor is then characterized, so that we get
an equivalence between $\bicat{RegFib}$ and $\bicat{RegEqt}$.

\begin{prp}
  \label{prp:20}
  If $\E \to \B$ is a regular fibration, then there is a cartesian
  equipment $\Matr(\E)$, with objects and tight maps the objects and
  morphisms of $\B$, and hom categories $\Matr(\E)(X,Y)$ the fibres
  $\E(X \times Y)$.
\end{prp}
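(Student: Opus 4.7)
The plan is to model $\Matr(\E)$ on the usual bicategory-of-relations construction, with the fibres $\E(X \times Y)$ replacing the subobject posets that appear in the case of $\Rel(\C)$ for a regular $\C$. Concretely: objects and tight maps are those of $\B$; the hom category from $X$ to $Y$ is $\E(X \times Y)$, with 2-cells the vertical morphisms of $\E$; the composite of $R \colon X \prof Y$ and $S \colon Y \prof Z$ is
\[
  S \hcmp R := q_!\bigl(p_{XY}^* R \cap p_{YZ}^* S\bigr) \in \E(X \times Z),
\]
where $p_{XY}$, $p_{YZ}$ and $q$ are the projections out of $X \times Y \times Z$; and the identity $1_X \colon X \prof X$ is the equality predicate $d_! \top_X \in \E(X \times X)$.

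First I would verify associativity and the unit laws for this composition. Reading composition in the internal language as $(S \hcmp R)(x, z) \simeq \exists y.\, R(x,y) \wedge S(y,z)$, associativity reduces to producing a coherent isomorphism
\[
  \exists z. \bigl( \exists y. R(x,y) \wedge S(y,z) \bigr) \wedge T(z,w) \;\iso\; \exists y, z.\, R(x,y) \wedge S(y,z) \wedge T(z,w),
\]
which is precisely one application of Frobenius reciprocity together with Beck--Chevalley for a product-absolute pullback assembled from squares of types (C) and (D) in def.~\ref{dfn:11}. The unit laws are handled identically to the proof of lemma~\ref{lem:5}, using Beck--Chevalley for squares of types (A) and (B). In short, the locally-ordered calculation of lemma~\ref{lem:5} goes through essentially verbatim at the level of actual isomorphisms rather than inequalities, and coherence follows from uniqueness of adjoint transposes.

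Next I would construct the identity-on-objects graph functor $(-)_\bullet \colon \B \to \bM$ sending a tight map $f \colon X \to Y$ to $f_\bullet := \tup{1, f}_! \top_X \in \E(X \times Y)$, with conjoint $f^\bullet := \tup{f, 1}_! \top_X \in \E(Y \times X)$. Functoriality $g_\bullet \hcmp f_\bullet \iso (g f)_\bullet$ is another Frobenius/Beck--Chevalley calculation, and the adjunction $f_\bullet \dashv f^\bullet$ is constructed by pasting Beck--Chevalley isomorphisms for suitable product-absolute pullbacks with the canonical maps $\top_X \to f^* \top_Y$ and $f_! \top_X \to \top_Y$, the unit in $\E(X \times X)$ being built from square (B) for the diagonal. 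This exhibits $\Matr(\E)$ as a map-equipment over $\B$, hence as an object of $\bEqt$.

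Finally, I would verify cartesian structure by appealing to prop.~\ref{prp:18}. The base $\B$ has finite products by hypothesis, the hom categories $\E(X \times Y)$ inherit finite products $(\cap, \top)$ fibrewise from $\E$, and defining
\[
  R \otimes R' := p^* R \cap q^* R' \in \E\bigl((X \times X') \times (Y \times Y')\bigr)
\]
for $R \colon X \prof Y$ and $R' \colon X' \prof Y'$, the functoriality of $\otimes$ with respect to composition on each factor reduces to yet another Frobenius/Beck--Chevalley computation. A direct internal-language calculation yields the required $d^\bullet(R \otimes R')d_\bullet \iso R \cap R'$ and $e^\bullet 1_{\term} e_\bullet \iso \top$, and these assemble into the lifts of the product and terminal functors on $\B$ to equipment morphisms. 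The main obstacle throughout is organising the nested Frobenius/Beck--Chevalley arguments coherently, but there is no hidden content: every step is a direct consequence of the regular-fibration axioms.
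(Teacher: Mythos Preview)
Your approach is correct and essentially matches the paper's: relational composition for $\hcmp$, the equality predicate for identities, and prop.~\ref{prp:18} for the cartesian structure via the computations $d^\bullet(R \otimes R')d_\bullet \iso R \cap R'$ and $e^\bullet 1_{\mathbf{1}} e_\bullet \iso \top$. The one difference worth noting is that the paper does not verify the bicategory and equipment axioms (associativity, unit laws, companions/conjoints, coherence) by hand as you propose; instead it invokes \cite[theorem~14.2]{shulman08:_framed_bicat_and_monoid_fibrat}, which already packages all of that into the statement that a symmetric monoidal bifibration over a cartesian base yields a symmetric monoidal framed bicategory. Your direct route is perfectly fine and more self-contained, but the citation saves the work you describe as ``organising the nested Frobenius/Beck--Chevalley arguments coherently.'' (Incidentally, the paper takes $f_\bullet = (f \times 1)^* 1_{X'}$ rather than your $\langle 1,f\rangle_! \top_X$, but these are isomorphic by Beck--Chevalley for a type-(A) square.)
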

\begin{proof}
  A regular fibration $\E$ over $\B$ is, in particular, a symmetric
  monoidal bifibration with cartesian base, and
  \cite[theorem~14.2]{shulman08:_framed_bicat_and_monoid_fibrat} shows
  that the $\Matr(-)$ construction applied to one such yields a
  symmetric monoidal framed bicategory, which is a symmetric monoid in
  $\bEqt$.\footnote{That result is stated for fibrations satisfying
    Beck--Chevalley for either all pullback squares or for a
    restricted class as long as the fibration satisfies Frobenius, but
    inspection of the proof shows that the conditions are only applied
    for product-absolute pullback squares.}  For reference, here are
  the essential details: composites are given by `relational
  composition':
  \begin{equation*}
    S \hcmp R
    = {p_Y}_! (p_Z^* R \cap p_X^* S)
    = \den{\exists \upsilon . R(x,\upsilon) \wedge S(\upsilon,z)}
  \end{equation*}
  and identities by `identity relations':
  \begin{equation*}
    1_X = d_! \top_X = \den{\exists \xi . (x,x')
      = (\xi, \xi)} = \den{x = x'}
  \end{equation*}
  while a morphism $f \colon X \to X'$ of $\B$ becomes a tight map
  like so:
  \begin{align*}
    f_\bullet & = (f \times 1)^* 1_{X'} \\
    f^\bullet & = (1 \times f)^* 1_{X'}
  \end{align*}
  For a cell in $\Matr(\E)$ of the form
  \begin{equation*}
    \xymatrix{
      X \ar[r]|+^R \ar[d]_f \ar@{}[dr]|{\displaystyle\Downarrow} & Y \ar[d]^g \\
      X' \ar[r]|+_S & Y'
    }
  \end{equation*}
  we have by \cite[(10,
  11)]{shulman08:_framed_bicat_and_monoid_fibrat} that
  \begin{align*}
    g_\bullet R & \iso (1 \times g)_! R \\
    S f_\bullet & \iso (f \times 1)^* S
  \end{align*}
  and hence a morphism $g_\bullet R \to S f_\bullet$ like that above
  has mates
  \begin{prooftree}
    \AXC{$\xymatrix{R \ar[r] & (f \times g)^* S \iso g^\bullet S
        f_\bullet }$}
    \UIC{$\xymatrix{g_\bullet R \iso (1 \times g)_! R \ar[r] & (f
        \times 1)^* S \iso S f_\bullet}$}
    \UIC{$\xymatrix{g_\bullet R f^\bullet \iso (f \times g)_! R \ar[r]
      & S}$}
  \end{prooftree}

  The category $\cat{B}$ we already know to be cartesian, and the
  tensor product on $\Matr(\E)$ is defined as
  \begin{equation*}
    R \otimes R' = p_{X'Y'}^* R \cap p_{XY}^* R'
    \colon X \times X' \rel Y \times Y'
  \end{equation*}
  which implies that 
  \begin{equation*}
    d^\bullet (R \otimes R') d_\bullet \iso (d_X \times
    d_Y)^*(p_{X'Y'}^* R \cap p_{XY}^* R') \iso R \cap R'
  \end{equation*}
  and so we have local binary products.  Because $d_{\mathbf{1}}$ is
  an isomorphism, the associated pull--push adjunction is an
  equivalence, and so
  \begin{equation*}
    e^\bullet (1_{\mathbf{1}}) e_\bullet
    \iso (e_X \times e_Y)^*d_!\top_{\mathbf{1}}
    \iso (e_X \times e_Y)^*\top_{\mathbf{1} \times \mathbf{1}}
    \iso \top_{X \times Y}
  \end{equation*}
  Hence in fact $\Matr(\E)$ has local finite products given by the
  formulas in (\ref{item:4}) of prop.~\ref{prp:18}, and so it is a
  cartesian equipment.
\end{proof}

Proving the following is a simple matter of unwinding definitions.
\begin{cor}
  A commuting square in the base of a regular fibration $\E$ satisfies
  the Beck--Chevalley condition (def.~\ref{dfn:4}) if and only if it
  is exact (def.~\ref{dfn:23}) in both senses in $\Matr(\E)$.
\end{cor}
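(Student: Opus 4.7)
The plan is to translate both sides of the claimed equivalence into the same pair of 2-cell conditions inside $\E$, and observe that they then coincide by construction.

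Fix a commuting square $tu = sv$ in $\B$. Definition~\ref{dfn:4} unpacks to the requirement that the two mates
\begin{equation*}
  u_! v^* \longrightarrow t^* s_!
  \qquad\text{and}\qquad
  v_! u^* \longrightarrow s^* t_!
\end{equation*}
of the canonical isomorphism $v^* s^* \iso u^* t^*$ be invertible in $\Cat$. On the equipment side, the same square produces two globular cells in $\Matr(\E)$, namely
\begin{equation*}
  \xymatrix{
    P \ar[r]|+^{u_\bullet} \ar[d]_v \ar@{}[dr]|{\Downarrow} & X \ar[d]^t \\
    X' \ar[r]|+_{s_\bullet} & Y
  }
  \qquad\text{and}\qquad
  \xymatrix{
    P \ar[r]|+^{v_\bullet} \ar[d]_u \ar@{}[dr]|{\Downarrow} & X' \ar[d]^s \\
    X \ar[r]|+_{t_\bullet} & Y
  }
\end{equation*}
whose underlying 2-cells in $\bM = \Matr(\E)$ are the canonical isomorphisms $t_\bullet u_\bullet \iso (tu)_\bullet = (sv)_\bullet \iso s_\bullet v_\bullet$ coming from $tu = sv$ and the pseudofunctoriality of $(-)_\bullet$. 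Exactness in the two senses, per Definition~\ref{dfn:23}, means that the mates $u_\bullet v^\bullet \to t^\bullet s_\bullet$ and $v_\bullet u^\bullet \to s^\bullet t_\bullet$ in $\bM$ are invertible.

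The key observation is that, under the identification of profunctors $X \prof Y$ with cocontinuous functors between fibres established in the proof of Proposition~\ref{prp:20}, the equipment adjunctions $f_\bullet \dashv f^\bullet$ correspond precisely to the fibrational adjunctions $f_! \dashv f^*$, and horizontal composition in $\Matr(\E)$ corresponds to composition of the associated functors. Consequently the mate construction in $\Matr(\E)$ is sent to the mate construction in $\E$. Applied termwise to the two cells above, this turns the equipment mates into the two fibrational mates displayed at the start; invertibility is preserved and reflected because the correspondence is an equivalence, so the two conditions match up in pairs.

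The only computation to do is the routine verification that $u_\bullet v^\bullet$ and $t^\bullet s_\bullet$, as objects of $\E(X \times X')$, are indeed the fibre-theoretic pull--pushes $v^* u^\bullet$ and $t^\bullet s_!$ of the equality predicate, and that the canonical 2-cell between them is the mate of $v^* s^* \iso u^* t^*$. This uses the composition formula ${p_Y}_!(p_Z^* R \cap p_X^* S)$, Frobenius reciprocity, and the Beck--Chevalley conditions for the product-absolute pullbacks of Definition~\ref{dfn:11} --- exactly the tools already invoked in Proposition~\ref{prp:20}. No conceptual obstacle arises; the task is purely bookkeeping, which is why the result is advertised as a matter of unwinding the definitions.
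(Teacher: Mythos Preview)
Your approach is correct and matches the paper's own, which simply records that the result follows by unwinding definitions. One phrasing to tighten: Proposition~\ref{prp:20} does not identify profunctors with ``cocontinuous functors between fibres''; what it actually supplies are the isomorphisms $g_\bullet R \iso (1\times g)_! R$, $S f_\bullet \iso (f\times 1)^* S$ and their adjoints, which show that left/right composition with $f_\bullet$ and $f^\bullet$ in $\Matr(\E)$ is exactly pull/push along $1\times f$ or $f\times 1$ in $\E$ --- precisely the correspondence you invoke correctly in your final paragraph.
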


\begin{thm}
  The $\Matr(-)$ construction of prop.~\ref{prp:20} extends to a fully
  faithful functor $\bicat{RegFib} \to \bicat{CartEqt}$.
\end{thm}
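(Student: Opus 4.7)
The plan is to extend $\Matr$ to 1- and 2-cells and then verify essential surjectivity on 1-cells and local full faithfulness on 2-cells. For a morphism $(F,\phi) \colon (\B_1,\E_1) \to (\B_2,\E_2)$ of regular fibrations, take $\Matr(F,\phi)$ to have tight part $F$ and loose part the composite
$$\Matr(\E_1)(X,Y) = \E_1(X \times Y) \xrightarrow{\phi_{X\times Y}} \E_2(F(X \times Y)) \xrightarrow{\sim} \E_2(FX \times FY) = \Matr(\E_2)(FX,FY),$$
using that $F$ preserves products. Preservation of horizontal composites $S \hcmp R \iso {p_Y}_!(p_Z^* R \cap p_X^* S)$, of identities $1_X \iso d_!\top_X$, of tight companions $f_\bullet \iso (f \times 1)^* 1_{X'}$, of the global tensor $R \otimes R' \iso p_{X'Y'}^* R \cap p_{XY}^* R'$ and of local products all reduce to $\phi$'s preservation of fibred finite products and cartesian/opcartesian lifts, so by prop.~\ref{prp:18} $\Matr(F,\phi)$ is a cartesian-equipment morphism. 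A 2-cell $\alpha \colon (F,\phi) \tc (G,\gamma)$ in $\bicat{RegFib}$ is by def.~\ref{dfn:28} a natural transformation $\alpha \colon F \tc G$ with $\gamma_X = \alpha_X^* \cdot \phi_X$; I define $\Matr(\alpha)$, via the equivalence $\bEqt \eqv \bicat{FrBicat}$, as the double transformation whose vertical components are the $\alpha_X$ and whose globular cell at $R \in \E_1(X \times Y)$ is the mate of the canonical isomorphism $\gamma_{X\times Y}(R) \iso (\alpha_X \times \alpha_Y)^* \phi_{X \times Y}(R)$.

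Local full faithfulness is then essentially formal: any double transformation $\Matr(F,\phi) \to \Matr(G,\gamma)$ must assign a tight map $\alpha_X \colon FX \to GX$ to each object, and these form a natural transformation $F \tc G$ by compatibility with horizontal composition of tight companions $f_\bullet$. Its globular cell at a loose $R$ is then forced by the mate correspondence to be the one described above, and the existence of these cells as $R$ varies is exactly the def.~\ref{dfn:28} compatibility condition. For essential surjectivity on 1-cells, the crucial observation is the natural isomorphism
$$\E(X) \;\iso\; \E(\mathbf{1} \times X) \;=\; \Matr(\E)(\mathbf{1}, X),$$
so the fibre categories of $\E$ are recoverable from $\Matr(\E)$ as distinguished hom-categories. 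Given $H \colon \Matr(\E_1) \to \Matr(\E_2)$ in $\bicat{CartEqt}$, let $F$ be its tight part (product-preserving because $H$ is cartesian) and define $\phi_X \colon \E_1(X) \to \E_2(FX)$ via the above isomorphism and $H$'s loose action. Naturality in tight morphisms follows from the double functoriality of $H$, preservation of fibred products from $H$'s preservation of local $\wedge$ and $\top$, preservation of the bifibration structure from $H$'s preservation of companions $f_\bullet$ together with their left adjoints $f^\bullet$, and Frobenius and Beck--Chevalley survive because both are expressible in the cartesian-equipment language that $H$ respects.

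The main obstacle is verifying that this extracted $(F,\phi)$ really does reproduce $H$ on all loose homs $\Matr(\E)(X,Y) = \E(X \times Y)$, not only on the distinguished ones $\Matr(\E)(\mathbf{1}, X)$. The bridge is the natural bijection $\Matr(\E)(X,Y) \iso \Matr(\E)(\mathbf{1}, X \times Y)$ induced by the tight projections of $X \times Y$ in $\B$ together with their companions and conjoints in the equipment; since $H$ preserves all of this cartesian-equipment structure coherently, one obtains a canonical isomorphism $\Matr(F,\phi) \iso H$ in $\bicat{CartEqt}$, completing essential surjectivity.
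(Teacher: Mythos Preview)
Your approach is essentially the same as the paper's: both define $\Matr$ on 1-cells via $\phi_{X\times Y}$, recover the fibration morphism from a cartesian-equipment morphism by reading off the distinguished hom-categories $\Matr(\E)(-,\mathbf{1}) \iso \E(-)$ (you use $\Matr(\E)(\mathbf{1},-)$, which works equally well), and treat 2-cells through the framed-bicategory/double-transformation description. One small slip: $f^\bullet$ is the \emph{right} adjoint (conjoint) of $f_\bullet$, not the left; and the bridging bijection $\Matr(\E)(X,Y)\iso\Matr(\E)(\mathbf{1},X\times Y)$ is really built from the diagonal and terminal maps (the compact-closed/Frobenius structure that $\Matr(\E)$ carries) rather than the product projections as such, though this does not affect the argument.
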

\begin{proof}
  Functoriality follows from
  \cite[theorem~14.9]{shulman08:_framed_bicat_and_monoid_fibrat}.  In
  brief, a morphism $(F, \phi) \colon (\B,\E) \to (\B',\E')$ of
  regular fibrations preserves all of the structure used to define
  $\Matr(\E)$, so that $F$, together with the functors
  \begin{equation*}
    \tilde F_{XY} \colon
    \xymatrix{ \E(X \times Y) \ar[r]^-{\phi_{X \times Y}} & \E'(F(X
      \times Y)) \ar[r]^\sim & \E'(F X \times F Y)}
  \end{equation*}
  which sends $m \colon X \prof Y$ to the pushforward of $\phi m$
  along the relevant coherence map of $F$, gives rise to a functor
  $\tilde F \colon \Matr(\E) \to \Matr(\E')$, with local monoidal
  constraints obtained similarly by pushforward.  The functoriality
  isomorphisms of $\Matr(-)$ come from the pseudonaturality cells of
  the $\phi, \gamma$, etc., and the former are coherent because the
  latter are.

  Conversely, if $(F, \tilde F) \colon \Matr(\E) \to \Matr(\E')$ is a
  map of cartesian equipments, then $F \colon \cat{B} \to \cat{B'}$
  preserves products, and the required transformation $\phi$ from $\E
  \tc F^* \E'$ is given by
  \begin{equation*}
    \phi_X =
    \xymatrix{
      \E X \iso \Matr(\E)(X,\mathbf{1})
      \ar[r]^-{\tilde F_{X,\mathbf{1}}}
      & \Matr(\E')(F X, F \mathbf{1}) \iso \E'(F X)
    }
  \end{equation*}
  which is natural in $X$ because its components are, and it preserves
  products because $\tilde F$ does so locally.  Naturality of $\tilde
  F_{X(Y\times \mathbf{1})}$ with respect to $Y \times \mathbf{1} \iso
  Y$ shows that $\Matr(-)$ applied to this gives an equipment morphism
  isomorphic to $(F, \tilde F)$, so that $\Matr(-)$ is essentially
  surjective on morphisms.

  Thinking of a transformation $(F,\phi) \tc (G,\gamma)$ as a
  `cylinder' $(\alpha, \bar \alpha)$, where $\alpha \colon F \tc G$
  and $\bar \alpha \colon \bar F \tc \bar G$, $\bar F$ and $\bar G$
  being the functors between total categories corresponding to $\phi$
  and $\gamma$, we get for each $m \colon X \prof Y$ a morphism $\bar
  \alpha_m \colon \bar F m \to \bar G m$ over $\alpha_{X \times Y}$,
  and composing this with the (op)cartesian morphisms indicated we get
  \begin{equation*}
    \tilde F m \iso \bar F m \overset{\bar\alpha_m}{\longrightarrow}
    \bar G m \iso \tilde G m
  \end{equation*}
  over
  \begin{equation*}
    FX \times FY \iso F(X \times Y) \overset{\alpha_{X\times
        Y}}{\longrightarrow} G(X \times Y) \iso GX \times GY
  \end{equation*}
  The latter is $\alpha_X \times \alpha_Y$, so the former corresponds
  to a unique cell
  \begin{equation*}
    \xymatrix{
      F X \ar[r]|+^{\tilde F m} \ar[d]_{\alpha_X}
      \ar@{}|{\displaystyle \Downarrow \mathrlap{\tilde \alpha_m}}[dr]
      &
      F Y \ar[d]^{\alpha_Y} \\
      G X \ar[r]|+_{\tilde G m} & G Y
    }
  \end{equation*}
  This assignment is natural in $m \in \E(X \times Y)$ because $\bar
  \alpha$ is.  It also respects horizontal composition and identities,
  because $\bar \alpha$ is a monoidal transformation, and hence
  commutes with the monoidal constraints of $\bar F$ and $\bar G$,
  therefore with those of $\tilde F$ and $\tilde G$.  It respects
  cartesian products in the category of cells for the same reason.
  The map $\bar\alpha \mapsto \tilde\alpha$ is itself clearly
  functorial.

  Conversely, let $(\beta, \tilde \beta) \colon \Matr(F,\bar F) \tc
  \Matr(G,\bar G)$ be a monoidal equipment-transformation.  An object
  $n$ over $X$ in the domain $\E$ of $\bar F$ and $\bar G$ gives a
  morphism $r^{-1}_!n \colon X \prof \mathbf{1}$ in $\Matr(\E)$, where
  $r \colon X \times \mathbf{1} \iso X$, and this in turn gives
  $\tilde \beta_{r^{-1}_! n} \colon \tilde F(r^{-1}_! n) \to \tilde
  G(r^{-1}_!  n)$ over $\beta_X \times \beta_{\mathbf{1}}$.  By the
  above, this is the composite with the evident isomorphisms of some
  morphism $\bar F(r^{-1}_! n) \to \bar G(r^{-1}_!  n)$ over $\beta_{X
    \times \mathbf{1}}$, and composing this with the $\bar F$- and
  $\bar G$-images of the isomorphism $n \iso r^{-1}_! n$ gives a
  morphism $\bar \beta_n \colon \bar F n \to \bar G n$ over $\beta_X$,
  which is natural in $n$ and monoidal because $\tilde \beta$ is.  It
  is easy to see then that $\Matr(\bar \beta) = \tilde \beta$, by
  cancelling inverses and using naturality, and the same in the other
  direction.  So $\Matr(-)$ is locally fully faithful, hence locally
  an equivalence, hence fully faithful as a 2-functor.
\end{proof}

\begin{dfn}[{cf.~def.~\ref{dfn:11}}]
  An object $A$ in a cartesian equipment is \emph{separable}
  \cite[def.~3.2]{lack10:_bicat} if the pullback square that expresses
  the monicity of $d_A$:
  \begin{equation}
    \label{eq:6}
    \xymatrix{
      A \ar[r] \ar[d] & A \ar[d]^{d_A} \\
      A \ar[r]_{d_A} & A \times A
    }
  \end{equation}
  is exact.  The object $A$ is \emph{Frobenius} if the coassociativity
  square
  \begin{equation}
    \label{eq:13}
    \xymatrix{
      A \ar[r]^{d_A} \ar[d]_{d_A} &
      A \times A \ar[d]^{A \times d_A} \\
      A \times A \ar[r]_{d_A \times A} & A^3
    }
  \end{equation}
  is exact, in both senses.  (In fact,
  \cite[lemma~3.2]{walters08:_froben} shows that for this particular
  square either exactness condition implies the other.)
\end{dfn}

The separability and Frobenius conditions hold in an equipment of the
form $\Matr(\E)$: they follow from the Beck--Chevalley conditions
(def.~\ref{dfn:11} type (A), and remark~\ref{rem:7}).

If $\cat{B} \to \bB$ is a cartesian equipment, then
\begin{equation*}
  \Pred(\bB) = \bB((-)_\bullet, \mathbf{1}) \colon \cat{B}^\op \to \Cat
\end{equation*}
is a bifibration, because if $f$ is a tight map then the pullback
functor $f^* = \Pred(\bB)(f_\bullet)$ has a left adjoint $f_! =
\Pred(\bB)(f^\bullet)$.  It clearly also has fibred finite products.
The Beck--Chevalley condition for squares $d f = (f \otimes 1)\langle
1, f \rangle$ of type (A) in def.~\ref{dfn:11} requires invertibility
of
\begin{gather*}
  \xymatrix{
    f_\bullet d^\bullet (1 \times f)^\bullet \ar[r] &
    d^\bullet (f \times 1)_\bullet
  } \\
  \xymatrix{
    (1 \times f)_\bullet d_\bullet f^\bullet \ar[r] &
    (f \times 1)^\bullet d_\bullet
  }
\end{gather*}
these being the mates of the isomorphism exhibiting $f$ as a
$d$-homomorphism (compare the inequalities \ref{eq:15}, \ref{eq:16}).
They are clearly dual.  Frobenius reciprocity requires invertibility
of
\begin{equation*}
  \xymatrix{
    (R \wedge S f_\bullet) f^\bullet \ar[r] &
    R f^\bullet \wedge S f_\bullet f^\bullet \ar[r] &
    R f^\bullet \wedge S
  }
\end{equation*}
but this is the whiskering of the second Beck--Chevalley morphism
above by $d^\bullet (R \otimes S)$, and hence the former is invertible
if the latter is.  The condition for type-(B) squares is precisely
separability \eqref{eq:6}.  The condition for squares of type (C) is a
special case of the functoriality of $\otimes$, as is that for type
(D), and side-by-side pastings always preserve the condition
(cf.~\cite[p.~512]{seely83:_hyper_natur_deduc_and_beck_condit}).

So if every object in $\bB$ is separable, then the only thing keeping
$\Pred(\bB)$ from being a regular fibration is the type-(A)
Beck--Chevalley condition.  Unfortunately, despite a strong suspicion
that the Frobenius condition implies it, I have been unable to find a
proof (note that the converse implication holds by
remark~\ref{rem:7}).  So we must assume it as an axiom, in the most
general cases, in order to get a regular fibration out of the
$\Pred(-)$ construction.  Therefore we define a \emph{regular
  equipment} to be a cartesian equipment satisfying these two
conditions.  There is thus a 2-category $\bicat{RegEqt}$ of regular
equipments, whose 1-cells are monoidal equipment functors and whose
2-cells are equipment transformations.  It is easy to see that if
$\Pred(\bB)$ exists then $\Matr(\Pred(\bB)) \eqv \bB$, so that
$\bicat{RegEqt}$ is equivalent to $\bicat{RegFib}$.

Note, however, that the troublesome condition does follow from the
Frobenius condition in the locally ordered context: by
\cite[prop.~3.6]{walters08:_froben} each object in a regular equipment
is self-dual, giving an identity-on-objects contravariant involution
$(-)^\circ$.  In the locally ordered case the results of
\cite[theorem~2.4]{carboni87:_cartes_bicat_i} follow (modulo the
caveats above regarding the difference between maps and tight maps),
showing that the dual of $(df)_\bullet = \big((f\times
f)d\big)_\bullet$ is an equality of precisely the type required, by
naturality of duality and by the fact that the dual of $d^\bullet$ is
$d_\bullet$.  This isomorphism does still exist in the
non-locally-ordered setting, but there seems to be no good reason why
it should be the inverse of the Beck--Chevalley morphism.

\subsection{Tabulation and comprehension}
\label{sec:kleisli-compl-compr}

In this section we examine notions of tabulation for morphisms in a
regular equipment, and compare them to comprehension in the
corresponding regular fibration.

\begin{dfn}
  Let $R \colon X \rel Y$ be a morphism in an equipment.  A
  \emph{tabulation} of $R$ is an object $\{R\}$ together with a
  universal cell
  \begin{equation*}
    \xymatrix{
      \{ R \} \ar[d]_i \ar[r]|+^1 & \{ R \} \ar[d]^j \\
      X \ar[r]|+_R & Y \ar@{}[ul]|{\displaystyle\Downarrow}
    }
  \end{equation*}
  that is, such that a 2-cell $g_\bullet \to R f_\bullet$ is given by
  composing the above cell with (the identity cell on) a unique tight
  $Z \to \{ R \}$.

  An equipment \emph{has tabulation} if every morphism has such a
  tabulation, and we say that these tabulations are \emph{full} if the
  mate $j_\bullet i^\bullet \to R$ of the universal cell is invertible
  for each $R$.
\end{dfn}

The following result explains why we use the same notation for
tabulations as for comprehension (def.~\ref{dfn:22}).

\begin{prp}
  A regular fibration $\E$ has (full) comprehension if and only if
  $\Matr(\E)$ has (full) tabulation.
\end{prp}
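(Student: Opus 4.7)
My plan is to set up a direct translation between the universal property of tabulation for a horizontal morphism $R \colon X \rel Y$ in $\Matr(\E)$ and the universal property of comprehension for $R$ regarded as an object of $\E(X \times Y)$. Recall from prop.~\ref{prp:20} that a morphism $R \colon X \rel Y$ in $\Matr(\E)$ is literally an object of $\E(X \times Y)$, and that for tight maps $f \colon Z \to X$, $g \colon Z \to Y$, the formulas
\begin{equation*}
  f_\bullet = (f \times 1)^*1_X, \quad f^\bullet = (1 \times f)^*1_X,
  \quad g_\bullet = (g \times 1)^*1_Y
\end{equation*}
govern how vertical maps become tight relations. Using these formulas together with the composition recipe and the Beck--Chevalley conditions that hold in a regular fibration, the composite $j_\bullet i^\bullet$ of tight maps $i \colon \{R\} \to X$ and $j \colon \{R\} \to Y$ in $\Matr(\E)$ works out to $\langle i,j \rangle_!\top_{\{R\}} = \im\langle i,j \rangle$ as an object of $\E(X \times Y)$.

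First I would show that a cell
\begin{equation*}
  \xymatrix{Z \ar[r]|+^{1} \ar[d]_{f} \ar@{}[dr]|{\displaystyle\Downarrow}
    & Z \ar[d]^{g} \\ X \ar[r]|+_R & Y}
\end{equation*}
in $\Matr(\E)$ corresponds under mates to a morphism $\langle f,g\rangle_!\top_Z \to R$ in $\E(X \times Y)$: unfolding the formulas, the cell $g_\bullet \to R f_\bullet$ in $\Matr(\E)$ is a vertical map $(g \times 1)^*d_!\top_Y \to (f \times 1)^*R$ in $\E(Z \times Y)$, and pushing forward along $\langle f,g\rangle$ and applying Frobenius plus the Beck--Chevalley condition (type (A) of def.~\ref{dfn:11}) to rewrite $\langle f,g\rangle_!(g \times 1)^*d_!\top_Y$ yields $\top$ on the left and $R$ on the right, after the standard computation. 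Composing with the unit, the cell is therefore equivalent data to a map $\im\langle f,g\rangle \to R$ in $\E(X \times Y)$.

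Next, comprehension at $X \times Y$ for $R$ gives $i_R \colon \{R\} \to X \times Y$ characterized by the fact that maps $\im\langle f,g\rangle \to R$ correspond to factorizations of $\langle f,g\rangle$ through $i_R$. Setting $i = p_X \cmp i_R$ and $j = p_Y \cmp i_R$, such a factorization is precisely a tight $h \colon Z \to \{R\}$ with $ih = f$ and $jh = g$. Combining this with the correspondence of the previous paragraph shows that the universal cell obtained from $h = 1_{\{R\}}$, i.e.\ from the identity morphism $\im i_R \to R$ (when $\im i_R \iso R$) or more generally from the unit of the comprehension adjunction, satisfies the tabulation universal property; conversely, a tabulation of $R \colon X \rel Y$ in $\Matr(\E)$ supplies $i_R = \langle i,j\rangle$ together with the required universal arrow, yielding comprehension for $R$ as an object of $\E(X \times Y)$. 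Since every object of every fibre arises as such an $R$ (via $\E X \iso \E(X \times \mathbf{1})$), comprehension at all fibres is equivalent to tabulation for all morphisms.

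Finally, for fullness: by the computation of $j_\bullet i^\bullet$ above, the mate of the universal cell is literally $\im i_R \to R$ in $\E(X \times Y)$, which is the counit of $\im \dashv \{-\}$ at $R$. Thus full tabulation (the mate is invertible for every $R$) is equivalent to full comprehension (the counit is invertible at every predicate). The main obstacle in carrying this out is verifying carefully that the translation between the two universal properties really is bijective, and in particular that the Beck--Chevalley and Frobenius conditions in the regular fibration are enough to identify the composite $j_\bullet i^\bullet$ with $\im\langle i,j\rangle$ on the nose; once this bookkeeping is in hand the rest reduces to transporting the adjunction $\im \dashv \{-\}$ across the identification.
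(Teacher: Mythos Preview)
Your proposal is correct and follows essentially the same route as the paper: both set up the chain of bijections
\[
  \bigl(\text{factorizations of }(f,g)\text{ through }i_R\bigr)
  \;\leftrightarrow\;
  \bigl(\text{maps }\im(f,g) \to R\text{ in }\E(X\times Y)\bigr)
  \;\leftrightarrow\;
  \bigl(\text{cells }g_\bullet \to R f_\bullet\text{ in }\Matr(\E)\bigr),
\]
and both identify the fullness conditions via $j_\bullet i^\bullet \iso \im\langle i,j\rangle$. The only real difference is that the paper simply quotes the mate formula $g_\bullet R f^\bullet \iso (f\times g)_! R$ already established in prop.~\ref{prp:20}, so the middle bijection drops out in one line as $(f\times g)_! 1_Z \iso (f,g)_!\top_Z$, whereas you rederive this step by hand; your description of that computation (``pushing forward along $\langle f,g\rangle$'') is a little loose on domains, but invoking the ready-made formula from prop.~\ref{prp:20} cleans this up immediately.
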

\begin{proof}
  The following sequence of bijections shows that the extension
  $\{R\}$ of an object $R$ of $\E(X \times Y)$ is also the tabulation
  of $R$ considered as a morphism $X \rel Y$ in $\Matr(\E)$:
  \begin{prooftree}
    \AXC{$\xymatrix{ Z \ar[rr] \ar[dr]_{(f,g)} & & \{ R \} \ar[dl]
        \\ & X \times Y}$}
    \UIC{$\xymatrix{ (f,g)_! \top_Z \iso (f \times g)_! 1_Z
        \ar[r] & R}$}
    \UIC{$\xymatrix{Z \ar[r]|+^1 \ar[d]_{f} \ar@{}[dr]|{\displaystyle\Downarrow}
        & Z \ar[d]^{g} \\
        X \ar[r]|+_R & Y}$}
  \end{prooftree}
  Setting $Y = \mathbf{1}$, the same sequence read backwards shows
  that the extension of a predicate $P \in \E X$ is given by the
  tabulation of $P \colon X \rel \mathbf{1}$.  It also shows that the
  morphisms required to be invertible by the two forms of fullness are
  in fact the same.
\end{proof}

Proposition 3.4 of \cite{lack10:_bicat} shows that the separability
axiom for an object $X$ of a cartesian bicategory is equivalent to the
identity $1_X$'s being subterminal, so that an endomorphism $G$ of $X$
can admit at most one `copoint' $G \to 1_X$.  Their lemma~3.15 then
shows that if it does then there exists a unique 2-cell $\gamma \colon
G \to G^2$ making $G$ into a comonad.

We can go further: the proposition referred to also shows that
separability is equivalent to the statement that for any $\epsilon
\colon G \to 1_X$, the span $G \leftarrow G \to 1_X$ is a product.  In
that case there is a unique morphism $G \to G \wedge 1_X$, necessarily
given by $(1_G, \epsilon)$, which is invertible and natural in $G$
(i.e.~with respect to morphisms between copointed endomorphisms of
$X$).  A useful consequence of this is an isomorphism
\begin{equation}
  \label{eq:23}
  \begin{split}
    d^\bullet (G \otimes 1)
    & \iso d^\bullet ((G \wedge 1) \otimes 1) \\
    & \iso d^\bullet (d^\bullet \otimes 1) (G \otimes 1 \otimes 1) (d
    \otimes 1) \\
    & \iso d^\bullet (1 \otimes d^\bullet) (G \otimes 1 \otimes 1) (d
    \otimes 1) \\
    & \iso d^\bullet (G \otimes 1) (1 \otimes d^\bullet) (d \otimes 1)
    \\
    & \iso d^\bullet (G \otimes 1) d d^\bullet \\
    & \iso G d^\bullet
  \end{split}
\end{equation}
using coassociativity of $d$, Frobenius, and separability.  From this
in turn we see that, for example,
\begin{equation}
  \label{eq:20}
  G (M \wedge N) \iso GM \wedge N \iso M \wedge GN
\end{equation}
and in particular that $G \wedge G \iso G(1 \wedge G) \iso G G$.
Indeed, the 2-cell $\gamma \colon G \to G G$ given by
\cite[lemma~3.15]{lack10:_bicat} is equivalently the composite
\begin{equation*}
  \xymatrix{
    G \ar[r]^-{\delta} & G \wedge G \ar[r]_-{\sim} & G G
  }
\end{equation*}
of this isomorphism with the local diagonal at $G$, because the latter
begins with
\begin{equation*}
  \xymatrix{
    G \ar[r]^-\delta & G \wedge G \ar[r]^-{(1,\epsilon) \wedge G}_-{\sim}
    & G \wedge 1 \wedge G \ar[r] & \cdots
  }
\end{equation*}
and the former with
\begin{equation*}
  \xymatrix{
    G \ar[r]^-{\delta_3} & G \wedge G \wedge G \ar[r]^-{G \wedge
      \epsilon \wedge G} & G \wedge 1 \wedge G \ar[r] & \cdots
  }
\end{equation*}
and the composites of the first two morphisms in each are clearly
equal, while both continue identically.  The same lemma then shows
that the product projections $G G \to G$ are given by $\epsilon G$ and
$G \epsilon$.

There is a not-too-dissimilar result for comodules, which allows us to
describe categories of comodules in a neat and useful way.

\begin{prp}
  \label{prp:22}
  \label{prp:23}
  Let $G$ be a comonad on $X$ in the regular equipment $\Matr(\E)$,
  and let $M \colon Z \rel X$ be a morphism.  The isomorphism
  \begin{equation*}
    GM \iso G(M \wedge \top) \iso M \wedge G \top
  \end{equation*}
  whose second factor is \eqref{eq:20} is natural in $M$ and exhibits
  the endofunctor $M \mapsto M \wedge G \top$ as a comonad isomorphic
  as such to $G_* \colon M \mapsto GM$.  There is then an equivalence
  of categories
  \begin{equation*}
    \mathrm{LComod}(G,Z) \eqv \E (Z \times X) / G \top
  \end{equation*}
\end{prp}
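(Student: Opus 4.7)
The plan is to factor the proof into three stages: naturality of the displayed isomorphism, its compatibility with the comonad structures, and then the resulting characterization of coalgebras as a slice category.

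First, I would verify naturality of $GM \iso M \wedge G\top$ in $M$. The first factor $G(M \wedge \top) \iso GM$ is natural because $G_* = G \hcmp (-)$ is a functor and $M \wedge \top \iso M$ is natural (via the local terminal $\top$). The second factor is a special case of the isomorphism $G(M \wedge N) \iso M \wedge GN$ of \eqref{eq:20}, which in turn is built from the isomorphism \eqref{eq:23}; inspection shows that the ingredients there (coassociativity of $d$, the Frobenius and separability isomorphisms, and the compatibility of $G$ with $d^\bullet$ via its copoint) are all natural in the arguments tensored in, so the composite is natural in $M$ with $N = \top$ held fixed.

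Next I would endow $M \mapsto M \wedge G\top$ with a comonad structure and show the isomorphism above is a comonad map. The counit $\epsilon \colon G \to 1$ of the comonad $G$ transports under the isomorphism to the projection $M \wedge G\top \to M$ (equivalently, $M \wedge \epsilon\top \colon M \wedge G\top \to M \wedge \top \iso M$), using that the isomorphism is natural in $G$ considered as a copointed endomorphism. For the comultiplication, one applies the isomorphism iteratively to get $GGM \iso GG\top \wedge M$ and then uses $GG\top \iso G\top \wedge G\top$ (again from \eqref{eq:20}); the fact, noted just before the statement, that $\gamma \colon G \to GG$ factors through the local diagonal $\delta \colon G \to G \wedge G$ via the isomorphism $G \wedge G \iso GG$ is exactly what is needed to show that the transported comultiplication on $M \wedge G\top$ is $M \wedge \delta_{G\top}$. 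The comonad axioms for $(-) \wedge G\top$ then hold because those for $G_*$ do, and we have an isomorphism of comonads on $\E(Z \times X)$.

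Finally, I would identify the category of coalgebras. Writing out the definition of a coalgebra for $(-) \wedge G\top$ with the counit given by first projection, a structure map $\alpha \colon M \to M \wedge G\top$ satisfying the counit law is forced to be of the form $(1_M, \beta)$ for a unique $\beta \colon M \to G\top$; the coassociativity law becomes automatic since both composites $M \to M \wedge G\top \wedge G\top$ reduce to $(1_M, \beta, \beta)$. A coalgebra morphism $(M, \beta) \to (M', \beta')$ is a map $f \colon M \to M'$ with $(f, \beta) = (f, \beta' \hcmp f)$, i.e., a morphism of $\E(Z \times X)/G\top$. This gives the isomorphism of categories with $\E(Z \times X)/G\top$, and composing with the equivalence of comodule categories induced by the comonad isomorphism yields $\mathrm{LComod}(G, Z) \eqv \E(Z \times X)/G\top$.

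The main obstacle is the middle step: one must track enough of the naturality and coherence of the isomorphism \eqref{eq:20}, together with the explicit formula for $\gamma$ recalled just above, to confirm that the transported counit and comultiplication on $(-) \wedge G\top$ really are the projection and the diagonal on $G\top$; once this is verified the coalgebra calculation is immediate.
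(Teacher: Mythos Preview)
Your proposal is correct, and its overall shape matches the paper's: naturality, then comonad isomorphism, then identification of coalgebras with a slice. The difference is entirely in the middle step, and it is worth noting because it dissolves precisely the obstacle you flag.

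You propose to \emph{track} the transferred counit and comultiplication through the isomorphism \eqref{eq:20}, using the explicit description of $\gamma$ in terms of the local diagonal, to verify that they become projection and diagonal on $G\top$. The paper instead argues by \emph{uniqueness}: for any object $A$ in a monoidal category, comonad structures on the endofunctor $(- \otimes A)$ are in bijection with comonoid structures on $A$; since $\E(Z \times X)$ is cartesian, $G\top$ carries exactly one comonoid structure (diagonal and terminal map), so the comonad structure on $(- \wedge G\top)$ transported from $G_*$ has no choice but to be the canonical one. This completely bypasses the coherence-chasing you identify as the main difficulty. Similarly, for the final step the paper simply invokes the standard fact that coalgebras for $(- \times A)$ on a cartesian category form the slice over $A$, rather than unpacking it. Your route works, but the paper's is shorter and avoids the only genuinely delicate verification.
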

\begin{proof}
  The isomorphism is natural in $M$ because its components are, so
  that $G_* \iso (- \wedge G \top)$, and hence the latter acquires the
  structure of a comonad.  For any object $A$ in a monoidal category,
  comonad structures on $(- \otimes A)$ are in bijection with comonoid
  structures on $A$.  But because $\E(Z \times X)$ is cartesian, there
  is one and only one comonoid structure on $G \top$, given by
  projection and diagonal, and hence the resulting comonad structure
  on $(- \wedge G \top)$ must be identical with that transferred from
  $G_*$.

  It follows that the categories of coalgebras of $(- \wedge G \top)$
  and of $G_*$ are equivalent.  But the category of coalgebras of the
  latter is the category of left $G$-comodules, while it is a
  generality that the category of coalgebras for a comonad of the form
  $(- \times A)$ on a cartesian category $\C$ is just $\C/A$.  Hence
  \begin{equation*}
    \mathrm{LComod}(G,Z) \eqv \cat{Coalg}(G_*) \eqv \E(Z \times X) / G
    \top
  \end{equation*}
\end{proof}

Now we want to compare the presence of tabulation for arbitrary
morphisms with the existence of Eilenberg--Moore objects for comonads.

\begin{dfn}
  An Eilenberg--Moore object for a comonad $G$ on an object $X$ in an
  equipment is, as in def.~\ref{dfn:21}, an object $X^G$ that
  represents (left) $G$-comodules, in that $\hom(Z,X^G) \eqv
  \mathrm{LComod}(G,Z)$, naturally in $Z$, but with this equivalence
  also holding for the restriction of each side to tight maps.  That
  is equivalently to say that the universal $X \prof X^G$ is a tight
  map, and that composition with it preserves and detects tight maps
  (cf.~\cite{garner13:_enric}).

  We will say that an object in an equipment is an EM object
  \emph{with respect to tight maps} if only the second part of this
  property holds.  In general, of course, such an object is not
  necessarily a genuine EM object.
\end{dfn}

\begin{lem}
  \label{lem:10}
  If $G$ is a comonad in a regular equipment, $f$ and $g$ are tight
  maps and $g_\bullet \to G f_\bullet$ is a 2-cell, then there is a
  unique isomorphism $f_\bullet \iso g_\bullet$, modulo which the
  given 2-cell makes $f$ a $G$-comodule.
\end{lem}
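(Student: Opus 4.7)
The plan is to exploit the natural isomorphism $Gf_\bullet \iso f_\bullet \wedge G\top$ established in the proof of prop~\ref{prp:22} and the fact that $f_\bullet, g_\bullet$ are maps in the underlying cartesian bicategory.

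First, I would decompose the given cell: under the iso $Gf_\bullet \iso f_\bullet \wedge G\top$, in which the counit $\epsilon_{f_\bullet}$ corresponds to the first projection, $\phi$ determines a pair $(\bar\phi, \phi_2)$ with
\begin{equation*}
  \bar\phi \;:=\; \epsilon_{f_\bullet} \vcmp \phi \colon g_\bullet \to f_\bullet
  \qquad \text{and} \qquad
  \phi_2 \colon g_\bullet \to G\top.
\end{equation*}
The main step will be to show that $\bar\phi$ is invertible. Because $f$ and $g$ are tight, $f_\bullet$ and $g_\bullet$ are maps (left adjoints) in the underlying cartesian bicategory, and in a regular equipment every object is both separable and Frobenius, so by the theory of discrete cartesian bicategories (\cite{carboni08:_cartes_ii}, extending lemma~\ref{lem:9}) every 2-cell between maps is invertible. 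Alternatively, a direct argument uses the self-duality $X \dashv X$ carried by every Frobenius object (cf.~prop.~\ref{prp:17}): applying the involution $(-)^\circ$ converts $\bar\phi$ into a 2-cell $f^\bullet \to g^\bullet$, and combining this with the adjunction mates under $f_\bullet \dashv f^\bullet$ and $g_\bullet \dashv g^\bullet$ produces a two-sided inverse $\theta \colon f_\bullet \to g_\bullet$, the triangle identities following from the Frobenius and separability axioms. Set $\theta := \bar\phi^{-1}$.

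Next I would verify that $\alpha := \phi \vcmp \theta \colon f_\bullet \to Gf_\bullet$ is a left $G$-comodule structure on $f_\bullet$. The counit axiom is immediate by construction:
\begin{equation*}
  \epsilon_{f_\bullet} \vcmp \alpha \;=\; \bar\phi \vcmp \bar\phi^{-1} \;=\; 1_{f_\bullet}.
\end{equation*}
For coassociativity I would appeal to the equivalence $\mathrm{LComod}(G,Z) \eqv \E(Z\times X)/G\top$ of prop.~\ref{prp:22}: under $Gf_\bullet \iso f_\bullet \wedge G\top$, any 2-cell into $Gf_\bullet$ satisfying the counit axiom is of the form $(1_{f_\bullet},\alpha_2)$ for some $\alpha_2 \colon f_\bullet \to G\top$ (here $\alpha_2 = \phi_2 \vcmp \theta$), and every such pair corresponds under the equivalence of prop.~\ref{prp:22} to a genuine $G$-comodule, so the coassociativity axiom is automatic from the uniqueness of the cartesian comonoid structure on $G\top$.

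Finally, uniqueness of $\theta$ is forced: if $\theta'$ is any isomorphism $f_\bullet \iso g_\bullet$ for which $\phi \vcmp \theta'$ is a comodule structure, then the counit axiom gives $\bar\phi \vcmp \theta' = \epsilon_{f_\bullet} \vcmp \phi \vcmp \theta' = 1_{f_\bullet}$, so $\theta' = \bar\phi^{-1} = \theta$. The real obstacle in the argument is the invertibility of $\bar\phi$; everything else is routine given prop.~\ref{prp:22}. A citation to the discrete cartesian bicategory literature makes this step a one-liner, while a self-contained proof, using the interplay between $(-)^\circ$, the companion/conjoint adjunctions, and the Frobenius axiom, is somewhat more involved but essentially unwinds already-known structure.
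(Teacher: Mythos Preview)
Your proof is correct and follows the same opening move as the paper: compose the given cell with the counit to obtain $\bar\phi \colon g_\bullet \to f_\bullet$, then invoke the fact that in a cartesian bicategory with Frobenius and separable objects any 2-cell between maps is invertible (the paper cites \cite[theorem~3.14]{lack10:_bicat} for this; your alternative sketch via duality is essentially the content of that result).

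Where you diverge is in verifying the comodule axioms.  The paper passes to the mate $f_\bullet f^\bullet \to G$ and observes that being a comodule is the same as this mate being a comonoid morphism, then cites \cite[theorem~4.2(ii)]{lack10:_bicat} to conclude that every such 2-cell is automatically a comonoid morphism.  You instead use prop.~\ref{prp:22}: under the comonad isomorphism $G_* \iso (- \wedge G\top)$, a 2-cell $f_\bullet \to Gf_\bullet$ satisfying the counit axiom corresponds to a pair $(1_{f_\bullet}, \alpha_2)$, and every such pair is a coalgebra for $(- \wedge G\top)$, hence for $G_*$.  Your route is more self-contained, drawing only on results already proved in the paper rather than an external citation.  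You also make uniqueness explicit via the counit axiom, whereas the paper obtains it from the stronger fact that there is at most one 2-cell between maps (so the isomorphism is unique regardless of whether it yields a comodule).
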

\begin{proof}
  Composing the given 2-cell with the counit $G \to 1$ gives a 2-cell
  $f_\bullet \to g_\bullet$, which by
  \cite[theorem~3.14]{lack10:_bicat} is unique and invertible.  Then
  2-cells $g_\bullet \to G f_\bullet$ are in natural canonical
  bijection with 2-cells $f_\bullet \to G f_\bullet$.  To say that a
  2-cell of the latter form is a $G$-comodule is the same as to say
  that its mate $f_\bullet f^\bullet \to G$ is a morphism of
  comonoids, but by [\textit{op.~cit.},~theorem~4.2(ii)] every such
  morphism is so.
\end{proof}

\begin{prp}
  \label{prp:19}
  A regular equipment has tabulation if and only if it has EM objects
  with respect to tight maps.  Moreover, the latter are genuine EM
  objects if and only if the corresponding tabulations are full.
\end{prp}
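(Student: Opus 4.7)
The plan is to use the correspondence between morphisms $R \colon X \rel Y$ and coreflexive comonads $\langle R \rangle$ on $X \times Y$, where $\langle R \rangle := 1_{X \times Y} \wedge p_X^\bullet R p_{Y,\bullet}$ (in the internal language, $\den{(x, y) = (x', y') \wedge R(x, y)}$); this is automatically a coreflexive endomorphism and hence canonically a comonad.

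For the forward direction, suppose $\Matr(\E)$ has tabulations. Given a comonad $G$ on $X$, tabulate $G$ viewed as a morphism $X \rel X$ to obtain $i, j \colon \{G\} \to X$ with universal cell $j_\bullet \to G i_\bullet$. By lemma~\ref{lem:10} the counit of $G$ collapses this cell to an iso $j_\bullet \iso i_\bullet$, so by local faithfulness of $(-)_\bullet$ over a locally discrete base $i = j =: u$, and the universal cell $u_\bullet \to G u_\bullet$ exhibits $u$ as a $G$-comodule; the tabulation universal property applied to tight comodule cells $f_\bullet \to G f_\bullet$ is then exactly the EM universal property with respect to tight maps. Conversely, given EM with respect to tight maps, for each $R$ take the EM object $u \colon X^{\langle R \rangle} \to X \times Y$ of $\langle R \rangle$ and set $i := p_X u$, $j := p_Y u$. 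A short internal-language calculation shows that for tight $f \colon Z \to X$ and $g \colon Z \to Y$, both tight $\langle R \rangle$-comodule structures on $(f, g)$ and tabulation cells $g_\bullet \to R f_\bullet$ reduce to the same datum, a 2-cell $\top_Z \to (f, g)^* R$, so the EM universal property of $X^{\langle R \rangle}$ translates directly into the tabulation universal property of $\{R\} := X^{\langle R \rangle}$.

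For the moreover part, a direct computation gives $p_{Y,\bullet} \langle R \rangle p_X^\bullet \iso R$, from which $j_\bullet i^\bullet \iso p_{Y,\bullet} u_\bullet u^\bullet p_X^\bullet$ and hence the equivalence of fullness conditions ``$u_\bullet u^\bullet \iso \langle R \rangle$'' $\iff$ ``$j_\bullet i^\bullet \iso R$''. Given $u_\bullet u^\bullet \iso \langle R \rangle$, propositions~\ref{prp:23} and~\ref{prp:24} together identify both $\hom(Z, \{R\}) = \E(Z \times \{R\})$ and $\mathrm{LComod}(\langle R \rangle, Z)$ with slices of $\E(Z \times X \times Y)$ over two predicates made equal by the above isomorphism, yielding the genuine EM equivalence. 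Conversely, applying a genuine EM equivalence to the loose morphism $u^\bullet \colon X \times Y \rel \{R\}$ identifies $u_\bullet u^\bullet$ as the free loose comodule associated to $u^\bullet$, which is $\langle R \rangle$ itself. The main obstacle is this last step: verifying that the abstract EM equivalence coincides with the concrete ``compose with the universal comodule'' map $M \mapsto u_\bullet M$, so that the fibrational identification really does yield the claimed correspondence; this amounts to a careful naturality chase through propositions~\ref{prp:23} and~\ref{prp:24}.
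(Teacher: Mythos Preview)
Your approach is essentially the paper's: your $\langle R \rangle$ is exactly the comonad $G_R$ of \cite[theorem~4.3]{lack10:_bicat} that the paper invokes (modulo a typo --- $p_X^\bullet R p_{Y,\bullet}$ does not compose; you want $p_Y^\bullet R p_{X,\bullet}$), and both halves of the first biconditional together with the ``full $\Rightarrow$ genuine'' direction use the same ingredients: lemma~\ref{lem:10}, the product decomposition of cells into $G_R$, and propositions~\ref{prp:23}--\ref{prp:24}.

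There is one real error. From $j_\bullet \iso i_\bullet$ you cannot conclude $i = j$: the functor $(-)_\bullet$ out of a locally discrete base is locally fully faithful on 2-cells but need not reflect isomorphisms of 1-cells --- in $\Matr(\E)$ that would amount to extensional equality, which is not assumed. The fix is simply not to make the claim: lemma~\ref{lem:10} already says that, modulo the unique isomorphism $i_\bullet \iso j_\bullet$, the universal cell makes $i$ itself a $G$-comodule, and that is all the argument needs. This is what the paper does.

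Your flagged ``main obstacle'' is indeed the weak point, and the paper takes a cleaner route there. Your argument for ``genuine EM $\Rightarrow$ full'' is close to circular: the EM equivalence sends $u^\bullet$ to the comodule $u_\bullet u^\bullet$, but you have no independent reason to identify that comodule with $\langle R \rangle$ --- that identification is precisely what you are trying to establish. The paper instead works in the other direction of the construction. Having built the tabulation of $R$ as the vertical paste of the universal $G_R$-comodule cell with the projection cell $\mu$, it observes that the mate $u_\bullet u^\bullet \to G_R$ of the structure 2-cell of a genuine EM object is always invertible, and since the mate of $\mu$ is already known to be invertible, the tabulation's mate --- their composite --- is invertible too. (The general fact behind the first invertibility: once $u$ has a right adjoint $u^\bullet$, post-composition with $u$ is comonadic for $G_*$, so the comonad $(u u^\bullet)_*$ it generates must coincide with $G_*$; by Yoneda this is exactly the invertibility of the mate.) This avoids the naturality chase you anticipated entirely.
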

\begin{proof}
  By \cite[theorem~4.3]{lack10:_bicat}, a morphism $R \colon X \rel Y$
  gives rise to a comonad $G_R$
  \begin{equation*}
    \xymatrix{
      X \times Y \ar[r]^-{d \otimes Y} &
      X \times X \times Y \ar[r]^{X \otimes R \otimes Y} &
      X \times Y \times Y \ar[r]^-{X \otimes d^\bullet} &
      X \times Y 
    }
  \end{equation*}
  that comes equipped with a cell (given by projecting out $R$)
  \begin{equation*}
    \xymatrix{
      X \times Y \ar[r]|+^{G_R} \ar[d]_{p_1} \ar@{}[dr]|{\displaystyle\Downarrow
        \mathrlap{\mu}} & X \times Y \ar[d]^{p_2} \\
      X \ar[r]_R & Y
    }
  \end{equation*}
  whose mate $p_2 G_R p_1^\bullet \to R$ is invertible.  To give a
  tight comodule for $G_R$ is to give a morphism $(f,g) \colon Z \to X
  \times Y$ and a cell
  \begin{equation*}
    \xymatrix{
      Z \ar[r]|+^1 \ar[d]_{(f,f,g)} \ar@{}[dr]|{\displaystyle\Downarrow} & Z
      \ar[d]^{(f,g,g)} \\
      X \times X \times Y \ar[r]_{X \otimes R \otimes Y} & X \times Y
      \times Y
    }
  \end{equation*}
  Because the lower morphism is a three-fold product in the category
  of cells (prop.~\ref{prp:18}), to give such a cell is to give three
  cells of the following form:
  \begin{equation*}
    \xymatrix{
      Z \ar[r]|+^1 \ar[d]_f \ar@{}[dr]|{\displaystyle\Downarrow} & Z \ar[d]^f \\
      X \ar[r]|+_1 & X
    }
    \qquad
    \xymatrix{
      Z \ar[r]|+^1 \ar[d]_f \ar@{}[dr]|{\displaystyle\Downarrow} & Z \ar[d]^g \\
      X \ar[r]|+_R & Y
    }
    \qquad
    \xymatrix{
      Z \ar[r]|+^1 \ar[d]_g \ar@{}[dr]|{\displaystyle\Downarrow} & Z \ar[d]^g \\
      Y \ar[r]|+_1 & Y
    }
  \end{equation*}
  But the outer two of these are always unique (as in the proof of the
  lemma above), so that to give a square of the central form is
  precisely to give a tight $G_R$-comodule, and if one is universal
  then so is the other.  Such a universal cell into $R$ is then, as in
  \cite[theorem~4.7]{lack10:_bicat}, the composite of the EM comodule
  with $\mu$ above. But the mate of the structure 2-cell of a genuine
  EM object is always invertible, as is that of $\mu$, so that the
  mate of the cell exhibiting the tabulation of $R$ is invertible too
  (being the composite of cells with invertible mates) and hence in
  this case the tabulation is full.

  In the other direction, if $G$ is a comonad, then its tabulation
  comes together with a universal 2-cell $j_\bullet \to G i_\bullet$.
  By lemma~\ref{lem:10} above, $i$ is then a $G$-comodule, and the
  universal property of the tabulation is (again by the lemma)
  precisely the universal property of an EM object wrt tight maps.  By
  prop.~\ref{prp:23} above, a comodule $j_\bullet \to G j_\bullet$ is
  the same thing as a morphism $j_\bullet \to G\top$, which is the
  same thing as a cell
  \begin{equation*}
    \xymatrix{
      Z \ar[rr]|+^1 \ar[d]_{t} \ar@{}[drr]|{\displaystyle\Downarrow} & & Z
      \ar[d]^j \\
      \mathbf{1} \ar[r]|+_{t^\bullet} & X \ar[r]|+_G & X
    }
  \end{equation*}
  That means that if $i$ is the EM object with respect to tight maps
  of $G$, then $i$ tabulates $G t^\bullet \iso {p_X}_! G$, where $p_X
  \colon X \times X \to X$ is the projection onto the first component,
  and hence $\{ G \} \iso \{{p_X}_! G\}$.  If these tabulations are
  full, then, propositions~\ref{prp:23} and \ref{prp:24} give
  \begin{align*}
    \hom(Z, \{ G \}) & \eqv \E(Z \times \{G\}) \\
    & \eqv \E(Z \times \{ {p_X}_! G\}) \\
    & \eqv \E \{ p_Z^* {p_X}_! G\} \\
    & \eqv \E(Z \times X)/ p_Z^* {p_X}_! G \\
    & \eqv \E(Z \times X)/ G \top \\
    & \eqv \mathrm{LComod}(Z, G)
  \end{align*}
  in which line 3 follows from line 2 because the pullback of any $i_P
  \colon \{ P \} \to X$ along the projection $Z \times X \to X$ is
  $1_Z \times i_P$, and because extension, being a right adjoint,
  preserves pullbacks.  So full tabulations give rise to genuine EM
  objects.
\end{proof}

\section{The effective topos}
\label{sec:effective-topos}

A \emph{realizability topos} \cite{oosten08:_realiz} is, roughly, a
topos built out of some collection of computable objects (a
\emph{partial combinatory algebra}, or pca).  In particular, the
\emph{effective topos} $\Eff$ is constructed relative to the partial
recursive functions $\mathbb{N} \rightharpoonup \mathbb{N}$ on the
natural numbers.  The connection with realizability in the traditional
sense is that the canonical interpretation of higher-order Heyting
arithmetic in $\Eff$ yields precisely Kleene's realizability
interpretation \cite{kleene45,troelstra98:_realiz} of intuitionistic
arithmetic.

There are two ostensibly quite different ways to build a realizability
topos starting from a given pca, and here we will use the results of
the preceding sections to explain (to a certain extent, at least) how
they are related.

\subsection{The two constructions}
\label{sec:hylands-construction}

The first definition of the effective topos was Hyland's
\cite{hyland82:_effec_topos}.  We start by considering sets $S \sub
\mathbb{N}$ as non-standard truth values, so that the set $[X, \PN]$
of functions $X \to \PN$ is thought of as the set of non-standard
predicates, called $\PN$-sets, on the set $X$.  This set carries the
structure of a category: if $\phi,\psi \colon X \to \PN$, then a
morphism $\phi \to \psi$ is given by a partial recursive function
$\Phi$ that satisfies the following condition: for any $x \in X$ and
any $n \in \phi x$, $\Phi n$ is defined and $\Phi n \in \psi x$.
Moreover, as a category, the set $[X,\PN]$ has finite products: the
terminal object is given by $x \mapsto \mathbb{N}$, while the product
$\phi \times \psi$ is $x \mapsto \set{\tup{n,m}}{n \in \phi x \mbox{
    and } m \in \psi x}$.  (Recall that pairing $\tup{-,-} \colon
\mathbb{N} \times \mathbb{N} \to \mathbb{N}$ can be chosen to be a
total bijection).

The usual construction of the effective topos uses the \emph{preorder
  reflection} of this category structure on $[X, \PN]$: it is the
preorder where $\phi \leq \psi$ if there is a morphism $\phi \to
\psi$, that is, if there exists a partial recursive $\Phi$ that
satisfies the condition above.  This preorder $[X,\PN]$ is (equivalent
to) a Heyting algebra, but the finite meets given by the finite
products defined above are enough for our purposes.

\begin{dfn}[{\cite{hyland82:_effec_topos}}]
  The \emph{effective tripos} $\cat{ET}(-) \colon \Set^\op \to
  \cat{Heyt}$ is the functor that sends a set $X$ to the Heyting
  algebra $[X,\PN]$ and a function $f \colon X \to Y$ to the Heyting
  algebra homomorphism $f^* \colon [Y,\PN] \to [X,\PN]$ given by
  precomposition with $f$.
\end{dfn}

The total category $\int \cat{ET}$ is the category of $\PN$-sets.

The following is proved in \cite[p.~53]{oosten08:_realiz}.
\begin{prp}
  $\cat{ET}$ is an ordered regular fibration.
\end{prp}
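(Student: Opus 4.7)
The plan is to verify each clause of def.~\ref{dfn:7} directly, exploiting the fact that because the fibres $\cat{ET}(X) = [X,\PN]$ are preorders by construction, every adjunction unit and counit, and every Frobenius or Beck--Chevalley isomorphism, reduces to the existence of a single partial recursive function realizing an inclusion of $\PN$-valued predicates. First, $\Set$ has finite products, and each fibre has a top element $\top_X(x) = \mathbb{N}$ and binary meets $(\phi \cap \psi)(x) = \{\tup{n,m} : n \in \phi(x), m \in \psi(x)\}$ for a fixed total recursive pairing, these being preserved on the nose by the precomposition functors $f^* \colon [Y,\PN] \to [X,\PN]$.

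For each $f \colon X \to Y$ define
\[ \exists_f\phi(y) \;=\; \bigcup_{x \in f^{-1}(y)} \phi(x). \]
Unwrapping the definitions, a partial recursive $\Phi$ realizing $\exists_f \phi \leq \psi$ is precisely the same data as one realizing $\phi \leq f^*\psi$, since in each case $\Phi$ is required to send every $n \in \phi(x)$ to a defined value in $\psi(f(x))$. Hence $\exists_f \dashv f^*$, and $f^*$ preserves finite meets as a right adjoint between preorders.

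For Frobenius reciprocity, the canonical $\exists_f(\phi \cap f^*\psi) \leq \exists_f \phi \cap \psi$ is automatic; for the reverse, if $\tup{n,m} \in (\exists_f\phi \cap \psi)(y)$, then there is $x \in f^{-1}(y)$ with $n \in \phi(x)$ and $m \in \psi(y) = f^*\psi(x)$, so $\tup{n,m} \in (\phi \cap f^*\psi)(x) \subseteq \exists_f(\phi \cap f^*\psi)(y)$, and the identity function realizes the required inequality.

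For Beck--Chevalley, consider any pullback square in $\Set$ (in particular a product-absolute one from def.~\ref{dfn:11}):
\[
\xymatrix{
P \pbcorner \ar[r]^p \ar[d]_q & X \ar[d]^f \\
Y \ar[r]_g & Z
}
\]
Computing, $(g^*\exists_f\phi)(y) = \bigcup_{x : f(x) = g(y)} \phi(x)$ while $(\exists_q p^*\phi)(y) = \bigcup_{z : q(z) = y} \phi(p(z))$, and the universal property of the pullback supplies a bijection $\{z : q(z) = y\} \iso \{x : f(x) = g(y)\}$ sending $z \mapsto p(z)$, under which the two unions of subsets of $\mathbb{N}$ become literally equal. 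Hence the identity realizes both directions of the mate, yielding Beck--Chevalley for every pullback, and \emph{a fortiori} for the product-absolute ones. The main obstacle, such as it is, is simply careful bookkeeping of pairing and unpairing in the Frobenius step; everything else is routine tripos-theoretic computation, as carried out in \cite{oosten08:_realiz}.
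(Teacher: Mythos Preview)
Your proof is correct: the direct verification that $\exists_f\phi(y) = \bigcup_{x \in f^{-1}(y)}\phi(x)$ is left adjoint to $f^*$, together with the identity-function realizers for Frobenius and Beck--Chevalley (the latter holding for arbitrary pullbacks in $\Set$, not merely product-absolute ones, because the pullback bijection makes the two unions literally equal), is exactly the standard tripos-theoretic argument. The paper itself gives no proof at all but simply cites \cite[p.~53]{oosten08:_realiz}, so your write-up is in fact more detailed than what appears in the paper, and is essentially what one finds at the cited reference.
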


\begin{dfn}
  \label{dfn:2}
  A \emph{partial equivalence relation} (per) on a type $X$ is a
  symmetric transitive relation on $X$; that is, a binary relation
  $R(x,x')$ of type $(X,X)$ such that
  \begin{align*}
    R(x_1,x_2) & \sq R(x_2,x_1) \\
    R(x_1,x_2), R(x_2,x_3) & \sq R(x_1,x_3)
  \end{align*}
  If $\E \to \B$ is an ordered regular fibration and $X \in \B$, then
  a per on $X$ is thus an object $r=\den{R}$ over $X \times X$
  satisfying $r \leq r^\circ = \sigma^* r$ (where $\sigma$ is the
  symmetry map of $X \times X$) and $r \cap r \leq r$.

  A morphism $(R,X) \to (S,Y)$ of pers is a relation $F$ of
  type $(X,Y)$ satisfying
  \begin{align*}
    F(x,y) & \sq R(x,x) \land S(y,y) & \text{(strict)} \\
    F(x,y) \land R(x,x') \land S(y,y') & \sq F(x',y') &
    \text{(relational)} \\
    F(x,y) \land F(x,y') & \sq S(y,y') & \text{(single-valued)} \\
    R(x,x) & \sq \exists y . F(x,y) & \text{(total)}
  \end{align*}
\end{dfn}

\begin{dfn}
  The \emph{effective topos} is the category of pers in the effective
  tripos $\cat{ET}$.
\end{dfn}

That this category is indeed a topos is proved in e.g.~\cite[theorem
2.2.1]{oosten08:_realiz}.

The second approach to constructing the effective topos is due to
Carboni, Freyd and \v{S}\v{c}edrov
\cite{carboni88:_categ_approac_to_realiz_and_polym_types}.

\begin{dfn}
  \label{dfn:1}
  An \emph{assembly} $A$ over a set $X$ is given by an
  $\mathbb{N}$-indexed sequence $\{A_i \sub X\}_{i\in\mathbb{N}}$.
  The sets $A_i$ are the \emph{caucuses} and the set
  $|A|=\bigcup_iA_i$ the \emph{carrier} of $A$.  A morphism $A \to B$
  of assemblies is given by a function $f \colon |A| \to |B|$ such
  that there exists a partial recursive $\Phi_f$ satisfying the
  following condition: for any $i$ and any $a \in A_i$, $\Phi_f i$ is
  defined and $f a \in B_{\Phi_f i}$.
\end{dfn}

\begin{rem}
  \label{rem:4}
  An assembly $\{A_i \sub X\}_i$ is essentially the same thing as a
  function $X \to \PN$, because $[X, \PN] \cong [\mathbb{N}, \pwrset
  X]$ as sets.  Moreover, the ordering on assemblies over $X$ induced
  by morphisms whose underlying function is (a restriction of) the
  identity on $X$ coincides with the ordering on $\PN$-sets defined
  above.

  However, morphisms `between the fibres' are not the same: an
  assembly morphism takes no account of elements that are not
  contained in any caucus.  In particular, assemblies with exactly the
  same caucuses (even ones over different sets) must be isomorphic in
  $\cat{Asm}$, but need not be so in (the total category of)
  $\cat{ET}$.

  It does, however, follow from this that every assembly $A$ over $X$,
  say, is isomorphic to an assembly over its carrier $|A|$, and this
  is clearly the same thing as a $\PN$-set $\phi$ such that each $\phi
  x$ is non-empty.  Taking that point of view, a morphism of
  assemblies is then precisely a morphism of $\PN$-sets, and so
  $\cat{Asm}$ is equivalent to a full subcategory of $\int \cat{ET}$.
\end{rem}

\begin{prp}[{\cite[Proposition~1]{carboni88:_categ_approac_to_realiz_and_polym_types}}]
  The category $\cat{Asm}$ of assemblies and assembly morphisms is
  regular.
\end{prp}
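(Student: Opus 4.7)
The plan is to verify directly the three conditions of def.~\ref{dfn:14} for $\cat{Asm}$: existence of finite limits, existence of images, and pullback-stability of images. At each step the partial combinatory structure of the partial recursive functions $\mathbb{N} \rightharpoonup \mathbb{N}$ supplies the trackers required by the universal properties. Remark~\ref{rem:4} (identifying $\cat{Asm}$ with a full subcategory of $\int \cat{ET}$ on non-empty $\PN$-sets) could be used as a bookkeeping device, but the argument is cleanest if I just work with caucuses directly.

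For finite limits I would construct them explicitly. The terminal object is the singleton assembly $\mathbf{1}$ with $\mathbf{1}_0 = \{\ast\}$. Given assemblies $A$ and $B$, the product has carrier $|A| \times |B|$ and caucuses $(A \times B)_{\tup{i,j}} = A_i \times B_j$, with projections tracked by the first- and second-projection combinators associated to the pairing $\tup{-,-}$. Equalizers of $f, g \colon A \rightrightarrows B$ are given by the sub-assembly of $A$ with carrier $\{a \in |A| : fa = ga\}$ and caucuses inherited from $A$.

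For images, given $f \colon A \to B$ tracked by $\Phi_f$, I would take $\im f$ to be the assembly with carrier $f(|A|)$ and caucuses $(\im f)_n = f(A_n)$. Then $f$ factors as $A \overset{e}{\to} \im f \overset{m}{\mon} B$, with $e$ the corestriction tracked by the identity partial recursive function and $m$ the inclusion tracked by $\Phi_f$. That $m$ is a monomorphism is evident. Minimality among monos $m' \colon B' \mon B$ through which $f$ factors is checked as follows: given $f = m' \circ e'$ with $e'$ tracked by $\Phi_{e'}$, each $b \in (\im f)_n$ can be written as $fa = m'(e'a)$ for some $a \in A_n$, whence $b \in B'_{\Phi_{e'}(n)}$, so $\Phi_{e'}$ tracks an inclusion $\im f \mon B'$ of sub-assemblies of $B$.

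The main obstacle is pullback-stability. Given $f \colon A \to B$ and $g \colon C \to B$, both $\im(g^* f)$ and $g^*(\im f)$, as sub-assemblies of $C$, have carrier $g^{-1}(f(|A|))$, so the substance of the claim lies in the agreement of caucus structures. The caucus of $\im(g^* f)$ at $n$ is the set of $c \in |C|$ with $(a,c) \in (g^* A)_n$ for some $a$ satisfying $fa = gc$, while that of $g^*(\im f)$ at $\tup{i,j}$ is $\{c \in C_i : gc \in f(A_j)\}$. The required isomorphism of sub-assemblies reduces, modulo reindexing, to closure of the partial recursive functions under pairing, projection, and composition; the delicacy here is not a deep obstruction but careful construction of the mutually inverse tracker functions relating the two indexing conventions.
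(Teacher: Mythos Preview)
The paper does not supply its own proof of this proposition; it is stated with a citation to \cite[Proposition~1]{carboni88:_categ_approac_to_realiz_and_polym_types} and then used without further argument. Your direct verification of finite limits, images, and pullback-stability of images is correct and is the standard approach; the partial recursive structure supplies the required trackers exactly as you describe, and the pullback-stability step really is just the bookkeeping of matching the two caucus-indexing conventions via the pairing and swap combinators. There is nothing in the paper to compare your argument against.
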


\begin{dfn}
  \label{dfn:3}
  The \emph{effective topos} $\cat{Eff}$ is the exact completion
  $\cat{Asm}_{\mathrm{ex/reg}}$ of $\cat{Asm}$.
\end{dfn}

By corollary~\ref{cor:2}, the exact completion of $\cat{Asm}$ is the
category of maps in the splitting (def.~\ref{dfn:17}) of the
equivalences in $\Rel(\cat{Asm})$.  An equivalence in an allegory is
the same thing as a monad $s$ that is symmetric (i.e.~$s^\circ = s$),
and a morphism of idempotents between two such is precisely a
(bi)module (def.~\ref{dfn:10}), because in this locally ordered
context a module $m \colon s \prof s'$ is indeed simply a morphism
such that $m s = m = s' m$.

\subsection{Relating the two}
\label{sec:relating-two}

\begin{dfn}
  We will denote by $\Rel(\cat{Asm})_{|\Set}$ the full sub-2-category
  of $\Rel(\cat{Asm})$ on the constant assemblies, which can be
  identified with the locally ordered 2-category whose objects are
  sets $X,Y,\ldots$ and in which a morphism $X \to Y$ is given by an
  assembly $\{A_i \sub A\}_i$ together with a jointly monic span of
  functions $X \leftarrow |A| \to Y$.  The ordering is induced in the
  obvious way by (necessarily unique) assembly morphisms.
\end{dfn}

\begin{rem}
  \label{rem:1}
  Because assemblies with the same caucuses are isomorphic in
  $\cat{Asm}$ and in $\Rel(\cat{Asm})_{|\Set}$ (remark~\ref{rem:4}),
  we may assume without loss of generality that a morphism in the
  latter from $X$ to $Y$ is given by an assembly $\{A_i \sub X \times
  Y\}_i$.
\end{rem}

\begin{lem}
  \label{lem:1}
  $\Rel(\cat{Asm})_{|\Set}$ is equivalent to the underlying 2-category
  of $\Matr(\cat{ET})$.
\end{lem}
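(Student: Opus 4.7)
The plan is to define a strict 2-functor $F \colon \Rel(\cat{Asm})_{|\Set} \to \Matr(\cat{ET})$ that is the identity on objects, and verify it is a 2-equivalence. For the action on hom-posets, by remark~\ref{rem:1} a morphism $X \to Y$ in $\Rel(\cat{Asm})_{|\Set}$ may be taken to be an assembly $\{A_i \sub X \times Y\}_{i \in \mathbb{N}}$, and the canonical bijection $\pwrset(X \times Y)^{\mathbb{N}} \iso [X \times Y, \PN]$ of remark~\ref{rem:4} exchanges it with the $\PN$-set $\phi_A(x,y) = \set{i}{(x,y) \in A_i}$, an object of $\cat{ET}(X \times Y) = \Matr(\cat{ET})(X,Y)$. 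This bijection also preserves the ordering, since in both hom-posets $A \leq B$ holds precisely when some partial recursive $\Phi$ witnesses $i \in \phi_A(x,y) \implies \Phi i \in \phi_B(x,y)$ uniformly in $(x,y)$; so $F$ is locally an isomorphism of posets.

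Next, I verify that $F$ preserves identities and composition modulo the equivalence in $\cat{ET}$. The identity on $X$ in $\Rel(\cat{Asm})_{|\Set}$ is the diagonal span, given by the constant assembly $\{\Delta_X\}_i = \Delta_X$; the associated $\PN$-set is $(x,x') \mapsto \mathbb{N}$ if $x = x'$ and $\nullset$ otherwise, which is equivalent in $\cat{ET}(X \times X)$ to $d_! \top_X = \den{x=x'}$, the identity of $\Matr(\cat{ET})$ at $X$. For composites: in $\Rel(\cat{Asm})_{|\Set}$ the composition of $R \colon X \rel Y$ and $S \colon Y \rel Z$ is obtained by pulling back in $\cat{Asm}$ and taking the image of the outer legs along $X \times Z$, producing an assembly with $\langle j, i\rangle$-th caucus $\set{(x,z)}{\exists y.\ (x,y) \in R_i \text{ and } (y,z) \in S_j}$. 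Its $\PN$-set sends $(x,z)$ to $\set{\langle j,i \rangle}{\exists y.\ i \in \phi_R(x,y),\ j \in \phi_S(y,z)}$, which should be equivalent in $\cat{ET}(X \times Z)$ to $\den{\exists y.\, R(x,y) \wedge S(y,z)}$, the composite in $\Matr(\cat{ET})$.

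The main obstacle will be this last equivalence of $\PN$-sets: although the two are obtained by essentially the same recipe, the tripos-theoretic composite is only characterized up to preorder equivalence, so one must exhibit partial recursive functions witnessing the two inequalities between the explicit $\PN$-set above and the canonical representative of $\exists_{p_{XZ}}(p_{YZ}^* \phi_S \cap p_{XY}^* \phi_R)$ used in $\cat{ET}$. These are built routinely from the pairing $\langle -, -\rangle$ and its two projections, together with the formulas for $\wedge$ and $\exists_f$ in the effective tripos. Once this comparison is in hand, the preservation of horizontal composition follows; associativity and unitality of the resulting $F$ are automatic in the locally ordered setting, and 2-functoriality on 2-cells is immediate from the local isomorphism of posets.
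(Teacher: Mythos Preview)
Your proposal is correct and takes essentially the same approach as the paper: define an identity-on-objects correspondence via the bijection $[X\times Y,\PN]\iso \pwrset(X\times Y)^{\mathbb N}$ of remark~\ref{rem:4}, check that the orderings match, and verify that identities and composites are preserved. The paper goes in the opposite direction (sending a $\PN$-set $r$ to the assembly $\bar r_i=\set{(x,y)}{i\in r(x,y)}$) and dismisses the preservation of identities and composition as ``a simple exercise in set theory'', whereas you spell out the required recursive witnesses; but this is the same argument, and what you call the ``main obstacle'' is indeed routine.
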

\begin{proof}
  By definition, the two have the same objects.  Isomorphism on hom
  posets follows essentially from remark~\ref{rem:4}.  In detail, the
  equivalence sends $r \colon X \times Y \to \PN$ to the assembly
  $\bar r = \{\bar r_i \sub X \times Y\}_i$, where $(x,y) \in \bar
  r_i$ if $i \in r(x,y)$, together with the projections to $X$ and
  $Y$.  It follows from remark~\ref{rem:1} that this assignment is a
  bijection.

  If $\Phi$ tracks $r \leq s$ in $\cat{ET}(X \times Y)$, then it also
  tracks $\bar r \leq \bar s$ and conversely, because $\Phi i \in
  s(x,y)$ if and only if $(x,y) \in \bar s_{\Phi i}$.

  It is a simple exercise in set theory to show that this
  correspondence preserves identities and composites, and so we have a
  2-functor that is the identity on objects and locally an
  isomorphism, hence an equivalence.
\end{proof}

\begin{lem}[{\cite{carboni88:_categ_approac_to_realiz_and_polym_types}}]
  \label{lem:2}
  $\Rel(\cat{Asm})$ is equivalent to
  $(\Rel(\cat{Asm})_{|\Set})[\check{\mathrm{crf}}]$.
\end{lem}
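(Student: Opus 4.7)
The plan is to apply the universal property of the coreflexive splitting from prop~\ref{prp:13}. I first verify that $\Rel(\cat{Asm})_{|\Set}$ is itself unitary and pre-tabular. Unitarity follows because the terminal set, viewed as a constant assembly, is a unit in $\Rel(\cat{Asm})$ and the inclusion is locally full. For pre-tabularity, the top element of $\Rel(\cat{Asm})_{|\Set}(X,Y)$ is the assembly on $X \times Y$ with every caucus equal to $X \times Y$, and the product projections $X \leftarrow X \times Y \to Y$ tabulate it: each projection is a map, since any function into a constant assembly is trackable (every natural number realizes every element), and the identity $p^\circ p \cap q^\circ q = 1$ holds by inheritance from $\Rel(\cat{Asm})$, which is unitary and tabular by prop~\ref{prp:11} since $\cat{Asm}$ is regular.

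Applying prop~\ref{prp:13} to the inclusion $I \colon \Rel(\cat{Asm})_{|\Set} \mon \Rel(\cat{Asm})$, which is a unit-preserving allegory functor into a unitary tabular target, yields an essentially unique extension $F \colon \Rel(\cat{Asm})_{|\Set}[\check{\mathrm{crf}}] \to \Rel(\cat{Asm})$. For essential surjectivity, I associate to any assembly $A$ the coreflexive $s_A \leq 1_{|A|}$ in $\Rel(\cat{Asm})_{|\Set}$ whose caucuses record the realizability structure on the diagonal: $(s_A)_i = \{(x,x) \in |A| \times |A| : i \text{ realizes } x \text{ in } A\}$. The inclusion $j \colon A \mon |A|$ into the constant assembly on the carrier is a map in $\cat{Asm}$ (trackable by the identity), and by lemma~\ref{lem:7} the pair $(j, j^\circ)$ splits $s_A$ in $\Rel(\cat{Asm})$, so $F(s_A) \iso A$.

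For local full-faithfulness, a morphism $s_A \to s_B$ in $\Rel(\cat{Asm})_{|\Set}[\check{\mathrm{crf}}]$ is a relation $m \colon |A| \rel |B|$ in $\Rel(\cat{Asm})_{|\Set}$ satisfying $s_B m = m = m s_A$; the two absorption conditions say precisely that the realizers of pairs in $m$ must be compatible with those of $A$ on the left and of $B$ on the right, which is exactly the data required for $m$ to be trackable as a relation between the assemblies $A$ and $B$ themselves. Conversely, any such relation in $\Rel(\cat{Asm})(A,B)$ pulls back along $j$ to a coreflexive-absorbing relation on carriers, and the two constructions are mutually inverse and order-preserving by the local fullness of $I$.

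The main obstacle is the bookkeeping in step two: one must check that the formal splitting of $s_A$ inside $\Rel(\cat{Asm})_{|\Set}[\check{\mathrm{crf}}]$ matches the concrete allegorical splitting of $s_A$ in $\Rel(\cat{Asm})$ realized by $(j, j^\circ)$, and that the translation between absorption and trackability is natural in $A$ and $B$, so that the assignment $(A,B,m) \mapsto (s_A, s_B, m|_{|A| \times |B|})$ assembles into a quasi-inverse allegory functor to $F$ rather than merely a family of bijections on hom-posets.
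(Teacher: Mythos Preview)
The paper does not give its own proof of this lemma; it is simply cited from Carboni--Freyd--\v{S}\v{c}edrov with no argument supplied. Your approach is correct and is essentially the standard one: exhibit the universal extension $F$ via prop.~\ref{prp:13}, then check essential surjectivity by observing that every assembly $A$ admits a monomorphism $j_A \colon A \mon \nabla|A|$ in $\cat{Asm}$ (so $j_A^\circ j_A = 1_A$ in $\Rel(\cat{Asm})$), making $A$ the splitting of the coreflexive $s_A = j_A j_A^\circ$ on a constant assembly.

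Your final paragraph overstates the difficulty. Full faithfulness of $F$ is automatic once you know that the inclusion $I$ is locally full and that the target already splits all coreflexives: for symmetric idempotents $s,s'$ split by $j,j'$ in $\Rel(\cat{Asm})$, the assignments $m \mapsto j'^\circ m j$ and $n \mapsto j' n j^\circ$ are mutually inverse order-isomorphisms between $\{m : s' m = m = m s\}$ and $\Rel(\cat{Asm})(A,B)$. This is just the general idempotent-splitting calculus, so there is no separate ``trackability'' verification to do, and no naturality bookkeeping beyond what is already packaged in the universal property of prop.~\ref{prp:13}.
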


By lemma~\ref{lem:1}, the functor $\Set \to \Rel(\cat{Asm})_{|\Set}$
is a regular equipment in the image of $\Matr(-)$, so that the results
of section~\ref{sec:kleisli-compl-compr} apply: a coreflexive morphism
is precisely a comonad, and the splitting of these is precisely the
category of comodules.  So by the last result $\Rel(\cat{Asm})$ is
$\mathrm{Mod}^\co(\Matr(\cat{ET}))$, and in particular a comonad in
$\Matr(\cat{ET})$ is an assembly.  One might then wonder whether
$\cat{Asm}$ itself could turn out to be the co-Eilenberg--Moore
completion $\mathrm{EM}^\co(\Matr(\cat{ET}))$, but it is not: a
coreflexive $h \colon X \to X$ is a $\PN$-set such that $h x x' \sub
\den{x = x'}$, and so $\hat h x = h x x$ is a $\PN$-set, or indeed an
assembly, over $X$.  A morphism $h \to g$ of coreflexives is a
function $f \colon X \to Y$ such that $g(x,x') \leq h(fx, fx')$, or
equivalently such that there exists a recursive $\Phi$ such that if $n
\in \hat h$ then $\Phi n \in \hat h f x$.  In other words,
$\mathrm{EM}^\co(\Matr(\cat{ET}))$ is precisely the category $\int
\cat{ET}$ of $\PN$-sets.  This shouldn't be too surprising, since we
are dealing with the co-Eilenberg--Moore completion of an equipment,
which we know corresponds to the comprehensive completion of a
fibration, and the base category of the latter is the total category
of the original fibration \cite[theorem~3.1]{maietti12:_unify}.

\begin{prp}
  \label{prp:6}
  Let $\E \to \B$ be an ordered regular fibration.  The category of
  pers (def.~\ref{dfn:2}) in $\E$ is equivalent to $\Map(\Matr(\E)
  [\check{\mathrm{sym}}])$, where $\mathrm{sym}$ is the class of
  symmetric idempotents in $\Matr(\E)$ (def.~\ref{dfn:35}).
\end{prp}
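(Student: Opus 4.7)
The plan is to exhibit an equivalence (in fact an isomorphism) between the category of pers in $\E$ and $\Map(\Matr(\E)[\check{\mathrm{sym}}])$ by unwinding the defining data on both sides, using that $\Matr(\E)$ is an allegory (by prop.~\ref{prp:20}, lemma~\ref{lem:5} and prop.~\ref{prp:16}).

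First I would identify the objects. By def.~\ref{dfn:17} together with def.~\ref{dfn:35}, an object of $\Matr(\E)[\check{\mathrm{sym}}]$ is a symmetric idempotent $r \colon X \rel X$ in $\Matr(\E)$. A per, by def.~\ref{dfn:2}, is a morphism $r$ satisfying $r \leq r^\circ$ and $rr \leq r$. Since $\Matr(\E)$ is locally ordered, applying $(-)^\circ$ to $r \leq r^\circ$ forces $r = r^\circ$, and then by lemma~\ref{lem:8} the symmetric transitive $r$ is idempotent. The converse is trivial, so the two notions of object coincide.

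Next I would translate the hom-sets. By def.~\ref{dfn:17} a morphism $R \to S$ in $\Matr(\E)[\check{\mathrm{sym}}]$ is a relation $F \colon X \rel Y$ with $FR = F = SF$, and by lemma~\ref{lem:3} it is a map precisely when $F \dashv F^\circ$ in the splitting, i.e.\ when $R \leq F^\circ F$ and $FF^\circ \leq S$ (noting that the identity on $R$ in the splitting is $R$ itself). I would then verify, via the relational formulas from the proof of lemma~\ref{lem:5}, that these four inequalities are equivalent to the four per-morphism axioms of def.~\ref{dfn:2}: $FF^\circ \leq S$ reads $F(x,y) \land F(x,y') \sq S(y,y')$, i.e.\ single-valuedness; $R \leq F^\circ F$ reads $R(x,x') \sq \exists y. F(x,y) \land F(x',y)$, whose diagonal case $x'=x$ is totality; and the two module equations $FR = F$, $SF = F$ decompose into relationality and strictness once the per axioms on $R$ and $S$ are available.

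The main obstacle will be the bookkeeping required to reverse these equivalences, since the per axioms and the allegory-theoretic conditions interlock. For instance, recovering strictness $F(x,y) \sq R(x,x) \land S(y,y)$ from the module laws uses the fact that a per is reflexive on its support, $\exists x'. R(x,x') \iff R(x,x)$, which itself requires symmetry and transitivity; and to promote pointwise totality to the full inequality $R \leq F^\circ F$ one needs strictness, relationality and the symmetry of $R$.

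Finally I would check functoriality. On both sides the identity at $R$ is $R$ itself (viewed as an identity per-morphism $R \to R$, respectively as the identity on $R$ in the splitting), and on both sides composition is the relational composite in $\Matr(\E)$ (for the splitting by def.~\ref{dfn:25}, whose defining coequalisers collapse in the locally ordered case since the bimodules in question are already absorbed by the idempotents; for per-morphisms this is the standard definition). Since the correspondence on hom-sets is the identity on the underlying relations, the resulting functor is bijective on objects and fully faithful, giving the desired equivalence.
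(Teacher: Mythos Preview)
Your proposal is correct and follows essentially the same route as the paper: identify objects (symmetric idempotents $=$ pers via lemma~\ref{lem:8}), then show that the pair of conditions ``$FR=F=SF$'' corresponds to (strict)$+$(relational) while ``$R\leq F^\circ F$ and $FF^\circ\leq S$'' corresponds to (total)$+$(single-valued), with the same interlocking of the reverse implications that you flag. The paper's proof omits your final paragraph on identities and composition, treating the category structure as evident once the hom-sets are matched.
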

\begin{proof}
  (Cf.~\cite[corollary A3.3.13(ii) \textit{et
    seq}.]{johnstone02:_sketc_of_eleph}) It is obvious that a
  symmetric idempotent in $\Matr(\E)$ is the same thing as a per in
  $\E$.

  Suppose $f \colon r \prof s$ is a morphism of symmetric idempotents
  that has a right adjoint $f^\bullet$ (which is necessarily equal to
  $f^\circ$).

  The axioms (strict) and (relational) are equivalent to $f$'s being a
  morphism of idempotents, i.e.~its satisfying $fr = f$ and $sf = f$
  (and consequently $f = sfr$).  In one direction, we have that
  $f(x,y)$ is equivalent to $f(x',y) \land r(x,x')$, and by symmetry
  and transitivity $r(x,x')$ implies $r(x,x)$.  The same works for $s$
  and so (strict) follows.  The condition $f = sfr$ easily implies
  (relational).  Conversely, (strict) and (relational) together imply
  that the three conditions
  \begin{align*}
    sfr & = \exists \xi,\upsilon. f(\xi,\upsilon) \land r(x,\xi) \land
    s(y,\upsilon)
    \\
    fr  & = \exists \xi. f(\xi,y) \land r(x,\xi) \\
    sf & = \exists \upsilon. f(x,\upsilon) \land s(y,\upsilon)
  \end{align*}
  are equivalent.  If (strict) holds then $f(x,y)$ implies $r(x,x)$
  and so implies $\exists \xi.f(\xi,y) \land r(x,\xi)$, while
  (relational) yields that $sfr$ as above implies $f(x,y)$, so that $f
  = sfr = fr = sf$.

  The axioms (total) and (single-valued) correspond to the adjunction
  $f \dashv f^\circ$; that is, to $r \leq f^\circ \cmp f$ and $f \cmp
  f^\circ \leq s$.  The latter gives
  \begin{equation*}
    f \cmp f^\circ \leq s \quad \text{\Iff} \quad \exists
    \xi. f(\xi,y) \land f(\xi,y') \Rightarrow s(y,y')
  \end{equation*}
  which by adjointness of $\exists$ and weakening is equivalent to
  (single-valued).  Finally, we have
  \begin{equation*}
    r \leq f^\circ \cmp f \quad \text{\Iff} \quad r(x,x') \Rightarrow
    \exists \upsilon. f(x,\upsilon) \land f(x',\upsilon)
  \end{equation*}
  which yields (total) when $x=x'$.  Conversely, $r(x,x')$ yields
  $r(x,x)$ and $r(x',x')$, and from these we get $\exists \upsilon,
  \upsilon'.f(x,\upsilon) \land f(x',\upsilon')$; (relational) gives
  $f(x,y) \land f(x,y')$ from this and $r(x,x')$, (single-valued)
  gives $s(y,y')$ and (relational) again yields $f(x,y) \land
  f(x',y)$.
\end{proof}

So the two different constructions of $\cat{Eff}$ are linked as shown
by the `map' in figure~\ref{fig:efftopmap} (where $\bullet$ denotes a
category that we don't really care about): we may start with the
effective tripos $\cat{ET}$, move to the corresponding framed
bicategory of relations, take the co-Eilenberg--Moore and then the
Kleisli completions, functionally complete the result and pass back to
a regular fibration (which is allowed because everything is locally
ordered), and the effective topos will be the base category of the
result.  The construction that starts with the category of assemblies
merges with this one at the stage indicated, modulo the slight
mismatch noted above between $\cat{Asm}$ and
$\mathrm{EM}^\co(\Matr(\cat{ET}))$.

\begin{figure}[hbtp]
  \centering
  \def\labelstyle{\textstyle}
  \begin{equation*}
    \xymatrix@+.5em@M+1em{
      \cat{ET} \ar@{|->}[r]^-{\Matr(-)} \ar@{|->}[ddd]
      \ar@/_3pc/@{|->}[dddd]_{\txt{cat. of \\ pers}}
      & \Set \to \Matr(\cat{ET})
      \ar@{|->}[d]_{\operatorname{EM}^\co(-)}
      & \cat{Asm} \ar@{|->}[dl]^{\Rel(-)} \\
      & \bullet \to \Rel(\cat{Asm})
      \ar@{|->}[d]_{\operatorname{Kl}_{\mathrm{sym}}(-)} \\
      & \bullet \to \Rel(\cat{Eff})
      \ar@{|->}[d]^{\txt{functional\\completion}} \\
      \operatorname{Sub}(\cat{Eff}) \ar@{|->}[d]^{\txt{base cat.}}
      & \ar@{|->}[l]_-{\Pred(-)} \cat{Eff} \to \Rel(\cat{Eff}) \\
      \cat{Eff}
    }
  \end{equation*}
  \caption{\label{fig:efftopmap}Constructions of the effective topos}
\end{figure}
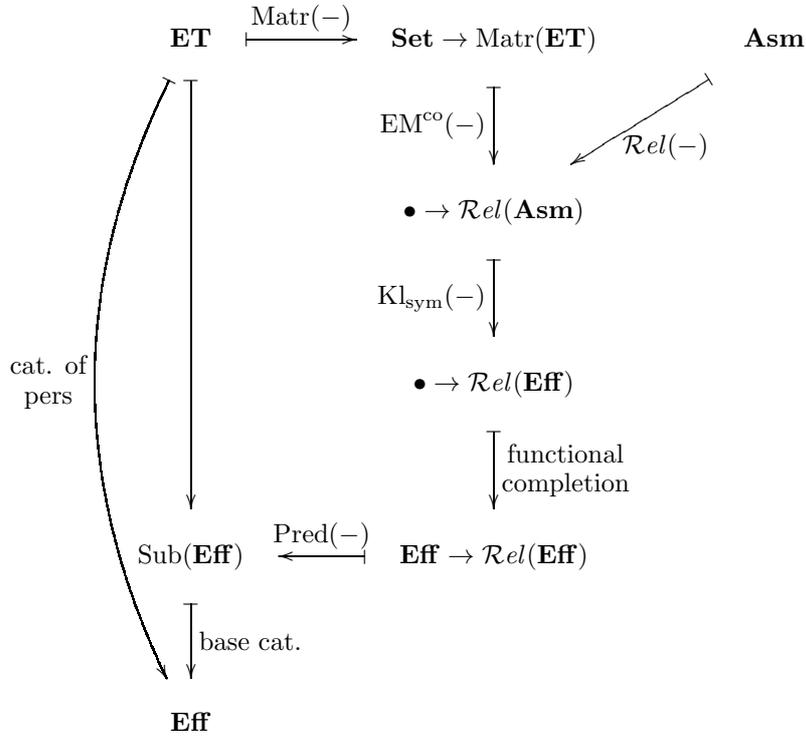

%%% Local Variables: 
%%% mode: latex
%%% TeX-master: "main"
%%% End: 

\chapter{Conclusions and future work}
\label{cha:concl-future-work}

\section{Recapitulation and comparison with existing work}
\label{sec:recapitulation}

Our main concrete results have been as follows:
\begin{itemize}
\item For regular theories, the fibrational and bicategorical
  semantics outlined in section \ref{sec:motivation} are equivalent
  once the latter is slightly augmented, and definitions and
  constructions may be translated back and forth across this
  equivalence in interesting and useful ways.  (And this is true not
  just in the (locally/fibrewise) ordered context of logic in the
  traditional sense, but also in the `proof-relevant' realm of type
  theory and category theory.)
\item In particular, one may translate the category-of-pers
  construction into the world of equipments, where it naturally
  decomposes into a sequence of constructions that each has a
  universal property, namely, the co-Eilenberg--Moore completion,
  followed by the Kleisli completion with respect to symmetric monads,
  followed by the functional completion.
\end{itemize}
Re-translating this back into the world of fibrations exhibits the
category of pers as the category of definable functions of the
effective completion of the comprehensive completion.\footnote{Note to
  arXiv version: More recently, Pasquali \cite{pasquali14:_remar} has
  derived this decomposition of the category-of-pers construction
  entirely within the world of fibrations.}  This decomposition
illuminates, to a certain extent, the relationship between the two
ostensibly quite different constructions of the effective topos,
showing that they `converge' sooner than one might expect.

It is also worth commenting on the techniques we have used to obtain
these results, and on the auxiliary results we have got along the way:
\begin{itemize}
\item Proposition~\ref{prp:16}, that a bicategory of relations is the
  same thing as a unitary tabular allegory, does not seem to have been
  published before, although it is hardly a surprising result.  It is
  what connects our work to the construction of the effective topos by
  taking the exact completion of $\cat{Asm}$, i.e.~by splitting
  idempotents in the allegory $\Rel(\cat{Asm})$.
\item Section~\ref{sec:2-extranaturality} defined bicategorical ends
  and coends in $\Cat$, showing that the former behaved exactly as one
  would expect.  Section~\ref{sec:coends-again} then showed how to
  compute coends as weighted and as conical colimits, the previous
  section having shown how to construct the latter in $\Cat$.  All of
  these results are new, as far as I can tell (but see the footnote
  p.~\pageref{fn:1}), although it would be useful to compare our
  construction of colimits with that of `2-filtered' ones given in
  \cite{dubuc06filt}.
\end{itemize}
These last results then meant that we could define $\Biprof$ as a full
sub-3-category of $\Bicat$, thereby avoiding a lot of calculation, but
also that we could treat its morphisms as category-valued functors
composed using coends in the usual way.

In the following sections we discuss in a little more detail how our
work on regular fibrations and regular equipments, and on equipments
in general, is related to some existing work.

\subsection{Comprehension and tabulation}
\label{sec:compr-tabul}

Suppose given a regular fibration $\E$ over $\B$ that has full
comprehension.  If equality in $\E$ is extensional
(def.~\ref{dfn:33}), then $\B$ has all pullbacks, which satisfy the
Beck--Chevalley condition in $\E$
(cf.~\cite[theorem~4.8]{lack10:_bicat}): for a cospan $(f,g) \colon X
\to Z \leftarrow Y$, put $P(f,g) = (f \times g)^*d_!\top_Z = \den{fx =
  gy}$.  Then $\{P(f,g)\}$ is the pullback of $f$ along $g$, by the
following bijections:
\begin{prooftree}
  \AXC{$\xymatrix{ W \ar[dr]_{(h,j)} \ar[rr] & & \{P(f,g)\} \ar[dl] \\ &
      X \times Y}$}
  \UIC{$\xymatrix{ \im (h,j) \ar[r] & P(f,g)}$}
  \UIC{$\xymatrix{ \im (fh,gj) \ar[r] & \im d }$}
  \UIC{$\xymatrix{ W \ar[dr]_{(fh,gj)} \ar[rr] & & \{ \im d \} \ar[dl]
      \\ & Z \times Z}$}
\end{prooftree}
By prop.~\ref{prp:21} extensionality means that each diagonal $d
\colon X \to X \times X$ is an injection, so that morphisms of the
last form are the same as factorizations of $(fh,gj)$ through $d$, of
which there is at most one, which exists precisely when $fh=gj$.  (The
Beck--Chevalley condition then follows from the fullness of
tabulations in $\Matr(\E)$.)  So $\B$ has finite limits, and hence $X
\mapsto \B/X$ is a regular fibration.  Note that this also means that
the type-(A) Beck--Chevalley condition holds in cartesian equipments
that satisfy the separability condition and that admit full
tabulations.

The adjunctions $\im \dashv \{-\}$ exhibit each fibre $\E X$ as a
reflective subcategory of $\B/X$.  If injections are closed under
composition, then this is equivalent \cite[2.12]{carboni97localiz} to
giving a factorization system on $\B$, whose right class consists of
the injections.  Then the image functors preserve pullbacks (they
always preserve pushforwards) if and only if this factorization system
is pullback-stable.  Consider the pullback square defined above: by
the Beck--Chevalley condition, we have $f^*g_! \iso (f^*g)_!
(g^*f)^*$, but $(g^*f)^*$ preserves the terminal object, so that
$f^*(\im g) \iso \im f^*g$.  Hence full comprehension implies that
image preserves pullbacks.  Therefore, from the definition of a
regular category from def.~\ref{dfn:14}, and the fact that in the
presence of full comprehension, orderedness of a fibration is
equivalent to every injection's being a monomorphism, we have the
following
(cf.~\cite[theorem~4.9.4]{jacobs99:_categ_logic_and_type_theor}).

\begin{prp}
  A regular fibration $\E$ over $\B$ is equivalent to
  $\operatorname{Sub}(\B)$ if and only if $\E$ is locally ordered and
  has full comprehension, such that every monomorphism in $\B$ is an
  injection.
\end{prp}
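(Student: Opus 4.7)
The plan is to verify each direction separately, doing most of the work in the ``if'' direction and relying on a straightforward diagram chase for the naturality of the resulting equivalence.

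($\Rightarrow$) Suppose $\E \eqv \operatorname{Sub}(\B)$. Since $\E$ is a regular fibration, so is $\operatorname{Sub}(\B) \to \B$, and hence $\B$ is regular. Each fibre $\operatorname{Sub}(X)$ is a poset of subobjects, so $\E$ is locally ordered. Extension sends a subobject $S \mon X$ to itself viewed as an object of $\B/X$; this is fully faithful by definition of the subobject order, so $\E$ has full comprehension. Finally, for any monomorphism $m \colon S \mon X$, one has $\im m = m$ as a subobject and $\{\im m\} = m$ in $\B/X$, so the unit $e_m$ is an identity and $m$ is an injection.

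($\Leftarrow$) Assume $\E$ is locally ordered with full comprehension, and that every monomorphism in $\B$ is an injection. The excerpt already notes that in an ordered fibration injections are automatically monomorphisms, so the two classes coincide. Full comprehension makes the extension functor $\{-\}_X \colon \E X \to \B/X$ fully faithful; its essential image is closed under isomorphism in $\B/X$ and consists exactly of the injections, because any injection $t$ satisfies $t \iso i_{\im t} = \{\im t\} \to X$ via the invertible unit $e_t$. Since injections coincide with monomorphisms, this essential image is precisely $\operatorname{Sub}(X)$, and we obtain a fibrewise equivalence $\{-\}_X \colon \E X \eqv \operatorname{Sub}(X)$.

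It remains to assemble these into an equivalence of fibrations over $\B$. For $f \colon Y \to X$ in $\B$ and $P \in \E X$, I would establish a natural isomorphism $f^*\{P\} \iso \{f^*P\}$ in $\B/Y$ by the chain
\begin{equation*}
\B/Y(s, f^*\{P\}) \iso \B/X(f \cmp s, \{P\}) \iso \E X(\im(f \cmp s), P) \iso \E X(f_!\im s, P) \iso \E Y(\im s, f^*P) \iso \B/Y(s, \{f^*P\}),
\end{equation*}
where the key step $\im(f \cmp s) \iso f_! \im s$ follows from pseudofunctoriality of $(-)_!$ applied to $\top$. Yoneda then gives the required isomorphism, and its coherence in $f$ is a routine matter of unpacking the adjunction and the pseudofunctor structure of $(-)_!$. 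This produces the pseudonatural equivalence $\E \eqv \operatorname{Sub}(\B)$ of fibrations.

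The main obstacle is not conceptual but combinatorial: showing that the fibrewise equivalences $\{-\}_X$ really do glue into an equivalence of fibrations, i.e.\ that the naturality isomorphisms just described are coherent with respect to composition of base morphisms. Everything else is essentially a bookkeeping exercise; the coincidence of monomorphisms and injections, which is given by hypothesis plus the general observation about ordered fibrations, is what makes the construction go through.
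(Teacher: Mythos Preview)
Your proof is correct and complete in substance; the approach differs from the paper's. The paper does not argue fibrewise but instead uses the discussion preceding the proposition: from the hypothesis that every monomorphism is an injection it deduces (via prop.~\ref{prp:21}, since diagonals are monic) that equality is extensional, hence that $\B$ has pullbacks satisfying Beck--Chevalley; then the reflective inclusions $\E X \hookrightarrow \B/X$ assemble into a pullback-stable factorization system whose right class is the injections, so $\B$ is regular; finally, since orderedness forces injections to be monic and the hypothesis gives the converse, the right class is exactly the monomorphisms and $\E$ is the subobject fibration. Your argument bypasses the factorization-system machinery entirely and goes straight to the fibrewise equivalence via the reflection $\im \dashv \{-\}$, which is more economical. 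The paper's route has the side benefit of explicitly exhibiting $\B$ as a regular category, which is arguably part of the content of the statement.

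One small presentational point: your naturality chain begins with $\B/Y(s, f^*\{P\})$, which presupposes that the pullback $f^*\{P\}$ exists in $\B$. Since that is something you are implicitly deriving, it would be cleaner to run the chain from the other end, showing that $\{f^*P\}$ has the universal property $\B/Y(s,\{f^*P\}) \iso \B/X(f\cmp s,\{P\})$ and hence \emph{is} the pullback. The argument is the same; only the order changes.
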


We also have the following result, an evident consequence of the
definition of injections.

\begin{prp}
  A regular fibration over $\B$ is equivalent to
  $\operatorname{Arr}(\B)$ if and only if it has full comprehension
  and every morphism in $\B$ is an injection.
\end{prp}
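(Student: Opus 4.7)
If $\E \eqv \operatorname{Arr}(\B)$, I would compute directly in $\operatorname{Arr}(\B)$. Its fibre over $X$ is $\B/X$, and the pushforward along $t \colon Y \to X$ is simply post-composition with $t$, so $\im t = t_!\top_Y = t_!\,1_Y = t$. Hence $\im \colon \B/X \to \B/X$ is the identity functor, and its right adjoint $\{-\}$ is also the identity, making comprehension fully faithful. The unit $e_t \colon Y \to \{\im t\} = Y$ is $1_Y$, so every $t$ is an injection. These properties are transported along the assumed equivalence.

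\textbf{Reverse direction.} Suppose $\E$ has full comprehension and every morphism in $\B$ is an injection. Full comprehension says the counit of $\im \dashv \{-\}$ is invertible (equivalently, $\{-\}$ is fully faithful); the injectivity hypothesis says the unit $e_t$ is invertible for every $t \in \B/X$. Together these make $\im \dashv \{-\}$ an adjoint equivalence $\B/X \eqv \E X$ for each $X \in \B$. Since every diagonal $d \colon X \to X \times X$ is in particular an injection, prop.~\ref{prp:21} yields that equality in $\E$ is extensional, and then the construction $\{P(f,g)\} = \{\den{fx=gy}\}$ sketched at the beginning of section~\ref{sec:compr-tabul} supplies pullbacks in $\B$. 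So $\B$ is finitely complete and $\operatorname{Arr}(\B)$ is a genuine regular fibration.

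\textbf{Assembling the equivalence of fibrations.} It remains to check that the pointwise adjoint equivalences are pseudo-natural in $X$ and preserve the regular structure. For pseudo-naturality, given $f \colon X \to Y$ in $\B$, one has on the nose $f_!(\im t) = (ft)_!\top_Y = \im(f_! t)$, where $f_! \colon \B/X \to \B/Y$ is post-composition; transposing across the adjunctions $f_! \dashv f^*$ (in both $\operatorname{Arr}(\B)$ and $\E$) and $\im \dashv \{-\}$ yields the required canonical iso $f^*\{P\} \iso \{f^*P\}$. Preservation of fibrewise finite products by $\{-\}$ is automatic, since it is a right adjoint and the terminal object of $\E X$ is sent to $1_X$ by coherence. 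Preservation of pushforwards by $\im$ is the calculation above.

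\textbf{Main obstacle.} The substantive content of the proof is really the transposition $e_t$ invertible $\iff$ injections, which is immediate from the definition, plus prop.~\ref{prp:21} to recover finite limits in $\B$. The remaining work is bookkeeping: verifying that the coherence isomorphisms of the resulting equivalence of pseudofunctors $\B^\op \to \Cat$ satisfy the required axioms. Since everything is forced by the adjunctions already at hand, no new geometric input is needed, and this should reduce to mate calculations of the kind already used in section~\ref{sec:compr-tabul}.
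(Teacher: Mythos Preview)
Your proposal is correct and follows the same idea the paper has in mind: full comprehension makes the counit of $\im \dashv \{-\}$ invertible, while ``every morphism is an injection'' is by definition the statement that every unit $e_t$ is invertible, so together the adjunction is an equivalence $\B/X \eqv \E X$. The paper treats this as immediate, simply calling it ``an evident consequence of the definition of injections'' and giving no further argument; you have spelled out in detail the bookkeeping (pseudo-naturality, structure preservation, recovering finite limits in $\B$ via prop.~\ref{prp:21}) that the paper leaves implicit.
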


We clearly have
\begin{align*}
  \Matr(\operatorname{Sub}(\C)) & \eqv \C \to \Rel(\C) \\
  \Matr(\operatorname{Arr}(\C)) & \eqv \C \to \Span(\C)
\end{align*}
and so the previous two results translate to characterizations of
equipments of relations and of spans.  Saying that a span $(f,g)
\colon X \to Y \times Z$ \emph{tabulates itself} if the following
diagram is a tabulation
\begin{equation*}
  \xymatrix{
    X \ar[rr]|+^1 \ar[d]_f \ar@{}[drr]|{\Downarrow} & & X \ar[d]^g \\
    Y \ar[r]|+_{f^\bullet} & X \ar[r]|+_{g_\bullet} & Z
  }
\end{equation*}
then we have
\begin{prp}
  A regular equipment $\cat{B} \to \bB$ with co-Eilenberg--Moore
  objects is
  \begin{itemize}
  \item the equipment of relations in $\cat{B}$ if it is locally
    ordered and if every relation in $\cat{B}$ tabulates itself in
    $\bB$; or
  \item the equipment of spans in $\cat{B}$ if every span in $\cat{B}$
    tabulates itself in $\bB$.
  \end{itemize}
\end{prp}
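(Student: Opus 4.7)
The plan is to transport the statement across the equivalence $\bicat{RegEqt} \eqv \bicat{RegFib}$ and then invoke the two characterizations of $\operatorname{Sub}(\cat{B})$ and $\operatorname{Arr}(\cat{B})$ as regular fibrations that were just established. First, by prop.~\ref{prp:19} the existence of co-Eilenberg--Moore objects for comonads in $\bB$ ensures that every morphism of $\bB$ admits a full tabulation, and the preceding result identifying tabulation with comprehension then gives that the associated regular fibration $\Pred(\bB)$ has full comprehension. Since $\bB \eqv \Matr(\Pred(\bB))$, it suffices to show that $\Pred(\bB) \eqv \operatorname{Sub}(\cat{B})$ in the first case and $\Pred(\bB) \eqv \operatorname{Arr}(\cat{B})$ in the second; the formulas $\Matr(\operatorname{Sub}(\cat{B})) \eqv (\cat{B} \to \Rel(\cat{B}))$ and $\Matr(\operatorname{Arr}(\cat{B})) \eqv (\cat{B} \to \Span(\cat{B}))$ noted in the text will then finish the argument.

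Next, I would translate the self-tabulation hypothesis into the fibrational notion of injection from section~\ref{sec:regular-fibrations}. Given a span $(f,g) \colon X \to Y \times Z$, the morphism $g_\bullet f^\bullet \colon Y \prof Z$ corresponds under $\Matr(\Pred(\bB)) \eqv \bB$ to the predicate $\im(f,g) = (f,g)_!\top_X$ on $Y \times Z$ (essentially by the proof of prop.~\ref{prp:20}, where $g_\bullet R f^\bullet \iso (f \times g)_!R$). Its tabulation in $\bB$ is therefore the comprehension $\{\im(f,g)\}$ equipped with the canonical $i \colon \{\im(f,g)\} \to Y \times Z$, and so the span tabulates itself exactly when the factorization $e_{(f,g)} \colon X \to \{\im(f,g)\}$ is invertible, i.e., when $(f,g)$ is an injection.

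Finally, in the second case the hypothesis gives that every morphism of $\cat{B}$ of the form $(f,g) \colon X \to Y \times Z$ is an injection; taking $Y = \mathbf{1}$, so that any $h \colon X \to W$ is recovered from $(!_X, h) \colon X \to \mathbf{1} \times W \iso W$, every morphism of $\cat{B}$ is an injection, and the corresponding characterization of $\operatorname{Arr}(\cat{B})$ applies. In the first case the same argument, restricted to jointly monic spans, shows that every monomorphism of $\cat{B}$ is an injection, and since $\Pred(\bB)$ is locally ordered by assumption, the corresponding characterization of $\operatorname{Sub}(\cat{B})$ applies.

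The main subtlety is the middle step—confirming that the tabulation of $g_\bullet f^\bullet$ computed in the equipment matches, under the equivalence $\bB \eqv \Matr(\Pred(\bB))$, the comprehension of $\im(f,g)$ in the fibration. Once this identification is in place the argument reduces to unwinding the definition of an injection, and both cases drop out by direct appeal to the previously stated fibrational propositions.
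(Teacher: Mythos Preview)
Your proposal is correct and follows essentially the same approach as the paper: the paper does not give a detailed proof of this proposition, but simply remarks that ``the previous two results translate to characterizations of equipments of relations and of spans'' after noting $\Matr(\operatorname{Sub}(\cat{B})) \eqv (\cat{B} \to \Rel(\cat{B}))$ and $\Matr(\operatorname{Arr}(\cat{B})) \eqv (\cat{B} \to \Span(\cat{B}))$, and your argument is precisely the unwinding of that translation via the identification of self-tabulation with the injection condition.
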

This is clearly very similar to the characterization in
\cite[theorems~4,~7]{carboni84:_bicat} of 2-categories of relations
and of spans, although they do not require even cartesianness of the
underlying bicategory $\bB$ but instead that $\Map(\bB)$ be locally
discrete.

In \cite{lack10:_bicat} another characterization is given of
2-categories of spans: they are those that are cartesian and admit
Eilenberg--Moore objects for comonads, and in which every map is
comonadic.  Expressed in our language, taking a cartesian bicategory
to be a chordate cartesian equipment as in prop.~\ref{prp:10},
comonadicity means that if $f \colon X \to Y$ is a map then the cell
\begin{equation*}
  \xymatrix{
    X \ar[rr]|+^1 \ar[d]_f \ar@{}[drr]|{\Downarrow} & & X \ar[d]^f \\
    Y \ar[r]|+_{f^\bullet} & X \ar[r]|+_{f_\bullet} & Y
  }
\end{equation*}
exhibits $X$ as the EM object of $f_\bullet f^\bullet$.  By
prop.~\ref{prp:19} this is the same as saying that
\begin{equation*}
  \xymatrix{
    X \ar[rr]|+^1 \ar[d]_t \ar@{}[drr]|{\displaystyle\Downarrow} & & X
    \ar[d]^f \\
    \mathbf{1} \ar[r]|+_{t^\bullet} & Y \ar[r]|+_{f_\bullet f^\bullet}
    & Y
  }
\end{equation*}
is a tabulation.  But $f_\bullet f^\bullet t^\bullet$ is canonically
isomorphic to $f_\bullet t^\bullet$, which is precisely $\im f$, and
the tabulation above is the extension $\{ \im f \}$.  So to say that a
tight map is comonadic in $\bB$ is precisely to say that it is an
injection with respect to the fibration $\mathrm{Pred}(\bB)$.  Again,
this is very similar, though not identical, to the result above.  In
fact, the only real difference here is that \cite{lack10:_bicat}
derives the separability and Frobenius conditions from the
comonadicity axiom rather than postulating them.  It would be
interesting to see whether a similar but restricted condition would
suffice to axiomatize regular equipments, with or without the type-(A)
Beck--Chevalley condition.

\subsection{Equipments}
\label{sec:equipments}

Apart from the concrete results listed above, probably the most
significant thing we have done is to define the 3-category $\Biprof$
and to identify equipments, in a suitably general sense, as
pseudo-monads in it.  The important step here was the construction of
Kleisli objects in $\Biprof$ in theorem~\ref{thm:kleisli-2prof}.  We
have seen that these monads, when taken over locally discrete
2-categories, are essentially the same as both Wood's and Shulman's
notions when these are taken not to require right adjoints for tight
morphisms, and their relationship with the equipments of Carboni
et.~al.~is clear.

One kind of equipment that we have not compared with ours is Verity's
notion of a \emph{double bicategory} \cite{verity92:_enric}.  Such a
thing is given by a pair of 2-categories with the same objects,
thought of as `vertical' and `horizontal', and the 2-cells of these
act in a functorial way on a set of `squares', whose boundaries are
vertical and horizontal 1-cells, as in a double category.  There is
also an operation of horizontal composition on squares, that commutes
suitably with the action of vertical and horizontal 2-cells.  We won't
work out the details here, but one would expect our equipments, in the
most general sense, to be to double bicategories as Wood's equipments
(etc.) are to double categories, that is, to be (equivalent to) double
bicategories whose squares are uniquely determined by certain
horizontal 2-cells.  Indeed, a monad $T \colon \bK \prof \bK$ in
$\Biprof$ has an underlying vertical 2-category, namely $\bK$, a
horizontal one, namely $\bK_T$, and a set of squares given by the
objects of the 2-category of elements $\int T$.  The squares are acted
on by the 2-cells of $\bK$ and $\bK_T$ and inherit a horizontal
composition operation from the multiplication of $T$.  (This structure
is what \cite[def.~1.2.4]{verity92:_enric} would call
$\mathrm{Sq}(\bK_T, \bK, F_T)$.)  After defining double bicategories,
Verity goes on to use them to discuss morphisms of equipments more
general than the ones we have defined.  We will suggest some ways of
doing this in our context in section~\ref{sec:more-equipments} below.

Shulman \cite{shulman12:_exact} establishes an equivalence between a
kind of equipment called a \emph{framed allegory} and a generalized
notion of \emph{site}, a result that must surely be closely related to
ours.  In particular, Pavlovi\'c \cite{pavlovic96:_maps_ii} explains a
way of viewing a site as a regular fibration.  On the other hand,
Shulman's framed allegories are not required to have finite products,
whereas what we have done relies quite heavily on their presence.
Regardless, these ideas should be compared to and incorporated with
ours in future work.

There has been work done before on constructing Kleisli objects in
$\Bicat$.  In \cite{cheng04:_pseud}, it is shown that for a
pseudo-monad $T \colon \bK \to \bK$ in $\Bicat$, the objects of $\bK$
and the hom-categories $\bK(k,Tk')$ form a 2-category, and their
theorem~4.3 then says that this 2-category represents right
$T$-modules (which they call `cocones') in $\Bicat$.  Our
theorem~\ref{thm:kleisli-2prof} strictly generalizes this result,
because (as noted after corollary~\ref{cor:1}) the Kleisli object in
$\Biprof$ of a representable monad is also its Kleisli object as a
monad in $\Bicat$.  The idea behind the construction in
\textit{op.~cit.} is, in our language, that for an equipment $\bK \to
\bM$ that is the Kleisli object of a monad $T$ on $\bK$ in $\Bicat$,
together with another monad $S$ on $\bK$, to lift the latter to a
pseudo-monad on the equipment is precisely to give a distributive law
\cite{marmolejo99:_distr_laws_for_pseud} of $S$ over $T$.  Now the
point of our theorem~\ref{thm:kleisli-2prof} is that in fact every
equipment is the Kleisli object of some monad in $\Biprof$, just not
necessarily a representable one.  So the problem of lifting monads in
the above sense is contained in the problem of constructing
distributive laws in $\Biprof$.

\section{Future directions}
\label{sec:future-directions}

In this last section we give some ideas and prospects for future work
based on what we have already done.  

\subsection{More on equipments}
\label{sec:more-equipments}
\label{sec:lax-extensions}

We have seen, in section~\ref{sec:defin-equipm}, that while equipments
in the most general sense can be viewed as monads in $\Biprof$ and
equipment morphisms are then monad morphisms in a straightforward way,
this doesn't quite work for 2-cells.  The right notion of equipment
2-cell would reduce to a vertical transformation between double
categories in the case of a locally discrete base bicategory, but
neither monad 2-cells nor Kleisli 2-cells fit the bill.  As it turned
out, we were able to define an ordinary category of equipments, which,
together with double transformations between associated double
functors, was enough to get the results of
section~\ref{sec:comp-with-regul}.

Section \ref{sec:defin-equipm} showed that an equipment morphism from
$\bK \to \bK_T$ to $\bL \to \bL_S$ is given by a functor $F \colon \bK
\to \bL$ and a morphism $T \to S(F,F)$ of pseudo-monoids, the latter
giving the effect on hom-categories of a functor $\tilde F \colon
\bK_T \to \bL_S$.  Along the same lines, an equipment 2-cell will be a
transformation $\alpha \colon F \tc G$ together with a coherent
isomorphism between the two evident morphisms $T \to S(F,G)$, which
gives the morphism-components required to make $\alpha$ a
pseudonatural transformation $\tilde F \tc \tilde G$.  Relaxing the
condition that this latter morphism be invertible should then give the
right notion of 2-cell, and, just as an equipment 2-cell between
morphisms $\tilde F$ and $\tilde G$ from $T$ to $S$ is a (pseudo)
morphism of $T^*$-algebras, where $T^*$ is the precomposition-with-$T$
monad on the Kleisli 2-category of $S_*$, a lax equipment 2-cell ought
to be a lax algebra morphism.  This raises the possibility of using
the theory of lax morphism classifiers \cite{lack02:_codes} to reduce
the lax case to the pseudo case.

A similar possibility suggests itself when it comes to defining lax
morphisms of equipments.  One may define lax algebras for
pseudo-monads just as in def.~\ref{dfn:34}, except that the 2-cells
$\alpha$ and $\upsilon$ are not required to be invertible.  Then a lax
$T^*$-algebra structure on $F$ in the Kleisli 2-category of $S_*$
should correspond to a `lax monoid morphism' $\phi \colon T \to
S(F,F)$, i.e.~one that comes equipped with coherent morphisms $\phi
\vcmp \eta^T \to \eta^{S(F,F)}$ and $\mu^{S(F,F)} \vcmp (\phi \hcmp
\phi) \to \phi \vcmp \mu^T$, which will give a lax functor $\tilde F
\colon \bK_T \to \bL_S$ together with a transformation $\tilde F \hcmp
F_T \tc F_S \hcmp F$, not invertible in general.  Again, it may be
possible to use or generalize existing work on 2-dimensional monads to
reduce the lax case to the pseudo: for a strict 2-monad $T$ on a
strict 2-category, it is possible, under certain conditions, to
construct a new monad $T'$ such that lax $T$-algebras are precisely
strict $T'$-algebras.  That $\Biprof$ has well-behaved local colimits
suggests that it might be possible to do something similar in this
context, in order to construct lax morphism classifiers in some
3-category of equipments.  This would be useful for studying the kind
of change-of-base questions that Verity \cite{verity92:_enric}
considers, as well as liftings of monads to equipments, as discussed
in the last section, but where the lift is a lax monad
\cite{bunge74:_coher} rather than pseudo.  A classic example of the
latter situation is the fact that the ultrafilter monad on $\Set$
lifts to a lax monad on $\bicat{Rel}$, whose lax algebras are
topological spaces \cite{barr70:_relat}, but there are many other
contexts in which such constructions arise \cite{clementino04:_one}.

\subsection{`Variation through enrichment'}
\label{sec:vari-thro-enrichm}

For $\C$ any category, there is a 2-category $\bicat{S}(\C)$ given by
the full sub-2-category of $\Span([\C^\op, \Set])$ on the
representables.  Then categories enriched in $\bicat{S}(\C)$, in the
sense of e.g.~\cite[(5.5)]{benabou67:_introd} or
\cite{walters82:_sheav_cauch}, are very nearly the same as fibrations
over $\C$: by \cite{betti83:_variat} there is an equivalence between
the 2-categories of `Cauchy-complete' objects of each sort.  In fact,
it seems (although I do not know of a published proof) that if
$\bicat{S}(\C)$-enriched functors are defined using the
equipment/double-category structure of $\bicat{S}(\C)$, i.e.~if they
are required to give \emph{vertical} morphisms between extents, then
the equivalence includes even the non-complete categories and
fibrations.

There are two reasons for considering this as a framework in which to
interpret our results.  The first is that we would like to be able to
pare away at the structure of a regular fibration or equipment to see
what the axioms on one side of the equivalence correspond to on the
other side.  The problem, of course, is that nearly all of the
structure of a regular fibration is required in order even to define
the functor $\Matr(-)$ as in section~\ref{sec:comp-with-regul}.  So it
would make sense to try to recover this latter construction as a
special case of the more general one: that is, we know there is an
equivalence $\bicat{Fib}(\C) \to \xcat{\bicat{S}(\C)}$, and we might
ask what is required of a fibration over $\C$ in order for this
functor to factor through equipments over $\C$ in such a way as to
reproduce the results of section~\ref{sec:comp-with-regul}, if indeed
that is possible at all.  What is `regular structure' on a fibration
as an object of $\bicat{Fib}(\C)$?  What does that mean for the
corresponding $\bicat{S}(\C)$-category?  Does this structure on an
$\bicat{S}(\C)$-category make it `equivalent' to a regular equipment
in some way, in a way that coheres with the $\Matr(-)$ construction?

The second reason for moving to this level of generality is a
potential connection with more general forms of realizability.
Longley has recently proposed a notion of `computability structure'
\cite{longley13:_comput} that encompasses partial combinatory algebras
and is similar to the `basic combinatorial objects' of
Hofstra~\cite{hofstra06:_all}.  Each of these is clearly trying very
hard to be a category enriched in some sort of 2-category, and so one
might wonder whether they are examples of a still more general notion
of `coefficient object' for realizability that encompasses the two,
and whether the passage from a partial combinatory algebra to its
associated tripos can be seen in the context of the equivalence
between fibrations and categories enriched in certain 2-categories.
That is rather a vague idea, of course, but it holds out the
possibility of a structural account of realizability: an equivalence
of categories between very general collections (of whatever sort) of
computable objects and the fibrations they induce would allow a direct
comparison between structures borne by the one and by the other;
regular structure, tripos structure, and so on.  That, after all, was
the motivation behind the work reported here in the first place.

%%% Local Variables: 
%%% mode: latex
%%% TeX-master: "main"
%%% End: 

% bibliography
\bibliographystyle{alpha}
\bibliography{strings,xref,cat,cs,lam,log,me}

\end{document}